\theoremstyle{plain}\newtheorem{thm}{Theorem}[section]
\theoremstyle{plain}\newtheorem{cor}[thm]{Corollary}
\theoremstyle{plain}\newtheorem{prop}[thm]{Proposition}
\theoremstyle{plain}\newtheorem{lem}[thm]{Lemma}
\theoremstyle{plain}\newtheorem{conj}[thm]{Conjecture}
\theoremstyle{plain}
\theoremstyle{plain}
\theoremstyle{plain}
\theoremstyle{plain}
\theoremstyle{plain}\newtheorem{property}[thm]{Property}
\theoremstyle{definition}
\newtheorem{defn}[thm]{Definition}
\newtheorem{exmp}[thm]{Example}
\newtheorem{notn}[thm]{Notation}
\theoremstyle{remark}
\newtheorem{rem}[thm]{Remark}
\newtheorem*{rep@theorem}{\rep@title}
\newcommand{\newreptheorem}[2]{%
	\newenvironment{rep#1}[1]{%
		\def\rep@title{#2 \ref{##1}}%
		\begin{rep@theorem}}%
		{\end{rep@theorem}}}
\numberwithin{equation}{section}
\DeclareMathOperator{\rank}{rank}
\DeclareMathOperator{\II}{I}
\DeclareMathOperator{\SHI}{SHI}
\DeclareMathOperator{\AHI}{AHI}
\DeclareMathOperator{\MHI}{MHI}
\DeclareMathOperator{\APS}{APS}
\DeclareMathOperator{\CKh}{CKh}
\DeclareMathOperator{\muu}{\mu^{orb}}
\DeclareMathOperator{\id}{id}
\DeclareMathOperator{\Eig}{Eig}
\DeclareMathOperator{\KHI}{KHI}
\DeclareMathOperator{\SiHI}{\Sigma HI}
\newcommand{\bC}{\mathbb{C}}
\newcommand{\bR}{\mathbb{R}}
\newcommand{\bZ}{\mathbb{Z}}
\newcommand{\bfv}{\mathbf{v}}
\newcommand{\bfw}{\mathbf{w}}
\newcommand{\clP}{\mathcal{P}}
\newcommand{\al}{\alpha}
\newcommand{\be}{\beta}
\newcommand{\ga}{\gamma}
\newcommand{\bpf}{\begin{proof}}
\newcommand{\epf}{\end{proof}}
\newcommand{\bthm}{\begin{thm}}
\newcommand{\ethm}{\end{thm}}
\newcommand{\bprop}{\begin{prop}}
\newcommand{\eprop}{\end{prop}}
\newcommand{\bcor}{\begin{cor}}
\newcommand{\ecor}{\end{cor}}
\newcommand{\blem}{\begin{lem}}
\newcommand{\elem}{\end{lem}}
\newcommand{\bdefn}{\begin{defn}}
\newcommand{\edefn}{\end{defn}}
\newcommand{\bexmp}{\begin{exmp}}
\newcommand{\eexmp}{\end{exmp}}
\newcommand{\brem}{\begin{rem}}
\newcommand{\erem}{\end{rem}}
\newcommand{\bdia}{\begin{displaymath}\xymatrix}
\newcommand{\edia}{\end{displaymath}}
\newcommand{\beq}{\begin{equation*}\begin{aligned}}
\newcommand{\eeq}{\end{aligned}\end{equation*}}
\newcommand{\mfo}{\mathfrak{o}}
\author{Zhenkun Li}
\address{Department of Mathematics, Stanford University, California 94305, USA}
\email{zhenkun@stanford.edu}
\author{Yi Xie}
\address{Beijing International Center for Mathematical Research, Peking University, Beijing 100871, China}
\email{yixie@pku.edu.cn}
\author{Boyu Zhang}
\address{Mathematics Department, University of Maryland at College Park, Maryland 20742, USA}
\email{bzh@umd.edu}
\title{Instanton homology and knot detection on thickened surfaces}
\begin{document}

\begin{abstract}
	Suppose $\Sigma$ is a compact oriented surface (possibly with boundary) that has genus zero, and $L$ is a link in the interior of $(-1,1)\times \Sigma$. We prove that the Asaeda--Przytycki--Sikora (APS) homology of $L$ has rank $2$ if and only if $L$ is isotopic to an embedded knot in $\{0\}\times \Sigma$. As a consequence, the APS homology detects the unknot in $(-1,1)\times \Sigma$. This is the first detection result for generalized Khovanov homology that is valid on an infinite family of manifolds, and it partially solves a conjecture in  \cite{xie2020instantons}.  Our proof is different from the previous detection results obtained by instanton homology because in this case, the second page of Kronheimer--Mrowka's spectral sequence is not isomorphic to the APS homology. We also characterize all links in  product manifolds that have minimal sutured instanton homology, which may be of independent interest.
\end{abstract}

\maketitle

\section{Introduction}

 Khovanov homology \cite{Kh-Jones} is a link invariant that assigns a homology group to every link in $S^3$. 
 A generalization of Khovanov homology was introduced by Asaeda--Przytycki--Sikora \cite{APS} for links in the interior of $[-1,1]\times \Sigma$, where $\Sigma$ is a compact oriented\footnote{
 	When $\Sigma$ is non-orientable, there is a unique $(-1,1)$--bundle over $\Sigma$ such that the total manifold is orientable, and the APS homology can be defined for links in this bundle.  We will only consider the case when $\Sigma$ is oriented in this paper.} surface  possibly with boundary. 
 We will call this invariant the \emph{APS homology} and denote the APS homology of a link $L$ by $\APS(L)$.  APS homology recovers Khovanov homology when $\Sigma$ is a disk or sphere.
 
Kronheimer and Mrowka \cite{KM:Kh-unknot} proved that Khovanov homology distinguishes the unknot from other knots and links (see also \cite{Hedden-unknot, Grigssby-Wehrli-color}). 
A natural question is whether the analogous result holds for APS homology. More generally, the following conjecture was stated in \cite{xie2020instantons}.
\begin{conj}
	\label{conj_main}
	Suppose $\Sigma$ is a compact oriented surface and $L$ is a link in the interior of $[-1,1]\times \Sigma$. Then $\rank_{\bZ/2}\APS(L;\bZ/2)\ge 2$, and the equality holds if and only if $L$ is isotopic to a knot embedded in $\{0\}\times \Sigma$. 
\end{conj}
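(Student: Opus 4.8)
The plan is to deduce Conjecture~\ref{conj_main} by comparing $\APS(L)$ with an instanton-theoretic invariant of the pair $\big((-1,1)\times\Sigma,L\big)$ and then classifying the links $L$ for which that invariant is as small as possible; this generalizes the strategy behind the annular instanton homology $\AHI$ of Xie--Zhang.

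\emph{Step 1 (the instanton side).} To a link $L$ in the interior of $[-1,1]\times\Sigma$, with a marked basepoint $p\in L$, associate the balanced sutured manifold $(M_L,\gamma_L)$ obtained from $[-1,1]\times\Sigma$ by deleting an open tubular neighbourhood of $L$, with two meridional sutures on each new torus boundary component and the product sutures on $\{\pm 1\}\times\partial\Sigma$; let $\SiHI^{red}(L)$ be its reduced sutured instanton homology over $\bC$, computed using $p$. Two inputs are needed. (a) \emph{Non-vanishing}: the product sutured manifold $[-1,1]\times\Sigma$ is taut, and removing any nonempty link keeps it taut, so by Kronheimer--Mrowka's non-vanishing theorem for $\SHI$ of taut sutured manifolds one has $\SiHI^{red}(L)\neq 0$. (b) \emph{A Kronheimer--Mrowka-type spectral sequence}: the cube of resolutions of a diagram of $L$ on $\Sigma$ gives a spectral sequence abutting to $\SiHI^{red}(L)$ whose $E_1$-term is the reduced APS cochain group. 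The essential subtlety---flagged in the abstract---is that the induced $d_1$ is $d_{APS}+\delta$, where $\delta$ is a sum of ``extra'' cobordism maps coming from the monodromy of the singular-instanton moduli spaces around non-contractible loops in $\Sigma$, which vanish in the APS TQFT; thus $E_2=H_*(C^{red}_{APS},\,d_{APS}+\delta)$ rather than $\APS^{red}(L)$. The point that still yields a bound is that $\delta$ strictly raises the filtration of $C^{red}_{APS}$ by the class in $H_1(\Sigma;\bZ)$ carried by the resolution circles, while $d_{APS}$ preserves it; the associated internal spectral sequence then gives
\[
\rank_{\bZ/2}\APS^{red}(L;\bZ/2)\;\ge\;\dim_{\bC}\APS^{red}(L;\bC)\;\ge\;\dim_{\bC}\SiHI^{red}(L),
\]
where the first inequality is universal coefficients. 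Combined with the analogue of Shumakovitch's splitting $\rank_{\bZ/2}\APS(L;\bZ/2)=2\,\rank_{\bZ/2}\APS^{red}(L;\bZ/2)$---a purely local statement near $p$, valid verbatim here---and with (a), this already gives $\rank_{\bZ/2}\APS(L;\bZ/2)\ge 2$, the first half of the conjecture.

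\emph{Step 2 (the equality case).} If $\rank_{\bZ/2}\APS(L;\bZ/2)=2$ then $\rank_{\bZ/2}\APS^{red}(L;\bZ/2)=1$, and the displayed inequalities together with (a) force $\dim_{\bC}\SiHI^{red}(L)=1$, so the complement of $L$ has the minimal possible reduced sutured instanton homology. I would then prove the standalone classification: \emph{$\dim_{\bC}\SiHI^{red}(L)=1$ if and only if $L$ is isotopic to an embedded knot in $\{0\}\times\Sigma$.} The ``if'' direction is easy---an embedded knot in $\{0\}\times\Sigma$ has a crossingless diagram, so its reduced APS complex is a single copy of the ground ring with zero differential, hence $\rank_{\bZ/2}\APS^{red}=1$, whence $\dim_{\bC}\SiHI^{red}=1$ by the inequalities of Step~1 and (a). The ``only if'' direction is a sutured-manifold-decomposition argument: by the instanton surface-decomposition formula (Ghosh--Li) any essential product annulus or product disk in $(M_L,\gamma_L)$ not isotopic into $\partial N(L)$ splits $\SiHI^{red}(L)$ as a direct sum with two nonzero summands, contradicting $\dim=1$; and a link complement in a thickened surface admitting no such annulus or disk is, by the bypass/gluing calculus, the complement of a single embedded level curve.

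The step I expect to be the main obstacle is the ``only if'' half of Step~2---classifying link complements in thickened surfaces with no essential product annuli or disks---together with the precise control of the correction term $\delta$ in Step~1(b). Both difficulties grow with $g(\Sigma)$: in positive genus there are infinitely many isotopy classes of essential vertical annuli to organize in the decomposition, the $H_1(\Sigma)$-grading on APS homology through which $\delta$ must be estimated is correspondingly richer, and one forfeits the crutch that $[-1,1]\times\Sigma$ is a handlebody when $g(\Sigma)=0$. Making the decomposition bookkeeping and the filtration estimate uniform in the genus is the crux of the full conjecture.
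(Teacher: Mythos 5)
There is a genuine gap — in fact several — and the most important one is that the statement you are proving is the paper's open Conjecture \ref{conj_main}, of which the paper only establishes the genus-zero case (Theorem \ref{thm_main}); your own closing paragraph concedes that the higher-genus difficulties are unresolved, so what you have is a strategy sketch, not a proof. The decisive technical claim in your Step 1(b) is also wrong as stated: every resolution $D_v$ of a fixed diagram carries the same class in $H_1(\Sigma;\bZ)$ (namely $[L]$ mod $2$), so ``the class carried by the resolution circles'' defines no nontrivial filtration on the cube. The filtration that actually works (Proposition \ref{prop_comparison_rank_APS_SiHI}, following Winkeler) assigns $\pm 1$ to each oriented non-contractible circle according to whether its orientation is counter-clockwise for a fixed embedding $\Sigma\hookrightarrow\bR^2$; the extra instanton terms (the $\lambda_2,\lambda_4$ maps of Lemmas \ref{lem_cobordism_map_1_nontrivial_to_2_nontrivial} and \ref{lem_SiHI_cobordism_two_nontrivial_one_nontrivial}) strictly \emph{decrease} this grading while the APS terms preserve it. This construction, and the explicit band-surgery computations it rests on, use genus zero essentially (planar embedding, every non-contractible curve becoming non-separating in the closed-up surface $R$, parallel-curve arguments); no substitute is offered for positive genus or for closed $\Sigma$. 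Separately, the Shumakovitch-type splitting $\rank\APS=2\rank\APS^{red}$ over $\bZ/2$ is asserted to hold ``verbatim,'' but the APS Frobenius-type structure on non-contractible circles (the $\bfw_{\mfo}$ generators) differs from the Khovanov one, and the circle through the basepoint need not be contractible; the paper avoids this entirely by working unreduced.

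Your Step 2 mechanism is also incorrect. Decomposing the complement along a product annulus or product disk does not split the sutured instanton homology into two nonzero summands; it preserves the dimension (both sides are computed from the same closure — see the proof of Corollary \ref{cor_contained_in_annulus}). The two distinct nonzero gradings that obstruct $\dim\le 2$ come instead from Scharlemann's theorem producing taut decomposing surfaces $S_\pm$ in prescribed homology classes, combined with a Thurston-norm-type inequality (Proposition \ref{prop: two non-zero gradings} and Lemma \ref{lem: juhasz's ineq}). And even after reducing to a thickened annulus, the classification of links with $\dim\AHI\le 2$ (Proposition \ref{prop_AHI_rank_2}) is not a ``bypass/gluing calculus'' formality: it requires unknot detection, braid and axis detection, and a multivariable Alexander polynomial argument to pin down the Hopf link. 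The assertion that a link complement with no essential product annuli or disks is the complement of a level curve is false without these inputs.
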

 It follows immediately from the definition that  for every knot $K$ embedded in $\{0\}\times \Sigma$, we have  
 $\rank_{\bZ/2}\APS(K;\bZ/2)=2;$ two knots $K_1,K_2\subset \{0\}\times \Sigma$ are isotopic if and only if their APS homology have the same gradings.  Therefore, if Conjecture \ref{conj_main} is true, then for every knot $K\subset \{0\}\times \Sigma$, the APS homology distinguishes (the isotopy class of) $K$ from other knots and links.

The unknot detection theorem of Kronheimer--Mrowka gives a positive answer to Conjecture \ref{conj_main} when $\Sigma$ is a disk or sphere.
Special cases of Conjecture \ref{conj_main} were also known when $\Sigma$ is an annulus \cite{XZ:excision} 
or a torus \cite{xie2020instantons}. 
The main result of this paper gives a positive answer to Conjecture \ref{conj_main} for all compact surfaces with genus zero, possibly with boundary.

\begin{thm}
	\label{thm_main}
		Suppose $\Sigma$ is a compact oriented surface with genus zero. Let $L$ be a link in the interior of $[-1,1]\times \Sigma$. Then $\rank_{\bZ/2}\APS(L;\bZ/2)\ge 2$, and the equality holds if and only if $L$ is isotopic to a knot embedded in $\{0\}\times \Sigma$. 
\end{thm}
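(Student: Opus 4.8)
The strategy is to reduce the APS-homology computation to instanton Floer theory, where we can exploit the rigidity of minimal sutured instanton homology. The first step is to recall Kronheimer--Mrowka's spectral sequence relating a version of instanton knot homology of $L\subset (-1,1)\times\Sigma\subset Y$ (for a suitable closed $3$--manifold $Y$ containing the thickened surface, e.g.\ the double or a Dehn filling) to $\APS(L;\bZ/2)$. Unlike the $S^3$ case, the abutment is the relevant instanton homology but the $E_2$-page is \emph{not} isomorphic to $\APS$; nonetheless the spectral sequence still gives an inequality $\rank\APS(L;\bZ/2)\ge \dim_{\bC}\mathrm{(instanton invariant of }L)$, which already yields $\rank\APS(L;\bZ/2)\ge 2$ once we know the instanton side has rank at least $2$. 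So the first block of the argument is: set up the closure/excision picture, identify the correct instanton invariant, and prove the rank-$2$ lower bound for it.

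The second, and main, step is the characterization of equality. The plan is: if $\rank_{\bZ/2}\APS(L;\bZ/2)=2$, then the instanton invariant of $L$ must also have minimal rank, and the spectral sequence must degenerate. Minimality of the sutured instanton homology of the complement then feeds into the promised classification result (the one advertised in the abstract: ``we characterize all links in product manifolds that have minimal sutured instanton homology''). That classification should say that a link in $(-1,1)\times\Sigma$ whose complement has minimal $\SHI$ must be isotopic into $\{0\}\times\Sigma$ and, being a \emph{link}, must in fact be a single embedded essential (or trivial) curve — i.e.\ a knot on the surface. Conversely, for a knot $K$ embedded in $\{0\}\times\Sigma$ one checks directly from the APS construction that $\rank_{\bZ/2}\APS(K;\bZ/2)=2$, which is stated in the introduction already. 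The genus-zero hypothesis enters because $\Sigma$ then has a handle decomposition with $0$- and $1$-handles only, so $(-1,1)\times\Sigma$ sits inside $(-1,1)\times S^2$ with some $1$-handles attached, and one can run an induction on the number of boundary components, peeling off boundary circles by Dehn filling and applying a surgery exact triangle / excision argument to relate the invariants across the induction.

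The technical heart—and the step I expect to be the main obstacle—is the minimal-$\SHI$ classification for links in product sutured manifolds, together with controlling how it behaves under the closure operations used to pass between the thickened surface and a closed manifold. Concretely, one must show that minimality of sutured instanton homology forces the link complement to be a product sutured manifold (or close to one), invoke the taut-decomposition/Gabai-type machinery adapted to instantons (Juhász-type arguments, the fact that $\SHI$ detects product sutured manifolds), and then translate ``complement is a product'' into ``$L$ is a surface curve.'' Handling the genus-zero induction cleanly, keeping track of gradings so that one genuinely gets a \emph{knot} (one component) in $\{0\}\times\Sigma$ rather than a multi-component link, and ensuring the spectral-sequence degeneration argument is valid over $\bZ/2$, are the places where care is needed; the surface-curve classification at the bottom of the induction (the sphere-with-holes case) is where the real work lives.
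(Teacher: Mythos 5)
Your high-level plan correctly identifies the two pillars of the argument: (i) an inequality transferring minimality from APS homology to an instanton invariant, and (ii) a classification of links in product sutured manifolds with minimal $\SHI$. However, there are substantive gaps in both halves.

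On the rank inequality, you assert that ``the spectral sequence still gives an inequality $\rank\APS(L;\bZ/2)\ge \dim_{\bC}(\text{instanton invariant})$'' even after observing that the $E_2$-page is not $\APS$. That assertion does not follow formally: the Kronheimer--Mrowka spectral sequence has first page $\bigoplus_{v}\SiHI_{R,p}(\{0\}\times D_v)$, and when $\Sigma$ has genus zero with at least three boundary components the differential $d_1$ does not compute $\APS$. The paper closes this gap by a genuinely new construction: it first pins down the $d_1$ maps explicitly (which requires the diffeomorphism-invariance result Proposition \ref{prop_diff_induces_id_on_II}, the partial algebraic description in Proposition \ref{prop_description_of_T(b)}, and the computations of Lemmas \ref{lem_cobordism_SiHI_one_trivial_to_two_trivial}--\ref{lem_SiHI_cobordism_two_nontrivial_one_nontrivial}), then introduces a filtration on $(E_1,d_1)$ inspired by Winkeler, whose associated graded complex agrees with the APS complex up to powers of $i$. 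The inequality is finally extracted by working with coefficients in $\bZ[i]$ and reducing modulo $(1-i)$. Without some version of this second spectral sequence and the explicit $d_1$-computation, there is no route from the KM spectral sequence to an APS rank bound.

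On the classification (the content of Theorem \ref{thm_main_instanton}), your proposed induction — peeling off boundary circles by Dehn filling, using a surgery exact triangle, and treating the sphere-with-holes base case separately — is not how the paper proceeds, and it is not clear it would work: Dehn filling along a boundary torus of a closure of $(-1,1)\times\Sigma$ changes the ambient manifold and the link in ways that do not directly compare $\SHI$ of $L$ in thickened $\Sigma$ with $\SHI$ of $L$ in a thickened surface with fewer boundary components, nor does a surgery exact triangle interact cleanly with the ``minimal rank'' hypothesis you want to propagate. Instead, the paper proves Proposition \ref{prop: two non-zero gradings} using Scharlemann's sutured decomposition theorem applied to groomed $1$--manifolds, producing two distinct nonzero gradings on $\SHI$ whenever the sutured manifold is not a product near the link; this forces the link to live in a thickened disk or annulus (Corollary \ref{cor_contained_in_annulus}), and the annular case is resolved by Proposition \ref{prop_AHI_rank_2} via a chain of existing detection results. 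The grading/taut-decomposition argument, not a Dehn-filling induction, is what makes the genus-zero hypothesis (and more generally arbitrary $\Sigma$ with boundary) tractable. Your sketch correctly anticipates that ``minimal $\SHI$ $\Rightarrow$ product-like complement'' is the key mechanism, but the decomposition-and-grading route is the part you would need to supply, and it is where the real technical work lies.
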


Theorem \ref{thm_main} is the first detection result for generalized Khovanov homology that is valid on an infinite family of manifolds. Although our proof also uses instanton Floer homology and spectral sequences, there are multiple essential difficulties that make the proof of Theorem \ref{thm_main} conceptually different from the earlier detection results in \cite{KM:Kh-unknot, XZ:excision, xie2020instantons}. 

First of all, when $\Sigma$ has genus zero and has at least three boundary components, the second page of Kronheimer--Mrowka's spectral sequence is \emph{not} isomorphic to APS homology. We will construct another spectral sequence, which is inspired by the work of Winkeler \cite{winkeler2021khovanov}, to relate the second page of Kronheimer--Mrowka's spectral sequence to APS homology.

It is also significantly more difficult to compute the differentials on the first page of Kronheimer--Mrowka's spectral sequence in the more general setting. Although the isomorphism class of the underlying group can be computed by sutured homology (Proposition \ref{prop_SiHI_iso_SHI}), due to the lack of a strong enough naturality property,  one cannot use it to compute the differential maps. We will develop a different argument 
by establishing an invariance property of instanton homology under diffeomorphisms (Proposition \ref{prop_diff_induces_id_on_II}) and using it to obtain a partial algebraic description of the differential map (Proposition \ref{prop_description_of_T(b)}). It turns out that the partial description of the differentials given by Proposition \ref{prop_description_of_T(b)} will be sufficient for the proof of the main theorem.

More importantly, even after a relationship between instanton homology and APS homology is established, the desired detection result does not follow immediately from the available tool package of instanton homology.  This is because when the Euler characteristic of $\Sigma$ is negative, the usual argument of estimating Thurston norms using instanton homology is no longer effective in restricting the isotopy class of $L$. We resolve this difficulty by establishing the following result on sutured instanton Floer homology, which may be of independent interest.

\begin{thm}
		\label{thm_main_instanton}
Suppose $\Sigma$ is a connected compact oriented surface with non-empty boundary, and $L$ is a link in the interior of $[-1,1]\times \Sigma$. If the singular sutured instanton homology $\SHI([-1,1]\times \Sigma, \{0\}\times \Sigma, L)$ has dimension no greater than $2$, then  $L$ is isotopic to a knot  in $\{0\}\times \Sigma$.
\end{thm}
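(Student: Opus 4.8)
The goal is to show that if the sutured instanton homology of the balanced sutured manifold $([-1,1]\times\Sigma,\{0\}\times\Sigma,L)$ has dimension at most $2$, then $L$ sits as a knot in $\{0\}\times\Sigma$. The natural strategy is to run the usual ``minimal dimension forces a product/simple structure'' argument adapted to the setting where $\Sigma$ has boundary, and where the complement of $L$ is taken inside $[-1,1]\times\Sigma$. First I would set up the complement $M = ([-1,1]\times\Sigma)\setminus N(L)$ as a balanced sutured manifold, with sutures coming from $\{0\}\times\Sigma$ together with meridional sutures on the new torus/torus boundary components created by removing $L$. The key general principle is that $\dim_{\bC}\SHI \geq 1$ always, with equality iff the sutured manifold is a product (this is the instanton analogue of the Juh\'asz product theorem, due to Kronheimer--Mrowka / Baldwin--Sivek and its singular refinement). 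So the dimension $\leq 2$ hypothesis is very restrictive.

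The main steps I would carry out: (1) Reduce to the case where $L$ is connected, i.e.\ a knot: if $L$ has a split component or more than one component, a suitable decomposition or K\"unneth-type inequality should push the dimension above $2$; more carefully, for a link with $n$ components the minimal dimension grows, so $\dim \leq 2$ already forces $L$ to be a knot. (2) Show the complement is ``taut'' and analyze its sutured structure: using the fact that $\dim\SHI \leq 2$, together with an Euler characteristic / grading argument (the graded Euler characteristic of $\SHI$ recovers a Turaev-torsion-type invariant, or one can use the $\bZ$-grading coming from a Seifert-type surface), conclude that $L$ is homologically essential in $[-1,1]\times\Sigma$ in a strong sense, and that the complement fibers or is close to a product. (3) The crucial geometric input: identify, via a sutured decomposition along an annulus or along the surface $\{0\}\times\Sigma$ cut along $L$, that $M$ decomposes into pieces each of which must have $\dim\SHI = 1$, hence each is a product sutured manifold. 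Then reassemble this product structure to conclude that $L$ is isotopic into $\{0\}\times\Sigma$: the product structure on the complement gives an annulus (or family of annuli) cobounding $L$ with a curve in $\{0\}\times\Sigma$, and a standard innermost-disk / irreducibility argument isotopes $L$ onto the surface.

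The step I expect to be the main obstacle is (3): controlling the sutured decomposition when $\Sigma$ has negative Euler characteristic. In the annulus and torus cases handled in \cite{XZ:excision, xie2020instantons} the ambient manifold is already close to a product and Thurston-norm estimates pin down $L$; here the complement of a knot in $[-1,1]\times\Sigma$ can be far from a product even though its $\SHI$ is small, because $[-1,1]\times\Sigma$ itself is not atoroidal and carries many incompressible annuli. So the delicate point is to choose the right decomposing surface (I would try the ``vertical'' annuli and the cut-open copy of $\{0\}\times\Sigma$) and to rule out the possibility that $L$ is knotted within $[-1,1]\times\Sigma$ while still having small homology. I would resort to the classification of such ``minimal $\SHI$'' sutured manifolds: show that a balanced sutured manifold built from a thickened surface minus a knot with $\dim\SHI\leq 2$ must be, up to the product piece, a single ``bigon-like'' piece, which forces the knot to be isotopic into $\{0\}\times\Sigma$. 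An auxiliary subtlety is passing from the singular instanton $\SHI$ (with the extra marking curve on $\{0\}\times\Sigma$) to the ordinary sutured picture, where Proposition \ref{prop_SiHI_iso_SHI} and the naturality issues flagged in the introduction must be invoked carefully; I would handle this by working with dimensions only and not relying on naturality of the decomposition maps.
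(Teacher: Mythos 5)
Your plan correctly identifies the high-level intuition---small $\SHI$ should force product-like structure, which should then be reassembled into an isotopy of $L$ onto $\{0\}\times\Sigma$---but there are two genuine gaps, one in the setup and one in the core mechanism.

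First, the object you propose to study is not the one in the hypothesis. You set up $M=([-1,1]\times\Sigma)\setminus N(L)$ with meridional sutures on the new boundary tori. But the theorem is about the \emph{singular} sutured instanton homology $\SHI([-1,1]\times\Sigma,\{0\}\times\Sigma,L)$ in the sense of \cite{XZ:excision}, where $L$ sits as the singular locus inside the unchanged sutured manifold $([-1,1]\times\Sigma,\{0\}\times\partial\Sigma)$; the dimension hypothesis does not transfer to the meridional-suture complement (the earring triangle already shows the two need not have the same rank, and would at best give you an inequality in the wrong direction). The paper keeps $L$ in the singular picture throughout, exploits \cite[Lemma~4.2]{Schar} to say $L$--tautness survives cutting, and never passes to the knot complement.

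Second, and more importantly, the ``crucial geometric input'' you flag as step (3) is exactly where the proposal stops short: you defer to a hypothetical ``classification of minimal $\SHI$ sutured manifolds,'' but the actual lever the paper uses is a polytope-type argument. It introduces a $\mathbb Z$--grading $\SHI(M,\gamma,L,\alpha,i)$ for each \emph{groomed} properly embedded $1$--manifold $\alpha\subset\Sigma$ via eigenspaces of $\mu$--maps on a closure (Section~3.1), invokes Scharlemann's theorem to produce two decomposing surfaces $S_\pm$ realizing extremal gradings, and then proves a Juh\'asz-style inequality (Lemma~3.11, an adaptation of the argument behind \cite[Theorem~6.1]{juhasz2010polytope} to $L$--taut manifolds) showing those two gradings are distinct. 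That is the engine behind Proposition~3.5 (two non-vanishing gradings) and Proposition~3.7 (forcing $\dim\SHI>2$ unless $\Sigma$ is a disk or annulus after cutting along product annuli/disks). Without something equivalent, ``decompose along vertical annuli and the cut-open copy of $\{0\}\times\Sigma$'' does not by itself rule out $L$ being knotted in $[-1,1]\times\Sigma$ with small $\SHI$; the whole difficulty is that $[-1,1]\times\Sigma$ carries many incompressible annuli precisely as you note.

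A smaller but still real gap: you treat the annulus case as essentially known (``close to a product and Thurston-norm estimates pin down $L$''). The paper's Proposition~3.10, which finishes the argument after reducing to an annulus via Corollary~3.8, is its own nontrivial detection result: it uses $\II^\natural$ and the earring triangle to force $L$ to be a knot, the unknot detection $\dim\KHI=1$, the braid-closure criterion of \cite{XZ:excision}, a sutured decomposition showing $L\cup U$ is mutually braided, and a multivariable Alexander polynomial bound (via \cite{XZ:forest} and \cite{LY_multi_Alexander}) to force the braid index to be $1$. This is not a corollary of earlier Thurston-norm estimates, and omitting it leaves the reduction incomplete.

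So the proposal's skeleton---minimality forces product pieces, reassemble, reduce to an annulus---matches the paper's, but the two load-bearing steps (the groomed-grading/Scharlemann/Juh\'asz-inequality machinery and the annular detection result) are missing, and the starting sutured manifold is the wrong one.
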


The paper is organized as follows. Section \ref{sec_preliminaries} reviews some basic constructions and terminologies that will be needed in this paper, Section \ref{sec_two_nonvanishing_grading} proves
Theorem \ref{thm_main_instanton}, Sections \ref{sec_instanton_thickend_surface} and \ref{sec_instanton_for_links_at_0} establish various properties of instanton Floer homology that will be needed in the proof of Theorem \ref{thm_main}, Section \ref{sec_proof_main} finishes the proof of the main theorem.

\vspace{\baselineskip}
{\bf Acknowledgments.} We would like to thank Ciprian Manolescu for helpful comments, and thank Lvzhou Chen for helpful discussions.

\section{Preliminaries}
\label{sec_preliminaries}

\subsection{Notation and conventions}
In this paper, all manifolds and all maps between manifolds are smooth. 
If $M$ is an oriented manifold, we use $-M$ to denote the same manifold $M$ with the opposite orientation. We use ${\rm int}(M)$ to denote the interior of a manifold $M$.

Unless otherwise specified, all submanifolds of $\bR$ are endowed with the inherited orientation from the standard orientation of $\bR$, and product manifolds are endowed with the product orientation. 

We view $S^1$ as the quotient of $[-1,1]$ by gluing $1$ and $-1$. The standard orientation on $S^1$ is defined to be the push-forward of the standard orientation on $[-1,1]$. We will use $t_*\in S^1$ to denote the image of $1$ and $-1$ under the quotient map. The point $t_*\in S^1$ will often be used as a base point on $S^1$.

We will use $A$ to denote the oriented annulus $[-1,1]\times S^1$. 

If $\Sigma$ is a compact oriented surface, we will call the manifold $(-1,1)\times \Sigma$ the \emph{thickened} $\Sigma$. 

\subsection{APS homology}
\label{subsec_APS}
This subsection briefly reviews the definition of APS homology from \cite{APS}. 
We will not discuss the gradings of APS homology here because they are not needed for Theorem \ref{thm_main}. 
For simplicity, we will focus on the case when $\Sigma$ is a compact oriented surface with genus zero. 
We also slightly modify the notation from \cite{APS} to make it more convenient for our proof of Theorem \ref{thm_main}.

Suppose $C$ is an embedded compact closed $1$--manifold in $\Sigma$. Define a free abelian group $V(C)$ as follows:
\begin{enumerate}
	\item  If $\ga$ is a contractible simple closed curve on $\Sigma$, define $V(\ga)$ to be the free group generated by $\bfv_+(\ga)$ and $\bfv_-(\ga)$, where $\bfv_+(\ga)$ and $\bfv_-(\ga)$ are formal generators associated with $\ga$. 
	\item If $\ga$ is a non-contractible simple closed curve, let $\mathfrak{o}$, $\mathfrak{o}'$ be the two orientations of $\ga$. Define $V(\ga)$ to be the free group generated by $\bfw_{\mfo}(\ga)$ and $\bfw_{\mfo'}(\ga)$, where $\bfw_{\mfo}(\ga)$ and $\bfw_{\mfo'}(\ga)$ are formal generators associated with $\ga$.
	\item  In general, suppose the connected components of $C$ are $\ga_1,\dots,\ga_k$, define 
	$$V(C)= \bigotimes_{i=1}^k V(\ga_i).$$
\end{enumerate}

Suppose a link $L$ in the interior of $[-1,1]\times \Sigma$ is given by a diagram $D$ on $\Sigma$ with $k$ crossings, and fix an ordering of the crossings.
For $v=(v_1,v_2,\dots, v_k)\in \{0,1\}^k$, resolving the crossings of $D$ by a sequence of $0$--smoothings and $1$--smoothings (see Figure \ref{fig_01smoothing}) given by $v$ turns $D$ to an embedded closed $1$--manifold in $\Sigma$. Denote the resolved diagram by $D_v$.

\begin{figure}
	\begin{tikzpicture}
		\draw[thick] (1,-1) to (-1,1); \draw[thick,dash pattern=on 1.3cm off 0.25cm] (1,1) to (-1,-1);  \node[below] at (0,-1.1) {A crossing};
		\draw[thick] (2,1)  to [out=315,in=180]  (3,0.3) to [out=0,in=225]   (4,1);
		\draw[thick] (2,-1)  to [out=45,in=180]  (3,-0.3) to [out=0,in=135]   (4,-1);  \node[below] at (3,-1.1) {0-smoothing};
		\draw[thick] (5,1)  to [out=315,in=90]  (5.7,0) to [out=270,in=45]    (5,-1);  \node[below] at (6,-1.1) {1-smoothing};
		\draw[thick] (7,1)  to [out=225,in=90]  (6.3,0) to [out=270,in=135]   (7,-1);
	\end{tikzpicture}
	\caption{Two types of smoothings}\label{fig_01smoothing}
\end{figure}
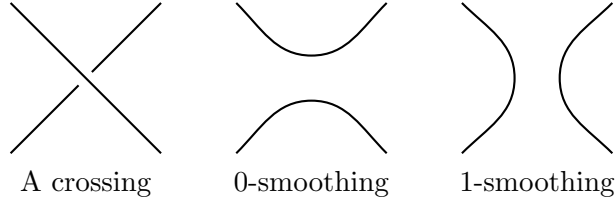

If a $0$--smoothing is changed to a $1$--smoothing at a given crossing, then the resolved diagram changes by merging two circles into one circle, or splitting one circle to two circles.
Let $u\in \{0,1\}^k$ be the label of the new resolution, we define a map from $V(D_v)$ to $V(D_u)$ as follows. 
We can write $D_v = D_v' \cup C$ and $D_u = D_u' \cup C$, where $D_v'$ and $D_u'$ are the components of $D_v$ and $D_u$ that are modified by the smoothing change, and $C$ is the complement. 
We define a map from $V(D_v')$ to $V(D_u')$; this will induce a map from $V(D_v)$ to $V(D_u)$ by taking tensor product with the identity map on $V(C)$. 

In the case that $D_v'$ contains two circles and $D_u'$ contains one circle, write $D_v'= \gamma_1\cup \gamma_2$ and $D_u'=\gamma$.
\begin{enumerate}
	\item  If $\gamma_1$ and $\gamma_2$ are both contractible circles, the map is defined by
	\begin{align*}
		\mathbf{v}_+(\gamma_1) \otimes \mathbf{v}_+(\gamma_2) &\mapsto \mathbf{v}_+(\gamma),  &\mathbf{v}_+(\gamma_1)& \otimes \mathbf{v}_-(\gamma_2) \mapsto \mathbf{v}_-(\gamma) ,\\
		\mathbf{v}_-(\gamma_1) \otimes \mathbf{v}_+(\gamma_2)&\mapsto \mathbf{v}_-(\gamma),  &\mathbf{v}_-(\gamma_1)& \otimes \mathbf{v}_-(\gamma_2) \mapsto 0.
	\end{align*}
\item If $\gamma_1$  is contractible and $\gamma_2$ is non-contractible, the map is defined by
$$
	\mathbf{v}_+(\gamma_1)\otimes \mathbf{w}_{\mfo}(\gamma_2) \mapsto \mathbf{w}_{\mfo}(\gamma_2),\quad\quad \mathbf{v}_-(\gamma_1)\otimes \mathbf{w}_\mfo(\gamma_2) \mapsto 0
$$
for every orientation $\mfo$ of $\ga_2$. 
\item
If both $\gamma_1$ and $\gamma_2$ are non-contractible and $\ga$ is contractible, then $\gamma_1$ and $\gamma_2$ must be parallel to each other, and we can identify the orientations of $\ga_1$ with the orientations of $\ga_2$. Let $\mfo$ and $\mfo'$ be the orientations of $\ga_1$ and use the same notation to denote the corresponding orientations of $\ga_2$. The merge map is defined by
\begin{align*}
	\mathbf{w}_{\mfo}(\gamma_1)\otimes \mathbf{w}_{\mfo}(\gamma_2) &\mapsto 0,   &\mathbf{w}_{\mfo'}(\gamma_1)& \otimes \mathbf{w}_{\mfo'}(\gamma_2) \mapsto 0,\\
	\mathbf{w}_{\mfo}(\gamma_1)\otimes \mathbf{w}_{\mfo'}(\gamma_2) &\mapsto \mathbf{v}_-(\gamma), &\mathbf{w}_{\mfo'}(\gamma_1)&\otimes \mathbf{w}_{\mfo}(\gamma_2) \mapsto \mathbf{v}_-(\gamma).
\end{align*}
\item If $\ga_1,\ga_2,\ga$ are all non-contractible, define the merge map to be zero.
\end{enumerate}

In the case that $D_v'$ contains one circle and $D_u'$ contains two circles, write $D_v'= \gamma$ and $D_u'=\gamma_1\cup \gamma_2$.
\begin{enumerate}
	\item  If $\gamma_1$ and $\gamma_2$ are both contractible circles, then $\ga$ is also contractible, and the map is defined by
	$$
	\mathbf{v}_+(\gamma) \mapsto \mathbf{v}_+(\gamma_1)\otimes \mathbf{v}_-(\gamma_2)+ \mathbf{v}_-(\gamma_1)\otimes \mathbf{v}_+(\gamma_2), \quad \mathbf{v}_-(\gamma) \mapsto \mathbf{v}_-(\gamma_1)\otimes \mathbf{v}_-(\gamma_2).
	$$
	\item If $\gamma_1$  is contractible and $\gamma_2$ is non-contractible, then $\ga_2$ is parallel to $\ga$. Identify the orientations of $\ga$ with the orientations of $\ga_2$. The split map is defined by
	$$
	\mathbf{w}_{\mfo}(\gamma) \mapsto \mathbf{v}_-(\gamma_1)\otimes \mathbf{w}_{\mfo}(\gamma_2)
	$$
	for every orientation $\mfo$ of $\ga$ and $\ga_2$.
	\item 
	If both $\ga_1$ and $\ga_2$ are non-contractible and $\ga$ is contractible, then $\gamma_1$ and $\gamma_2$ must be parallel to each other, and we can identify the orientations of $\ga_1$ with the orientations of $\ga_2$. Let $\mfo$ and $\mfo'$ be the orientations of $\ga_1$ and use the same notation to denote the corresponding orientations of $\ga_2$. The split map is given by 
	$$
	\mathbf{v}_+(\gamma) \mapsto \mathbf{w}_{\mfo}(\gamma_1)\otimes \mathbf{w}_{\mfo'}(\gamma_2) + \mathbf{w}_{\mfo'}(\gamma_1)\otimes \mathbf{w}_{\mfo}(\gamma_2) , \quad \mathbf{v}_-(\gamma) \mapsto 0.
	$$
	\item If all of $\ga_1,\ga_2,\ga$ are non-contractible, define the splitting map to be zero.
\end{enumerate}

The above construction defines a map $d_{vu}: V(D_v) \to V(D_u)$ whenever $u$ is obtained from $v$ by changing one coordinate from 0 to 1. Let $e_i$ be the $i$--th standard basis vector of $\mathbb{Z}^k$.
Define
\begin{equation*}
	\CKh_\Sigma(L)=\bigoplus_{v\in \{0,1\}^k} V(D_v),
\end{equation*}
and define an endomorphisms on $\CKh_\Sigma(L)$ by
\begin{equation*}
	\mathcal{D}_\Sigma= \sum_i \sum_{u-v=e_i}  (-1)^{\sum_{i<j\le c}v_j}  d_{vu}.
\end{equation*}
\begin{thm}[\cite{APS}]
	The map $\mathcal{D}_\Sigma$ satisfies $\mathcal{D}_\Sigma^2=0$. The homology of $(\CKh_\Sigma(L),\mathcal{D}_\Sigma)$ does not depend on the diagram $D$ or the order of the crossings.
\end{thm}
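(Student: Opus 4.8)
The plan is to follow the cube-of-resolutions strategy of Khovanov \cite{Kh-Jones}, viewing the local maps $d_{vu}$ as the structure maps of a Frobenius-type gadget and reducing everything to a finite list of local identities.

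First, to prove $\mathcal{D}_\Sigma^2=0$, expand the square. A standard feature of the sign convention $(-1)^{\sum v_j}$ is that it turns every commuting square of the cube $\{0,1\}^k$ into an anticommuting one; hence it suffices to show that for every pair of distinct crossings $i,j$ and every $v\in\{0,1\}^k$ the two composites
\[
V(D_v)\to V(D_{v+e_j})\to V(D_{v+e_i+e_j})\qquad\text{and}\qquad V(D_v)\to V(D_{v+e_i})\to V(D_{v+e_i+e_j})
\]
coincide. One classifies these square faces according to how the two smoothing changes interact with the circles of $D_v$. If the circles affected by the first change are disjoint from those affected by the second, both composites are the same tensor product of local maps, and equality is immediate. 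In the remaining cases the two changes share a circle, and unwinding the combinatorics of merges and splits reduces the claim to a short list of identities for the data $V(\cdot)$: associativity of the merge map $m$, coassociativity of the split map $\Delta$, and the Frobenius compatibility $\Delta\circ m=(m\ot\id)\circ(\id\ot\Delta)=(\id\ot m)\circ(\Delta\ot\id)$. I would verify each of these by a finite case analysis on whether the circles involved are contractible or non-contractible in $\Sigma$, using the explicit formulas above. A preliminary observation needed here is that the contractibility type of every circle occurring in a given face is well defined independently of the order in which the two resolution changes are performed; this holds because the isotopy class in $\Sigma$ of each resolved circle depends only on the resolution vector.

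Independence of the ordering of the crossings is the usual linear-algebra fact: a permutation of the crossings alters each sign $(-1)^{\sum_{i<j}v_j}$ in a way implemented by a diagonal automorphism of $\bigoplus_v V(D_v)$ with entries $\pm1$, and this automorphism intertwines the two differentials, giving a canonical isomorphism of homologies. Independence of the diagram follows from the Reidemeister theorem for links in a thickened surface: it is enough to show the complex changes by a chain homotopy equivalence under each Reidemeister move performed inside a disk in $\Sigma$. Since such a move is supported in a disk, the circles it creates or destroys are contractible in $\Sigma$ and the contractibility type of every other circle is unchanged, so the local picture is exactly Khovanov's; the same explicit chain maps and chain homotopies apply, the only extra check being that tensoring with the identity on $V(C)$ for the untouched part $C$ — which may now contain non-contractible circles — does not disturb the homotopy identities, which is immediate.

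The step I expect to be the main obstacle is the case analysis establishing the Frobenius-type identities when non-contractible circles are involved: keeping track of the orientation labels $\bfw_{\mfo}$ and the various vanishing conditions under merges and splits of parallel non-contractible curves, and confirming that no face of the cube forces a configuration outside the list of defined cases — for instance a merge of two non-contractible, non-parallel curves, which cannot occur on a genus-zero surface. Everything else is bookkeeping modeled on the planar and closed-surface cases.
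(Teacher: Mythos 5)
This theorem is quoted from \cite{APS}; the present paper does not reprove it. Your sketch correctly reconstructs the APS strategy: the sign convention reduces $\mathcal{D}_\Sigma^2=0$ to commutativity of the underlying square faces, those faces are verified by a finite case analysis keyed on the contractibility types of the circles involved, and diagram invariance reduces to a local Reidemeister check in which the circles created or destroyed lie in a disk and are therefore contractible. Your observation that the contractibility type of each resolved circle depends only on the resolution vector, not on the order in which one passes through the cube, is indeed the right preliminary lemma.

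One claimed fact is wrong, however. You assert that a merge of two non-contractible, non-parallel curves is outside the defined list of local maps and cannot occur on a genus-zero surface. Both halves are off. On a thrice-punctured sphere, curves encircling two of the punctures merge along a band to a curve encircling the third puncture, and all three are non-contractible; this is exactly clause (4) of the merge formulas, which sends it to zero. The configuration that genus zero actually forbids --- and the completeness claim the case analysis depends on --- is a merge of two non-parallel non-contractible curves into a \emph{contractible} curve. That is impossible: if the merged circle $\gamma$ bounds a disk $D$, then $D$ together with the band $b$ is an annulus cobounded by the two original circles, forcing them to be parallel; hence clause (3) already covers every merge with a contractible output. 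A smaller caveat in the same direction: the APS rules do not form a genuine Frobenius algebra (several composites are declared to vanish and the non-contractible sector carries no unit), so associativity, coassociativity, and the compatibility relation are names for computations that must be carried out entry by entry from the table, not axioms one may cite.
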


The homology of $(\CKh_\Sigma(L),\mathcal{D}_\Sigma)$ is the APS homology of $L$. If $R$ is a commutative ring, the APS homology of $L$ with coefficient $R$ is defined to be the homology of $\CKh_\Sigma(L)\otimes R$ with respect to the differential $\mathcal{D}_\Sigma\otimes \id_R$, and we denote the homology by $\APS(L;R)$.

\subsection{Singular instanton homology}
We give a brief review of the theory of singular instanton Floer homology developed in \cite{KM:Kh-unknot,KM:YAFT}.
Let $Y$ be a closed oriented $3$--manifold, $L$ be a link in $Y$, and $\omega\subset Y$ be an embedded compact $1$--manifold such that $\omega \cap L = \partial \omega$. A closed, oriented, connected, embedded surface $\Sigma\subset Y$ is called \emph{non-integral} (with respect to the triple $(Y,L,\omega)$) if one of the following conditions hold:
\begin{enumerate}
	\item The algebraic intersection number of $\Sigma$ and $L$ is odd.
	\item $\Sigma$ is disjoint from $L$, and the intersection number of $\Sigma$ and $\omega$ is odd.
\end{enumerate}
Note that when $\Sigma$ is disjoint from $L$, the mod $2$ intersection number of $\Sigma$ and $\omega$ is well-defined, so condition (2) above is well-defined. The triple $(Y,L,\omega)$ is called \emph{admissible} if there exists a non-integral surface in every connected component of $Y$.

Let $R$ be a ring. 
\emph{Singular instanton homology} assigns an $R$--module $\II(Y,L,\omega,a;R)$ to every admissible triple $(Y,L,\omega)$ and a choice of ``auxiliary data'' $a$ on $(Y,L,\omega)$ (see \cite{KM:Kh-unknot}). 
The homology $\II(Y,L,\omega, a;R)$ is endowed with a relative $\bZ/4$ grading. 
For different choices of auxiliary data $a_1$ and $a_2$, the homology $\II(Y,L,\omega,a_1;R)$ and $\II(Y,L,\omega,a_2;R)$ are isomorphic as $\bZ/4$--relatively graded $R$--modules. The isomorphism is canonical up to an overall sign. We will often omit the auxiliary data $a$ from the notation and write the singular instanton homology group as $\II(Y,L,\omega;R)$. We always assume that some auxiliary data is chosen when defining singular instanton homology. 

If $\omega_1$ and $\omega_2$ have the same fundamental class in $H_1(Y,L;\bZ/2)$, then $\II(Y,L,\omega_1;R)\cong \II(Y,L,\omega_2;R)$ as $\bZ/4$ relatively graded modules.

Suppose $(Y_1,L_1,\omega_1)$ and $(Y_2,L_2,\omega_2)$ are two admissible triples. A \emph{cobordism} from $(Y_1,L_1,\omega_1)$ to $(Y_2,L_2,\omega_2)$ is a quadruple $(W,S,\Omega,\tau)$, such that 
\begin{enumerate}
	\item $W$ is an oriented $4$--manifold with boundary.
	\item $(S,\partial S)\subset (W,\partial W)$ is a properly embedded surface such that the interior of $S$ is disjoint from $\partial W$.
	\item $(\Omega, \partial \Omega)\subset (W,S\cup \partial W)$ is a tamely embedded surface with corner such that the interior of $\Omega$ is disjoint from $S \cup \partial W$.
	\item $\tau$ is an orientation-preserving diffeomorphism from $\partial W$ to $(-Y_1)\sqcup Y_2$. 
	\item $\tau(\partial S)= L_1\cup L_2$.
	\item $\tau(\Omega\cap \partial W) = \omega_1\cup \omega_2$.
\end{enumerate}

A cobordism $(W,S,\Omega,\tau)$ from $(Y_1,L_1,\omega_1)$ to $(Y_2,L_2,\omega_2)$ induces a homomorphism 
$$
\II(W,S,\Omega,\tau): \II(Y_1,L_1,\omega_1;R) \to \II(Y_2,L_2,\omega_2;R).
$$
The map $\II(W,S,\Omega,\tau)$ is only well defined up to an overall sign. Therefore, all identities and commutative diagrams involving cobordism maps only hold up to signs unless specific choices of signs are made.

If $(W,S,\Omega,\tau)$ and $(W',S',\Omega',\tau')$ are two cobordisms from $(Y_1,L_1,\omega_1)$ to $(Y_2,L_2,\omega_2)$ such that there exists a diffeomorphism $\eta$ from $(W,S,\Omega)$ to $(W',S',\Omega')$ with $\tau'\circ\eta = \tau$, then $\II(W,S,\Omega,\tau) = \pm \II(W',S',\Omega',\tau')$.

From now on, we will omit $\tau$ from the notation when it is clear from context.

\begin{rem}
	It is also possible to define a cobordism map when $S$ has normal self-intersection points and when $\Omega$ has a non-trivial intersection with $S$ in the interior. Since they are not needed in this paper, we will not discuss these constructions here.
\end{rem}

The cobordism maps satisfy a functoriality property in the following sense.  
\begin{enumerate}
	\item If $(W,S,\Omega)$ is a product cobordism, then $\II(W,S,\Omega) = \pm \id$.
	\item 
	Suppose $(W_1,S_1,\Omega_1)$ is a cobordism from $(Y_1,L_1,\omega_1)$ to $(Y_2,L_2,\omega_2)$ and $(W_2,S_2,\Omega_2)$ is a cobordism from $(Y_2,L_2,\omega_2)$ to $(Y_3,L_3,\omega_3)$. Then the gluing of $(W_1,S_1,\Omega_1)$ and $(W_2,S_2,\Omega_2)$ along $Y_2$ defines a cobordism $(\hat{W},\hat{S},\hat{\Omega})$ from $(Y_1,L_1,\omega_1)$ to $(Y_3,L_3,\omega_3)$. If $Y_2$ has $n$ components and $W_1$, $W_2$ are connected, then 
	$$
     \II(\hat{W},\hat{S},\hat{\Omega}) = \pm 2^{n-1} 	\II(W_2,S_2,\Omega_2)\circ \II(W_1,S_1,\Omega_1) .
	$$
\end{enumerate}

\begin{exmp}
	\label{exmp_diff_induce_iso}
If $\varphi$ is a diffeomorphism from $(Y_1,L_1,\omega_1)$ to $(Y_2,L_2,\omega_2)$ that preserves the orientations of $Y_i$, then $\varphi$ defines an isomorphism from $\II(Y_1,L_1,\omega_1)$ to $\II(Y_2,L_2,\omega_2)$ (up to sign)  as follows. Let $(W,S,\Omega)$ be the cobordism obtained by gluing $[-1,0] \times (Y_1,L_1,\omega_1)$ with $[0,1]\times (Y_2,L_2,\omega_2)$ via the map 
$$\id\times \varphi:\{0\}\times (Y_1,L_1,\omega_1)\to \{0\}\times (Y_2,L_2,\omega_2),$$ then the map induced by $\varphi$ is defined to be the cobordism map $\II(W,S,\Omega)$. 

If $\varphi$ is the composition of two diffeomorphisms $\varphi_1$ and $\varphi_2$, then the isomorphism defined by $\varphi$ on instanton homology is equal to the composition of the isomorphisms defined by $\varphi_1$ and $\varphi_2$ up to sign.

We will be mostly interested in the case when $\varphi$ is a self-diffeomorphism.
If $\varphi$ is homotopic to the identity through self-diffeomorphisms of $(Y,L,\omega)$, then the map induced by $\varphi$ is $\pm \id$. This is because the cobordism that defines this map is diffeomorphic to the trivial product via a diffeomorphism that fixes the boundary.
\end{exmp}

If $L$ is oriented and $\partial \omega=\emptyset$, there is a special class of auxiliary data such that for all $a_1,a_2$ in this class, the isomorphism from $\II(Y,L,\omega,a_1)$ to  $\II(Y,L,\omega,a_2)$ is canonically defined without sign ambiguity\footnote{This is achieved by specifying a canonical reducible connection on the bundle. For details, the reader may refer to \cite{KM:Kh-unknot}, see the paragraph containing Equation (12) and the discussion after Definition 3.7.}. 
In this case, we will always assume that the auxiliary data is chosen in such special class.
If $\varphi:(Y_1,L_1,\omega_1)\to (Y_2,L_2,\omega_2)$ is a diffeomorphism preserving the orientations of $Y_i$ and $L_i$, then the map $\varphi$ induces an isomorphism from $\II(Y_1,L_1,\omega_1)$ to $\II(Y_2,L_2,\omega_2)$ without sign ambiguity.

Suppose $(W,S,\Omega)$ is a cobordism from $(Y_1,L_1,\omega_1)$ to $(Y_2,L_2,\omega_2)$, where $L_i$'s are oriented and $\partial \omega_i =\emptyset$.
If $W$ is endowed with an almost complex structure such that $S$ is an almost complex submanifold and $\Omega\cap S=\emptyset$, there is a canonical choice of sign for the cobordism map $\II(W,S,\Omega)$, see \cite[Section 5.1]{KM:Kh-unknot}. When such cobordisms are glued in a way that is compatible with the almost complex structures, the functoriality property holds without sign ambiguity.

\begin{rem}
There is another setting where the sign of the cobordism map can be fixed. Suppose $(W,S,\Omega)$ is a cobordism from $(Y_1,L_1,\omega_1)$ to $(Y_2,L_2,\omega_2)$, where $L_i$ are oriented and $\partial \omega_i =\emptyset$.
If $Y_1\cong Y_2$, $W\cong [-1,1]\times Y_1$, $\Omega\cap S=\emptyset$, and $S$ is oriented, then there is a canonical choice of sign for the cobordism map $\II(W,S,\Omega)$. This construction is not needed in our paper. 
\end{rem}

From now, all instanton homology groups will be defined with $\bC$ coefficients, and we will omit the coefficient ring from the notation. 

If $(Y,L,\omega)$ is given by the disjoint union of $(Y_1,L_1,\omega_1)$ and $(Y_2,L_2,\omega_2)$, then the K\"unneth formula (for $\bC$ coefficients) gives a canonical isomorphism 
$$
\II(Y,L,\omega) \cong \II(Y_1,L_1,\omega_1)\otimes  \II(Y_2,L_2,\omega_2).
$$


\subsection{$\mu$ maps and excision}
Suppose $(Y,L,\omega)$ is an admissible triple and $a$ is a fixed choice of auxiliary data.
For $\alpha\in H_*(Y;\bZ)$, there is a homomorphism 
$$
\muu(\alpha):\II(Y,L,\omega,a)\to \II(Y,L,\omega,a)
$$
 on singular instanton homology.
The $\mu$ map for instanton Floer homology was first introduced in  \cite{donaldson1990polynomial, D-Floer};
for the definition of the $\mu$ map in the context of singular instanton Floer homology, the reader may refer to \cite[Section 2.3.2]{Street} and \cite[Section 2.1]{XZ:excision}.
There is a choice of an overall multiplicative constant factor in the definition of $\mu$ maps, and we follow the convention in \cite[Section 2.1]{XZ:excision} and choose the constant to be $-1/4$. 

Once the auxiliary data is fixed, the $\muu$ map is defined without sign ambiguity. Suppose $a_1$ and $a_2$ are two choices of auxiliary data for $(Y,L,\omega)$ and let $\varphi:\II(Y,L,\omega,a_1)\to \II(Y,L,\omega,a_2)$ be the canonical isomorphism. The map $\varphi$ is only defined up to an overall sign, and we choose an arbitrary sign for $\varphi$. Then $\varphi\circ \muu(\alpha) = \muu(\alpha)\circ \varphi$ and the equation holds without sign ambiguity. 

If $\alpha$ is contained in the push-forward of $H_*(Y\backslash L;\bZ)$ to $H_*(Y;\bZ)$, it is also customary in the literature to write $\muu(\alpha)$ as $\mu(\alpha)$.

The $\muu$ maps satisfy the following properties.
\begin{property}
	\label{property_mu_map_intertwine_with_cobordism}
	Suppose $(W,S,\Omega)$ is a cobordism between two admissible triples $(Y_1,L_1,\omega_1)$ and $(Y_2,L_2,\omega_2)$. Let $\alpha_1\in H_*(Y_1;\bZ)$, $\alpha_2\in H_*(Y_2;\bZ)$ be two homology classes such that their images in $H_*(W;\bZ)$ are the same. Fix a choice of sign for $\II(W,S,\Omega)$, the following equation holds without sign ambiguity.
		$$
		\II(W,S,\Omega)\circ \muu(\alpha_1) =  \muu(\alpha_2)\circ \II(W,S,\Omega).
		$$ 
\end{property}
\begin{cor}
	\label{cor_mu_map_eigenspace_preserved_by_cobordism}
	Let $(Y_1,L_1,\omega_1)$, $(Y_2,L_2,\omega_2)$, $(W,S,\Omega)$, $\alpha_1$, $\alpha_2$ be as in Property \ref{property_mu_map_intertwine_with_cobordism}.
	Then for every $\lambda\in \bC$, the map $\II(W,S,\Omega)$ sends the generalized eigenspace of $\muu(\alpha_1)$ in $\II(Y_1,L_1,\omega_1)$ with eigenvalue $\lambda$ to the generalized eigenspace of $\muu(\alpha_2)$ in $(Y_2,L_2,\omega_2)$ with eigenvalue $\lambda$.
\end{cor}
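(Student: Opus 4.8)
The plan is to deduce the statement from Property \ref{property_mu_map_intertwine_with_cobordism} by a direct linear-algebra argument, with the one subtlety being the handling of signs. First I would fix a choice of sign for $\II(W,S,\Omega)$; by Property \ref{property_mu_map_intertwine_with_cobordism}, with this sign fixed the commutation relation
$$
\II(W,S,\Omega)\circ \muu(\alpha_1) = \muu(\alpha_2)\circ \II(W,S,\Omega)
$$
holds on the nose. Abbreviate $f=\II(W,S,\Omega)$, $A_1=\muu(\alpha_1)$, $A_2=\muu(\alpha_2)$, so that $fA_1=A_2f$. An immediate induction then gives $f\,(A_1-\lambda)^k=(A_2-\lambda)^k f$ for every integer $k\ge 0$ and every $\lambda\in\bC$.

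Next, let $x$ lie in the generalized eigenspace of $A_1$ in $\II(Y_1,L_1,\omega_1)$ with eigenvalue $\lambda$, so that $(A_1-\lambda)^N x=0$ for some $N$ (one may take $N=\dim_{\bC}\II(Y_1,L_1,\omega_1)$). Applying the identity above,
$$
(A_2-\lambda)^N f(x) = f\big((A_1-\lambda)^N x\big) = 0 ,
$$
so $f(x)$ lies in the generalized eigenspace of $A_2$ with eigenvalue $\lambda$. As this holds for every such $x$, the map $f=\II(W,S,\Omega)$ carries the $\lambda$-generalized eigenspace of $\muu(\alpha_1)$ into the $\lambda$-generalized eigenspace of $\muu(\alpha_2)$, which is the assertion of the corollary.

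There is no real obstacle here; the only point that requires attention is the sign. If one only knew the intertwining relation up to an overall sign, a factor of $-1$ would turn the computation above into $(A_2+\lambda)^N f(x)=0$ up to sign, i.e.\ $f$ would send the $\lambda$-eigenspace to the $(-\lambda)$-eigenspace. It is therefore essential that Property \ref{property_mu_map_intertwine_with_cobordism} provides the commutation \emph{without} sign ambiguity once a sign for $\II(W,S,\Omega)$ has been chosen, and this is exactly what is used in the first step.
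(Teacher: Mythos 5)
Your proof is correct and is the natural argument here; the paper does not spell out a proof for this corollary, and the intended reasoning is exactly the linear algebra you give, relying on the fact that Property \ref{property_mu_map_intertwine_with_cobordism} provides the intertwining relation without sign ambiguity once a sign for the cobordism map is fixed. Your closing remark about the sign is a correct and worthwhile observation: with only a sign-ambiguous commutation one could not distinguish between preserving the $\lambda$-eigenspace and sending it to the $(-\lambda)$-eigenspace.
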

\begin{property}
	\label{property_linearity_and_commutativity_of_mu}
	\begin{enumerate}
\item 
For all $\alpha,\alpha'\in  H_*(Y;\bZ)$, we have 
$$
\muu(\alpha+\alpha') = \muu(\alpha) + \muu(\alpha').
$$
\item For $\alpha\in H_k(Y;\bZ)$, $\alpha'\in H_{k'}(Y;\bZ)$, we have
$$
\muu(\alpha) \muu(\alpha')= (-1)^{kk'} \muu(\alpha')	\muu(\alpha).
$$
\end{enumerate}
\end{property}

It is often useful to represent $\alpha$ by embedded manifolds in $Y$. To simplify notation, if $\Sigma$ is a closed oriented surface embedded in $Y$, we will write $\muu([\Sigma])$ as $\muu(\Sigma)$. Similarly, if $p\in Y$ is a point, we will write $\muu([p])$ as $\muu(p)$. 

 If $\alpha\in H_{k}(Y;\bZ)$, then $\muu(\alpha)$ has degree $4-k$ with respect to the relative $\bZ/4$ grading. This observation and an elementary argument lead to the following lemma.

\begin{lem}[{\cite[Lemma 3.14]{xie2020instantons}}]
	\label{lem_same_dim_opposite_eigenvalues}
	 Suppose $\alpha\in H_{2}(Y;\bZ)$. Then for every $\lambda\in \bC$, the dimension of the generalized eigenspace of $\muu(\alpha)$ on $\II(Y,L,\omega)$ with eigenvalue $\lambda$ is the same as the dimension of the generalized eigenspace with eigenvalue $-\lambda$. 
\end{lem}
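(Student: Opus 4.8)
The plan is to exploit the fact, recalled just above, that $\muu(\alpha)$ has degree $4-k=2$ modulo $4$ with respect to the relative $\bZ/4$ grading on $\II(Y,L,\omega)$, and to manufacture from this grading an invertible operator that conjugates $\muu(\alpha)$ to $-\muu(\alpha)$.

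First I would dispose of the trivial case $\II(Y,L,\omega)=0$, and otherwise fix an arbitrary lift of the relative $\bZ/4$ grading to an absolute one; this gives a decomposition $\II(Y,L,\omega)=V_0\oplus V_1\oplus V_2\oplus V_3$ into graded summands (the dimensions appearing in the conclusion will not depend on this choice). Since $\muu(\alpha)$ shifts the grading by $2$, it maps $V_j$ into $V_{j+2}$, with indices taken mod $4$; in particular $-2\equiv 2$ removes any ambiguity about the sign convention for the degree.

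Next I would define $\epsilon\colon \II(Y,L,\omega)\to\II(Y,L,\omega)$ to act on the summand $V_j$ as multiplication by the scalar $i^{j}$; this is a well-defined invertible operator because $i^{4}=1$. A one-line computation on each $V_j$, using $i^{j+2}=-i^{j}$, then shows that $\epsilon\circ\muu(\alpha)=-\muu(\alpha)\circ\epsilon$. (One could equally well use the real operator acting by $(-1)^{\lfloor j/2\rfloor}$ on $V_j$.)

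It follows that $\epsilon\,\muu(\alpha)\,\epsilon^{-1}=-\muu(\alpha)$, so $\muu(\alpha)$ and $-\muu(\alpha)$ are conjugate endomorphisms of the finite-dimensional vector space $\II(Y,L,\omega)$. Consequently, for every $\lambda\in\bC$ and every $N\ge 1$,
$$
\dim\ker\big(\muu(\alpha)-\lambda\,\id\big)^{N}=\dim\ker\big({-\muu(\alpha)}-\lambda\,\id\big)^{N}=\dim\ker\big(\muu(\alpha)+\lambda\,\id\big)^{N},
$$
and taking $N$ sufficiently large yields the claimed equality between the generalized eigenspace dimensions for $\lambda$ and $-\lambda$. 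There is no serious obstacle here, in line with the paper's description of this as an elementary argument; the only mild subtlety is that the $\bZ/4$ grading is a priori only relative, but choosing a base grading makes the decomposition above available, and the final dimension count is manifestly independent of that choice.
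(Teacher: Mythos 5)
Your proof is correct, and it is precisely the ``elementary argument'' the paper alludes to just before stating the lemma: the fact that $\muu(\alpha)$ has $\bZ/4$-degree $2$ produces an involution-like operator anticommuting with $\muu(\alpha)$, hence a conjugacy between $\muu(\alpha)$ and $-\muu(\alpha)$. The handling of the relative (rather than absolute) grading by fixing an arbitrary lift is also the right touch, since the conjugacy and the resulting dimension count are independent of that choice.
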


We will also need the following result about the eigenvalues of $\mu$ maps.

\begin{prop}[{\cite[Corollary 7.2]{KM:suture}}, {\cite[Proposition 6.1]{XZ:excision}}]
	\label{prop_eigenvalues_bounded_by_Thurston_norm} 
Let $\II^{(2)}(Y,L,\omega)$ be the simultaneous generalized eigenspace of $\muu(p)$ with eigenvalue $2$ for all $p\in Y$. Suppose $\Sigma$ is a non-integral surface with genus $g$ that intersects $L$ transversely at $n$ points such that $2g-2+n\ge 0$, then all eigenvalues of $\muu(\Sigma)$ on $\II^{(2)}(Y,L,\omega)$ are contained in the set 
		$$\{2g-2+n-2k\,|\,k\in \bZ, \, 0\le k\le 2g-2+n\}.$$ 
\end{prop}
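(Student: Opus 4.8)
The statement is, up to cosmetic reformulation, \cite[Proposition 6.1]{XZ:excision}, which in turn rests on \cite[Corollary 7.2]{KM:suture}; the plan is to indicate how it follows from the excision principle for singular instanton homology together with the formal properties recorded above. The engine is the combination of Property~\ref{property_mu_map_intertwine_with_cobordism} and Corollary~\ref{cor_mu_map_eigenspace_preserved_by_cobordism}: an excision cobordism that carries $[\Sigma]$ to the homology class of a standard product surface, and carries point classes to point classes, induces --- via the K\"unneth formula and the functoriality of cobordism maps, up to an invertible power of $2$ --- an isomorphism that restricts to the simultaneous $\muu(\mathrm{pt})=2$ eigenspaces and intertwines the two $\muu(\Sigma)$ operators. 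Hence the eigenvalues of $\muu(\Sigma)$ on $\II^{(2)}(Y,L,\omega)$ must be a subset of those of the corresponding operator on the product piece.

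Concretely, I would cut $Y$ open along $\Sigma$, regarded as a surface meeting $L$ transversely in the $n$ marked points $\Sigma\cap L$, and simultaneously cut a parallel copy of the product triple $\bigl(\Sigma\times S^1,\,(\Sigma\cap L)\times S^1,\,\omega_0\bigr)$ open along $\Sigma\times\{t_*\}$, then cross-reglue, exactly as in the proof of singular Floer excision. Here $\omega_0 = \{\mathrm{pt}\}\times S^1$ if $n=0$ and $\omega_0=\emptyset$ if $n\ge1$, chosen so that the product triple is admissible (with $\Sigma\times\{t_*\}$ as a non-integral surface). The cobordisms produced by excision identify $[\Sigma]$ with $[\Sigma\times\{t_*\}]$ and send point classes to point classes, so Property~\ref{property_mu_map_intertwine_with_cobordism} applies and the problem is reduced to computing the eigenvalues of $\muu(\Sigma\times\{t_*\})$ on $\II^{(2)}$ of the product triple. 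One may simplify first: tubing $\Sigma$ along an arc of $L$ joining two of the marked points trades two intersection points for one unit of genus, leaving $2g-2+n$, the class $[\Sigma]\in H_2(Y;\bZ)$, and non-integrality unchanged, so it suffices to treat $n\in\{0,1\}$.

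For the model computation one uses Mu\~noz's description of the ring structure of the instanton Floer homology of $\Sigma_g\times S^1$ --- equivalently \cite[Corollary 7.2]{KM:suture} --- which yields the eigenvalue set $\{2g-2-2k:0\le k\le 2g-2\}$ when $n=0$; the singular case $n\ge1$ is reduced to this by resolving each circle $\{\mathrm{pt}\}\times S^1$ of singular locus via the blow-up construction of \cite{KM:Kh-unknot}, each resolution raising the relevant Euler-characteristic count by one and so converting $2g-2$ into $2g-2+n$. (As a consistency check, Lemma~\ref{lem_same_dim_opposite_eigenvalues} shows the eigenvalue multiset is symmetric about $0$, matching the claimed set.) The step I expect to be the main obstacle is precisely this gauge-theoretic model input together with the bookkeeping of singular excision along a surface meeting $L$: one must check that admissibility and the non-integrality hypothesis are preserved under the cut-and-reglue, that the excision isomorphism is genuinely equivariant for $\muu(\Sigma)$, and that the genus-zero case --- where $2g-2+n$ may be as small as $0$ --- behaves as stated. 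Since all of this is carried out in \cite{KM:suture,XZ:excision}, in practice the proposition is deduced directly from those references.
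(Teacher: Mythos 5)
The paper does not prove Proposition~\ref{prop_eigenvalues_bounded_by_Thurston_norm}: it is stated as a citation to \cite[Corollary~7.2]{KM:suture} and \cite[Proposition~6.1]{XZ:excision}, with no proof environment. Your proposal correctly recognizes this and ultimately defers to those references, so in that sense the approach matches the paper's. Your accompanying sketch of what lies inside those references (excision of $\Sigma$ to a standard product triple, $\muu$-equivariance of the excision cobordism via Property~\ref{property_mu_map_intertwine_with_cobordism}, and Mu\~noz's computation of the Floer homology ring of $\Sigma_g\times S^1$) is accurate for the $n=0$ case; the tubing reduction is also fine since $\muu(\Sigma)$ depends only on $[\Sigma]\in H_2(Y;\bZ)$ while $2g-2+n$ is preserved. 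The one detail I would not endorse is the claim that the $n\ge 1$ case is obtained by ``resolving each circle $\{\mathrm{pt}\}\times S^1$ of singular locus via the blow-up construction of \cite{KM:Kh-unknot}'': the eigenvalue computation for the singular product $(S^1\times\Sigma,\,S^1\times\{p_1,\dots,p_n\})$ in \cite{XZ:excision} is carried out directly with the orbifold $\mu$-operator on the marked product, not by iteratively blowing up; the blow-up/earring construction in \cite{KM:Kh-unknot} serves a different purpose. Since you explicitly fall back on the references for the model input, this is a descriptive inaccuracy rather than a gap.
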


The above proposition motivated the following definition in \cite{XZ:excision}.
\begin{defn}
	Let $(Y,L,\omega)$ be an admissible triple. 
Suppose $\Sigma$ is a closed oriented embedded surface in $Y$. Let $\Sigma_1,\dots,\Sigma_l$ be the connected components of $\Sigma$. Assume $\Sigma_i$ has genus $g_i$ and intersects $L$ transversely at $n_i$ points, and assume that $2g_i-2+n_i\ge 0$ for all $i$. Define $\II(Y,L,\omega|\Sigma)\subset \II^{(2)}(Y,L,\omega)$ to be the simultaneous generalized eigenspace of 
$$\muu(\Sigma_1),\dots,\muu(\Sigma_l)$$ with eigenvalues $$2g_1-2+n_1,\dots,2g_l-2+n_l.$$
\end{defn}

Note that in the above definition, we do not require $\Sigma_i$ to be non-integral. This turns out to be convenient in some occasions. 

If $\Sigma$ is a torus that is disjoint from $L$ and intersects $\omega$ transversely at $1$ point, then by Proposition \ref{prop_eigenvalues_bounded_by_Thurston_norm} above, we have $\II(Y,L,\omega|\Sigma)= \II^{(2)}(Y,L,\omega)$.

The group $\II(Y,L,\omega)$ satisfies the following property, which is usually referred to as the excision theorem.

\begin{prop}[{\cite[Theorem 7.7]{KM:suture}, \cite[Proposition 6.4]{XZ:excision}}]
	\label{prop_excision}
	Suppose $(Y,L,\omega)$ is an admissible triple. 
	Suppose $\Sigma_1,\Sigma_2\subset Y$ are two disjoint non-integral surfaces such that both of them have genus $g$,  intersect $L$ transversely at $n$ points, are disjoint from $\partial \omega$, and intersect $\omega$ transversely at $m$ points. Suppose either $n=0$ and $m$ is odd, or $n$ is odd and $n\ge 3$. Let $\varphi:\Sigma_1\to\Sigma_2$ be an orientation-preserving diffeomorphism that maps $\Sigma_1\cap L$ to $\Sigma_2\cap L$ and maps $\Sigma_1\cap \omega$ to $\Sigma_2\cap \omega$. Write $\Sigma_1\cup \Sigma_2$ as $\Sigma$. Let $(\widetilde{Y}, \widetilde{L}, \tilde{\omega})$ be the resulting triple after cutting $Y$ open along $\Sigma=\Sigma_1\cup \Sigma_2$ and gluing back the boundary components via the map $\varphi$, let $\widetilde{\Sigma}\subset\widetilde{Y}$ be the image of $\Sigma$ after gluing.  Then the above excision defines a cobordism $(W,S,\Omega)$ from $(Y,L,\omega)$ to $(\widetilde{Y},\widetilde{L},\tilde{\omega})$, which induces an isomorphism from 
$\II(Y,L,\omega|\Sigma)$ to  $\II(\widetilde{Y},\widetilde{L},\tilde{\omega}| \widetilde{\Sigma}).$
\end{prop}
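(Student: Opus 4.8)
The plan is to prove the excision isomorphism by the Floer excision principle, adapting the arguments of Floer and of Kronheimer--Mrowka \cite{KM:suture} (and their singular refinements in \cite{XZ:excision}) to the present generality. The first step is to write down the excision cobordism explicitly. Since $\varphi$ identifies the local data $(\Sigma_i,\Sigma_i\cap L,\Sigma_i\cap\omega)$ for $i=1,2$ with a common model $(R,R\cap L,R\cap\omega)$, with $|R\cap L|=n$ and $|R\cap\omega|=m$ (here we use that $\Sigma_i$ is disjoint from $\partial\omega$, so that near $\Sigma_i$ the link and $\omega$ look like products), we fix a pair of pants $\mathbb{T}$, regarded as a two--dimensional cobordism from two circles to one circle, and form the four--manifold $\mathbb{T}\times R$. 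The cobordism $W$ is built from $Y\times[0,1]$ by inserting $\mathbb{T}\times R$ near $(\Sigma_1\cup\Sigma_2)\times\{1\}$: the two incoming ends of $\mathbb{T}\times R$ are attached along tubular neighborhoods of $\Sigma_1$ and $\Sigma_2$, and the outgoing end becomes the tubular neighborhood of $\widetilde\Sigma$ in the new boundary $\widetilde Y$. Taking $S=L\times[0,1]$ modified by $\mathbb{T}\times(R\cap L)$ (that is, $n$ parallel copies of $\mathbb{T}$) and $\Omega=\omega\times[0,1]$ modified by $\mathbb{T}\times(R\cap\omega)$ together with the product arcs near $\partial\omega\subset L$, one obtains $(W,S,\Omega)$; a direct check shows that this is a cobordism in the sense recalled above and that $(\widetilde Y,\widetilde L,\tilde\omega)$ is again admissible. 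One also records the ``reverse'' excision cobordism $(\overline W,\overline S,\overline\Omega)$ from $(\widetilde Y,\widetilde L,\tilde\omega)$ to $(Y,L,\omega)$, obtained from the same local model with the roles of the two ends exchanged.

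The second step is to see that $\II(W,S,\Omega)$ carries $\II(Y,L,\omega|\Sigma)$ into $\II(\widetilde Y,\widetilde L,\tilde\omega|\widetilde\Sigma)$. Inside $\mathbb{T}\times R$ each of $\Sigma_1,\Sigma_2$ and each component of $\widetilde\Sigma$ is a parallel copy of $\{\mathrm{pt}\}\times R$ for a point on one of the three boundary circles of $\mathbb{T}$, so they all represent the same class in $H_2(W;\bZ)$, and the point classes match up component by component. Moreover each component of $\widetilde\Sigma$ is diffeomorphic to $R$, hence has genus $g$ and meets $\widetilde L$ transversally in $n$ points, so the prescribed eigenvalues $2g-2+n$ (and $2$ for the point classes) agree on the two sides. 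Property \ref{property_mu_map_intertwine_with_cobordism} and Corollary \ref{cor_mu_map_eigenspace_preserved_by_cobordism} then give the desired inclusion, and symmetrically for $\overline W$.

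The heart of the matter is to prove that this restriction is an isomorphism, which I would do by computing the two composites $\II(\overline W)\circ\II(W)$ and $\II(W)\circ\II(\overline W)$. The composite cobordism $\overline W\cup_{\widetilde Y}W$ from $(Y,L,\omega)$ to itself agrees, away from a compact region, with the product; in the compact region the pair--of--pants pieces combine to $\Pi\times R$, where $\Pi=\mathbb{T}\cup_{\partial_0}\overline{\mathbb{T}}$ is a four--holed sphere, and the surface and $\omega$--data are the corresponding products $\Pi\times(R\cap L)$ and $\Pi\times(R\cap\omega)$. Stretching the neck along a copy of $(R\times S^1)\times[-T,T]$ obtained from a non--boundary--parallel circle in $\Pi$ (which splits $\Pi$ back into two pairs of pants), and invoking the gluing formula recalled in the excerpt (including its powers of $2$), one identifies this composite, restricted to $\II(Y,L,\omega|\Sigma)$, with a nonzero multiple of the identity; the nonvanishing of the multiple is exactly the statement that the relevant simultaneous generalized eigenspace of the singular instanton homology of the surface bundle $R\times S^1$ --- namely the one where $\muu(R\times\{\mathrm{pt}\})$ acts with eigenvalue $2g-2+n$ and $\muu(p)$ acts with eigenvalue $2$ --- is one--dimensional. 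Running the same argument for $\II(W)\circ\II(\overline W)$ shows $\II(W)$ is injective and surjective on the prescribed eigenspaces, hence an isomorphism.

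The main obstacle is the input used in the last step: that the singular instanton homology of $R\times S^1$ (equipped with the $n$--point link pattern $(R\cap L)\times S^1$, or, when $n=0$, with $\omega$ running once around the $S^1$ factor and meeting the requisite parity condition $m$ odd) is one--dimensional in the stated generalized eigenspace. This is the singular analog of the classical computation that $HF(T^3)$ has rank one in the relevant $\mathrm{Spin}^c$ structure, and it is precisely this computation that forces the hypotheses ``$n=0$ and $m$ odd'' or ``$n$ odd and $n\ge 3$''; one proves it by combining the eigenvalue bound of Proposition \ref{prop_eigenvalues_bounded_by_Thurston_norm} with a direct analysis of the flat moduli space on $R\times S^1$, as in \cite{KM:suture} and \cite{XZ:excision}. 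A secondary, more bookkeeping--level difficulty is controlling the overall sign and $\bZ/4$--grading ambiguities together with the powers of $2$ in the composition law, so that the two composites are genuine nonzero scalar multiples of the identity and not merely nonzero maps.
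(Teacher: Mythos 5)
The paper does not actually prove this proposition; it is stated with citations to \cite[Theorem~7.7]{KM:suture} and \cite[Proposition~6.4]{XZ:excision} and used as a black box. So there is no ``paper's own proof'' to compare against, and what you have written is an attempt to supply the proof that the paper delegates to those references.

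Your overall strategy is the right one and matches how the cited results are proved: construct the excision cobordism $(W,S,\Omega)$ and its reverse $(\overline W,\overline S,\overline\Omega)$, use Property~\ref{property_mu_map_intertwine_with_cobordism} and Corollary~\ref{cor_mu_map_eigenspace_preserved_by_cobordism} to see that the cobordism maps respect the prescribed simultaneous generalized eigenspaces, and then show each composite is a nonzero multiple of the identity by neck--stretching along an interior copy of $R\times S^1$ and invoking the one--dimensionality statement, which is exactly Proposition~\ref{prop_product_space_homology_rank_1}; the parity hypotheses on $n$ and $m$ are correctly identified as the input needed for that computation.

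There is, however, a concrete error in the construction of the cobordism itself. You take $\mathbb{T}$ to be a pair of pants from two circles to one circle and insert $\mathbb{T}\times R$; but $\widetilde\Sigma$, the image of $\Sigma=\Sigma_1\cup\Sigma_2$ in $\widetilde Y$, has \emph{two} connected components (each obtained by gluing half of a $\Sigma_1$--boundary to half of a $\Sigma_2$--boundary), and the outgoing end of the excision cobordism must therefore contain two copies of the surface, not one. Your $\mathbb{T}\times R$ would produce a cobordism whose outgoing end has only one copy of $R$, i.e.\ a different $\widetilde Y$ from the one in the statement. The correct cobordism is built by gluing $\Sigma\times U$ to $[0,1]\times Y_\Sigma$, where $Y_\Sigma$ is $Y$ cut open along $\Sigma_1\cup\Sigma_2$ and $U$ is the ``saddle'' surface --- a disk realized as an octagon with four vertical edges (glued to $[0,1]\times\Sigma_i^\pm$) alternating with four horizontal edges, two at $t=0$ effecting the $Y$--gluing and two at $t=1$ effecting the $\widetilde Y$--gluing. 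This changes the downstream picture as well: in the composite $\overline W\cup W$ the inserted piece is $\Sigma\times V$ with $V$ an annulus, not your four--holed sphere $\Pi$, and the neck is stretched along $\Sigma\times(\text{core of }V)\cong R\times S^1$. Once the cobordism is corrected to the saddle form, the rest of your argument goes through as in \cite{KM:suture} and \cite{XZ:excision}.
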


We will also need the following computation of instanton Floer homology. Recall that $t_*\in S^1$ is the image of $\{-1,1\}\subset[-1,1]$ under the quotient map.
\begin{prop}[{\cite[Proposition 7.8]{KM:suture}, \cite[Proposition 6.7]{XZ:excision}}]
		\label{prop_product_space_homology_rank_1}
	Suppose $\Sigma$ is a closed oriented surface with genus $g$, let $p_1,\dots,p_n, q_1,\dots,q_m$ be distinct points on $\Sigma$. 
	Let $(u,\partial u)\subset (\Sigma, \{p_1,\dots,p_n\})$ be an embedded compact $1$--manifold in $\Sigma$ whose interior is disjoint from $\{p_1,\dots,p_n,q_1,\dots,q_m\}$. 
	Assume $2g-2+n\ge 0$, and assume that either $n=0$ and $m$ is odd, or $n$ is odd and $n\ge 3$. Then we have 
	$$\II(S^1\times \Sigma, S^1\times \{p_1,\dots,p_n\}, S^1\times \{q_1,\dots,q_m\}\cup \{t_*\}\times u|\{t_*\}\times \Sigma)\cong \bC.$$
\end{prop}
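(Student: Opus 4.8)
The plan is to reduce the statement, by repeated application of the excision theorem (Proposition~\ref{prop_excision}), to two base cases in which the instanton homology of a product $S^1\times\Sigma$ can be computed directly. Throughout I keep track of the data $(g,n,m,u)$ and argue that the dimension of
\[
\II(S^1\times\Sigma,\ S^1\times\{p_1,\dots,p_n\},\ S^1\times\{q_1,\dots,q_m\}\cup\{t_*\}\times u\,|\,\{t_*\}\times\Sigma)
\]
is always $1$.

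First I would normalize the decoration. A component of $u$ that bounds a disk in $\Sigma$ contributes a circle $c$ to $\omega$ with $\{t_*\}\times c$ null-homologous in $Y$, hence zero in $H_1(Y,L;\bZ/2)$; the induced change of $\II$ is realized by a tube cobordism, which commutes with the $\muu$ maps (Property~\ref{property_mu_map_intertwine_with_cobordism}) and so preserves the $\muu(\{t_*\}\times\Sigma)$--eigenspace, and thus the group in question is unchanged. Likewise, if $m\ge2$, an embedded arc from $q_j$ to $q_{j'}$ disjoint from $\{p_1,\dots,p_n\}$ exhibits $S^1\times\{q_j\}\cup S^1\times\{q_{j'}\}$ as the boundary of an annulus disjoint from $L$, so these two components of $\omega$ may be erased. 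Iterating, one reduces to $m\le 1$ (with $m=1$ forced when $n=0$), and to $u$ consisting only of arcs, which may be put into a standard position by an ambient isotopy of $(Y,L)$.

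The main step is the reduction of $(g,n)$, done by excision. Since the parallel copies $\{t_1\}\times\Sigma$ and $\{t_2\}\times\Sigma$ of the distinguished surface are disjoint and non-integral (by the hypotheses on $n$ and $m$) and homologous to $\{t_*\}\times\Sigma$, cutting along them and regluing by a self-diffeomorphism of $\Sigma$ permuting the marked points gives, via Proposition~\ref{prop_excision}, an isomorphism from the eigenspace above onto the corresponding eigenspace of a mapping torus of $\Sigma$ (for the image of the fibers). Choosing these gluings appropriately and iterating, one arranges that the resulting mapping torus contains a non-integral surface of strictly smaller genus, or meeting the link in fewer points; a further excision along that surface and a parallel copy then lowers $(g,n)$ while preserving $2g-2+n$, hence the eigenvalue. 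In this way one arrives at $\Sigma=S^2$ with $n=3$, or at $\Sigma=T^2$ with $n=0,\ m=1$. In both cases $\II$ is computed directly: the relevant moduli space of flat connections consists of a single nondegenerate point — for $S^1\times S^2$ with three trace-zero strands by an elementary $\SU(2)$ calculation, for $S^1\times T^2$ by the classical computation of Floer and Mu\~noz — so the top $\muu$--eigenspace is one-dimensional over $\bC$.

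The hard part is the reduction of $(g,n)$. Because non-integrality constrains the excision surfaces homologically, genus cannot be lowered in one stroke inside the product $S^1\times\Sigma$; one must route through mapping tori, choose monodromies that produce lower-complexity non-integral surfaces, and at each cut-and-reglue faithfully transport the whole configuration — the circles $S^1\times\{q_j\}$, the arc system $\{t_*\}\times u$, the link $L$, and the distinguished surface — while checking that the excision isomorphism matches the $\muu$--eigenspaces with the correct eigenvalues. If this becomes unwieldy, a fallback is to compute the left-hand side directly from a Morse--Bott model for $\II(S^1\times\Sigma,\dots)$ built on the moduli space of parabolic flat connections on $\Sigma$ punctured at the $p_i$: that space is connected, and the top eigenvalue of $\muu(\{t_*\}\times\Sigma)$ selects its one-dimensional top cohomology.
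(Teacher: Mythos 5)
This proposition is cited in the paper from \cite[Proposition~7.8]{KM:suture} and \cite[Proposition~6.7]{XZ:excision}; the paper itself supplies no proof, so there is nothing in-house to compare against.

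Your reconstruction has a genuine gap precisely where you flag it, and the difficulty is not merely that the step is ``unwieldy'' --- the proposed excision does not do what you need. Cutting $S^1\times\Sigma$ along two parallel copies $\{t_1\}\times\Sigma$ and $\{t_2\}\times\Sigma$ and regluing by a self-diffeomorphism of $\Sigma$ produces (a disjoint union of) mapping tori of $\Sigma$ with some monodromy; the fiber genus and the number of link strands are unchanged, so $(g,n)$ is not reduced by this operation, and ``finding a non-integral surface of strictly smaller genus'' in the resulting mapping torus is asserted but not demonstrated. The actual reduction in the cited references runs the excision in the \emph{vertical} direction: one chooses a non-separating simple closed curve $c$ on $\Sigma$ disjoint from the marked points, and excises along the torus $S^1\times c$ together with a parallel copy. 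To make $S^1\times c$ non-integral one must arrange that the decorating $1$--manifold $\omega$ meets it an odd number of times, and this is exactly what the arc system $\{t_*\}\times u$ in the statement is for: $[S^1\times\{q\}]$ and $[\{t_*\}\times u]$ live in different summands of $H_1(S^1\times\Sigma;\bZ/2)$, so you cannot trade one for the other, and you need $u$ to cross $c$ an odd number of times. Your first normalization step --- isotoping $u$ into a fixed standard position and discarding null-homologous pieces --- throws away the very flexibility this requires, so it is premature. Excising along $S^1\times c$ splits $\Sigma$ along $c$, lowering the genus (or, in the singular case, separating marked points), and iterating reduces to the base cases. Your base cases are plausible, but knowing that the representation variety is a single nondegenerate point gives $\dim\II = 1$; it does not by itself locate that single generator in the top $\muu(\{t_*\}\times\Sigma)$--eigenspace, which is what the statement asserts, so an extra eigenvalue computation is still owed. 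The fallback Morse--Bott argument via parabolic moduli is too compressed to stand as a proof.
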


\subsection{Annular instanton Floer homology}

Recall that $A= [-1,1]\times S^1$ denotes the oriented annulus. Suppose $L$ is a link in the interior of $[-1,1]\times A$. The \emph{annular instanton homology} of $L$ was introduced in  \cite{AHI}, and it is defined by the following steps. 
\begin{enumerate}
	\item Fix an orientation on the standard $S^2$. Write $S^2$ as the union of two disks $D_1\cup D_2$ with $D_1\cap D_2\cong S^1$.
	\item Embed $[-1,1]\times [-1,1]$ into $D_1$ by a fixed orientation-preserving map.  
	\item View $L\subset [-1,1]\times A = [-1,1]\times [-1,1]\times S^1$ as a link in $D_1\times S^1$ via the above embedding.
	\item Take two points $p,q$ in $D_2$, and let $c$ be an arc connecting $p$ and $q$ in $D_2$.
	\item Define $\AHI(L)$ to be the singular instanton homology 
	$$\II\big(S^2\times S^1, L\cup (\{p,q\}\times S^1), c\times\{t_*\}\big).$$
\end{enumerate}

Let $\Sigma = S^2\times\{t_*\}\subset S^2\times S^1$ be endowed with the same orientation as the standard $S^2$. It was proved by 
\cite[Proposition 4.5]{AHI}
that the eigenvalues of $\muu(\Sigma)$ on $\AHI(L)$ are all integers. The decomposition of $\AHI(L)$ by the generalized eigenspaces of $\muu(\Sigma)$ defines a grading on $\AHI(L)$, which is called the f-grading. By Lemma \ref{lem_same_dim_opposite_eigenvalues} (and also by \cite[Proposition 4.5]{AHI}), for each $i\in \bZ$, the component of $\AHI(L)$ with f-degree $i$ has the same dimension as the component of $\AHI(L)$ with degree $-i$. 

Suppose $L_1$ and $L_2$ are two links in the interior of $[-1,1]\times A$ and $S$ is a link cobordism from $L_1$ to $L_2$, then $S$ defines a cobordism map from $\AHI(L_1)$ to $\AHI(L_2)$, and we denote the cobordism map by $\AHI(S)$. 

\subsection{Sutured instanton Floer homology for tangles}
We will also need the theory of sutured instanton Floer invariant for tangles which was introduced in \cite{XZ:excision}. Suppose $(M,\ga)$ is a balanced sutured manifold and $T$ is a balanced tangle in $(M,\ga)$, we will use $\SHI(M,\ga,T)$ to denote the sutured instanton Floer homology of $(M,\ga,T)$ as defined in \cite[Definition 7.6]{XZ:excision}. For the definition of balanced sutured manifolds and balanced tangles, we refer the reader to  \cite[Section 7]{XZ:excision}. When $T=\emptyset$, it is customary to denote $\SHI(M,\ga,\emptyset)$ as $\SHI(M,\ga)$. 

We remark here that $\SHI(M,\ga,T)$ is only defined to be an \emph{isomorphism class} of $\bC$--vector spaces instead of an actual vector space. The reader may refer to \cite{baldwin2015naturality} for a general discussion of the naturality problem of sutured homology.

We will be mostly interested in the case when $(M,\ga)=([-1,1]\times \Sigma,\{0\}\times \partial \Sigma)$, where $\Sigma$ is a compact oriented surface with boundary, and $T$ is an embedded link in the interior of $M$. In this case, there is an explicit description of $\SHI(M,\ga,T)$ given as follows. 
\begin{prop}
	\label{prop_SiHI_iso_SHI}
	Let $\Sigma$, $T$ be as above.
Let $F$ be a compact surface such that there is an orientation-reversing diffeomorphism $\tau:\partial F\to \partial \Sigma$. We also assume that $F$ does not have disk components. Let $R=F\cup_\tau \Sigma$, let $p$ be a point on $F$, let $c$ be an arbitrary simple closed curve on $R$ such that there exists a simple closed curve $c'\subset F$ intersecting $c$ transversely at one point. Then we have 
\begin{align}
&\SHI([-1,1]\times \Sigma,\{0\}\times \partial \Sigma,T)
	\label{eqn_SiHI_iso_SHI_0}
\\
\cong &\II(S^1\times R, T, S^1\times \{p\}|\{t_*\}\times R) 
	\label{eqn_SiHI_iso_SHI_1}
\\
\cong &\II(S^1\times R, T, \{t_*\}\times c|\{t_*\}\times R).
\label{eqn_SiHI_iso_SHI_2}
\end{align}
where $T$ is viewed as a subset of $S^1\times R$ via the embedding of $(-1,1)\times \Sigma$ to $S^1\times R$.
\end{prop}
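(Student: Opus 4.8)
The plan is to prove the two isomorphisms separately, the first by unwinding the definition of $\SHI$ for tangles on a particularly convenient closure, and the second by an excision argument.

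For $\eqref{eqn_SiHI_iso_SHI_0}\cong\eqref{eqn_SiHI_iso_SHI_1}$, I would specialize the construction of $\SHI(M,\ga,T)$ from \cite{XZ:excision} to the balanced sutured manifold $(M,\ga)=([-1,1]\times\Sigma,\{0\}\times\partial\Sigma)$, for which $R_+(\ga)\cong R_-(\ga)\cong \Sigma$. Taking $F$ as the auxiliary surface, the preclosure $([-1,1]\times\Sigma)\cup_{[-1,1]\times\partial\Sigma}([-1,1]\times F)$ is diffeomorphic (after smoothing corners) to $[-1,1]\times R$ with $\partial_\pm\cong R$, and closing it up by the identity map $\{1\}\times R\to\{-1\}\times R$ produces the closed triple $(S^1\times R,\,T,\,S^1\times\{p\})$ with distinguished surface $\{t_*\}\times R$, where $t_*$ is the image of $\pm1$ and $T$ lies in $(-1,1)\times\Sigma$ as required. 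One then checks that this is an \emph{admissible} closure (e.g.\ the torus $S^1\times c'$ is disjoint from $T$ and meets $S^1\times\{p\}$ in a single point when $p\notin c'$, hence is non-integral), verifies that the genus and intersection numbers are such that the generalized $\muu(\{t_*\}\times R)$-eigenspace with eigenvalue $2g(R)-2$ is the relevant one — this is where the hypothesis that $F$ has no disk components is used, guaranteeing $2g-2\ge0$ on each component of $R$ — and finally invokes the well-definedness of $\SHI$ (independence of the auxiliary surface, the gluing diffeomorphism, and the decoration), proved in \cite{XZ:excision}, to identify $\SHI([-1,1]\times\Sigma,\{0\}\times\partial\Sigma,T)$ with $\eqref{eqn_SiHI_iso_SHI_1}$. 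If the definition in \cite{XZ:excision} is normalized with a decoration other than the point-circle $S^1\times\{p\}$, one first reduces to this case by the same type of argument used below.

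For $\eqref{eqn_SiHI_iso_SHI_1}\cong\eqref{eqn_SiHI_iso_SHI_2}$ the point is to replace the ``vertical'' decoration $S^1\times\{p\}$ by the ``horizontal'' decoration $\{t_*\}\times c$ lying inside the distinguished slice. Since these curves are not homologous in $H_1(S^1\times R;\bZ/2)$ (indeed not even in $H_1(S^1\times R,T;\bZ/2)$), this does not follow from the bundle-change invariance of singular instanton homology, and it must instead be extracted from Floer's excision theorem (Proposition \ref{prop_excision}); this is precisely the kind of manipulation already performed in \cite{XZ:excision}, compare the computation in Proposition \ref{prop_product_space_homology_rank_1}, whose decoration $S^1\times\{q_i\}\cup\{t_*\}\times u$ freely mixes point-circles and in-slice curves. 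Concretely, using the dual curve $c'\subset F$ one sets up an excision along two parallel copies of the torus $S^1\times c'$ inside $S^1\times R$ — these are disjoint from $T$ and meet the relevant decoration in one point — and re-glues by a diffeomorphism supported near a neighborhood of $c'$; one arranges that the resulting admissible triple is again of the form $(S^1\times R,\,T,\,\cdot)$ with decoration a curve homologous to $\{t_*\}\times c$, with $T$ and the distinguished surface $\{t_*\}\times R$ untouched, and one uses Corollary \ref{cor_mu_map_eigenspace_preserved_by_cobordism} to see that the excision cobordism map carries the relevant $\muu(\{t_*\}\times R)$-eigenspace isomorphically to the relevant eigenspace on the other side.

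I expect the main obstacle to be the bookkeeping rather than any conceptual novelty: first, verifying that the genus and intersection-number conventions make the ``$|\{t_*\}\times R$'' eigenspaces on all three sides literally the ones output by $\SHI$ and by the excision cobordism (this is where one must be careful about sphere components of $R$ and about the normalization of $\SHI$ in \cite{XZ:excision, KM:suture}); and second, organizing the excision in the second step so that it genuinely takes place away from the region $(-1,1)\times\Sigma$ containing $T$, so that $T$ is transported by a product cobordism while only the decoration is moved into the distinguished slice. Once these points are settled, both isomorphisms are formal consequences of the invariance properties of $\SHI$ for tangles, the excision theorem, and the functoriality of the $\muu$-maps.
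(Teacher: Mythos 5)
Your two-step strategy—unwinding the definition of $\SHI$ for the isomorphism \eqref{eqn_SiHI_iso_SHI_0}$\cong$\eqref{eqn_SiHI_iso_SHI_1}, and an excision argument for \eqref{eqn_SiHI_iso_SHI_1}$\cong$\eqref{eqn_SiHI_iso_SHI_2}—is exactly the division of labor that the paper relies on when it cites \cite[Remark 7.8]{XZ:excision} for the former and the proof of \cite[Theorem 4.4]{KM:suture} for the latter, so the overall route is right. You also correctly identify that $[S^1\times\{p\}]$ and $[\{t_*\}\times c]$ differ in $H_1(S^1\times R;\bZ/2)$ and that this forces one to use excision rather than the mere ``same class of $\omega$'' invariance, which is a genuine and important observation.

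There is one concrete error in the admissibility check. You claim the torus $S^1\times c'$ ``meets $S^1\times\{p\}$ in a single point when $p\notin c'$, hence is non-integral.'' If $p\notin c'$ then $S^1\times c'$ and $S^1\times\{p\}$ are \emph{disjoint} (their intersection is $S^1\times(c'\cap\{p\})=\emptyset$), and if $p\in c'$ then $S^1\times\{p\}$ is contained in $S^1\times c'$, so in neither case do they intersect transversely at one point. The non-integral surface you want for the closure with decoration $S^1\times\{p\}$ is $\{t_*\}\times R$, which is disjoint from $T\subset(-1,1)\times\Sigma$ and meets $S^1\times\{p\}$ transversely at one point; the torus $S^1\times c'$ is the relevant non-integral surface for the closure with decoration $\{t_*\}\times c$. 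This does not affect the truth of the proposition, but the stated reason for admissibility is wrong as written. Your reading of the ``no disk components'' hypothesis is fine: with $\Sigma$ connected (which one may assume by taking tensor products over components) it does force $g(R)\ge1$, hence $2g(R)-2\ge0$. The excision you sketch for the second isomorphism is described at the level of ``which tori and why,'' which is about the same level of precision as the paper's citation of \cite[Theorem 4.4]{KM:suture}, so I would not count that as a gap, only as deferred bookkeeping.
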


Proposition \ref{prop_SiHI_iso_SHI} follows from standard excision arguments. 
We briefly review the proof here by listing a few references. When $F$ is connected, the fact that \eqref{eqn_SiHI_iso_SHI_1} and \eqref{eqn_SiHI_iso_SHI_2} are isomorphic and that their isomorphism class does not depend on the choice of $F$ follows verbatim from the proof of \cite[Theorem 4.4]{KM:suture}. The case when $F$ is disconnected follows from the discussion  in \cite[Theorem 4.4]{KM:suture}.
The fact that \eqref{eqn_SiHI_iso_SHI_1} and \eqref{eqn_SiHI_iso_SHI_2} are both isomorphic to \eqref{eqn_SiHI_iso_SHI_0} follows from \cite[Remark 7.8]{XZ:excision} when $L=\emptyset$, and the same argument can be extended verbatim to the case when $L$ is a link in the interior of $M$.

\section{Proof of Theorem \ref{thm_main_instanton}}
\label{sec_two_nonvanishing_grading}

The main purpose of this section is to establish Theorem \ref{thm_main_instanton}. The arguments in this section rely heavily on the techniques of sutured manifold decomposition. We will use the notation from \cite{Schar} for sutured manifolds and sutured hierarchies. 

\subsection{Product sutured manifold and gradings}\label{subsection: surfaces induce gradings on SHI}
Suppose $\Sigma$ is a connected oriented surface with boundary.
Let $(M,\ga)$ be the sutured manifold given by
$$(M,\ga)=([-1,1]\times \Sigma,\{0\}\times \partial \Sigma).$$
Suppose $L$ is a link in the interior of $M$. Suppose $\al$ is an oriented properly embedded $1$--submanifold of $\Sigma$. We construct a ``grading'' on $\SHI(M,\ga,L)$ associated to $\al$ as follows: Pick a connected oriented surface $F$ so that there is an orientation reversing diffeomorphism
$$\tau:\partial F\to \ga,$$
and assume that the genus of $F$ is positive.
Pick a point $p\in F$ and an oriented properly embedded $1$--submanifold $\al'$ on $F\backslash\{p\}$ so that $\tau$ restricts to an orientation reversing diffeomorphism
$$\tau:\partial \al'\to \partial\al.$$
To construct the grading, write $R=\Sigma\cup_\tau F$ and recall that by Proposition \ref{prop_SiHI_iso_SHI},
$$\SHI(M,\ga,L)\cong \II(S^1\times R, L, S^1\times \{p\}|\{t_*\}\times R).$$
Now we define
\begin{equation}\label{eq: grading associated to al}
	\SHI(M,\ga,L,\al,i)={\rm Eig}(\muu(S^1\times(\al\cup\al')),i)\cap \II(S^1\times R, L, S^1\times \{p\}|\{t_*\}\times R),
\end{equation}
where the intersection is taken as subspaces of $\II(S^1\times R, L, S^1\times \{p\})$,
and ${\rm Eig}(\muu(S^1\times(\al\cup\al'),i))$ denotes the generalized eigenspace of the map $\muu(S^1\times(\al\cup\al'))$ with eigenvalue $i$.
\blem\label{lem: grading is well defined}
The isomorphism class of $\SHI(M,\ga,L,\al,i)$ is independent of the choice of $F$ and $\al'$. 
\elem
\bpf
The proof that the isomorphism class of $\SHI(M,\ga,L,\al,i)$ remains the same when $F$ is changed to $F\# T^2$ follows from the same argument as \cite[Theorem 4.4]{KM:suture}. 

To show that it is independent of the choice of $\al'$, we first claim that if $\beta$ is a non-separating simple closed curve on $R$ such that $p\notin \beta$ and $(S^1\times \be) \cap L = \emptyset$, then the action of $\muu(S^1\times \be)$ on 
$$\II(S^1\times R, L, S^1\times \{p\}|\{t_*\}\times R)$$ 
is nilpotent. To prove the claim, let $\be'$ be a simple closed curve on $R$ intersecting $\be$ transversely at one point. Then by  the excision theorem (Proposition \ref{prop_excision}), we have 
\begin{align*}
&\II(S^1\times R, L, S^1\times \{p\}|\{t_*\}\times R)  
\\
\cong &\II(S^1\times R, L, S^1\times \{p\}\cup \{t_*\}\times \be'|\{t_*\}\times R) \otimes \II(S^1\times R, \emptyset, S^1\times \{p\}\cup \{t_*\}\times \be'|\{t_*\}\times R).
\end{align*}
The above isomorphism is defined by an excision cobordism map. 
By Proposition \ref{prop_eigenvalues_bounded_by_Thurston_norm}, the action of $\muu(S^1\times \be)$ is nilpotent on $\II(S^1\times R, L, S^1\times \{p\}\cup \{t_*\}\times \be'|\{t_*\}\times R)$ and $ \II(S^1\times R, \emptyset, S^1\times \{p\}\cup \{t_*\}\times \be'|\{t_*\}\times R)$. Hence by Property \ref{property_mu_map_intertwine_with_cobordism}, the action of $\muu(S^1\times \be)$ is also nilpotent on $\II(S^1\times R, L, S^1\times \{p\}|\{t_*\}\times R)$, and the claim is proved.

Now suppose $\al''$ is another properly embedded oriented manifold in $F$ such that  $\partial \al'' = \partial \al'$ as oriented manifolds. Then 
$$[S^1\times (\al\cup\al')]=[S^1\times (\al\cup\al'')]+[S^1\times(\al'\cup-\al'')]\in H_2(S^1\times R).$$
Note that $S^1\times(\al'\cup-\al'')$ is homologous to a union of tori of the form $\be_i\times S^1$, where each $\be_i$ is a non-separating simple closed curve on $R$ such that  $p\notin \beta_i\subset F$. By the previous discussion, we conclude that $\muu([S^1\times(\al'\cup-\al'')])$ is nilpotent. Therefore the desired result follows from Property \ref{property_linearity_and_commutativity_of_mu}.
\epf

By \cite[Lemma 3.16]{xie2020instantons} (see also Lemma \ref{lem_grading_SiHI_range} below), all eigenvalues of $\muu(S^1\times(\al\cup\al'))$ on $\II(S^1\times R, L, S^1\times \{p\}|\{t_*\}\times R)$ are integers. Therefore we have

\begin{lem}
	\label{lem_sum_gradings_SHI}
	$\dim \SHI(M,\ga,L) = \sum_{i\in \bZ} \dim \SHI(M,\ga,L,\al,i). $ \qed
\end{lem}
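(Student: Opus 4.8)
The plan is to reduce the identity to elementary linear algebra, the only nontrivial input being the integrality of eigenvalues that has just been recorded. First I would use Proposition~\ref{prop_SiHI_iso_SHI} to fix an isomorphism
$$\SHI(M,\ga,L)\cong W:=\II(S^1\times R, L, S^1\times \{p\}\,|\,\{t_*\}\times R),$$
and recall that, by definition, $W$ is the generalized eigenspace of $\muu(\{t_*\}\times R)$ with eigenvalue $2g(R)-2$ inside $\II^{(2)}(S^1\times R, L, S^1\times \{p\})$; in particular $W$ is a finite-dimensional subspace of the (finite-dimensional) instanton homology group $\II(S^1\times R, L, S^1\times \{p\})$.

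Next I would verify that the endomorphism $T:=\muu(S^1\times(\al\cup\al'))$ preserves $W$. The classes of $S^1\times(\al\cup\al')$, of $\{t_*\}\times R$, and of a point $p$ are all even-dimensional homology classes of $S^1\times R$, so Property~\ref{property_linearity_and_commutativity_of_mu} shows that $T$ commutes \emph{on the nose} with $\muu(\{t_*\}\times R)$ and with $\muu(\pt)$. Consequently $T$ carries each simultaneous generalized eigenspace of those two maps into itself, in particular $W$ into $W$. Applying the primary (Jordan) decomposition to the operator $T|_W$ on the finite-dimensional space $W$ then yields
$$W=\bigoplus_{\lambda\in\bC}\Big(\Eig\big(\muu(S^1\times(\al\cup\al')),\lambda\big)\cap W\Big),$$
where the sum runs over the finitely many eigenvalues $\lambda$ of $T|_W$, and each summand is, by the definition in \eqref{eq: grading associated to al}, exactly $\SHI(M,\ga,L,\al,\lambda)$.

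Finally, I would invoke \cite[Lemma 3.16]{xie2020instantons} (equivalently Lemma~\ref{lem_grading_SiHI_range} below), which says that every eigenvalue of $T$ on $W$ is an integer. Hence the displayed decomposition may be reindexed over $i\in\bZ$, with all but finitely many summands zero, giving $W=\bigoplus_{i\in\bZ}\SHI(M,\ga,L,\al,i)$. Taking dimensions and transporting along the isomorphism $\SHI(M,\ga,L)\cong W$ gives the claimed equality. I do not expect a real obstacle here; the only point needing a little care is that the eigenspace decomposition genuinely takes place inside $W$ rather than in the ambient instanton homology, and this is immediate from the commutation relations in Property~\ref{property_linearity_and_commutativity_of_mu}.
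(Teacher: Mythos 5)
Your proposal is correct and takes essentially the same route as the paper. The paper's proof is the one sentence preceding the lemma (citing \cite[Lemma 3.16]{xie2020instantons} for integrality of the eigenvalues, and leaving the Jordan decomposition implicit); you simply spell out the small point the paper leaves tacit, namely that $\muu(S^1\times(\al\cup\al'))$ commutes with $\muu(\{t_*\}\times R)$ and $\muu(\pt)$ by Property~\ref{property_linearity_and_commutativity_of_mu}, hence preserves the subspace $W$ so that the generalized eigenspace decomposition genuinely happens inside $W$.
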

We view the groups $\SHI(M,\ga,L,\al,i)$ ($i\in \bZ$) as a ``grading'' on $\SHI(M,\ga,L)$ in the sense that they define a partition of the dimension of $\SHI(M,\ga,L)$.

\bdefn
\label{def_groomed}
We say an oriented properly embedded $1$-submanifold $\al\subset\Sigma$ is {\it groomed} if it satisfies either one of the following two conditions.
\begin{enumerate}
	\item [(1)] $\al$ has no closed component, and for every component $\sigma$ of $\partial \Sigma$, all intersections between $\al$ and $\sigma$ are of the same sign.
	\item [(2)] Every component of $\al$ is closed, and $\al$ consists of parallel, coherently oriented non-separating simple closed curves.
\end{enumerate}
\edefn

\brem
The term \emph{groomed} first appeared in \cite{G:Sut-2} to describe surfaces in $(M,\ga)$. The $1$--manifold $\al$ is groomed in the sense of Definition \ref{def_groomed} if and only if $[-1,1]\times \al$ is a groomed surface in $([-1,1]\times \Sigma,\{0\}\times \partial \Sigma)$ in the sense of \cite{G:Sut-2}. 
\erem

The main result of this section is the following. 
\bprop\label{prop: two non-zero gradings}
Suppose $\Sigma$ is a compact connected oriented surface with non-empty boundary, and suppose  $\al\subset \Sigma$ is groomed. Let
$$(M,\ga)=([-1,1]\times \Sigma,\{0\}\times \partial \Sigma).$$
Suppose $L\subset {\rm int}(M)$ is a non-empty link so that 
\begin{enumerate}
	\item $(M,\ga)$ is $L$--taut, 
	\item every product annulus in $M\backslash L$ is trivial (cf. \cite[Definition 4.1]{Schar}), 
	\item  every product disk in $M\backslash L$ is boundary parallel (cf. \cite[Definition 4.1]{Schar}),
\end{enumerate}
 then there are integers $i_+\neq i_-$ so that 
$$\SHI(M,\ga,L,\al,i_+)\neq0{\rm~and~}\SHI(M,\ga,L,\al,i_-)\neq0.$$
\eprop
\bpf
If $L$ can be isotoped to a disjoint union of non-empty links $L_1\subset (0,1)\times \Sigma$ and  $L_2\subset (-1,0)\times \Sigma$, then 
$(M,\ga)$ is both $L_1$--taut and $L_2$--taut. By \cite[Theorem 7.12]{XZ:excision}, $\SHI(M,\ga,L_1)$ and $\SHI(M,\ga,L_2)$ are both non-vanishing. 
Hence by the excision theorem and Property \ref{property_mu_map_intertwine_with_cobordism}, one only needs to prove the desired property for $L_1$ and $L_2$. As a result, we may assume without loss of generality that $L$ cannot be isotoped to a disjoint union of $L_1$ and $L_2$ as above. This is equivalent to the statement that there is no non-trivial horizontal surfaces in $(M,\ga)$ disjoint from $L$ (cf. \cite[Definition 2.17]{juhasz2010polytope}).

Let $S_{\al}=[-1,1]\times \al$. Let $N\subset \partial M$ be the union of $[-1,1]\times \partial \Sigma$ and a small tubular neighborhood of $\partial S_\al$.
Applying \cite[Theorem 2.5]{Schar} to $S_\al$ itself and to $S_\al$ with the opposite orientation, we obtain two surfaces $S_+$ and $S_-$ inside $M$ so that the following conditions hold:
\begin{itemize}
	\item [(i)] $\partial S_\pm \subset N$, the intersection $\partial S_\pm\cap [-1,1]\times \partial \Sigma$ only contains essential circles and essential arcs in $[-1,1]\times \partial \Sigma$, and $\partial S_\pm \cap R(\ga) = (\pm S_\al) \cap R(\ga)$. 
	\item [(ii)] there exist positive integers $k_{\pm}$ so that  
	\begin{equation}
		\label{eqn_homology_S_pm}
	 [S_{\pm},\partial S_{\pm}]=\pm[S_{\al},\partial S_{\al}]+k_{\pm}\cdot [R_+(\al),\partial R_+(\al)]\in H_2(M,N),
	\end{equation}
	\item [(iii)] $S_+\cap R_{\pm}(\ga)=-S_-\cap R_{\pm}(\ga)=\{\pm1\}\times\al.$
	\item [(iv)] There are taut sutured manifold decompositions
	$$(M,\ga,L)\stackrel{S_+}{\leadsto}(M_+,\ga_+,T_+)~{\rm and~}(M,\ga,L)\stackrel{S_-}{\leadsto}(M_-,\ga_-,T_-).$$
\end{itemize}

\brem
In the original statement of \cite[Theorem 2.5]{Schar}, Equation \eqref{eqn_homology_S_pm} was replaced with the weaker statement
$$[S_{\pm},\partial S_{\pm}]=\pm[S_{\al},\partial S_{\al}]+k_{\pm}\cdot [R_+(\al),\partial R_+(\al)]\in H_2(M,\partial M),$$
yet the proof in \cite{Schar} actually constructed surfaces $S_{\pm}$ satisfying the stronger property. This observation was also utilized in the proof of \cite[Theorem 6.1]{juhasz2010polytope}.
\erem



Note the arc components of $S_+\cap [-1,1]\times \partial\Sigma$ come from the intersection points in $\partial\al\cap \partial\Sigma$. The assumption that $\al$ is groomed then implies that all arc intersections of $S_+$ with a given component of $[-1,1]\times\partial\Sigma$ are parallel and coherently oriented. For closed components of $S_+\cap [-1,1]\times\partial \Sigma$, if two such components are oriented oppositely, we can glue an annulus to them to remove both components without affecting the properties listed above. As a result, we can assume that for any component $\sigma$ of $\partial \Sigma$, the intersection $S_+\cap[-1,1]\times\sigma$ consists of parallel and coherently oriented essential simple closed curves or arcs. The same statement applies to $S_-$ as well. By \eqref{eqn_homology_S_pm}, we have 
$$[\partial S_{\pm}]=[\pm\partial S_{\al}]+k_{\pm}\cdot [\ga]\in H_1(N).$$
By Condition (i) above, we have $\partial S_{\pm}\cap R(\ga)=\{-1,1\}\times (\pm\al)$, so $\partial S_{\pm}$ can be obtained from the union of $\pm\partial S_{\al}$ and $k_{\pm}$ copies of $\ga$ by oriented resolutions on their intersections.

Suppose $F$ is an auxiliary surface for $(M,\ga)$ and $\al'\subset F$ is chosen as in the construction of the grading associated to $\al$. Write
$$R=\Sigma\cup F~{\rm and}~\widetilde{S}_{\al}=[-1,1]\times(\al\cup\al').$$

 We extend $S_{\pm}$ to closed surfaces inside $S^1\times R$ as follows. Inside $[-1,1]\times F$, we perform a cut and paste surgery on $k_{\pm}$ parallel copies of $F$ and $[-1,1]\times\al'$, i.e., we cut these surfaces open along intersections and reglue in an orientation preserving way to obtain a properly embedded surface. Then we glue the resulting surface to $S_{\pm}$ via the identification of
 $[-1,1]\times\partial \Sigma$ and $[-1,1]\times F.$
 Let $\widetilde{S}_{\pm}$ be the resulting properly embedded surface inside $[-1,1]\times R$. Then we have
 $$\partial \widetilde{S}_{\pm}\cap\{-1,1\}\times R=\pm\widetilde{S}_{\al}\cap\{-1,1\}\times R=\{-1,1\}\times(\al\cup\al').$$
 As a result, when we glue $\{1\}\times R$ to $\{-1\}\times R$, the surfaces $\widetilde{S}_{\pm}$ are glued to become closed surfaces $\widebar{S}_{\pm}$ inside $S^1\times R$. 
 Let $\widebar{S}_\al = S^1\times (\al\cup \al').$
 Condition (ii) for $S_{\pm}$ and our construction of $\widebar{S}_{\pm}$ then imply that 
\begin{equation}\label{eq: relation between S_pm and S_al}
	[\widebar{S}_{\pm}]=\pm[\widebar{S}_{\al}]+k_{\pm}\cdot [R]\in H_2(S^1\times R).
\end{equation}

Recall that by definition, the action of $\muu(\{t_*\}\times R)$ on $\II(S^1\times R,L,S^1\times\{p\}|\{t_*\}\times R)$ has only one eigenvalue $2g(R)-2$. By Condition (iv) above and \cite[Theorem 2.14]{LXZ-unknot}, we have
\beq
0&\neq \SHI(M_{\pm},\ga_{\pm},T_{\pm})\\
&\cong \II(S^1\times R,L,S^1\times\{p\}|\{t_*\}\times R)\cap\Eig(\mu^{orb}(\widebar{S}_{\pm}),|S_{\pm}\cap L|-\chi(\widebar{S}_{\pm}))\\
&\cong \II(S^1\times R,L,S^1\times\{p\}\{t_*\}\times R)\cap\Eig(\mu^{orb}(\widebar{S}_{\al}),|S_{\pm}\cap L|-\chi(\widebar{S}_{\pm})+k_{\pm}\cdot \chi(R))\\
&=\SHI\bigg(M,\ga,L,\al,\pm\big(|S_{\pm}\cap L|-\chi(\widebar{S}_{\pm})+k_{\pm}\cdot \chi(R)\big)\bigg).
\eeq
Hence if we take
$$i_{\pm}=\pm\big(|S_{\pm}\cap L|-\chi(\widebar{S}_{\pm})+k_{\pm}\cdot \chi(R)\big),$$
then 
$$\SHI(M,\ga,L,\al,i_{\pm})\neq0.$$
It remains to show that $i_+\neq i_-$. If $i_+=i_-$, then we have
\begin{align}
&(|S_{+}\cap L|-\chi(\widebar{S}_{\pm})+k_+\cdot \chi(R))=-(|S_{-}\cap L|-\chi(\widebar{S}_{-})+k_-\cdot \chi(R))
\nonumber \\
\Rightarrow~&|S_+\cap L|+|S_-\cap L|-\chi(\widebar{S}_{+})-\chi(\widebar{S}_{-})=-(k_++k_-)\cdot \chi(R)
\nonumber \\
\Rightarrow~&|S_+\cap L|+|S_-\cap L|-\chi({S}_{+})-\chi({S}_{-})+|S_{+}\cap \ga|=-(k_++k_-)\cdot \chi(R_+(\ga)).
\label{eqn_i+=i-_implied}
\end{align}
Where in the last step, we used the construction of $\widebar{S}_{\pm}$ to deduce that
$$\chi(\widebar{S}_{\pm})=\chi(S_{\pm})-\frac{1}{2}|S_{\pm}\cap \ga|+k_{\pm}\cdot \chi(F);$$
also by construction, we have $|S_+\cap\ga|=|S_-\cap\ga|$. Equation \eqref{eqn_i+=i-_implied} directly contradicts Lemma \ref{lem: juhasz's ineq} below. Hence the proof is finished.
\epf

We postpone the proof of Lemma \ref{lem: juhasz's ineq} to Section \ref{subsec: key inequality}. In the next section, we use Proposition \ref{prop: two non-zero gradings} to prove Theorem \ref{thm_main_instanton}.

\subsection{Proof of Theorem \ref{thm_main_instanton} assuming Proposition \ref{prop: two non-zero gradings}}

\bprop
\label{prop_SHI_rank_greater_than_2_if_no_trivial_product}
Suppose $\Sigma$ is a compact connected oriented surface with non-empty boundary. Assume $\Sigma$ satisfies at least one of the following two conditions:
\begin{enumerate}
	\item $\Sigma$ has at least three boundary components.
	\item $\Sigma$ has genus at least one.
\end{enumerate} 
Let $(M,\ga)=([-1,1]\times \Sigma,\{0\}\times \partial \Sigma).$
 Suppose $L\subset {\rm int}(M)$ is a link so that $(M,\ga)$ is $L$--taut, every product annulus in $M\backslash L$ is trivial, and every product disk in $M\backslash L$ is boundary parallel.
Then $$\dim \SHI(M,\ga,L)>2.$$
\eprop

\bpf
Assume  $\dim \SHI(M,\ga,L)\le 2$. Take two oriented $1$--manifolds $\al_1$ and $\al_2$ of $\Sigma$ as follows:
\begin{enumerate}
	\item If $\Sigma$ has at least three boundary components, let $\al_1$ and $\al_2$ be  two properly embedded arcs so that $\al_1\cap\al_2=\emptyset$ and $\partial \al_1\cup\partial \al_2$ intersects at least three boundary components of $\partial\Sigma$.
	\item Otherwise, $\Sigma$ has genus at least one, and we let $\al_1$ and $\al_2$ be a pair of simple closed curves such that $\al_1$ and $\al_2$ intersect transversely at one point.
\end{enumerate}
By construction, $\al_1$ and $\al_2$ are both groomed. Then by Proposition \ref{prop: two non-zero gradings}, for $j=1,2$, there are distinct integers $i_{j,+}\neq i_{j,-}$ so that 
$$\SHI(M,\ga,L,\al_j,i_{j,+})\neq0{\rm~and~}\SHI(M,\ga,L,\al_j,i_{j,-})\neq0.$$

Note that we can choose suitable auxiliary data so that $S_{\al,j}=[-1,1]\times \al_j$ extends to closed surfaces $\widebar{S}_{\al,j}$ inside the same closure $Y=S^1\times (\Sigma\cup F)$ for both $j$. Since the actions $\muu(\widebar{S}_{\al,1})$ and $\muu(\widebar{S}_{\al,2})$ commute with each other, after changing the orders of $i_{j,+}$ and $i_{j,-}$ if necessary, we may assume that
$$\SHI(M,\ga,L,(\al_1,\al_2),(i_{1,+},i_{2,+}))\cong \SHI(M,\ga,L,(\al_1,\al_2),(i_{1,-},i_{2,-}))\cong\mathbb{C},$$
where $\SHI(M,\ga,L,(\al_1,\al_2),(\lambda_1,\lambda_2))$ denotes the intersection of 
$$\SHI(M,\ga,L,\al_1,\lambda_1) ~\text{and}~ \SHI(M,\ga,L,\al_2,\lambda_2)$$
 as subspaces of $\II(S^1\times (\Sigma\cup F), L, S^1\times \{p\}|\{t_*\}\times (\Sigma\cup F))$.

Pick co-prime integers $r$ and $s$ so that
$$r\,( i_{1,+}- i_{1,-})=s\,( i_{2,-}- i_{2,+}).$$
If $r>0$, let $r\cdot \al_1$ be $r$ parallel copies of $\al_1$; if $r<0$, let $r\cdot \al_1$ be $|r|$ copies of $-\al_1$. Define $s\cdot \al_2$ similarly. 
If $\al_1$ and $\al_2$ are arcs, let $\al_3$ be the union of $r\cdot \al_1$ and $s\cdot \al_2$.
If $\al_1$ and $\al_2$ are simple closed curves, let $\al_3$ be the union of $r\cdot \al_1$ and $s\cdot \al_2$ with singularities resolved at all intersections.
Then
\begin{equation}
	\label{eq: only one grading}
	\SHI(M,\ga,L,\al_3,i_3)\cong\mathbb{C}^2\cong \SHI(M,\ga,L).
\end{equation}

Note that if $\al_1$ and $\al_2$ are simple closed curves that intersect transversely at one point, then $\al_3$ is a non-separating simple closed curve on $\Sigma$, which is groomed. If $\al_1$, $\al_2$ are arcs such that  $\partial \al_1\cup \partial \al_2$ intersects each component of $\partial \Sigma$ at the same sign, then $\al_1\cup \al_2$ is also groomed.

If $\al_3$ is groomed, then \eqref{eq: only one grading} and the assumption that $\dim \SHI(M,\ga,L)\le 2$ contradicts Proposition \ref{prop: two non-zero gradings}.
 
If $\al_3$ is not groomed, then both $\al_1$ and $\al_2$ are arcs. Let $\al'_1={\rm sign}(r)\cdot \al_1$, $\al'_2={\rm sign}(s)\cdot \al_2$. Then there is a unique component $\sigma$ of $\partial \Sigma$ that contains two points in $\partial \al_1\cup \partial \al_2$, and the intersections $\sigma \cap \al_1'$ and $\sigma\cap \al_2'$ have opposite signs. 

Suppose $q_1$ and $q_2$ are two intersections of $\sigma$ and $\al_3$ with opposite signs, and assume that $q_1$ and $q_2$ are adjacent on $\sigma$. Let $\delta$ be the part of $\sigma$ between $q_1$ and $q_2$ that does not contain any other intersections of $\sigma$ with $\al_3$, We can glue $\delta$ to $\al_3$ and push it into the interior of $\Sigma$ to get a new collection of arcs $\al_4$ as in Figure \ref{fig: alpha} so that
$\partial \al_3=\partial \al_4\cup\{q_1,q_2\}.$

\begin{figure}[h]
	\begin{overpic}[width=0.6\textwidth]{./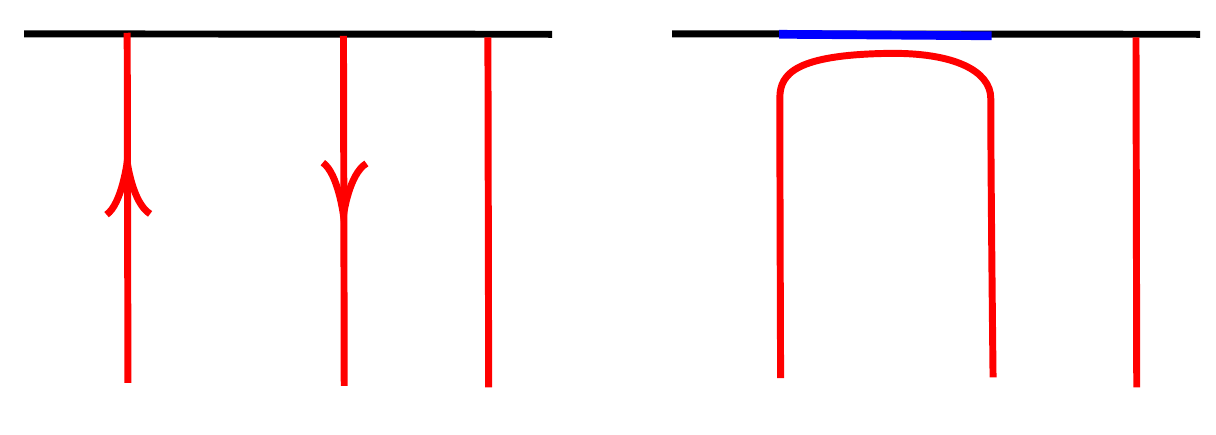}
		\put(26,1){$\al_3$}
		\put(48,32){$\partial \Sigma$}
		\put(10,36){$q_1$}
		\put(27,36){$q_2$}
		\put(63,36){$q_1$}
		\put(80,36){$q_2$}
		\put(72,35){$\delta$}
		\put(80,1){$\al_4$}
	\end{overpic}
	\vspace{\baselineskip}
	\caption{Modify $\al_3$ to obtain $\al_4$}\label{fig: alpha}
\end{figure}

Now we show that $\al_3$ and $\al_4$ induce the same grading on $\SHI(M,\ga,L)$. Pick an auxiliary surface $F$ for $(M,\ga)$ and pick a collection of arcs $\al_3'$ on $F$ so that $\partial \al_3'$ are glued to $\partial \al_3$. By Lemma \ref{lem: grading is well defined}, we can choose $\al_3'$ arbitrarily.  Choose $\al_3'$ so that one component of $\al_3'$ connects $q_1$ and $q_2$ and is boundary parallel to $\delta$ in $F$. Now  take $\al_4'=\al_3'\backslash\beta$. Using $\al_3'$ and $\al_4'$, we  extend $[-1,1]\times \al_3$ and $[-1,1]\times \al_4$ to closed surfaces $S^1\times (\al_3\cup\al_3')$ and $S^1\times (\al_4\cup\al_4')$ inside the closure $S^1\times (\Sigma\cup F)$ of $(M,\ga,L)$. It is clear that
$$[S^1\times (\al_3\cup\al_3')]=[S^1\times (\al_4\cup\al_4')]\in H_2(S^1\times (\Sigma\cup F)),$$
so the gradings associated to $\al_3$ and $\al_4$ are the same.

This process can be repeated until we obtain a groomed $1$--manifold that induces the same grading on $\SHI(M,\ga,L)$ as $\al_3$. By Lemma \ref{lem_sum_gradings_SHI}, Proposition \ref{prop: two non-zero gradings}, and (\ref{eq: only one grading}), this yields a contradiction. 
\epf

\begin{cor}\label{cor_contained_in_annulus}
\label{cor: theorem 1.3}	
Suppose $\Sigma$ is a connected compact oriented surface with non-empty boundary.
Let $(M,\ga) = ([-1,1]\times \Sigma,\{0\}\times \Sigma)$
	and suppose $\dim \SHI(M,\ga,L) \le 2$ for some link $L\subset {\rm int} (M)$. Then there exists an embedded annulus $N\subset \Sigma$ such that $L$ can be isotoped into $[-1,1]\times N$. 
\end{cor}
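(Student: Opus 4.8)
The plan is to reduce to Proposition \ref{prop_SHI_rank_greater_than_2_if_no_trivial_product} by a sutured-manifold-theoretic analysis of the link complement. First I would dispose of the easy case: if $\Sigma$ is a disk or an annulus, there is nothing to prove (take $N=\Sigma$ in the annulus case, and a disk sits inside an annulus). So assume $\Sigma$ has genus zero with at least three boundary components, or genus at least one — precisely the hypotheses of Proposition \ref{prop_SHI_rank_greater_than_2_if_no_trivial_product}. The contrapositive of that proposition says: if $\dim\SHI(M,\ga,L)\le 2$ then either $(M,\ga)$ is not $L$--taut, or $M\setminus L$ contains a non-trivial product annulus, or $M\setminus L$ contains a product disk that is not boundary parallel. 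Each of these structural features should let me simplify the pair $(M,L)$, and I want to argue by induction on some complexity (e.g.\ the number of intersections of $L$ with a spanning system of disks/arcs for $\Sigma$, or the genus-plus-boundary complexity of $\Sigma$ together with the crossing number of $L$).

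The key steps, in order, are as follows. (1) \textbf{Non-tautness.} If $(M,\ga)$ fails to be $L$--taut, then since $(M,\ga)$ itself is taut, the obstruction must come from $L$: either $L$ has a split unknotted component bounding a disk in $M\setminus(\text{rest of }L)$, or a component of $L$ is isotopic to a closed curve in $\{0\}\times\Sigma$ that is null-homotopic or inessential in a way producing a compressible complement, or $L$ lies in a ball. In each sub-case I remove the offending component (it contributes a tensor factor of $\bC^2$ to $\SHI$ by the Künneth-type formula, so the rest still has $\dim\le 1$, forcing it to be empty or again splittable) and recurse. (2) \textbf{Product annulus.} A non-trivial product annulus $P$ in $M\setminus L$ has $\partial P\subset [-1,1]\times\partial\Sigma$ consisting of two essential arcs or circles; decomposing $(M,\ga,L)$ along $P$ (or cutting $\Sigma$ along the core arc/circle $P\cap(\{0\}\times\Sigma)$) produces a sutured manifold of the same product form $[-1,1]\times\Sigma'$ with $\Sigma'$ of strictly smaller complexity, and by the behavior of $\SHI$ under such a decomposition — this is where I would invoke the sutured decomposition results cited in Section \ref{sec_two_nonvanishing_grading}, e.g.\ via Scharlemann's theory — the dimension does not increase. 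Applying the inductive hypothesis to $\Sigma'$ yields an annulus in $\Sigma'$ containing $L$, which lifts to an annulus in $\Sigma$. (3) \textbf{Product disk.} A product disk $Q$ that is not boundary parallel likewise allows a decomposition reducing $\Sigma$ by a band, and I treat it the same way. (4) \textbf{Termination.} The induction bottoms out when $\Sigma$ is a disk or annulus; there one checks directly that $L$ lies in $[-1,1]\times N$ for an annulus $N$ — for the disk this is vacuous, and for the annulus $N=\Sigma$ works. Throughout, I must keep track that after each decomposition the new link $L'$ in the smaller product still satisfies $\dim\SHI\le 2$, which is exactly what the monotonicity of $\SHI$ under taut decompositions gives.

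The main obstacle I expect is step (2)–(3): making the sutured decomposition along a product annulus or product disk land back inside the \emph{class of product sutured manifolds} $([-1,1]\times\Sigma',\{0\}\times\partial\Sigma')$ rather than some more general balanced sutured manifold, and simultaneously controlling what happens to $L$ and to $\dim\SHI$. Cutting $\Sigma$ along an essential arc with both endpoints on $\partial\Sigma$ does produce another planar-or-lower-genus surface, but one has to verify that the product annulus/disk can always be chosen "vertical" (of the form $[-1,1]\times(\text{arc or curve in }\Sigma)$) up to isotopy of $L$ — this should follow from an innermost/outermost argument together with the hypothesis, already extracted from Proposition \ref{prop_SHI_rank_greater_than_2_if_no_trivial_product}, that \emph{every} product annulus is trivial and \emph{every} product disk is boundary parallel once we are in the base case, so the only non-trivial ones occurring force genuine complexity reduction. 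A secondary subtlety is the naturality caveat for $\SHI$ (it is only an isomorphism class of vector spaces), but since the statement only concerns the dimension, this causes no trouble.
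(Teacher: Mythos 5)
Your outline is in the right spirit --- reduce along product annuli and product disks, stay in the product class, and apply Proposition~\ref{prop_SHI_rank_greater_than_2_if_no_trivial_product} --- and this is the route the paper takes. But there are genuine gaps in how you would execute it, and a couple of places where you propose something more complicated than what works.

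First, you never address disconnection. A non-trivial product annulus can be non-separating (or can be separating but cut off a piece that still contains part of $L$), so after cutting $(M,\ga)$ along annuli/disks you may end up with several product pieces, each potentially containing a sub-link, and your inductive step only accounts for a single smaller $\Sigma'$. The paper closes this using Proposition~\ref{prop: two non-zero gradings}: if two pieces each carried a nonempty sub-link, each contributes a tensor factor with two distinct nonzero gradings, forcing $\dim\SHI\ge 4>2$. Without this step the induction does not go through.

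Second, your treatment of non-tautness is both vague and unnecessary. You propose a case analysis of how $L$-tautness can fail (split components, null-homotopic components, etc.) and a ``remove and recurse'' step, but you do not make it precise, and it is not the right move. The paper instead observes that $(M,\ga)$ is automatically $L$-taut whenever $L$ is not contained in a $3$-ball (since $M$ is a handlebody, $R(\ga)$ is incompressible and norm-minimizing, and these properties pass to the $L$-versions), and if $L$ \emph{is} in a $3$-ball the conclusion is trivial. Your attempt to run the contrapositive of Proposition~\ref{prop_SHI_rank_greater_than_2_if_no_trivial_product} case by case conflates ``not $L$-taut'' with structural features of $L$ that you have not established.

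Third, you hedge on the dimension bookkeeping by appealing to ``monotonicity of $\SHI$ under taut decompositions.'' The paper has a cleaner fact available: cutting a product sutured manifold along a product annulus or product disk does not change the closure at all, so $\dim\SHI(M',\ga',L)=\dim\SHI(M,\ga,L)$ \emph{exactly}, not merely $\le$. You should not need a general decomposition inequality here.

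Finally, you correctly flag the key technical lemma you would need --- that a product annulus or product disk in $([-1,1]\times\Sigma,\{0\}\times\partial\Sigma)$ can be isotoped to a vertical one $[-1,1]\times\beta$, so that the decomposed piece is again a product $[-1,1]\times\Sigma'$ --- but you leave it as an ``obstacle.'' This is precisely Lemma~\ref{lem_cutting_open_along_product_annulus_and_disk} in the paper, proved by an innermost-circle argument plus the Sch\"onflies theorem (for disks) and an additional Dehn's-lemma argument (for annuli). It is not automatic and needs the proof. Termination of the process is also not something to wave at ``some complexity''; the paper cites \cite[Lemma~2.16]{juhasz2010polytope}.
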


\begin{proof}
	The statement is obvious if $L$ is contained in an embedded $3$--ball in $M$. From now, we assume that $L$ is not contained in a $3$--ball, therefore $M$ is $L$--irreducible. Since $R(\ga)$ is incompressible and norm-minimizing in $(M,\ga)$, it is also $L$--incompressible and $L$--norm minimizing.
	Therefore $(M,\ga)$ is $L$--taut.
	
	If there is a non-trivial product annulus disjoint from $L$ or a product disk that is not boundary-parallel in $M\backslash L$, we may cut open $M$ along the annulus or disk and obtain another balanced sutured manifold $(M',\ga')$. We know that $(M',\ga')$ is also $L$--taut by \cite[Lemma 4.2]{Schar}. By Lemma \ref{lem_cutting_open_along_product_annulus_and_disk} below, $(M',\ga')$ is also a product sutured manifold. We have $$\dim \SHI(M',\ga',L) = \dim \SHI(M,\ga,L) $$
	because both sides of the equation can be defined using the same closure.

	 Repeat this process until it cannot be continued; by \cite[Lemma 2.16]{juhasz2010polytope}, this process ends after finitely many steps. By Proposition \ref{prop: two non-zero gradings} and the assumption that $\dim \SHI(M,\ga,L) \le  2$, there is at most one component in the final stage that contains a non-empty subset of $L$.
	  Let 
	  $$(\hat M,\hat \ga)= ([-1,1]\times \hat \Sigma, \{0\}\times \hat \Sigma)$$ be the connected component in the final stage that contains $L$.  By Proposition \ref{prop_SHI_rank_greater_than_2_if_no_trivial_product}, $\hat \Sigma$ has genus-zero and at most two boundary components. This implies that $\hat \Sigma$ is a disk or an annulus, therefore  the desired result is proved.
\end{proof}

\begin{lem}
	\label{lem_cutting_open_along_product_annulus_and_disk}
	Suppose $\Sigma$ is a compact surface with non-empty boundary. 
	\begin{enumerate}
		\item If $S$ is a properly embedded annulus in $[-1,1]\times \Sigma$ such that $S\cap \{\pm 1\}\times \Sigma$ is an embedded non-contractible circle on $\{\pm 1\}\times \Sigma$, then $(S,\partial S)$ is isotopic in $([-1,1]\times \Sigma, \{-1,1\}\times \Sigma)$ to an annulus of the form $[-1,1]\times \beta$, where $\beta$ is a non-contractible circle on $\Sigma$.
		\item If $S$ is a properly embedded disk in $[-1,1]\times \Sigma$ such that $S\cap \{\pm 1\}\times \Sigma$ is an embedded arc on $\{\pm 1\}\times \Sigma$, then $(S,\partial S)$ is isotopic in $([-1,1]\times \Sigma, \{-1,1\}\times \Sigma)$ to a disk of the form $[-1,1]\times \beta$, where $\beta$ is an arc on $\Sigma$.
	\end{enumerate}
\end{lem}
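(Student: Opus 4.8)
The plan is to treat both parts uniformly using the structure of the $[-1,1]$-direction. First I would observe that an annulus (resp. disk) $S$ as in the statement is, in particular, a $\pi_1$-injective properly embedded surface with two boundary components (resp. one), each lying on a different copy $\{\pm1\}\times\Sigma$ for part (1), and with its single boundary arc having endpoints and interior distributed on $\{-1,1\}\times\Sigma$ for part (2). The key tool is the projection $\pi\colon[-1,1]\times\Sigma\to[-1,1]$: after a small isotopy I would arrange $\pi|_S$ to be Morse, and then argue that the critical points can be cancelled. The model to imitate is the standard fact that an incompressible, $\partial$-incompressible surface in a product $I\times\Sigma$ is isotopic to a vertical surface $I\times\beta$ (see e.g.\ Waldhausen-type arguments). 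So the first step is to record that $S$ is incompressible and boundary-incompressible in $([-1,1]\times\Sigma,\{-1,1\}\times\Sigma)$: incompressibility because $S$ is an annulus or disk meeting the boundary in essential curves/arcs, and any compressing disk would force that boundary curve to bound in $\{\pm1\}\times\Sigma$, contradicting non-contractibility; boundary-incompressibility follows similarly, using that $\pi_1(\Sigma)$ injects and that a $\partial$-compression would again trivialize the boundary data.

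Second, I would run the innermost-disk / outermost-arc argument to eliminate intersections of $S$ with the vertical annuli $[-1,1]\times(\partial\Sigma)$ and with vertical slices $\{t\}\times\Sigma$ for generic $t$. Concretely: a generic level set $S\cap(\{t\}\times\Sigma)$ is a $1$-manifold; a circle component bounding a disk in $\{t\}\times\Sigma$ also bounds a disk in $S$ (by incompressibility), and an innermost such can be used to isotope $S$ to reduce the number of saddles of $\pi|_S$; arcs are handled with $\partial$-incompressibility. After finitely many such reductions, $\pi|_S$ has no critical points in the interior, so $S$ is a union of vertical lines, i.e.\ $S=[-1,1]\times\beta$ for an embedded $1$-manifold $\beta\subset\Sigma$. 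Since $S$ is connected, $\beta$ is a single circle (part (1)) or arc (part (2)); since $S\cap\{\pm1\}\times\Sigma$ is non-contractible (resp.\ an essential arc), $\beta$ is non-contractible (resp.\ essential as an arc), which is the claim. I would also note that the isotopy produced is ambient and supported in $[-1,1]\times\Sigma$ rel a neighborhood of nothing in particular, hence is an isotopy of pairs $([-1,1]\times\Sigma,\{-1,1\}\times\Sigma)$ as required.

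The main obstacle I anticipate is bookkeeping in the level-set argument: ensuring that each innermost-disk or outermost-arc surgery genuinely \emph{decreases} a well-chosen complexity (e.g.\ the number of critical points of $\pi|_S$, or the number of intersection circles/arcs with the vertical boundary), and that the surgery can be realized by an ambient isotopy rather than merely a homotopy of $S$. A secondary subtlety, specific to part (2), is handling the boundary arc: its two endpoints lie on $\{-1\}\times\Sigma$ and $\{1\}\times\Sigma$ while the rest of $\partial S$ runs along $[-1,1]\times\partial\Sigma$ or along $\{\pm1\}\times\partial\Sigma$, so one must keep track of the corner structure and confirm that $\beta$ emerges as a properly embedded arc in $\Sigma$ with $\partial\beta\subset\partial\Sigma$. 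None of this is conceptually hard — it is the classical theory of surfaces in product $3$-manifolds — so I would keep the written proof short, citing the standard incompressible-surface machinery and only spelling out the complexity that decreases.
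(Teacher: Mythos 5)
Your plan takes a genuinely different route from the paper's. The paper first normalizes $\partial S$, then repeatedly isotopes $S$ off vertical disks $[-1,1]\times\gamma$ for essential arcs $\gamma\subset\Sigma$ disjoint from $\beta_\pm$ (all intersection circles are contained in a disk, hence inessential, so a standard innermost-disk argument applies); this reduces Part~(1) to the case where $\Sigma$ is an annulus and $\beta_\pm$ is its core, after which the paper invokes the Sch\"onflies theorem and a Dehn's-lemma argument to show the tangle $[-1,1]\times\{\text{center}\}$ is trivial, concluding that $S$ is parallel to a vertical boundary annulus. Part~(2) reduces to $\Sigma$ a disk and then Sch\"onflies directly. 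You instead propose a Morse-theoretic argument on the projection $\pi\colon[-1,1]\times\Sigma\to[-1,1]$, eliminating critical points of $\pi|_S$ by innermost-disk and outermost-arc moves on the level sets $S\cap(\{t\}\times\Sigma)$. Both are classical styles of argument for vertical/horizontal surfaces in $I$-bundles, so your route is legitimate in outline; but I want to flag two places where the plan as written is too quick. First, the mechanism you describe --- ``an innermost trivial circle in a level set can be used to reduce the number of saddles'' --- is not the usual bookkeeping: trivial circles in level sets and critical points of $\pi|_S$ are different kinds of complexity, and a cut-and-paste along an innermost trivial circle need not decrease the number of critical points (it can even introduce new ones before you re-perturb). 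The paper's choice to intersect $S$ with \emph{vertical} disks $[-1,1]\times\gamma$ rather than horizontal slices is precisely what makes the innermost argument clean: the vertical disk is simply connected, so every intersection circle bounds there, and the count of intersection circles with a fixed finite set of vertical disks is an honest complexity that drops. To make your version rigorous you would need to set up a comparable complexity (e.g.\ the number of critical points of $\pi|_S$, or a thin-position width) and show each surgery strictly decreases it, keeping track of how minima/maxima pair with saddles via the Euler characteristic constraint. Second, the step ``$\pi|_S$ has no interior critical points, so $S$ is a union of vertical lines'' needs justification: absence of critical points makes $S$ \emph{abstractly} a product over $[-1,1]$ via the gradient flow, but the level circles can wander; one still needs an isotopy-extension argument to straighten $S$ onto $[-1,1]\times\beta$ (possible here because the lemma only asks for an isotopy of pairs, allowing $\partial S$ to move within $\{-1,1\}\times\Sigma$). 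Finally, for Part~(2) you should verify boundary-incompressibility explicitly and watch the corner structure along $[-1,1]\times\partial\Sigma$; the paper avoids this by cutting until $\Sigma$ is a disk.
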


\begin{proof}
	For Part (1), write  $S\cap \{1\}\times \Sigma = \{1\}\times \beta_+$, $S\cap \{-1\}\times \Sigma = \{-1\}\times \beta_-$, then $\beta_+$ and $\beta_-$ are isotopic curves in $\Sigma$, and we may assume without loss of generality that $\be_-=\be_+$. 
	By the assumptions, the embedding of $S$ in $[-1,1]\times \Sigma$ is $\pi_1$--injective.
	
	Let $\ga$ be an essential arc on $\Sigma$ that is disjoint from $\be_\pm$. Perturb $S$ so that it intersects $[-1,1]\times \ga$ transversely. Then every component of $S\cap ([-1,1]\times \ga)$ is contractible in $[-1,1]\times \Sigma$, thus is contractible in $S$. A standard innermost disk argument shows that $S$ can be isotoped so that it is disjoint from $[-1,1]\times \ga$. As a result, we may cut open the surface $\Sigma$ along arcs until $\Sigma$ is an annulus and $\be_\pm$ is a non-contractible simple closed curve on $\Sigma$. 
	
	Now write $\Sigma$ as $B(2)\backslash B(1)$, where $B(r)$ is the disk of radius $r$ in $\bR^2$ centered at $(0,0)$.   Let $M_1$ and $M_2$ be the closures of the two components of $[-1,1]\times \Sigma\backslash S$, where $M_1$ contains $[-1,1]\times \partial B(1)$ and $M_2$ contains $[-1,1]\times \partial B(2)$. The Sch\"onflies theorem implies that $M_1 \cup ([-1,1]\times B(1))$ is a ball. Let $T=[-1,1]\times \{(0,0)\}$, then $T$ is a tangle in $M_1 \cup ([-1,1]\times B(1))$, and $M_1$ can be viewed as the ball $M_1 \cup ([-1,1]\times B(1))$ with a tubular neighborhood of $T$ removed.
	
	Since the map $\pi_1(S)\to \pi_1([-1,1]\times \Sigma)$ induced by inclusion is an isomorphism, it follows from the Seifert--van-Kampen theorem that the maps $\pi_1(S)\to \pi_1(M_1)$ and $\pi_1(S)\to \pi_1(M_2)$ induced by inclusions are also isomorphisms, therefore $\pi_1(M_1)\cong \pi_1(M_2) \cong \bZ$. A standard argument using Dehn's lemma then implies that $T$ is the trivial tangle in the ball $M_1 \cup ([-1,1]\times B(1))$. So $M_1$ is diffeomorphic to $[-1,1]\times A$, and $S$ can be isotoped to a parallel copy of $[-1,1]\times \partial B(1)$. Therefore Part(1) of the lemma  is proved.
	
	The proof of Part (2) follows from a similar (but simpler) argument, where we first cut $\Sigma$ open along arcs to reduce to the case where $\Sigma$ is a disk, and then apply the Sch\"onflies theorem.
\end{proof}

\begin{prop}\label{prop_AHI_rank_2}
Suppose $L$ is a link in the thickened annulus $(-1,1)\times A$. If $\dim\AHI(L)\le 2$, then $L$ is either the unknot or isotopic to a non-contractible circle in $\{0\}\times A$. 
\end{prop}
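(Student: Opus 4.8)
The plan is to reduce the statement to Theorem \ref{thm_main_instanton} by comparing $\AHI(L)$ with the singular sutured instanton homology of the thickened annulus, and then to read off the conclusion from the classification of simple closed curves in an annulus. The starting observation is that $A$ is itself a connected compact oriented surface of genus zero with non-empty boundary, so Theorem \ref{thm_main_instanton} applies with $\Sigma=A$; the substantive part is therefore to establish the inequality
\[
\dim \SHI\big([-1,1]\times A,\ \{0\}\times\partial A,\ L\big)\ \le\ \dim \AHI(L).
\]

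To prove this I would start from the defining model $\AHI(L)=\II\big(S^2\times S^1,\,L\cup(\{p,q\}\times S^1),\,c\times\{t_*\}\big)$, with $L\subset D_1\times S^1$, $p,q\in D_2$, and $c\subset D_2$ an arc from $p$ to $q$, and then apply the excision theorem (Proposition \ref{prop_excision}) along the two boundary tori $T_p=\partial N(\{p\}\times S^1)$ and $T_q=\partial N(\{q\}\times S^1)$. Each of $T_p,T_q$ has genus one, is disjoint from $L\cup(\{p,q\}\times S^1)$ and from the endpoints of $c\times\{t_*\}$, and meets $c\times\{t_*\}$ transversely in a single point, so the hypotheses of Proposition \ref{prop_excision} hold and I am free to choose the regluing diffeomorphism $T_p\to T_q$ within those constraints. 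Since the complement of $N(\{p\}\times S^1)\cup N(\{q\}\times S^1)$ in $S^2\times S^1$ is a copy of $T^2\times[-1,1]$, an appropriate regluing produces $\widetilde Y=T^3\sqcup(S^2\times S^1)$, with $L$ and the middle portion of $c\times\{t_*\}$ lying in the $T^3$ summand and the circles $\{p,q\}\times S^1$ together with the two end portions of $c\times\{t_*\}$ lying in the $S^2\times S^1$ summand. Combining the excision isomorphism with the K\"unneth formula yields
\[
\II\big(S^2\times S^1,\,L\cup(\{p,q\}\times S^1),\,c\times\{t_*\}\ \big|\ T_p\cup T_q\big)\ \cong\ B_1\otimes B_2,
\]
where $B_1$ and $B_2$ are the instanton homologies of the two summands in the generalized eigenspaces cut out by the tori produced by the excision.

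The remaining step is to identify the two factors. The $T^3$ summand, with its link, its reglued $\omega$-curve, and its distinguished torus, is by construction a closure of the balanced sutured manifold $([-1,1]\times A,\{0\}\times\partial A)$ in the sense of Proposition \ref{prop_SiHI_iso_SHI}, with auxiliary surface $F$ an annulus so that $R=A\cup_\tau F$ is a torus; tracing through the identifications then gives $B_1\cong\SHI([-1,1]\times A,\{0\}\times\partial A,L)$. The factor $B_2$ is independent of $L$: it is the instanton homology of $S^2\times S^1$ equipped with two parallel fibers and a single $\omega$-arc joining them, restricted to the generalized eigenspace of the separating torus, and it is nonzero --- in fact one-dimensional --- by \cite{AHI}. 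Since $\II(S^2\times S^1,L\cup(\{p,q\}\times S^1),c\times\{t_*\}\,|\,T_p\cup T_q)$ is a subspace of $\AHI(L)$, we conclude $\dim\AHI(L)\ge\dim B_1\cdot\dim B_2\ge\dim\SHI([-1,1]\times A,\{0\}\times\partial A,L)$, which is the inequality we need. (One expects in fact the stronger isomorphism $\AHI(L)\cong\SHI([-1,1]\times A,\{0\}\times\partial A,L)$, but only the inequality is used here.)

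Granting the inequality, the rest is immediate. If $\dim\AHI(L)\le 2$ then $\dim\SHI([-1,1]\times A,\{0\}\times\partial A,L)\le 2$, so Theorem \ref{thm_main_instanton} shows that $L$ is isotopic, in the thickened annulus, to a knot $K$ contained in $\{0\}\times A$. A simple closed curve $K$ in the annulus $A$ is either null-homotopic, in which case it bounds an embedded disk in $\{0\}\times A\subset(-1,1)\times A$ and hence $L$ is the unknot; or essential, in which case $K$ is isotopic on $A$ to the core circle and hence $L$ is isotopic to a non-contractible circle in $\{0\}\times A$. I expect the main obstacle to be the excision bookkeeping: one must organize the cut-and-paste so that the $T^3$ summand is literally a closure of $([-1,1]\times A,\{0\}\times\partial A)$ --- with the reglued $\omega$ occurring as a curve $S^1\times\{p\}$ for a point $p$ in the auxiliary surface, with $L$ landing in the expected sub-thickened-annulus, and with the distinguished torus matching $\{t_*\}\times R$ --- and so that the leftover $S^2\times S^1$ summand contributes a nonzero (hence harmless) tensor factor; once this is done, Theorem \ref{thm_main_instanton} and the curve classification finish the argument.
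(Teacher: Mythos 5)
Your proposal is circular. You reduce Proposition \ref{prop_AHI_rank_2} to Theorem \ref{thm_main_instanton} applied with $\Sigma=A$, but in the paper Theorem \ref{thm_main_instanton} is \emph{proved using} Proposition \ref{prop_AHI_rank_2}: the proof of Theorem \ref{thm_main_instanton} first invokes Corollary \ref{cor_contained_in_annulus} to isotope $L$ into a thickened annulus inside $[-1,1]\times\Sigma$, then identifies $\SHI$ with $\AHI$ of the reduced link, and then cites Proposition \ref{prop_AHI_rank_2} to finish. In the special case $\Sigma=A$, Theorem \ref{thm_main_instanton} \emph{is} Proposition \ref{prop_AHI_rank_2} up to the standard isomorphism $\AHI(L)\cong\SHI([-1,1]\times A,\{0\}\times\partial A,L)$, so nothing has been gained: you have rephrased the statement, not proved it. The excision/K\"unneth bookkeeping you carry out is a reasonable way to establish that isomorphism, but it cannot substitute for the actual detection argument.

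The genuine content of the proposition lives elsewhere. The paper's proof treats the thickened annulus as the complement of an unknot $U\subset S^3$, uses $\AHI(L)\cong\II^\natural(L\cup U;p)$ and a lower bound $\dim\AHI(L)\ge 2^{|L|}$ to force $L$ to be a knot, passes via the earring-removing exact triangle and the reduced/unreduced comparison to bound $\dim\KHI(L\cup U)$ by $4$, and then uses parity of Euler characteristics, the $\KHI=\bC$ characterization of the unknot, unlink detection, braid-closure detection via the top $f$-grading, sutured decompositions showing $U$ is a braid closure with axis $L$, and finally a multivariable Alexander polynomial estimate for mutually braided links to conclude that $L\cup U$ is either the two-component unlink or the Hopf link. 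None of that machinery appears in your proposal, and it is exactly this machinery that replaces the appeal to Theorem \ref{thm_main_instanton} you are not allowed to make. Before you can use the annulus case as input to Corollary \ref{cor_contained_in_annulus}'s conclusion, you need an independent argument for it, and that is what Proposition \ref{prop_AHI_rank_2} supplies.
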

\begin{proof}
We can view the thickened annulus as the complement of an unknot $U\subset S^3$. According to \cite{AHI}*{Section 4.3},
	we have $\AHI(L)\cong \II^\natural (L\cup U;p)$, where $p\in U$ is a base point. By \cite[Proposition 3.5]{xie2021meridian}, this implies that $\dim \AHI(L) \ge 2^{|L|}$, where $|L|$ is the number of components of $L$. Since we assume that $\dim\AHI(L)\le 2$, we know that $L$ must have only one component.
	
According to \cite{Xie-earring}*{Section 3}, there is an earring-removing exact triangle
	$$
	\cdots \to \II(S^3,L\cup U \cup m_1\cup m_2, u_1\cup u_2 ) \to \II^\natural (L\cup U;p) \to \II^\natural (L\cup U;p)
	$$ 
	where $m_1, m_2$ are small meridians around $L, U$ respectively, and $u_1, u_2$ are small arcs joining $m_1,m_2$ to $L, U$ respectively. By \cite{Xie-earring}*{Proposition 5.1}, we have
	$\II(L\cup U \cup m_1\cup m_2, u_1+u_2 ) \cong \KHI(L\cup U)$. Therefore the above exact triangle implies 
	\begin{equation}
	\label{eqn_AHI_rank_2_proof1}
	\dim \KHI(L\cup U) \le 2\cdot\dim  \II^\natural (L\cup U;p) =2\cdot\dim \AHI(L) \le 4.
	\end{equation}
	
	By \cite{LY-HeegaardDiagram}*{Proposition 3.14} we have
	\begin{equation}
		\label{eqn_AHI_rank_2_proof2}
			\dim \KHI(L\cup U) \ge  2 \cdot\dim \KHI (L).
	\end{equation}

	By \cite[Theorem 1.1]{KM:Alexander}, the Euler characteristic of $\KHI$ is given by the Alexander polynomial. Since $L$ is a knot, this implies that $\dim \KHI(L)$ is odd and $\dim \KHI(L\cup U)$ is even.
	Therefore \eqref{eqn_AHI_rank_2_proof1}, \eqref{eqn_AHI_rank_2_proof2} imply that 
	$$\dim \KHI(L) = 1, \quad \dim \KHI(L\cup U) = 2 ~ \text{or} ~4.$$
	
	If $\dim \KHI(L\cup U) = 2$, then by \cite{LXZ-unknot}*{Corollary 1.8}, $L\cup U$ is the unlink, and the desired result holds. From now on, we assume that $\dim \KHI(L\cup U) = 4$.
	
	Since $\dim \KHI(L) = 1$,  \cite{KM:suture}*{Proposition 7.16} implies that $L$ is an unknot when viewed as a knot in $S^3$.  
	
	If the top f-grading of $\AHI(L)$ is zero, then by \cite{XZ:excision}*{Corollary 8.3}, $L$ is included in a solid $3$--ball in the thickened annulus, so $L\cup U$ is the unlink and $\dim \KHI(L\cup U) = 2$, contradicting the assumption that $\dim \KHI(L\cup U) = 4$. Therefore, the top f-grading of $\AHI(L)$ is not zero. By the symmetry of f-grading and the assumption that $\dim \AHI(L)\le 2$, the top f-grading component of $\AHI(L)$ must have dimension $1$. Therefore by \cite{XZ:excision}*{Corollary 8.4}, $L$ is a braid closure in the thickened annulus. Let $l>0$ be the strand number of the braid. 
	
	Let $S^3(L\cup U)$ denote the complement of a tubular neighborhood of $L\cup U$ in $S^3$, and let $T^2_L, T^2_U\subset \partial\big( S^3(L\cup U)\big)$ be the two components corresponding to $L$ and $U$ respectively. Take $\ga_{\mu}$ to be the union of two meridians of $L$ and two meridians of $U$. Then by the definition of $\KHI$ (cf. \cite{KM:suture}), we have
	$$\KHI(L\cup U)\cong \SHI(S^3(L\cup U),\ga_{\mu}).$$

	Let $D$ be an embedded disk bounded by $L$ in $S^3$. The surface $D\cap S^3(L\cup U)$ represents a homology class in $H_2(S^3(L\cup U),\partial S^3(L\cup U))$. Let $S$ be a surface representing the same homology class in $H_2(S^3(L\cup U),\partial S^3(L\cup U))$ so that $S$ is norm-minimizing, $\partial S\cap T^2_L$ is a longitude of $L$, and $\partial S\cap T^2_U$ consists of parallel and coherently oriented meridians of $U$. We can arrange the two components of $\ga_{\mu}\cap T_U$ so that they are not separated by $\partial S\cap T_U$. Note since the linking number between $L$ and $U$ is non-zero, we must have $\chi(S)\leq0$ and hence according to \cite[Corollary 6.3]{GL-decomposition}, we have two taut sutured manifold decompositions
$$
		(S^3(L\cup U),\ga_{\mu})\stackrel{S}{\leadsto}(M_+,\ga_+)~{\rm and~}(S^3(L\cup U),\ga_{\mu})\stackrel{-S}{\leadsto}(M_-,\ga_-).
$$
	
As in \cite[Section 2]{GL-decomposition}, the surface $S$ induces a $\mathbb{Z}$--grading on $\KHI(L\cup U)=\SHI(S^3(L\cup U),\ga_{\mu})$ after possible stabilizations (cf. \cite[Definition 2.23]{GL-decomposition}). Now since $L$ and $U$ have a non-zero linking number, \cite[Theorem 1.18]{GL-decomposition} applies and it states that $\KHI(L\cup U)$ detects the Thurston norm of the homology class $[S]\in H_2(S^3(L\cup U),\partial S^3(L\cup U))$. In \cite[Section 6]{GL-decomposition}, this result is proved by establishing the following statements:
\begin{equation}\label{eq: KHI(L cup U)}
	\KHI(L\cup U,i_{t})\cong \SHI(M_+,\ga_+),~\KHI(L\cup U,i_{b})\cong \SHI(M_-,\ga_-),
\end{equation}
and
\begin{equation}\label{eq: i_t-i_b}
	x([S])=-\chi(S)=i_t-i_b-1,
\end{equation}
where $i_t$ and $i_b$ are the top and bottom gradings of $\KHI(L\cup U)$ with respect to the grading associated with $S$, and $\KHI(L\cup U,i)$ denotes the component of $\KHI(L\cup U)$ that has grading $i$ with respect to the grading induced by $S$.

Now we look at the balanced manifolds $(M_+,\ga_+)$ and $(M_-,\ga_-)$. Since we have chosen the suture $\ga_\mu$ in a way that the two components of $\ga_{\mu}\cap T^2_U$ are not separated by $\partial S\cap T^2_U$, there are three components of $\ga_+$ that are parallel to each other: one comes from $\partial S\cap T^2_U$, and two come from $\ga_{\mu}\cap T^2_U$. The same statement holds for $\ga_-$. Let $\ga_{\pm}'$ be the suture $\ga_{\pm}$ with two of the three parallel components removed. From \cite[Proof of Theorem 3.1]{KM:Alexander}, we know that
$$\SHI(M_{\pm},\ga_{\pm})\cong\SHI(M_{\pm},\ga_{\pm}')\otimes\mathbb{C}^2.$$
Note Equation (\ref{eq: i_t-i_b}) implies that $i_t\neq i_b$, and hence from Equation (\ref{eq: KHI(L cup U)})we know
$$\bigg(\SHI(M_+,\ga_{+}')\otimes\mathbb{C}^2\bigg)\oplus\bigg(\SHI(M_-,\ga_{-}')\otimes\mathbb{C}^2\bigg)\hookrightarrow\KHI(L\cup U)\cong\mathbb{C}^4.$$
As a result we must have $\SHI(M_{\pm},\ga_{\pm}')\cong\mathbb{C}$. By \cite[Theorem 1.2]{GL-decomposition} (see also \cite{KM:suture}*{Theorem 6.1} for the case then $(M_\pm, \ga_\pm')$ is a homology product), $(M_{\pm},\ga_{\pm}')$ are both product sutured manifolds, and hence $U$ is the closure of a braid with axis $L$. 

Recall that we also proved $L$ is a braid closure in the thickened annulus with strand number $l>0$. So $L\cup U$ is a mutually braided link. 
According to \cite{XZ:forest}*{Lemma 6.7}, if $|l|\ge 2$, then 
$$
\| (1-x)(1-y)\tilde{\Delta}_{L\cup U}(x,y)\|>4,
$$
where $\tilde{\Delta}_{L\cup U}(x,y)$ denotes the multi-variable Alexander polynomial of $L\cup U$ and $\|\cdot \|$ denotes the sum of the absolute values of the coefficients of 
a polynomial.

By \cite{LY_multi_Alexander}*{Theorem 1.4}, $\KHI(L\cup U)$ can be equipped with multi-gradings whose graded Euler characteristic recovers 
$(1-x)(1-y)\tilde{\Delta}_{L\cup U}(x,y)$. Therefore when $l>2$, we have
$$
4=\dim \KHI(L\cup U) \ge \| (1-x)(1-y)\tilde{\Delta}_{L\cup U}(x,y)\|>4,
$$
 which yields a contradiction.
As a consequence, we must have $l= 1$, so $L\cup U$ is the Hopf link, and the desired result is proved.
\end{proof}

\begin{proof}[Proof of Theorem \ref{thm_main_instanton}]
Suppose
	$$
	\dim_{\bC}\SHI([-1,1]\times \Sigma, \{0\}\times \Sigma, L)\le 2.
	$$
	By Corollary \ref{cor_contained_in_annulus}, this implies that there exists an embedding of the annulus $\varphi:A\to \Sigma$ such that $L$ is contained in $(-1,1)\times \varphi(A)$ after isotopy. If $\varphi$ is contractible, we may find a non-contractible annulus embedded in $\Sigma$ that contains $\varphi(A)$, so we may assume that $\varphi$ is not contractible without loss of generality.
	
	Let $\varphi^{-1}(L)$ be the link in the interior of $[-1,1]\times A$ given by the pull-back of $L$ by $\id_{[-1,1]}\times \varphi$. 
By \cite[Lemma 3.10]{xie2020instantons} and Proposition \ref{prop_SiHI_iso_SHI},
	$$
	\AHI(\varphi^{-1}(L))\cong \SHI([-1,1]\times \Sigma, \{0\}\times \Sigma, L).
	$$
 By Proposition \ref{prop_AHI_rank_2}, we conclude that $\varphi^{-1}(L)$ is isotopic to a knot embedded in $\{0\}\times A$ via an isotopy in $(-1,1)\times A$. Therefore the theorem is proved.
\end{proof}

\subsection{Proof of Lemma \ref{lem: juhasz's ineq}}\label{subsec: key inequality}
This subsection proves the key inequality Lemma \ref{lem: juhasz's ineq}, thus completing the proof of Proposition \ref{prop: two non-zero gradings} and Theorem \ref{thm_main_instanton}. Our argument follows the strategy of the proof of \cite[Theorem 6.1]{juhasz2010polytope}. 

\blem\label{lem: juhasz's ineq}
Suppose $(M,\ga)$ is a taut balanced sutured manifold and $L\subset {\rm int}(M)$ is a link so that $(M,\ga)$ is $L$--taut and has no non-trivial horizontal surfaces (cf. \cite[Definition 2.17]{juhasz2010polytope}), non-trivial product annuli, or non-boundary-parallel product disks that are disjoint from $L$. Assume further that $H_2(M;\bZ)=0$. Suppose $S_+$ and $S_-$ are two surfaces so that the following hold:
\begin{itemize}
	\item [(i)] $S_+\cap R(\ga)=-S_-\cap R(\ga)$.
	\item [(ii)] 
	$[S_+,\partial S_+]=-[S_-,\partial S_-]\neq0\in H_2(M,\partial M)$
	\item [(iii)] There is a positive integer $k$ so that 
	$$[S_+,\partial S_+]+[S_-,\partial S_-]=k\cdot [R_+(\ga),\partial R_+(\ga)]\in H_2(M,N(\ga)).$$
	\item [(iv)] The surfaces $S_{\pm}$ are both $x_L$-norm minimizing and $L$--incompressible (cf. \cite[Definition 1.2]{Schar}).
\end{itemize}
Then we have the following inequality:
$$\chi(S_+)+\chi(S_-)-|S_+\cap \ga|-|L\cap S_+|-|L\cap S_-|<k\cdot \chi(R_+(\ga)).$$
\elem
\bpf
Let $P$ be the result of the cut and paste surgery of $S_+\cup S_-$ described as follows: on $R(\ga)$, Condition (i) above states that $\partial S_+\cap R(\ga)=\partial S_-\cap R(\ga)$, and we  glue $\partial S_+\cap R(\ga)$ to $\partial S_-\cap R(\ga)$; away from $R(\ga)$, perturb $S_+$ and $S_-$ so that they intersect transversely, cut them open along intersections, and re-glue in an orientation preserving way to resolve the intersections.

By the first inequality on \cite[Page 27]{juhasz2010polytope},  we have
$$\chi(S_+) + \chi(S_-) - |S_+\cap \ga| \le \chi (P).$$
Hence it suffices to show that
$$\chi(P)-|L\cap P|<k\cdot \chi(R_+(\ga)).$$
To do this, first note that
$$\partial[P]=\partial[S_+]+\partial[S_-]=\partial[S_+]+\partial[-S_+]+k\partial[R_+(\ga)]=k\cdot[\ga]\in H_1(A(\ga);\bZ).$$
Since $H_2(M)=0$, we know from Condition (iii) that
$$[P]=k\cdot[R_+(\ga)]\in H_2(M,A(\ga);\bZ).$$

From Condition (iv), we know that any closed component of $S_+\cap S_-$ are either both essential or both inessential on $S_+$ and $S_-$. 

We may assume without loss of generality that the intersection points of $S_+$ with every component of $L$ have  the same sign. In fact, if a component of $L$ intersects $S_+$ at two consecutive points with opposite signs, we may attach a tube along the segment of $L$ between these two points to $S_+$ without changing the homology class of $S_+$ or the Thurston norms.  The same statement holds for $S_-$. 

If $P$ contains a spherical component $Q$ that intersects $L$ transversely at no more than one point, since $H_2(M;\bZ)=0$, the component $Q$ must be disjoint from $L$. By a standard innermost circle argument, we can isotope $S_+$ and $S_-$ to eliminate this $2$--spherical component of $P$ without changing the intersection numbers with $L$. 

If $P$ contains a toroidal component $T$ that is disjoint from $L$, 
then after isotopy of $S_+$ and $S_-$ using a standard innermost circle argument, we can assume that $T$ consists of some annuli on $S_+$ and some annuli on $S_-$. Since $H_2(M;\bZ)=0$ any torus inside $M$ is homologically trivial. As a result, we can perform cut and paste surgeries to $S_+$ and $S_-$ to swap those annular parts on $T$. After the cut and paste surgeries, $S_{\pm}$ remain in the same homology classes and have same generalized Thurston norms as before, but the closed component $T$ is merged into other components of $P$.

If $P$ contains a spherical component $Q$ that intersects $L$ transversely at two points, since $H_2(M;\bZ)=0$, the algebraic intersection number of $Q$ and $L$ is zero. 
Since we assume that the intersection points of $S_\pm$ and every component of $L$ have the same sign, one of the intersection points in $Q\cap L$ is contained in $S_+$, and the other is contained in $S_-$. 
After isotoping $S_+$ and $S_-$ using the innermost circle argument, we may assume that the decomposition of $Q$ by $S_+$ and $S_-$ consists of an even number of annuli and two disks. As a result, we can perform cut and paste surgeries to $S_+$ and $S_-$ to swap those annular parts on $Q$. After the cut and paste surgeries, $S_{\pm}$ remain in the same homology classes and have same generalized Thurston norms as before, but the closed component $Q$ is merged into other components of $P$.

If $P$ has a disk component, then by the definition of $P$, $\partial P$ is contained in a small tubular neighborhood of $\ga$, so $\partial D$ is parallel to a component of $\ga$. Hence $R_{\pm}(\ga)$ must both be disks as they are incompressible. Since $(M,\ga)$ is assumed to be taut, $M$ must be a $3$-ball, but then $(M,\ga)$ cannot be $L$--taut. 

Hence, we may assume without loss of generality that every component of $P$ has a positive $L$--Thurston norm. So
$$x_L(P)=|L\cap P|-\chi(P)~{\rm and~}x_L(R_+(\ga))=x(R_+(\ga))=-\chi(R_+(\ga)).$$
The desired inequality is then equivalent to
$$x_L(P)>k\cdot x(R_+(\ga)).$$

Assume the contrary; namely,
$x_L(P)\leq k\cdot x(R_+(\ga)).$
Since we know that
$$[P]=k\cdot [R_+(\ga)]\in H_2(M,A(\ga);\bZ)$$
and $R_+(\ga)$ is norm-minimizing, we have
$$k\cdot x(R_+(\ga))\leq x(P)\leq x_L(P)\leq k\cdot x(R_+(\ga)).$$
Since every component of $P$ has positive generalized Thurston norm and $H_2(M;\bZ)=0$, we conclude that $P$ has no closed components, $P\cap L= \emptyset$, and 
\begin{equation}
	\label{eqn_norm_of_P}
x(P)=k\cdot x(R_+(\ga)).
\end{equation}

 Fix an arbitrary base point $x\in M\backslash P$ and define a function $\phi:M\backslash P\to \mathbb{Z}$ by taking $\phi(y)$ to be the algebraic intersection number of any path from $x$ to $y$ with $P$. Since $[P]=k\cdot [R_+(\ga)]=0\in H_2(M,\partial M)$, the map $\phi$ is well defined. Since $P$ has no closed components, this implies that $P$ has the form
$$P= P_1\cup...\cup P_{k},$$
where each $\partial P_i$ is parallel to $\ga$.

Since $M\backslash L$ does not contain non-trivial horizontal surfaces, we know that each $P_i$ is either parallel to $R_+(\ga)$ or parallel to $R_-(\ga)$ in $M\backslash L$. We can assume that $P_i$ is parallel to $R_+(\ga)$ for $1\leq i\leq j$ and $P_i$ is parallel to $R_-(\ga)$ for $j+1\leq i\leq k$. Let $J_j$ be the part of $M$ between $P_{j}$ and $P_{j+1}$, and assume without loss of generality that $\ga\subset \partial J_j\cap A(\ga)$. Then $(J_j,\ga)$ is balanced sutured manifold diffeomorphic to $(M,\ga)$ and $L\subset {\rm int}(J_j)$. 

Observe that the difference between $S_+\cup S_-$ and $P$ in the interior of $M$ are within a neighborhood of $S_+\cap S_-$ along which we performed the cut and paste surgery. As a result, every component of $S_+\cap J_j$ is either a product disk or a product annulus in $J_j$; product disk components of $S_+\cap J_j$ correspond to arc components of $S_+\cap S_-\cap {\rm int}(M)$, and product annulus components of $S_+\cap J_j$ correspond to circle components of $S_+\cap S_-\cap {\rm int}(M)$. Also note that these product disks and product annuli are disjoint from $L$. So by the assumption on $(M,\ga,L)$, we know that $S_+\cap J_j$ can be isotoped into $\partial J_j$. Since $M\backslash J_j$ is a product, we can deform $S_+$ into $\partial M$ in $M$. This implies 
$$[\partial S_+]=0\in H_1(\partial M;\bZ).$$
Since $H_2(M;\bZ)=0$, it contradicts Condition (ii).
\epf

\section{Instanton homology for links in thickened surfaces}
\label{sec_instanton_thickend_surface}

Let $R$ be a closed, connected, oriented surface. 
In this section, we define and study the properties of an instanton Floer homology invariant for links in $(-1,1)\times R$.

Recall that $t_*\in S^1$ denotes the image of $\{-1,1\}\subset[-1,1]$ under the quotient map.

\begin{defn}
	Let $L$ be a link in $(-1,1)\times R$. Let $p$ be a point on $R$ that is disjoint from the projection of $L$ to $R$.  
	Embed $L$ into $S^1\times R$ via the quotient map $[-1,1] \times R \to S^1\times R$. 
	Define
	$$
	\SiHI_{R,p}(L) = \II(S^1\times R, L, S^1\times \{p\}|\{t_*\}\times R).
	$$
\end{defn}

\begin{rem}
	The group $\SiHI_{R,p}(L)$ is the same as the group $\SiHI(L)$ defined in \cite{xie2020instantons}. We need to study the properties of $\SiHI_{R,p}(L)$ with different surfaces $R$ in this paper, so we modify the notation to keep track of $R$ and $p$. Since the homology class $[S^1\times{p}]\in H_2(S^1\times R,L)$ is independent of $p$, the isomorphism class of $\SiHI_{R,p}(L)$ is independent of $p$.
\end{rem}

Suppose $\Sigma$ is a compact connected oriented surface with $\partial \Sigma\neq \emptyset$, and $L$ is a link in the interior of $[-1,1]\times \Sigma$. 
	Let $F$ be a connected oriented surface such that there is an orientation-reversing diffeomorphism $\tau: \partial F\to\partial \Sigma$, and let $R = \Sigma\cup_\tau F$. We also assume that $F$ is not a disk.
	Then by Proposition \ref{prop_SiHI_iso_SHI}, we have
\begin{equation}
	\label{eqn_SiHI_iso_SHI}
	\SiHI_{R,p}(L) \cong \SHI([-1,1]\times \Sigma, \{0\}\times \partial \Sigma, L).
\end{equation}

Suppose $S$ is a link cobordism from $L_1$ to $L_2$ in the interior of $[-1,1]\times [-1,1]\times R$ whose projection to $R$ is disjoint from $p$. Then 
$$([-1,1]\times S^1\times R, S, [-1,1]\times S^1\times \{p\})$$
is a cobordism from $(S^1\times R,L_1,S^1\times \{p\})$ to $(S^1\times R,L_2,S^1\times \{p\})$. We will denote the induced cobordism map by $\SiHI_{R,p}(S)$.

\subsection{Embedding of annulus in $R$}

Let $\varphi:A\to R$ be an orientation-preserving embedding of the annulus $A$ in $R$ such that the image of $A$ is non-separating in $R$. Suppose $L$ is a link in the interior of $[-1,1]\times A$. We will abuse notation and use $L$ to denote its image in $[-1,1]\times R$. Let $p$ be a point in $R\backslash \varphi(A)$. 

The main result of this subsection is the following.

\begin{prop}
	\label{prop_embedding_of_annulus_in_R}
	There exists a natural isomorphism from $\AHI(L)$ to $\SiHI_{R,p}(L)$.
\end{prop}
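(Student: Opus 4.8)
The plan is to realize both $\AHI(L)$ and $\SiHI_{R,p}(L)$ as singular instanton homology groups of triples built from $S^1 \times R$, and to produce the isomorphism between them by an excision cobordism, tracking the relevant generalized eigenspaces so that the map is canonical. First I would recall the definitions unwound: $\AHI(L) = \II(S^2\times S^1, L\cup(\{p,q\}\times S^1), c\times\{t_*\})$ where $L$ sits in $D_1\times S^1 \subset S^2\times S^1$, whereas $\SiHI_{R,p}(L) = \II(S^1\times R, L, S^1\times\{p\}\,|\,\{t_*\}\times R)$. Since $\varphi(A)$ is non-separating in $R$, I can choose a simple closed curve $\delta\subset R$ meeting the core of $\varphi(A)$ transversely once and disjoint from $p$; cutting $R$ along $\delta$ produces a connected surface, and this is the geometric input that will let excision ``unwrap'' the $R$-direction down to the sphere factor appearing in $\AHI$. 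In effect I want to compare the closed $3$-manifold $S^1\times R$ (with its link and $\omega$) to $S^2\times S^1$ by a sequence of excisions along tori of the form $S^1\times(\text{s.c.c. in } R)$.

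The key steps, in order, are: (1) Using Proposition \ref{prop_SiHI_iso_SHI} (or directly the excision theorem, Proposition \ref{prop_excision}) reduce $\SiHI_{R,p}(L)$ to the case where $R$ is obtained from a surface containing $\varphi(A)$ by capping, peeling off the handles of $R$ that are disjoint from $\varphi(A)$; each such reduction is an excision along a torus $S^1\times\beta$ with $\beta$ a non-separating curve in $R$ disjoint from $\varphi(A)\cup\{p\}$, and by Proposition \ref{prop_eigenvalues_bounded_by_Thurston_norm} the eigenvalue-$2$ (``$|\Sigma$'') eigenspace condition on $\{t_*\}\times R$ is preserved under the excision cobordism maps (Corollary \ref{cor_mu_map_eigenspace_preserved_by_cobordism}). (2) Once $R$ has been reduced to an annulus or once-punctured torus containing $\varphi(A)$ — whichever normal form matches the $\AHI$ setup — identify the resulting triple with $\big(S^2\times S^1, L\cup(\{p,q\}\times S^1), c\times\{t_*\}\big)$ by an explicit orientation-preserving diffeomorphism: this is where the fact that $\AHI(L)$ is itself defined via an embedding $[-1,1]^2\hookrightarrow D_1$ and an auxiliary arc $c$ in $D_2$ gets matched with $S^1\times\{p\}$ after one more excision turning the $S^1\times\{p\}$ curve into the arc $c\times\{t_*\}$ (this last move uses that $\omega_1,\omega_2$ with the same class in $H_1(Y,L;\bZ/2)$ give isomorphic groups, as stated in the preliminaries). (3) Check naturality: because we want an isomorphism commuting with cobordism maps $\SiHI_{R,p}(S)$ and $\AHI(S)$ for link cobordisms $S$ in the interior, I would use that the excision cobordisms can be taken product-like away from the neighborhoods of the excision tori, so they commute with any cobordism supported in $[-1,1]\times[-1,1]\times\varphi(A)$ up to the controlled factors of $2^{n-1}$ from the gluing formula; rescaling the isomorphism by the appropriate power of $2$ absorbs these, exactly as in the analogous naturality arguments of \cite{XZ:excision, xie2020instantons}. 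The canonicity of sign follows from working inside the special class of auxiliary data available since $L$ is oriented and $\partial\omega=\emptyset$ in all the triples involved.

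The main obstacle I expect is step (3), the naturality statement, rather than the bare isomorphism of groups. Establishing that the composite excision isomorphism intertwines cobordism maps requires knowing that the excision cobordisms and the link cobordism $S$ can be made ``disjoint'' or at least commuting in the four-dimensional picture, and that the various sign and $2^{n-1}$ ambiguities in the instanton functoriality can be pinned down consistently; this is precisely the kind of bookkeeping that the paper flags as delicate (cf. the discussion preceding Proposition \ref{prop_diff_induces_id_on_II}). A secondary technical point is making sure each intermediate triple in the excision chain is admissible — i.e. that a non-integral surface survives in every component — which I would guarantee by always keeping a copy of $\{t_*\}\times(\text{sphere or torus})$ meeting $\omega$ or $L$ an odd number of times, using that the eigenspace we restrict to is cut out by exactly such a surface. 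Everything else — the homological computation of which eigenspace corresponds to which, and the identification of the diffeomorphism type of the reduced manifold — is routine given Propositions \ref{prop_excision}, \ref{prop_product_space_homology_rank_1}, and \ref{prop_SiHI_iso_SHI}.
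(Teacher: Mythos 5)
Your overall strategy is the same as the paper's: realize both groups via the excision theorem, and derive naturality from the observation that the excision surfaces can be taken disjoint from $(-1,1)\times A$, so that the excision cobordisms commute with any link cobordism supported inside the thickened annulus. Your step (1) is exactly the paper's use of Proposition~\ref{prop_SiHI_iso_SHI} (equation~\eqref{eqn_SiHI_iso_SHI}), which identifies $\SiHI_{R,p}(L)$ with $\SHI([-1,1]\times A,\{0\}\times\partial A,L)$ independently of the capping surface. The paper's actual proof is very short: it combines this with a single additional black box, namely \cite[Lemma 3.10]{xie2020instantons}, which says $\AHI(L)\cong\SiHI_{T^2,p'}(L)$ when $A$ is embedded in the torus parallel to a non-trivial curve.

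The gap in your proposal is step (2), which plays the role of that missing lemma but is not actually carried out. Unwinding the definition, $\AHI(L)$ is the \emph{full} group $\II(S^2\times S^1, L\cup(\{p,q\}\times S^1), c\times\{t_*\})$ with two extra link components $\{p\}\times S^1$ and $\{q\}\times S^1$ and with $\omega$ an arc, whereas $\SiHI_{R,p}(L)$ is a generalized eigenspace of a $\mu$-map in $\II(S^1\times R,L,S^1\times\{p\})$ for a \emph{closed} $R$ of genus $\ge 1$ with no auxiliary link components. Bridging these requires an excision that removes the $\{p,q\}\times S^1$ components and converts the sphere factor into a torus, while matching the eigenspace condition; this is precisely the content of \cite[Lemma 3.10]{xie2020instantons}. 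Your description — ``an explicit diffeomorphism'' plus ``one more excision turning the $S^1\times\{p\}$ curve into the arc $c\times\{t_*\}$,'' justified by $H_1(Y,L;\bZ/2)$-invariance of $\omega$ — does not account for the two extra link components or the mismatch between the full group and the eigenspace, and cannot be literally what happens. The phrase ``reduced to an annulus or once-punctured torus'' is also a red flag: $R$ stays closed under excision, and the relevant reduction target is the closed torus $T^2$, not a surface with boundary. If you insert the citation to \cite[Lemma 3.10]{xie2020instantons} in place of your step (2), your argument collapses to essentially the paper's three-line proof; without it, step (2) needs to be established from scratch and is the substantial part of the claim.
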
 

By definition, an isomorphism $\mathcal{I}: \AHI(L)\to \SiHI_{R,p}(L)$ is natural if it intertwines with link cobordism maps. More precisely, for every link cobordism $S$ from $L_1$ to $L_2$ in $[-1,1]\times [-1,1]\times A$, we require that
$$
\mathcal{I}\circ \AHI(S) = \pm \SiHI_{R,p}(S)\circ \mathcal{I}.
$$
\begin{proof}
	Let $\varphi':A\to T^2$ be an embedding of $A$ in the torus such that the image is parallel to a non-trivial simple closed curve in $T^2$, and let $F'$ be the closure of $T^2\backslash \varphi'(A)$. Let $p'$ be a point on $F'$.  By \cite[Lemma 3.10]{xie2020instantons}, $\AHI(L)\cong \SiHI_{T^2,p'}(L)$.
	
	By \eqref{eqn_SiHI_iso_SHI}, $\SiHI_{T^2,p'}(L)\cong \SHI([-1,1]\times A, \{0\}\times A, L) \cong \SiHI_{R,p}(L)$. 
	Therefore $\AHI(L)\cong \SiHI_{R,p}(L)$.
	
	All the isomorphisms above are given by excisions along surfaces disjoint from $(-1,1)\times A$, so the excision surfaces are disjoint from links and link cobordisms in $(-1,1)\times A$, therefore all the isomorphisms above intertwine with link cobordism maps. 
\end{proof}

\begin{cor}
	\label{cor_zero_maps_detected_by_AHI}
	Suppose $L_1,L_2$ are links in $(-1,1)\times A$ and $S$ is a cobordism from $L_1$ to $L_2$. Let $\varphi$ be an orientation-preserving embedding of $A$ in $R$. Let $p$ be a point in $R\backslash \varphi(A)$.  Then
	\begin{enumerate}
		\item A cobordism $S$ from $L_1$ to $L_2$ defines the zero map on $\SiHI_{R,p}(L_1)$ if and only if $S$ defines the zero map on $\AHI(L_1)$. 
		\item 	Suppose $S_1,\dots,S_k$ are $k$ cobordisms from $L_1$ to $L_2$, and $\lambda_1,\dots,\lambda_k\in \bC$. Then $\sum \lambda_i\AHI(S_i)=0$ for some choice of signs of the cobordism maps if and only if $\sum \lambda_i\SiHI_{R,p}(S_i)=0$ for some choice of signs of the cobordism maps. 	
	\end{enumerate}
\end{cor}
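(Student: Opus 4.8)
The plan is to derive the corollary as a formal consequence of the naturality in Proposition \ref{prop_embedding_of_annulus_in_R}. For each link $L$ in $(-1,1)\times A$, regarded as a link in $(-1,1)\times R$ via $\varphi$, that proposition supplies an isomorphism $\mathcal{I}_L\colon \AHI(L)\to\SiHI_{R,p}(L)$ with the property that for every link cobordism $S$ from $L_1$ to $L_2$ inside $[-1,1]\times[-1,1]\times A$ one has $\mathcal{I}_{L_2}\circ\AHI(S)=\pm\,\SiHI_{R,p}(S)\circ\mathcal{I}_{L_1}$. Proposition \ref{prop_embedding_of_annulus_in_R} is stated for non-separating $\varphi(A)$; if $\varphi(A)$ is separating I would first enlarge $R$ by attaching a $1$--handle joining the two sides of the core of $\varphi(A)$, which leaves $L$ and all cobordisms in $(-1,1)\times A$ untouched, does not affect $\SiHI_{R,p}$ or its cobordism maps since the relevant excision surfaces are disjoint from $(-1,1)\times A$, and makes $\varphi(A)$ non-separating.

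Granting this, part (1) is immediate: since $\mathcal{I}_{L_1}$ and $\mathcal{I}_{L_2}$ are isomorphisms, the intertwining relation shows $\AHI(S)=0$ if and only if $\SiHI_{R,p}(S)=0$, irrespective of the sign ambiguities in the two cobordism maps.

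For part (2) I would be careful with signs. For each $i$ fix the sign of $\SiHI_{R,p}(S_i)$ so that $\mathcal{I}_{L_2}\circ\AHI(S_i)=\SiHI_{R,p}(S_i)\circ\mathcal{I}_{L_1}$ holds on the nose (absorbing the $\pm$ from the naturality statement). Then, if $\sum_i\lambda_i\AHI(S_i)=0$ for some choice of signs of the $\AHI(S_i)$, composing on the left with $\mathcal{I}_{L_2}$ gives $\bigl(\sum_i\lambda_i\,\SiHI_{R,p}(S_i)\bigr)\circ\mathcal{I}_{L_1}=0$, so $\sum_i\lambda_i\,\SiHI_{R,p}(S_i)=0$ because $\mathcal{I}_{L_1}$ is invertible; the reverse implication follows symmetrically by composing with $\mathcal{I}_{L_2}^{-1}$ and $\mathcal{I}_{L_1}^{-1}$. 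I expect no real obstacle here: the corollary is a formal consequence of Proposition \ref{prop_embedding_of_annulus_in_R}, and the only points demanding attention are the bookkeeping of the inherent $\pm$ ambiguity of instanton cobordism maps in part (2) and the harmless reduction to the non-separating case so that Proposition \ref{prop_embedding_of_annulus_in_R} applies verbatim.
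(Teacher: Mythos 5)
Your argument is correct, and it is exactly what the paper intends: the corollary is an immediate formal consequence of the naturality in Proposition~\ref{prop_embedding_of_annulus_in_R} (the paper gives no separate proof for this statement), and your bookkeeping of the overall $\pm$ ambiguities in part~(2) --- fixing, for each $i$, the sign of $\SiHI_{R,p}(S_i)$ so that the intertwining relation with $\AHI(S_i)$ holds on the nose, and then using that $\mathcal{I}_{L_1}, \mathcal{I}_{L_2}$ are isomorphisms --- is the right way to make the ``for some choice of signs'' clauses go through.

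One remark on the separating case: the corollary sits under the standing assumption of the subsection that $\varphi(A)$ is non-separating in $R$, so your reduction is not actually needed, and indeed every later application of this corollary in the paper is for a non-separating annulus (in the contractible-circle case the paper explicitly replaces the annulus by a non-separating one; see the proof of Lemma~\ref{lem_linear_independence_v_pm}). Your proposed reduction is morally fine but, as written, is a bit loose: attaching a $1$--handle genuinely changes the ambient surface from $R$ to some $R'$, so $\SiHI_{R,p}(L)$ is not literally unchanged; what you want to say is that there is an excision isomorphism $\SiHI_{R,p}(L)\cong\SiHI_{R',p}(L)$ along surfaces disjoint from $(-1,1)\times A$, hence intertwining with cobordism maps in $(-1,1)\times A$, in the same spirit as the last sentence of the proof of Proposition~\ref{prop_embedding_of_annulus_in_R}. (One would also need to be a little careful about the no-disk-components hypothesis in Proposition~\ref{prop_SiHI_iso_SHI} when the core of $\varphi(A)$ is contractible.) Since the non-separating hypothesis is already in force, none of this is an actual gap, but if you do include the reduction you should phrase it as an isomorphism rather than an equality.
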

 
\subsection{Grading on $\SiHI$ from simple closed curves}
 Let $c$ be an oriented simple closed curve in $R$. The following result was proved in \cite{xie2020instantons}.

\begin{lem}[{\cite[Lemma 3.16]{xie2020instantons}}]
	\label{lem_grading_SiHI_range}
	Suppose $S^1\times c$ intersects $L$ transversely at $n$ points, then the eigenvalues of $\muu(S^1\times c)$ on $\SiHI_{R,p}(L)$ are contained in the set 
	$\{-n+2i|i\in\bZ, 0\le i\le n\}$.
\end{lem}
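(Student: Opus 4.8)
The plan is to relate $\SiHI_{R,p}(L)$ to an annular instanton homology group and then transport the eigenvalue bound from Proposition~\ref{prop_eigenvalues_bounded_by_Thurston_norm} through the excision isomorphisms.

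First I would dispose of the case where $c$ is null-homotopic or separating in $R$ by a direct argument: if $S^1\times c$ bounds then $\muu(S^1\times c)$ acts trivially (eigenvalue $0$), which lies in the asserted set since $n\ge 0$; more carefully, one should treat $c$ just as a closed surface component $T = S^1 \times c$, a torus intersecting $L$ transversely at $n$ points, and apply Proposition~\ref{prop_eigenvalues_bounded_by_Thurston_norm} to it. Indeed, the cleanest route is: $T=S^1\times c$ is a torus in $S^1\times R$ of genus $g=1$ meeting $L$ in $n$ points, and $\SiHI_{R,p}(L) = \II(S^1\times R,L,S^1\times\{p\}\,|\,\{t_*\}\times R) \subseteq \II^{(2)}(S^1\times R,L,S^1\times\{p\})$. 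If $T$ were non-integral with $2g-2+n = n \ge 0$, then Proposition~\ref{prop_eigenvalues_bounded_by_Thurston_norm} would immediately give that all eigenvalues of $\muu(T)$ on $\II^{(2)}$, hence on the subspace $\SiHI_{R,p}(L)$, lie in $\{n - 2k : 0\le k\le n\} = \{-n+2i : 0\le i\le n\}$, which is exactly the claim.

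The main obstacle is that $T$ need not be non-integral: when $n$ is even and $T$ is disjoint from $\omega = S^1\times\{p\}$ (or meets it in an even number of points), neither condition in the definition of non-integral holds, so Proposition~\ref{prop_eigenvalues_bounded_by_Thurston_norm} does not apply directly to $T$. To handle this I would modify $\omega$ within its $\bZ/2$--homology class in $H_1(S^1\times R, L;\bZ/2)$ so that the new representative meets $T$ oddly while keeping admissibility and the eigenspace condition cut out by $\{t_*\}\times R$ unchanged; since $\II(Y,L,\omega_1)\cong\II(Y,L,\omega_2)$ when $[\omega_1]=[\omega_2]$ in $H_1(Y,L;\bZ/2)$, and since $c$ is a simple closed curve one can find a dual curve and arrange the parity, this reduces to the non-integral case. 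Alternatively — and this is how I expect the cited Lemma~3.16 of \cite{xie2020instantons} proceeds — one uses an excision along a torus dual to $c$, as in the proof of Lemma~\ref{lem: grading is well defined}, to split off a factor in which $\muu(S^1\times c)$ acts with the right spectral bound, then invokes Property~\ref{property_mu_map_intertwine_with_cobordism} to pull the bound back to $\SiHI_{R,p}(L)$.

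Since this lemma is quoted verbatim from \cite[Lemma 3.16]{xie2020instantons}, in the paper itself the proof is simply a citation; were I to reconstruct it, the steps above (treat $S^1\times c$ as a torus, fix the parity of its intersection with $\omega$ by changing $\omega$ in its $\bZ/2$ class, apply Proposition~\ref{prop_eigenvalues_bounded_by_Thurston_norm} together with the containment in $\II^{(2)}$) constitute the complete argument, with the parity adjustment being the only non-formal point.
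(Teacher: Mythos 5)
This lemma is cited by the paper from \cite[Lemma 3.16]{xie2020instantons} without a proof, so there is no in-paper argument to compare against; I will evaluate your reconstruction on its own merits.

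Your overall strategy (regard $T=S^1\times c$ as a genus-one surface meeting $L$ in $n$ points, observe that $\SiHI_{R,p}(L)$ lives inside $\II^{(2)}$, and feed this into Proposition~\ref{prop_eigenvalues_bounded_by_Thurston_norm}) is the natural one, and the arithmetic $\{2g-2+n-2k\}=\{-n+2i\}$ with $g=1$ is correct. The gap is in your ``parity adjustment.'' Condition (2) in the definition of a non-integral surface reads ``$\Sigma$ is \emph{disjoint from} $L$, \emph{and} the intersection number of $\Sigma$ and $\omega$ is odd''; the disjointness from $L$ is part of the hypothesis, not something you can buy back by retooling $\omega$. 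Consequently, when $n$ is even and $n>0$, the torus $T$ simply cannot be made non-integral by any choice of $\omega$: condition (1) fails because the mod~$2$ intersection with $L$ is $0$, and condition (2) fails because $T\cap L\neq\emptyset$. This is exactly the case your argument has to cover (it does occur, e.g.\ for a small unknot whose diagram crosses $c$ twice), and neither of your two proposed fixes addresses it.

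There is also a secondary issue with the first fix even in the case $n=0$: if $T$ is disjoint from $L$ then the mod~$2$ intersection $T\cdot\omega$ is a pairing of $[T]\in H_2(Y\setminus L;\bZ/2)$ with $[\omega]\in H_1(Y,L;\bZ/2)$, so it is unchanged under any replacement of $\omega$ within the same $\bZ/2$--class --- the move you invoke cannot change the parity. What the paper actually does in a similar spirit (see the proof of Lemma~\ref{lem: grading is well defined}) is an \emph{excision} argument, which replaces $\omega$ by $S^1\times\{p\}\cup\{t_*\}\times\beta'$; this lands in a genuinely different homology class and hence a genuinely different admissible triple, with the excision cobordism supplying the isomorphism and Property~\ref{property_mu_map_intertwine_with_cobordism} transporting the $\muu$--spectrum. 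That excision-based route is the correct version of your ``dual curve'' idea for $n=0$, but it still depends on $T\cap L=\emptyset$, so it also does not cover $n$ even and positive. A complete reconstruction of \cite[Lemma 3.16]{xie2020instantons} would need a separate device for that case, which your proposal does not supply.
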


Lemma \ref{lem_grading_SiHI_range} is the motivation of the following definition.

\begin{defn}
	Define the $c$--grading on $\SiHI_{R,p}(L)$ to be the grading given by the generalized eigenspace decomposition of $\muu(S^1\times c)$. 
\end{defn}

Suppose $S$ is a link cobordism.
By Corollary \ref{cor_mu_map_eigenspace_preserved_by_cobordism}, for every oriented simple closed curve $c$ on $R$, the map $\SiHI_{R,p}(S)$ preserves the $c$--grading.

\subsection{Maps induced by diffeomorphisms}
Recall from Example \ref{exmp_diff_induce_iso} that diffeomorphisms on admissible triples define isomorphisms on instanton homology.
In this subsection, we prove the following result for maps on $\SiHI_{R,p}$ induced by diffeomorphisms. 

\begin{prop}
	\label{prop_diff_induces_id_on_II}
	Suppose $R$ is a closed oriented surface. Suppose $N\subset R$ is a connected embedded compact surface with boundary, such that $\partial N$ has an even number of connected components. Assume that $R\backslash N$ is connected and has genus at least $3$. Let $L$ be a link in the interior of $(-1,1)\times N$. Let $p\in R$ be a point disjoint from the projection of $L$ to $R$. Let $\varphi:R\to R$ be a diffeomorphism that restricts to the identity on $N\cup \{p\}$. Then the diffeomorphism $\id_{S^1}\times \varphi$ induces $\pm \id$ on $\SiHI_{R,p}(L)$.
\end{prop}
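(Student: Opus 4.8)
The plan is to reduce the statement to a situation where a diffeomorphism is known to induce $\pm\id$ because it extends over a cobordism that looks like a product, and to produce such a cobordism by an excision trick that exploits the high genus of $R\setminus N$. First I would observe that since $\varphi$ fixes $N$ pointwise and fixes $p$, it is supported in $R\setminus(N\cup\{p\})$, which is a connected surface of positive genus with an even number of boundary components (the components of $\partial N$) together with a puncture removed. The mapping class group of such a surface is generated by Dehn twists along simple closed curves lying in the interior of $R\setminus(N\cup\{p\})$, so it suffices to prove the claim when $\varphi$ is a single Dehn twist $t_\delta$ about such a curve $\delta\subset R\setminus(N\cup\{p\})$, since the isomorphisms induced by diffeomorphisms compose (up to sign) as in Example \ref{exmp_diff_induce_iso}.

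Next I would analyze the cobordism $(W,S,\Omega)$ that by definition induces the map of $t_\delta$ on $\SiHI_{R,p}(L)=\II(S^1\times R,L,S^1\times\{p\}\mid\{t_*\}\times R)$: it is the mapping cylinder of $\id_{S^1}\times t_\delta$, and since $t_\delta$ is isotopic to the identity by a \emph{non}-ambient isotopy supported near $\delta$ but is \emph{not} isotopic to the identity rel the relevant data, we cannot directly conclude the map is $\pm\id$. The key point is that $L$, $\{p\}$, and the cut surface $\{t_*\}\times R$ are all disjoint from a neighborhood of $S^1\times\delta$, so the cobordism map is "local" near $S^1\times\delta$. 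I would use the excision theorem (Proposition \ref{prop_excision}) along two parallel copies of a high-genus surface $\{t_*\}\times R_0$, where $R_0\subset R\setminus(N\cup\{p\})$ is a genus $\geq 3$ subsurface disjoint from $\delta$ that separates $\delta$ from the rest — here the hypothesis that $R\setminus N$ has genus at least $3$ is exactly what guarantees we can find such a non-integral excision surface of sufficient genus while keeping everything disjoint from $L$, $p$, and $\delta$. After excision the Dehn twist $t_\delta$ becomes a diffeomorphism of a new manifold in which $\delta$ bounds (or becomes null-homotopic in the complement of the remaining link/$\omega$ data), so that $t_\delta$ is isotopic to the identity through diffeomorphisms preserving all the data, whence the induced map is $\pm\id$ by the last paragraph of Example \ref{exmp_diff_induce_iso}. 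Chasing this isomorphism back through the excision cobordism maps, which commute with the diffeomorphism-induced maps because the excision surfaces are disjoint from the support of $\varphi$, then shows the original map was $\pm\id$ as well.

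The alternative, and perhaps cleaner, route I would consider is to bypass excision and instead argue directly: the mapping cylinder of $\id_{S^1}\times t_\delta$ is diffeomorphic to $[-1,1]\times(S^1\times R)$ via a diffeomorphism that is \emph{not} the identity on the boundary, but one can try to isotope the pair $(S, \Omega)$ inside $W$ relative to $\partial W$ to the product pair. Since $S = [-1,1]\times L$ and $\Omega$ is $[-1,1]\times S^1\times\{p\}$ up to the twisting, and $t_\delta$ fixes both $L$'s projection and $p$, the surfaces $S$ and $\Omega$ genuinely are products; the only obstruction is that the ambient $4$-manifold $W$ is not obviously a product rel boundary. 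This obstruction lives in $\pi_0$ of the relevant diffeomorphism group and is detected by the Dehn twist; the high-genus hypothesis is what kills it after the excision stabilization. So ultimately both routes converge on the same mechanism.

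\textbf{Main obstacle.} The hard part will be setting up the excision carefully enough that the diffeomorphism $t_\delta$ descends to the excised manifold as a map isotopic to the identity through data-preserving diffeomorphisms, and simultaneously verifying that the excision cobordism maps intertwine with the diffeomorphism-induced maps — this requires checking that the excision surfaces, which must be non-integral tori or higher-genus surfaces with the correct intersection numbers with $\omega = S^1\times\{p\}$, can be chosen in the complement of $S^1\times\delta$, of $L$, and of $\{t_*\}\times R$ simultaneously. The genus $\geq 3$ condition and the even number of boundary components of $N$ (ensuring the total intersection parity works out for an admissible choice of $\omega$) are precisely the hypotheses that make this possible, so the bookkeeping of which surface has which genus and which intersection number is where the real work lies.
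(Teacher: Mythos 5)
Your reduction to Dehn twists along simple closed curves supported in $R\setminus(N\cup\{p\})$ is the right first step and agrees with the paper. But the core of your plan has an internal contradiction that I don't think can be repaired. You want to excise along surfaces disjoint from $S^1\times\delta$ (so that the excision cobordism map commutes with $(t_\delta)_*$), and then claim that after excision ``$\delta$ bounds (or becomes null-homotopic\dots), so that $t_\delta$ is isotopic to the identity through diffeomorphisms preserving all the data.'' These two requirements work against each other: excision along surfaces disjoint from $\delta$ is performed entirely away from a neighborhood of $\delta$, so the local picture near $\delta$ is unchanged and the Dehn twist remains a nontrivial element of the relevant mapping class group; conversely, any excision that would trivialize $t_\delta$ would have to interact with a neighborhood of $\delta$, and then you lose the commutativity you need to chase the identification back. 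Your description of the excision surface as ``two parallel copies of $\{t_*\}\times R_0$'' with $R_0$ a genus $\geq 3$ \emph{subsurface with boundary} also doesn't fit the excision theorem as stated (Proposition \ref{prop_excision} needs closed non-integral surfaces), so even the setup is not well-defined.

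The paper's actual mechanism is different and worth understanding. It does not try to make $t_\delta$ isotopic to the identity after excision. Instead it introduces the auxiliary groups $H(L,\omega,p)$ and exploits two facts: (a) $H(\emptyset,\omega,p)\cong\bC$, so any automorphism is a scalar; and (b) the pair-of-pants cobordism gives $H(\emptyset,\omega,p)$ a canonical element $u_0$ with $\clP(u_0\otimes u_0)=\pm u_0$, which must be preserved up to sign by any $\varphi_*$ (because the diffeomorphism lifts to the pair-of-pants cobordism compatibly with an almost complex structure). This pins down $\varphi_*=\pm\id$ on $H(\emptyset,\omega,p)$, which is then transported to $H(L,\omega,p)$ by an excision along $S^1\times\partial N$ between two copies of $S^1\times R$ (here is where the parity condition on $\partial N$ enters, so that a suitable $\omega$ exists meeting each component of $\partial N$ once), and finally to $\SiHI_{R,p}(L)=H(L,\emptyset,p)$ by an excision along $\{t_*\}\times R$. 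Crucially, the excision surfaces are chosen disjoint from (a neighborhood of) $\ga$ precisely so that $(D_\ga)_*$ intertwines with the excision maps, and the conclusion comes not from trivializing the twist but from the rigidity of the one-dimensional group.

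You also miss the separating-curve issue entirely. The direct argument only works for a Dehn twist along a curve $\ga$ that is non-separating in $F=R\setminus N$ (one needs a transverse dual curve $\ga'\subset F$). For $\ga$ separating, the paper invokes the lantern relation, using the genus $\geq 3$ hypothesis to guarantee that one side of $F\setminus\ga$ has genus $\geq 2$, hence contains a lantern expressing $t_\ga$ as a product of twists along non-separating curves. This is where the genus bound is actually doing work — not in supplying a high-genus excision surface as you conjectured. Without this step your proof would be incomplete even if the excision plan were sound.
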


We start by introducing the following notation.
If $\omega$ is a closed $1$--manifold embedded in $R$, $L$ is a link in $(-1,1)\times R$, and $p\in R\backslash \omega$ is a point disjoint from the projection of $L$ to $R$, define
 $$
H(L,\omega,p)= \II(S^1\times R, L,\{t_*\}\times \omega \cup S^1\times \{p\}|\{t_*\}\times R).
 $$
By Proposition \ref{prop_product_space_homology_rank_1}, $H(\emptyset,\omega,p)\cong \bC$. 
If $\varphi:R\to R$ is an orientation-preserving diffeomorphism that restricts to the identity on the union of $\omega$, $\{p\}$, and the projection of $L$ to $R$, we will use $\varphi_*$ to denote the automorphism of $H(L,\omega,p)$ induced by $\id_{S^1}\times \varphi$. 

\begin{lem}
	\label{lem_diff_induce_pm_id_with_empty_link}
	Suppose $\varphi:R\to R$ is an orientation-preserving diffeomorphism such that $\varphi(p)=p$ and $\varphi(\omega)=\omega$. Then $\varphi_*=  \pm \id$ on $H(\emptyset,\omega,p)$. 
\end{lem}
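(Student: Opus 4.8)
The plan is to show that $H(\emptyset,\omega,p)\cong\bC$ is one-dimensional, so the induced automorphism $\varphi_*$ is multiplication by a scalar, and then to pin down that scalar using the functoriality properties of the cobordism maps. First I would recall that by Proposition \ref{prop_product_space_homology_rank_1}, $H(\emptyset,\omega,p)\cong\bC$ (here $L=\emptyset$ and the hypotheses on $\omega$, $p$ guarantee admissibility and that $\{t_*\}\times R$ is non-integral). Since $\id_{S^1}\times\varphi$ is a diffeomorphism of the admissible triple $(S^1\times R,\emptyset,\{t_*\}\times\omega\cup S^1\times\{p\})$ that preserves orientations, Example \ref{exmp_diff_induce_iso} tells us it induces an isomorphism $\varphi_*$ of the one-dimensional space $H(\emptyset,\omega,p)$, hence $\varphi_*=\lambda\cdot\id$ for some $\lambda\in\bC^\times$, defined up to the overall sign ambiguity of the cobordism map.

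Next I would identify the scalar. The key point is that the mapping class group of $R$ fixing a point (and a $1$-manifold setwise) is generated by Dehn twists, but a cleaner route is to use the multiplicativity from Example \ref{exmp_diff_induce_iso}: the isomorphism induced by a composition is the composition of the induced isomorphisms (up to sign). Applying this to $\varphi\circ\varphi^{-1}=\id$ and to $\varphi^n$ shows that the scalar $\lambda$ associated to any $\varphi$ in the relevant group must be a root of unity up to sign; more precisely, since the induced map on a $1$-dimensional $\bC$-vector space has a well-defined absolute value, and the inverse diffeomorphism induces the inverse scalar, we get $|\lambda|=1$. To nail down $\lambda=\pm1$ exactly, I would invoke the last sentence of Example \ref{exmp_diff_induce_iso}: if $\varphi$ is isotopic to the identity through diffeomorphisms preserving $(R,\omega,p)$ then $\varphi_*=\pm\id$; but in general $\varphi$ need not be isotopic to the identity. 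So instead I would argue that the class $[\{t_*\}\times\omega\cup S^1\times\{p\}]\in H_1(S^1\times R;\bZ/2)$ and the eigenvalue constraint from $\muu(\{t_*\}\times R)$ are preserved, and use that the cobordism $(W,S,\Omega)$ defining $\varphi_*$ is, as an unoriented-data-bundle cobordism, diffeomorphic rel boundary to a \emph{product} cobordism whenever $\varphi$ lies in the identity component; for a general $\varphi$, compose with a representative to reduce to that case.

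A more robust approach, and the one I would actually write up, uses the excision/naturality packaging already set up in the paper: since $H(\emptyset,\omega,p)$ is one-dimensional and $\muu(\{t_*\}\times R)$ acts on it with a single eigenvalue $2g(R)-2$, and since all cobordism maps between such groups are nonzero scalars, the assignment $\varphi\mapsto\varphi_*$ descends to a homomorphism from the relevant mapping class group to $\bC^\times/\{\pm1\}$. The target being abelian, this factors through $H_1$ of the mapping class group, which is finite (it is a quotient of the abelianization of a mapping class group of a surface, which is finite once the genus is at least $3$, by the hypothesis $R\setminus N$ has genus at least $3$ and $R$ has even boundary data --- in fact the mapping class group of a closed surface of genus $\ge3$ is perfect). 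Hence the homomorphism is trivial, i.e. $\varphi_*=\pm\id$.

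The main obstacle I anticipate is the sign/overall-scalar bookkeeping: cobordism maps in singular instanton homology are only defined up to sign, so one must be careful that the statement "$\varphi_*=\pm\id$" is the correct and provable assertion rather than "$\varphi_*=\id$", and one must verify that the multiplicativity of Example \ref{exmp_diff_induce_iso} combined with the perfectness (or finite abelianization) of the mapping class group genuinely forces the scalar into $\{\pm1\}$ rather than merely into the roots of unity. Resolving this cleanly requires either (a) knowing the abelianization of the relevant mapping class group is trivial, so that the homomorphism to $\bC^\times/\{\pm1\}$ is forced to be trivial, or (b) directly exhibiting, for a generating set of the mapping class group (Dehn twists), product cobordisms realizing each generator up to a controlled sign. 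Option (a) is the cleanest given the genus-$\ge3$ hypothesis, and that is the route I would take; the subtlety is that we need the mapping class group of $R$ \emph{fixing $N\cup\{p\}$ pointwise}, whose abelianization I would compute by comparing with the mapping class group of the complementary surface $R\setminus N$ (which has genus $\ge3$ and is connected, hence perfect mapping class group up to boundary-twist factors, which act trivially here because the boundary components are capped off inside $R$).
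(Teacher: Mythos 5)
Your proposal does not match the paper's argument, and the two routes you sketch each contain a genuine gap.

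\textbf{Where the proposal fails.} Your first attempt to pin down the scalar claims that because $\varphi^{-1}$ induces the inverse scalar, one gets $|\lambda|=1$. This is not a constraint: from $\varphi\circ\varphi^{-1}=\id$ one only learns $\lambda\cdot\lambda^{-1}=\pm1$, which holds for every nonzero $\lambda$. There is no inner product or real structure in play that would force the scalar to lie on the unit circle, so this step is vacuous. Your second (``more robust'') route reduces the problem to showing that the abelianization of the relevant mapping class group vanishes, but you never establish this and it is not the generic-perfectness statement you cite. The hypothesis of the lemma is only that $\varphi$ fixes $p$ and preserves $\omega$ \emph{setwise}; you have silently substituted the much smaller group from Proposition~\ref{prop_diff_induces_id_on_II} (diffeomorphisms fixing $N\cup\{p\}$ pointwise), and then reasoned about $R\setminus N$ instead. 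The group that actually arises is a stabilizer of $(\{p\},\omega)$ in $\mathrm{MCG}(R)$, with $\omega$ an arbitrary closed $1$--manifold; a stabilizer of a curve system in a perfect mapping class group is not automatically perfect, and proving perfectness here would require a separate (and not obviously short) argument. You acknowledge this uncertainty yourself, which means the key step is missing.

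\textbf{What the paper actually does.} The paper's proof sidesteps all mapping-class-group considerations by exhibiting a canonical distinguished element. Taking the pair-of-pants cobordism $P$ and the product $P\times R$, one obtains $\mathcal{P}_0:H(\emptyset,\emptyset,p)^{\otimes 2}\to H(\emptyset,\emptyset,p)$, whose sign is fixed by the product almost complex structure (since the singular locus is empty). Because $\id_P\times\varphi$ extends $\id_{S^1}\times\varphi$ over the cobordism and preserves the almost complex structure up to isotopy, the identity $\mathcal{P}_0(\varphi_*x\otimes\varphi_*y)=\varphi_*(\mathcal{P}_0(x,y))$ holds \emph{without} sign ambiguity. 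The unique nonzero solution $u_0$ of $\mathcal{P}_0(u\otimes u)=u$ is then fixed by $\varphi_*$, giving $\varphi_*=\id$ on $H(\emptyset,\emptyset,p)$. For $\omega\neq\emptyset$, the analogous map $\mathcal{P}_\omega:H(\emptyset,\omega,p)^{\otimes 2}\to H(\emptyset,\emptyset,p)$ and the equation $\mathcal{P}_\omega(u\otimes u)=u_0$ single out $\pm u_\omega$, and equivariance forces $\varphi_*(u_\omega)=\pm u_\omega$. This argument is purely local to the cobordism category and requires no input about the algebraic structure of $\mathrm{MCG}(R)$, which is why it succeeds where your two attempts stall.
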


\begin{proof}
	We first consider the case when $\omega=\emptyset$.
	
	Let $P$ be the pair-of-pants cobordism from two circles to one circle.  Topologically, $P$ is a sphere with $3$ interior disks removed. Taking the product of $P$ with $R$ gives a cobordism from $S^1\times R \sqcup S^1 \times R$ to $S^1\times R$. This cobordism defines a map 
	$$\mathcal{P}_0: H(\emptyset,\emptyset,p)\otimes H(\emptyset,\emptyset,p)\to H(\emptyset,\emptyset,p).$$
	By Proposition \ref{prop_excision}, $\clP_0$ is an isomorphism. The map $\mathcal{P}_0$  can be defined without sign ambiguity because the singular set is empty and the cobordism has an almost complex structure defined by taking the product of an almost complex structure on $P$ with an almost complex structure on $R$. 
	Since the singular set is empty, the map $\varphi_*$ is also defined without sign ambiguity. And we have 
	\begin{equation}
		\label{eqn_clP_varphi_commutative_H(empty,empty,p)}
	\clP_0(\varphi_*(x)\otimes \varphi_*(y)) = \varphi_*(\clP_0(x,y))
	\end{equation}
for all $x,y\in H(\emptyset,\emptyset,p)$
	because the map $\varphi$ lifts to a diffeomorphism $\id_P\times \varphi$ on the cobordism $P\times R$, and the almost complex structure is preserved up to isotopy under this diffeomorphism.
	 
	Since $H(\emptyset,\emptyset,p)\cong\bC$, there is a unique element $u_0\in H(\emptyset,\emptyset,p)$ so that $\clP(u_0\otimes u_0)=u_0.$ By \eqref{eqn_clP_varphi_commutative_H(empty,empty,p)}, we have $\varphi_*(u_0) = u_0$, so $\varphi_*=\id$ on $H(\emptyset,\emptyset,p)$.
	
	When $\omega\neq 0$, the pair-of-pants cobordism defines a homomorphism
		$$\clP_\omega: H(\emptyset,\omega,p)\otimes H(\emptyset,\omega,p)\to H(\emptyset,\emptyset,p).$$
	The same almost complex structure as above defines a sign for $\clP_\omega$, and we have 
	$$\clP_\omega(\varphi_*(x)\otimes \varphi_*(y)) = \varphi_*(\clP_\omega(x,y))$$
	for all $x,y\in H(\emptyset,\omega,p)$. Since $H(\emptyset,\omega,p)\cong\bC$, there exist exactly two elements $u_\omega, -u_\omega \in H(\emptyset,\omega,p)$ such that $\clP_\omega(u_\omega\otimes u_\omega) = u_0$. Therefore $\varphi_*(u_\omega) = \pm u_\omega$, and the lemma is proved.
\end{proof}

Now we prove a special case of Proposition \ref{prop_diff_induces_id_on_II} when $\varphi$ is given by a Dehn twist along a non-separating curve in $R\backslash N$.
\begin{lem}
	\label{lem_diff_induces_id_on_II_Dehn_twist}
Let $R, N, p$ be as in Proposition \ref{prop_diff_induces_id_on_II}.
Suppose $\ga$ is an oriented simple closed curve on $R\backslash N$ such that there exists a simple closed curve $\ga'$ on $R\backslash N$ intersecting $\ga$ transversely at one point. Let $D_\ga$ be the Dehn twist along $\ga$. Then Proposition \ref{prop_diff_induces_id_on_II} holds for $\varphi = D_\ga$. 
\end{lem}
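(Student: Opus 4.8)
The plan is to present the automorphism $(\id_{S^1}\times D_\gamma)_\ast$ of $\SiHI_{R,p}(L)$ as an excision isomorphism, and then to identify that isomorphism with $\pm\id$ by reducing to the empty-link computation of Lemma~\ref{lem_diff_induce_pm_id_with_empty_link}. First I would fix a closed annular neighbourhood $A_\gamma=[-1,1]_s\times S^1_\theta$ of $\gamma$ inside $R\setminus N$, small enough to be disjoint from $p$ and to meet $\gamma'$ in a single essential arc (here the genus hypothesis on $R\setminus N$ leaves the needed room), arranged so that $D_\gamma$ is the shear $(s,\theta)\mapsto(s,\theta+f(s))$ supported in $A_\gamma$, where $f$ runs from $0$ to a full turn. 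Then $\id_{S^1}\times D_\gamma$ is supported in the thickened torus $S^1\times A_\gamma\subset S^1\times R$, and by Example~\ref{exmp_diff_induce_iso} the induced map on $\SiHI_{R,p}(L)$ is the cobordism map of the mapping cylinder $W=[0,1]\times(S^1\times R)$ whose outgoing end is reglued by $\id_{S^1}\times D_\gamma$; since $D_\gamma$ is the identity on $N\cup\{p\}$, the surface $S=[0,1]\times L$ and the singular surface $\Omega=[0,1]\times(S^1\times\{p\})$ inside $W$ are honest products.

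The key step is to rewrite $W$ through excision. The two boundary tori $T_\pm=S^1\times\partial A_\gamma$ of $S^1\times A_\gamma$ are disjoint, parallel, and cobound $S^1\times A_\gamma\cong T^2\times[-1,1]$; because $\gamma$ has the dual curve $\gamma'$, the class $[\{t_\ast\}\times\gamma']$ can be incorporated into $\omega$ without changing $\SiHI_{R,p}(L)$ (the underlying group depends only on the class of $\omega$ in $H_1(S^1\times R,L;\bZ/2)$), and this makes $T_+$ and $T_-$ non-integral. Performing the self-excision of $S^1\times R$ along $T_+\sqcup T_-$ with the regluing diffeomorphism given by the shear of $T^2$ that shifts the $S^1$–coordinate once around $\theta$ produces an excision cobordism from $S^1\times R$ to the disjoint union of $S^1\times R$ with a torus bundle over $S^1$, whose instanton homology (in the relevant simultaneous eigenspace) is one–dimensional by Proposition~\ref{prop_product_space_homology_rank_1}. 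I would then check that this excision cobordism, capped off along the torus-bundle factor, is diffeomorphic---together with its surface and singular data, and rel the identifications of the two ends---to the mapping cylinder $W$ above; that is, the shear that excision performs along the cut torus is exactly the Dehn twist $D_\gamma$ on the surface factor. In particular the excision theorem (Proposition~\ref{prop_excision}) shows $(\id_{S^1}\times D_\gamma)_\ast$ is an isomorphism that factors through this one–dimensional piece, and Corollary~\ref{cor_mu_map_eigenspace_preserved_by_cobordism} guarantees all the maps in sight respect the eigenspace gradings.

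Finally, to see that this isomorphism is $\pm\id$, I would run the argument of Lemma~\ref{lem_diff_induce_pm_id_with_empty_link} in the relative setting: the pair-of-pants cobordism $P\times R$ is natural under $\id_{S^1}\times D_\gamma$ (it lifts to $\id_P\times(\id_{S^1}\times D_\gamma)$, compatibly with a product almost complex structure), so it yields an isomorphism $\SiHI_{R,p}(L)\otimes H(\emptyset,\cdot,p)\to\SiHI_{R,p}(L)$ intertwining the two diffeomorphism actions; since $H(\emptyset,\cdot,p)\cong\bC$ carries the action $\pm\id$ and the shear contributes only through the one–dimensional torus-bundle factor, tracking the canonical (sign-ambiguous) generators forces the action on $\SiHI_{R,p}(L)$ to be $\pm\id$. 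The main obstacle is the diffeomorphism asserted in the second paragraph: one must match framings and orientations so that the self-excision regluing reproduces precisely $D_\gamma$ rather than some other element of the torus mapping class group, while keeping $L$, the chosen $\omega$, and the distinguished surface $\{t_\ast\}\times R$ in standard position throughout the cut-and-reglue. Everything else is bookkeeping with cobordism maps and the already-established excision and product-space computations.
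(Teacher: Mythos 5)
Your strategy — excising along the two boundary tori $T_{\pm}=S^1\times\partial A_\gamma$ of a thickened annulus around $\gamma$ and reducing to a one-dimensional torus-bundle factor — is genuinely different from the paper's, which instead excises along the components of $S^1\times\partial N$ between two copies of $S^1\times R$ (one carrying $L$, the other empty) so that the $F$-part supporting $D_\gamma$ lands on the empty-link side, where Lemma~\ref{lem_diff_induce_pm_id_with_empty_link} applies directly. As written, however, your proposal has two genuine gaps.

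First, the pivotal claim of your second paragraph --- that the mapping cylinder of $\id_{S^1}\times D_\gamma$, capped off along the torus-bundle factor, is diffeomorphic rel boundary to the excision cobordism --- is not established and is not the right thing to aim for. The excision cobordism of Proposition~\ref{prop_excision} is a specific $4$--manifold built near the cut locus; identifying it with the mapping cylinder of a Dehn twist would require a delicate matching of framings which you yourself flag as ``the main obstacle.'' What one actually needs, and what is much easier, is that the \emph{supports} of $D_\gamma$ (interior of $A_\gamma$) and of the excision surgery (a collar of $T_{\pm}$) are disjoint, so that $\id_{S^1}\times D_\gamma$ \emph{commutes} with the excision cobordism. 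This would give a commutative square whose right vertical map is the $D_\gamma$-action on the excised manifold: trivial on the $S^1\times R''$ factor and some automorphism on the one-dimensional torus-bundle factor. You do not formulate this commutation, and without it the factorization you assert has no basis.

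Second, the last paragraph's conclusion does not close. The one-dimensional piece produced by your excision is the instanton homology of a torus bundle arising from $S^1\times A_\gamma$ --- it is \emph{not} $H(\emptyset,\omega,p)$, which lives on the full $S^1\times R$. Quoting ``$H(\emptyset,\cdot,p)\cong\bC$ carries the action $\pm\id$'' therefore does not apply; you would have to re-run the pair-of-pants argument of Lemma~\ref{lem_diff_induce_pm_id_with_empty_link} for the torus-bundle factor to see that the induced automorphism there is $\pm 1$ rather than an arbitrary nonzero scalar. There is also unaddressed bookkeeping: after adjoining $\{t_*\}\times\gamma'$ to $\omega$ to make $T_{\pm}$ non-integral, the arcs of $\gamma'$ are cut by the excision and must be re-closed consistently on each piece. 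The paper's route avoids all of this by choosing $\omega$ as a single closed curve meeting each component of $\partial N$ once; the excision along $\partial N$ then transports $F$ (and with it the entire support of $D_\gamma$) to the empty-link copy in one stroke, Lemma~\ref{lem_diff_induce_pm_id_with_empty_link} pins down $\pm\id$ on the resulting factor, and a second excision along $\{t_*\}\times R$ removes $\omega$ to recover $\SiHI_{R,p}(L)$.
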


\begin{proof}
Let $\omega$ be a simple closed curve on $R$ that is disjoint from $p$, $\ga$, $\ga'$ and intersects every component of $\partial N$ transversely at one point. Such a curve exists because we assume both $N$ and $F$ are connected and $\partial N$ has an even number of components.
Then we have the following commutative diagram (up to sign)
	$$
	  \begin{tikzcd}
		H(\emptyset,\omega,p)\otimes H(L,\omega,p) \arrow{r}\arrow{d}{\id\otimes (D_\ga)_*}  & H(L,\omega,p)\otimes H(\emptyset,\omega,p)  \arrow{d}{\id\otimes (D_\ga)_*} \\
		H(\emptyset,\omega,p)\otimes H(L,\omega,p) \arrow{r}  & H(L,\omega,p)\otimes H(\emptyset,\omega,p) ,
	\end{tikzcd}
$$
where the horizontal maps are given by excisions along the components of $S^1\times \partial N$ between two copies of $S^1\times R$. 
This diagram is commutative because the excision locus are pointwise fixed under the map $\id_{S^1}\times D_\ga$, so the cobordisms defining the two compositions are diffeomorphic.
All maps in the above diagram are isomorphisms, the two horizontal maps are identical, and by Lemma \ref{lem_diff_induce_pm_id_with_empty_link}, the right vertical map is $\pm \id$. Therefore the left vertical map is also $\pm \id$, which implies $(D_\ga)_* = \pm \id$ on $H(L,\omega,p)$.

Now consider the commutative diagram (up to sign)
$$
\begin{tikzcd}
	H(\emptyset,\omega,p)\otimes H(L,\emptyset,p) \arrow{r}\arrow{d}{(D_\ga)_*\otimes (D_\ga)_*}  & H(L,\omega,p)  \arrow{d}{(D_\ga)_*} \\
	H(\emptyset,\omega,p)\otimes H(L,\emptyset,p) \arrow{r}  & H(L,\omega,p),
\end{tikzcd}
$$
where the horizontal maps are given by excisions along $\{t_*\}\times R$. This diagram is commutative because  $\id_{S^1}\times D_\ga:S^1\times R \to S^1\times R$ lifts to a self-diffeomorphism on the cobordism that defines the horizontal maps.
Since $(D_\ga)_*=\pm  \id$ on $H(\emptyset,\omega,p)$ and $H(L,\omega,p)$, we conclude that $(D_\ga)_*= \pm \id$ on $ H(L,\emptyset,p)$.
\end{proof}

\begin{proof}[Proof of Proposition \ref{prop_diff_induces_id_on_II}]
	Since the mapping class group is generated by Dehn twists, we only need to prove Proposition \ref{prop_diff_induces_id_on_II} when $\varphi$ is a Dehn twist along a simple closed curve $\ga$ in $R\backslash (N\cup\{p\})$.
	
	Let $F=R\backslash N$. 
	If $\ga$ is non-separating in $F$, the desired result follows from Lemma \ref{lem_diff_induces_id_on_II_Dehn_twist}.
	If $\ga$ is separating, at least one component of $F\backslash \ga$ has genus at least $2$. 
	 Hence there exists an embedded compact surface $K$ in $F$ such that
	\begin{enumerate}
		\item $K$ is diffeomorphic to a sphere with four interior disks removed.
		\item $\partial K = \ga\cup\ga_1\cup \ga_2\cup \ga_3$ where $\ga_1,\ga_2,\ga_3$ are non-separating in $F$.
	\end{enumerate}
The lantern relation (see, for example, \cite[Proposition 5.1]{farb2011primer}) on $K$ then implies that the Dehn twist on $\ga$ can be generated by Dehn twists along non-separating curves on $F$. Hence the proposition is proved.
\end{proof}

\subsection{Pair-of-pants cobordisms}
\label{subsec_pair_of_pants_cobordism}
 
Suppose $L_1$ and $L_2$ are two links in $(-1,1)\times R$. Define $L_1\sqcup L_2$ to be the link obtained by isotoping $L_1$ to $(-1,0)\times R$ via the linear isotopy in the $(-1,1)$--component, isotoping $L_2$ to $(0,1)\times R$ via the linear isotopy in the $(-1,1)$--component, and then taking the union. Note that $L_1\sqcup L_2$ may not be isotopic to $L_2\sqcup L_1$ in general.

The disjoint union operation is associative: the link $(L_1\sqcup L_2) \sqcup L_3$ and $L_1\sqcup (L_2\sqcup L_3)$ are canonically isotopic by a linear isotopy in the $(-1,1)$ component. 
We use this canonical isotopy to identify $(L_1\sqcup L_2) \sqcup L_3$ and $L_1\sqcup (L_2\sqcup L_3)$, and we will denote this link by $L_1\sqcup L_2\sqcup L_3$. More generally, suppose $L_1,\dots,L_k$ are $k$ links in $(-1,1)\times R$, we define the disjoint union inductively by  setting 
$$L_1\sqcup \dots \sqcup L_k = (L_1\sqcup \dots \sqcup L_{k-1})\sqcup L_k.$$

Excision along $\{t_*\}\times R$ gives an isomorphism 
$$
\mathcal{P}:  \SiHI_{R,p}(L_1)\otimes \SiHI_{R,p}(L_2)\to \SiHI_{R,p}(L_1\sqcup L_2).
$$ 
The map $\mathcal{P}$ is defined up to sign. It is natural and associative in the following sense:
\begin{lem}
	\label{lem_associativity_naturality_clP}
	\begin{enumerate}
	\item Suppose $S_1$ is a link cobordism from $L_1$ to $L_1'$. Let $S_1'$ be the cobordism from $L_1\sqcup L_2$ to $L_1'\sqcup L_2$  defined by taking the disjoint union of $S_1$ with the product cobordism of $L_2$. Then 
	$$\SiHI_{R,p}(S_1')\circ \mathcal{P}= \pm \mathcal{P}\circ \big(\SiHI_{R,p}(S_1)\otimes \id \big). $$
	The analogous statement also holds for cobordisms on $L_2$.
	\item $\mathcal{P}(\mathcal{P}(x,y),z)= \pm  \mathcal{P}(x,\mathcal{P}(y,z))$.
	\end{enumerate}
\end{lem}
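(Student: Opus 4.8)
The plan is to express each side of both identities as, up to sign, the cobordism map of a single glued $4$--dimensional cobordism of admissible triples, and then to identify the two glued cobordisms by a diffeomorphism matching the embedded surfaces and commuting with the boundary parametrizations. The conclusion then follows from the functoriality properties recalled in Section \ref{sec_preliminaries}: the gluing law for cobordism maps, the fact that product cobordisms induce $\pm\id$, and the fact that diffeomorphic cobordisms induce maps that agree up to sign.

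Recall that $\mathcal{P}$ is the excision isomorphism along $\{t_*\}\times R$, hence it is the cobordism map of $(P\times R,\ S_P,\ P\times\{p\})$, where $P$ is the pair of pants, $S_P$ is the union of the product cylinders on $L_1$ and $L_2$ (emerging from $L_1$ and $L_2$ over the two incoming copies of $S^1\times R$ and giving $L_1\sqcup L_2$ over the outgoing copy), and the $\omega$--surface is $P\times\{p\}$. For part (1): by the gluing law applied along the connected $3$--manifold $S^1\times R$, the composite $\SiHI_{R,p}(S_1')\circ\mathcal{P}$ is the map of the cobordism $\widehat{W}_{\mathrm{l}}$ obtained by stacking $[-1,1]\times(S^1\times R)$ carrying $S_1'$ on top of $P\times R$; and, writing $\SiHI_{R,p}(S_1)\otimes\id$ via the K\"unneth formula as the cobordism map of the disjoint union of $[-1,1]\times(S^1\times R)$ carrying $S_1$ with the product cobordism of $L_2$, the composite $\mathcal{P}\circ(\SiHI_{R,p}(S_1)\otimes\id)$ is the map of the cobordism $\widehat{W}_{\mathrm{r}}$ obtained by stacking $P\times R$ on top of that disjoint union. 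Both $\widehat{W}_{\mathrm{l}}$ and $\widehat{W}_{\mathrm{r}}$ are diffeomorphic to $P\times R$, and under these diffeomorphisms the two embedded surfaces are isotopic rel boundary: each consists of a copy of $S_1$ running into the first leg of $P$ and a product cylinder on $L_2$ running into the second, joined in the body of $P$. The isotopy is possible because $S_1'$, $S_1$, and the product cobordism are all supported in $[-1,1]\times(-1,1)\times R$, hence disjoint from $[-1,1]\times\{t_*\}\times R$, the region where the gluing takes place; moreover the $\omega$--surfaces are $P\times\{p\}$ in both cases and are matched by the same diffeomorphism. This proves part (1) for cobordisms on $L_1$; the case of cobordisms on $L_2$ is symmetric.

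For part (2), the same strategy applies. Writing each of $\mathcal{P}(\mathcal{P}(\cdot,\cdot),\cdot)$ and $\mathcal{P}(\cdot,\mathcal{P}(\cdot,\cdot))$ as a composition of two pair-of-pants excision cobordisms together with a product cobordism on the remaining factor, the gluing law expresses each as, up to sign, the map of a connected cobordism from $(S^1\times R)^{\sqcup 3}$ to $S^1\times R$. Each such cobordism is diffeomorphic to $P_4\times R$, where $P_4$ is a four-holed sphere with three incoming boundary circles and one outgoing, carrying the product cylinders on $L_1\sqcup L_2\sqcup L_3$ and the $\omega$--surface $P_4\times\{p\}$. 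Under the canonical isotopy identifying $(L_1\sqcup L_2)\sqcup L_3$ with $L_1\sqcup(L_2\sqcup L_3)$, the two decompositions of $P_4$ as a composition of two pairs of pants are isotopic rel boundary, and this isotopy carries all the surface data along with it. Hence the two cobordisms of triples are diffeomorphic respecting all the data, and the two maps agree up to sign.

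The main obstacle is bookkeeping rather than any substantive new idea: one must check that the diffeomorphisms of the underlying $4$--manifolds can be chosen to restrict to the identity on the incoming and outgoing ends (so that they intertwine the parametrizations $\tau$, which is what the functoriality statement requires in order to conclude equality up to sign) while simultaneously matching the link-cobordism surfaces and the $\omega$--surfaces. In part (1) this reduces to pushing $S_1$ off the gluing collar into the body of $P\times R$, and in part (2) it is the classical fact that the two iterated pair-of-pants cobordisms on a four-holed sphere are isotopic; in both cases this works because all link and cobordism data lies in $(-1,1)\times R$, disjoint from the excision surfaces $\{t_*\}\times R$ along which the cobordisms are assembled. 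One should also verify that the eigenvalue conditions ``$|\{t_*\}\times R$'' built into the definition of $\SiHI_{R,p}$ are preserved throughout, which follows from Corollary \ref{cor_mu_map_eigenspace_preserved_by_cobordism} and Proposition \ref{prop_excision}.
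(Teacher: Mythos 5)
Your proof is correct and is an expanded version of the paper's own one-sentence proof, which simply says "the cobordisms defining both sides of equations are diffeomorphic." You correctly identify the glued cobordisms, observe that the surface and $\omega$-data live in $(-1,1)\times R$ and hence stay clear of the excision locus $\{t_*\}\times R$ so that the isotopy rel boundary exists, and invoke the same functoriality properties the paper uses.
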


\begin{proof}
	The cobordisms defining both sides of equations are diffeomorphic.
\end{proof}

\begin{defn}
	\label{defn_clP}
	Define the isomorphism
$$
\mathcal{P}:  \SiHI_{R,p}(L_1)\otimes \SiHI_{R,p}(L_2)\otimes \dots \otimes  \SiHI_{R,p}(L_k)\to \SiHI_{R,p}(L_1\sqcup \dots\sqcup L_k).
$$
inductively by setting
$$
\mathcal{P}(x_1\otimes\dots\otimes x_k) = \clP(\mathcal{P}(x_1\otimes\dots\otimes x_{k-1}) \otimes x_k).
$$
\end{defn}

The links $L\sqcup \emptyset$ and $\emptyset\sqcup L$ are canonically isotopic to $L$ via linear isotopies on the $(-1,1)$ factor. Using these isotopies to identify $\SiHI_{R,p}(L)$ with $\SiHI_{R,p}(L\sqcup \emptyset)$ and $\SiHI_{R,p}( \emptyset\sqcup L)$, we have the following result.

\begin{lem}
	\label{lem_multiplicative_unit}
	There exists an element $ u_0\in \SiHI_{R,p}(\emptyset)$ such that $\mathcal{P}( u_0, x) = \pm x$, $\mathcal{P}( x, u_0) = \pm x$, for all links $L$ disjoint from $\{p\}\times S^1$ and all $ x\in \SiHI_{R,p}(L)$.
\end{lem}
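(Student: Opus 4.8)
The plan is to use that $\SiHI_{R,p}(\emptyset)\cong\bC$ together with the associativity of the pair-of-pants product from Lemma \ref{lem_associativity_naturality_clP} to produce a normalized idempotent element, and then to observe that ``multiplication'' by this idempotent is an invertible operator squaring to itself up to sign, hence $\pm\id$.

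First I would recall that $\SiHI_{R,p}(\emptyset)=\II(S^1\times R,\emptyset,S^1\times\{p\}\,|\,\{t_*\}\times R)$ is one-dimensional over $\bC$ by Proposition \ref{prop_product_space_homology_rank_1} (this is the same computation used in the proof of Lemma \ref{lem_diff_induce_pm_id_with_empty_link}), and that the excision map $\mathcal{P}\colon\SiHI_{R,p}(\emptyset)\otimes\SiHI_{R,p}(\emptyset)\to\SiHI_{R,p}(\emptyset)$ is an isomorphism by Proposition \ref{prop_excision}. After fixing a sign for this instance of $\mathcal{P}$, I would pick any nonzero $e\in\SiHI_{R,p}(\emptyset)$, write $\mathcal{P}(e\otimes e)=c\,e$ with $c\in\bC^\times$, and set $u_0=c^{-1}e$, so that $\mathcal{P}(u_0\otimes u_0)=u_0$; equivalently, one may take the element $u_0$ already constructed in the proof of Lemma \ref{lem_diff_induce_pm_id_with_empty_link}.

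Next, for an arbitrary link $L\subset(-1,1)\times R$ disjoint from $\{p\}\times S^1$, I would consider the operator $T\colon\SiHI_{R,p}(L)\to\SiHI_{R,p}(L)$ given by $T(x)=\mathcal{P}(u_0,x)$, where $\SiHI_{R,p}(\emptyset\sqcup L)$ is identified with $\SiHI_{R,p}(L)$ via the canonical isotopy. Since $\mathcal{P}\colon\SiHI_{R,p}(\emptyset)\otimes\SiHI_{R,p}(L)\to\SiHI_{R,p}(\emptyset\sqcup L)$ is an isomorphism and $\SiHI_{R,p}(\emptyset)$ is one-dimensional and spanned by $u_0$, the operator $T$ is an isomorphism. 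By the associativity in Lemma \ref{lem_associativity_naturality_clP}(2) (applied up to sign) and bilinearity of $\mathcal{P}$,
$$T^2(x)=\mathcal{P}\bigl(u_0,\mathcal{P}(u_0,x)\bigr)=\pm\,\mathcal{P}\bigl(\mathcal{P}(u_0,u_0),x\bigr)=\pm\,\mathcal{P}(u_0,x)=\pm\,T(x),$$
so $T^2=\pm T$ as operators. An invertible operator with $T^2=T$ equals $\id$, and one with $T^2=-T$ equals $-\id$; in either case $T=\pm\id$, i.e.\ $\mathcal{P}(u_0,x)=\pm x$ for all $x\in\SiHI_{R,p}(L)$. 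Running the same argument with the operator $x\mapsto\mathcal{P}(x,u_0)$, using the canonical identification of $\SiHI_{R,p}(L\sqcup\emptyset)$ with $\SiHI_{R,p}(L)$ and associativity on the other side, gives $\mathcal{P}(x,u_0)=\pm x$.

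The statement is essentially formal once one-dimensionality of $\SiHI_{R,p}(\emptyset)$ is in hand, so I do not anticipate a genuine obstacle. The only points that need care are the normalization of $u_0$ — which is exactly where one-dimensionality is used, since without it there is no reason for $\mathcal{P}(u_0,u_0)$ to be proportional to $u_0$ — and the bookkeeping of the various independent sign ambiguities of $\mathcal{P}$ and of the associativity relation, which must all be absorbed into the allowed ``$\pm$'' so that $T$ genuinely satisfies $T^2=\pm T$; the invertibility of $T$ then forces $T=\pm\id$.
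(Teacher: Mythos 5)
Your proof is correct and follows essentially the same route as the paper: choose $u_0$ with $\mathcal{P}(u_0,u_0)=\pm u_0$ (possible since $\SiHI_{R,p}(\emptyset)\cong\bC$), then use associativity and invertibility of $\mathcal{P}(u_0,-)$ and $\mathcal{P}(-,u_0)$ to conclude they are $\pm\id$. The paper phrases the last step via surjectivity of these two maps rather than your idempotent-plus-invertibility formulation, but the argument is the same.
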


\begin{proof}
	Let $u_0\in \SiHI_{R,p}(\emptyset)$ be an element such that $\mathcal{P}( u_0, u_0) = \pm u_0$. Then the desired result follows by the associativity of $\mathcal{P}$ and the fact that $\mathcal{P}( u_0,-)$ and $\mathcal{P}(-, u_0)$ are both surjective.
\end{proof}

\section{Instanton homology for links in $\{0\}\times R$}
\label{sec_instanton_for_links_at_0}
\subsection{Notation}
Suppose $R$ is a closed oriented surface with genus at least $3$. 
Let $p\in R$ be a fixed point.
Suppose $C\subset R\backslash\{p\}$ is a closed $1$--manifold such that every non-contractible component is non-separating.
Define a complex linear space $V_\bC(C)$ as follows:
\begin{enumerate}
	\item  If $\ga$ is a contractible simple closed curve on $R$, define $V_\bC(\ga)$ to be the space generated by $\bfv_+(\ga)$ and $\bfv_-(\ga)$, where $\bfv_+(\ga)$ and $\bfv_-(\ga)$ are formal generators associated with $\ga$. 
	\item If $\ga$ is a non-contractible simple closed curve, let $\mathfrak{o}$, $\mathfrak{o}'$ be the two orientations of $\ga$. Define $V_\bC(\ga)$ to be the space generated by $\bfw_{\mfo}(\ga)$ and $\bfw_{\mfo'}(\ga)$, where $\bfw_{\mfo}(\ga)$ and $\bfw_{\mfo'}(\ga)$ are formal generators.
	\item  In general, suppose the connected components of $C$ are $\ga_1,\dots,\ga_k$, define 
	$$V_\bC(C)=\bigotimes_{i=1}^k V_\bC(\ga_i)$$
	where the tensor product is taken with coefficient $\bC$.
\end{enumerate}
\begin{rem}
Suppose $\Sigma$ is a surface with genus zero and there is an embedding $\iota: \Sigma\to R$ such that $\iota$ induces an injection on $\pi_1$. Then the space $V_\bC(C)$ above coincides with $V(C)\otimes_\bZ \bC$, where $V(C)$ is the free group defined in Section \ref{subsec_APS}.
\end{rem}

Recall that we view $S^1$ as the quotient space of $[-1,1]$.  We will use $0\in S^1$ to denote the image of $0\in[-1,1]$ in $S^1$.
For simplicity, when $R$ and $p$ are clear from context, we will write $\SiHI_{R,p}(\{0\}\times C)$ as $\SiHI(C)$.

\subsection{Isomorphism from $V_\bC(C)$ to $\SiHI(C)$}
\label{subsec_iso_V_to_SiHI}
Let $R,p$ be as above.

\begin{lem}
	\label{lem_SiHI(C)_iso_V(C)}
	Suppose $C$  is a closed $1$--manifold in $R\backslash\{p\}$ such that every non-contractible component is non-separating. Then 
	$$\dim_\bC \SiHI(C) \cong V_{\bC}(C).$$
\end{lem}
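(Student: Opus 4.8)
The plan is to compute $\dim_\bC \SiHI(C) = \dim_\bC \II(S^1\times R, \{0\}\times C, S^1\times\{p\}\,|\,\{t_*\}\times R)$ by induction on the number of components of $C$, using the excision theorem (Proposition \ref{prop_excision}) and Proposition \ref{prop_product_space_homology_rank_1} as base cases. First I would treat a single simple closed curve $\ga$. If $\ga$ is contractible, it bounds a disk $D$ in $R$; isotoping $\{0\}\times\ga$ so that it sits in a ball, the triple $(S^1\times R, \{0\}\times\ga, S^1\times\{p\})$ becomes a connected sum (along $\{t_*\}\times R$) of $(S^1\times R,\emptyset,S^1\times\{p\})$ with a standard unknot triple in $S^3$, and the Künneth formula together with the rank-one computation gives $\dim\SiHI(\ga)=2=\dim_\bC V_\bC(\ga)$. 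If $\ga$ is non-contractible (hence non-separating), one picks $\ga'$ intersecting $\ga$ once; running the excision along $\{t_*\}\times R$ exactly as in the proof of Lemma \ref{lem: grading is well defined}, or alternatively recognizing $\{0\}\times\ga$ as a curve isotopic to a fiber-type circle, one again gets $\dim\SiHI(\ga)=2$; the two dimensions match the two orientations $\mfo,\mfo'$.

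For the inductive step, write $C = C'\sqcup \ga$ where $\ga$ is an innermost or otherwise convenient component. I would like to say $\{0\}\times C = (\{0\}\times C')\sqcup(\{0\}\times\ga)$ as a disjoint union of links in $(-1,1)\times R$ in the sense of Section \ref{subsec_pair_of_pants_cobordism}, and then invoke the pair-of-pants/excision isomorphism $\clP$ of that section: $\SiHI_{R,p}(\{0\}\times C')\otimes \SiHI_{R,p}(\{0\}\times\ga)\cong \SiHI_{R,p}((\{0\}\times C')\sqcup(\{0\}\times\ga))$. The point is that $(\{0\}\times C')\sqcup(\{0\}\times\ga)$ is isotopic in $(-1,1)\times R$ to $\{0\}\times C$ — the components of $C$ being disjoint in $R$, pushing one of them off the slice $\{0\}$ and back is an ambient isotopy supported away from $p$, which by Corollary \ref{cor_mu_map_eigenspace_preserved_by_cobordism} (isotopy invariance, respecting the $\muu(\{t_*\}\times R)$-eigenspace) gives an isomorphism on $\SiHI$. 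Combining, $\dim_\bC\SiHI(C)=\dim_\bC\SiHI(C')\cdot \dim_\bC\SiHI(\ga)=\dim_\bC V_\bC(C')\cdot 2=\dim_\bC V_\bC(C)$ by induction, since $V_\bC(C)=V_\bC(C')\otimes V_\bC(\ga)$ and each $V_\bC(\ga_i)$ is $2$-dimensional.

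The main obstacle I expect is making the excision/connected-sum identifications rigorous at the level of \emph{triples} $(S^1\times R,\cdot,S^1\times\{p\})$ while keeping track of the distinguished eigenspace cut out by $\muu(\{t_*\}\times R)$: one has to check that the excision surface(s) and the isotopies used are disjoint from $S^1\times\{p\}$, are admissible (so that the non-integral surface $\{t_*\}\times R$ is preserved), and induce isomorphisms on the relevant generalized eigenspaces — this is where Proposition \ref{prop_excision}, Proposition \ref{prop_product_space_homology_rank_1}, and the hypothesis that every non-contractible component of $C$ is non-separating get used (non-separating is exactly what lets the relevant surfaces remain non-integral and connected after cutting). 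Once those bookkeeping points are in place, the dimension count is immediate. Note that this lemma only asserts equality of dimensions; the identification of an actual \emph{basis} of $\SiHI(C)$ matching the generators $\bfv_\pm,\bfw_\mfo$ of $V_\bC(C)$ — which is presumably needed later — would be carried out separately, so here I would not attempt to pin down a canonical isomorphism, only the numerical equality $\dim_\bC\SiHI(C)=\dim_\bC V_\bC(C)$.
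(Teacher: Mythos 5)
Your overall strategy matches the paper's: isotope $\{0\}\times C$ into the stacked disjoint union $(\{0\}\times\ga_1)\sqcup\cdots\sqcup(\{0\}\times\ga_k)$ of Section \ref{subsec_pair_of_pants_cobordism} and apply the excision isomorphism $\clP$ to obtain $\SiHI(C)\cong\bigotimes_i\SiHI(\ga_i)$, reducing everything to the single--component case. (The paper does the tensor decomposition in one step rather than by induction, but by the associativity in Lemma \ref{lem_associativity_naturality_clP} this is the same argument.)

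Where you diverge is in the base case, and there your sketch has a real gap. The ``connected sum along $\{t_*\}\times R$'' picture for a contractible $\ga$ does not correspond to any move available in the theory: splitting off an unknot in a small $3$--ball would be a connected sum along a $2$--\emph{sphere}, not an excision along the genus--$\ge 3$ surface $\{t_*\}\times R$, and Proposition \ref{prop_excision} only applies to the latter kind of cut. Likewise, your alternative for the non-contractible case (``fiber-type circle'' or rerunning the excision as in Lemma \ref{lem: grading is well defined}) is not pinned down enough to extract the number $2$. The paper's fix is simpler and uniform: every component $\ga_i$ (contractible or non-separating) lies inside some annulus that is non-separating in $R$ and disjoint from $p$, so Proposition \ref{prop_embedding_of_annulus_in_R} gives $\SiHI(\ga_i)\cong\AHI(\ga_i)$, and $\dim\AHI(\ga_i)=2$ is the known computation of annular instanton homology of the unknot and of the core circle from \cite[Example 4.2]{AHI}. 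Replacing your base-case argument with that citation makes your plan coincide with the paper's proof; the bookkeeping worries you raise about eigenspaces and admissibility are exactly what Proposition \ref{prop_embedding_of_annulus_in_R} already handles.
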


\begin{proof}
	Suppose $C$ has $k$ components $\ga_1,\dots,\ga_k$. Then $C$ is isotopic to 
	$$(\{0\}\times \ga_1)\sqcup(\{0\}\times \ga_2)\sqcup  \dots\sqcup (\{0\}\times \ga_k).$$
	By the excision theorem (see also Section \ref{subsec_pair_of_pants_cobordism}), this implies 
	$\SiHI(C)\cong \otimes_{i=1}^k \SiHI(\ga_i).$
	Since every non-contractible component of $C$ is separating in $R$, by Proposition \ref{prop_embedding_of_annulus_in_R} and the computation of $\AHI$ in \cite[Example 4.2]{AHI}, we have $\dim \SiHI(\ga_i)=2$. Hence the lemma is proved.
\end{proof}

For the rest of this subsection, we define an explicit isomorphism from $V_\bC(C)$ to $\SiHI(C)$ that realizes the isomorphism in Lemma \ref{lem_SiHI(C)_iso_V(C)}.
The isomorphism is constructed by the following steps.

\subsubsection{Step 1}
\label{subsubsec_iso_V_to_SiHI_1}
 We first fix a basis for $\SiHI(\ga)$ when $\ga$ is contractible. Let $ u_0\in \MHI_{R,p}(\emptyset)$ be the element given by Lemma \ref{lem_multiplicative_unit}.  The element $u_0$ is unique up to sign.

If $\ga$ is a contractible simple closed curve on $\{0\}\times R$, then it bounds a unique disk $B$ in $\{0\}\times R$. The disk $B$ defines a link cobordism from the empty set to $\ga$.  Denote this link cobordism by $D^+(\ga)$.  Let $E^+(\ga)$ be the connected sum of $D^+(\ga)$ with a standard torus in a small ball. 

Define $v_+(\ga)\in \SiHI(\ga)$ to be the image of $u_0$ under the cobordism map of $\SiHI_{R,p}(D^+)$, define $v_-(\ga)$ to be the image of $u_0$ under the map $\frac 12 \SiHI_{R,p}(E^+)$. By definition, $v_+(\ga)$ and $v_-(\ga)$ are only well defined up to signs. 

\begin{lem}
	\label{lem_linear_independence_v_pm}
	$v_+(\ga)$ and $v_-(\ga)$ are linearly independent in $\SiHI(\ga)$.
\end{lem}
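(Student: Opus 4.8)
The plan is to reduce the statement to the known structure of $\AHI$ of a single non-contractible-free circle (equivalently, $\AHI$ of a contractible circle, which by \cite[Example 4.2]{AHI} together with Proposition \ref{prop_embedding_of_annulus_in_R} gives $\dim_{\bC}\SiHI(\ga)=2$), and then to pin down the two elements $v_+(\ga)$ and $v_-(\ga)$ by their behavior under pair-of-pants (merge/split) cobordisms. First I would observe that $v_+(\ga)=\SiHI_{R,p}(D^+(\ga))(u_0)$ is nonzero: composing $D^+(\ga)$ with the annular cobordism back to $\emptyset$ (the reverse of $D^+(\ga)$, i.e.\ capping the disk off again) gives a cobordism from $\emptyset$ to $\emptyset$ whose underlying cobordism is the sphere $S^2$ times $R$ with a trivial $2$-sphere component removed; this composite map is (up to the excision normalization) a nonzero multiple of the identity on $\SiHI_{R,p}(\emptyset)\cong\bC$, so $v_+(\ga)\neq 0$. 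Since $\dim\SiHI(\ga)=2$, linear independence of $v_+$ and $v_-$ is then equivalent to $v_-(\ga)$ not being a scalar multiple of $v_+(\ga)$.

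The key step is to distinguish $v_\pm(\ga)$ using a suitable merge cobordism. The cobordism $E^+(\ga)$ differs from $D^+(\ga)$ by a connect-summed torus in a small ball; the standard neck-stretching/dimension argument for instanton homology (the genus-$1$ surface inserted in a ball contributes a factor that annihilates the "top" generator and is an isomorphism on a complementary line, exactly as in the disk/sphere case underlying Khovanov homology) shows that $v_-(\ga)$ lies in the kernel of any cobordism map that detects $v_+(\ga)$ but not $v_-(\ga)$. Concretely, I would cap $\ga$ off by the disk $B$ again: the cobordism $D^-(\ga)$ from $\ga$ to $\emptyset$ given by the same disk $B$ sends $v_+(\ga)$ to a nonzero multiple of $u_0$ (by the composition argument above) and sends $v_-(\ga)=\tfrac12\SiHI_{R,p}(E^+(\ga))(u_0)$ to $\tfrac12\SiHI_{R,p}(E^+(\ga)\cup_\ga D^-(\ga))(u_0)$, where $E^+(\ga)\cup_\ga D^-(\ga)$ is a sphere-with-a-handle (i.e.\ a genus-$1$ surface) in a ball; this closed-up genus-one cobordism induces the zero map on $\SiHI_{R,p}(\emptyset)\cong\bC$ for grading/degree reasons (the $\mu$-map eigenvalue bookkeeping of Proposition \ref{prop_eigenvalues_bounded_by_Thurston_norm}, or directly the well-known vanishing of the genus-one dotted-sphere evaluation in this Floer-theoretic setting). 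Hence $D^-(\ga)$ kills $v_-(\ga)$ but not $v_+(\ga)$, so $v_-(\ga)$ is not a scalar multiple of $v_+(\ga)$, and the two are linearly independent in the $2$-dimensional space $\SiHI(\ga)$.

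I expect the main obstacle to be the bookkeeping of signs and of the excision normalization constants (the factors of $2^{n-1}$ in the gluing formula and the $\tfrac12$ built into the definition of $v_-$): since all cobordism maps here are only defined up to sign, I need to be careful to phrase everything in terms of "nonzero multiple" rather than exact equalities, which is harmless for a linear-independence statement. A second, more technical point is justifying that the closed genus-one cobordism in a ball induces the zero map; I would handle this either by invoking the analogous computation already implicit in \cite{AHI, KM:Kh-unknot} (the Frobenius-algebra structure on $\AHI$ of the unknot, where the "dot squared equals zero" relation is exactly this vanishing) or by a direct neck-stretching argument reducing to $\II$ of $S^2\times S^1$ with the standard data. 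Everything else — nonvanishing of $v_+$, the dimension count, and the reduction to a $2$-dimensional linear-algebra statement — is routine given the results already established in the excerpt.
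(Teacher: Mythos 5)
Your strategy---exhibiting a cobordism map that kills exactly one of $v_\pm(\ga)$---is sound in principle, and it is essentially the content of Lemma \ref{lem_SiHI_D-}, which the paper records immediately after the statement you are proving. But both of the concrete computational facts you invoke are wrong, and in fact they are exactly backwards. The composite $D^-(\ga)\circ D^+(\ga)$ is a closed $2$--sphere embedded in a ball disjoint from $\Omega$, and the genus-zero closed-surface evaluation is \emph{zero}, not a nonzero multiple of the identity: this is the Frobenius identity $\epsilon\circ\eta=0$ for the algebra $\bC[X]/(X^2)$. So you cannot deduce $v_+(\ga)\neq 0$ from $D^-\circ D^+$. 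Conversely, $D^-(\ga)\circ E^+(\ga)$ is a closed genus-one surface in a ball, and it evaluates to $2$, not $0$: in Frobenius terms $\epsilon\circ m\circ\Delta\circ\eta(1)=\epsilon(2X)=2$. The vanishing you are remembering occurs for closed surfaces of genus $\ge 2$ (where $X^2=0$ enters), not for genus $1$. Consequently your conclusion ``$D^-$ kills $v_-$ but not $v_+$'' is exactly reversed: $D^-$ kills $v_+$ and takes $v_-$ to $\pm u_0$, as the paper states. The linear-independence conclusion would still follow from the corrected facts, but the proof as written rests on two false claims, and it needs some substitute justification for $v_+(\ga)\neq 0$ (the paper sidesteps all of this by using Corollary \ref{cor_zero_maps_detected_by_AHI} to reduce to the annular case and then citing \cite[Proposition 5.9]{AHI} wholesale). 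A minor further slip: the underlying $4$-manifold of the composite cobordism is the product $[-1,1]\times S^1\times R$ containing an embedded $2$-sphere, not ``$S^2\times R$ with a trivial $2$-sphere component removed.''
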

\begin{proof}
	Since $\SiHI(\emptyset)\cong \bC$, $\SiHI(\ga)\cong\bC^2$, the lemma is equivalent to the statement that $\SiHI_{R,p}(D^+)$ and $\SiHI_{R,p}(E^+)$ are linearly independent maps from  $\SiHI(\emptyset)$ to $\SiHI(\ga)$. 
	Since $\ga$ is a contractible circle, we can find an embedded non-separating annulus in $R$ that contains $\ga$, and view $D^+(\ga)$ and $E^+(\ga)$ as cobordisms between links in the thickened annulus.
	By Corollary \ref{cor_zero_maps_detected_by_AHI}, we only need to show that $\AHI(D^+(\ga))$ and $\AHI(E^+(\ga))$ are linearly independent as maps from $\AHI(\emptyset)$ to $\AHI(\ga)$.  This is proved in \cite{AHI}. In the notation of \cite{AHI}, these two cobordism maps are denoted by $\AHI(D^+)$ and $\AHI(\Sigma^+)$, and the desired result follows from \cite[Proposition 5.9]{AHI}.
\end{proof}

The mirror image of $D^+(\ga)$ defines a cobordism from $\ga$ to the empty set.  We denote this cobordism by $D^-(\ga)$. 
\begin{lem}
	\label{lem_SiHI_D-}
	The map $\SiHI_{R,p}(D^-(\ga))$ takes $v_+(\ga)$ to $0$, takes $v_-(\ga)$ to $\pm u_0$.
\end{lem}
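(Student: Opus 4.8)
The plan is to reduce the statement to a computation in annular instanton homology, exactly as in the proof of Lemma~\ref{lem_linear_independence_v_pm}. Since $\ga$ is contractible in $R$, I would fix an embedded non-separating annulus $N\subset R$ with $\ga\subset N$ (as in Lemma~\ref{lem_linear_independence_v_pm}), and view $D^+(\ga)$, $E^+(\ga)$ and $D^-(\ga)$ as link cobordisms between links in the thickened annulus $[-1,1]\times N$. By Proposition~\ref{prop_embedding_of_annulus_in_R} there is a natural isomorphism $\mathcal{I}$ from $\AHI$ to $\SiHI_{R,p}$ on links in $[-1,1]\times N$, intertwining all link cobordism maps, so it suffices to prove the analogous statement about the maps on $\AHI$.

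Next I would identify the relevant elements. Under $\mathcal{I}^{-1}$ the element $u_0\in\SiHI_{R,p}(\emptyset)$ corresponds, up to sign, to a generator $e_0$ of the one-dimensional space $\AHI(\emptyset)$; since $\mathcal{I}$ intertwines the pair-of-pants maps, $\mathcal{I}^{-1}(u_0)$ is a multiplicative unit in $\AHI(\emptyset)$, which is how the corresponding basis of $\AHI(\ga)$ is set up in \cite{AHI}. By naturality, $\mathcal{I}^{-1}(v_+(\ga))=\pm\AHI(D^+(\ga))(e_0)$ and $\mathcal{I}^{-1}(v_-(\ga))=\pm\tfrac12\AHI(E^+(\ga))(e_0)$; in the notation of \cite{AHI} these are precisely the two generators of $\AHI(\ga)$ built from $\AHI(D^+)$ and $\tfrac12\AHI(\Sigma^+)$ in \cite[Proposition 5.9]{AHI}. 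The lemma then asserts that the ``cap'' cobordism map $\AHI(D^-(\ga))$ — the counit of the Frobenius structure on $\AHI(\ga)\cong\bC[X]/(X^2)$ — kills the first generator and sends the second to $\pm e_0$, which is part of the same family of computations in \cite{AHI}.

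A more self-contained route is to compose cobordisms. By the functoriality property, $\SiHI_{R,p}(D^-(\ga))(v_+(\ga))=\pm\SiHI_{R,p}(D^-(\ga)\circ D^+(\ga))(u_0)$ — with no power-of-two factor, since the gluing is along the connected manifold $S^1\times R$ — and $D^-(\ga)\circ D^+(\ga)$ is a cobordism from $\emptyset$ to $\emptyset$ whose surface is an unknotted $2$--sphere disjoint from $\Omega$, so its cobordism map vanishes (see \cite{AHI, KM:Kh-unknot}). Likewise $\SiHI_{R,p}(D^-(\ga))(v_-(\ga))=\pm\tfrac12\SiHI_{R,p}(D^-(\ga)\circ E^+(\ga))(u_0)$, and $D^-(\ga)\circ E^+(\ga)$ has surface an unknotted torus disjoint from $\Omega$, whose cobordism map equals $\pm 2\,\id$; this produces $\pm u_0$ after the factor $\tfrac12$.

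The main obstacle is bookkeeping rather than conceptual: one must line up the sign and normalization conventions for $v_\pm(\ga)$ and $u_0$ used here with those in \cite{AHI}, and — for the composition argument — cite or re-derive the evaluation of the cobordism maps of a separated $2$--sphere and a separated torus that avoid the singular locus. Note that the ``$v_+(\ga)\mapsto0$'' half is insensitive to all sign choices, and the $\tfrac12$ in the definition of $v_-(\ga)$ is precisely what makes the coefficient in ``$v_-(\ga)\mapsto\pm u_0$'' equal to $\pm1$ rather than $\pm2$.
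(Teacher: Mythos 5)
Your first route is exactly the paper's proof: reduce to $\AHI$ via the natural isomorphism of Proposition~\ref{prop_embedding_of_annulus_in_R} (equivalently, Corollary~\ref{cor_zero_maps_detected_by_AHI} applied to $D^-\circ D^+$ and to $D^-\circ E^+$ minus twice the product cobordism), then cite \cite[Proposition 5.9]{AHI}. The second, ``self-contained'' route by composing cobordisms is also correct and is a mild repackaging of the same facts — the vanishing of the separated sphere and the factor of $2$ from the separated torus are precisely the counit computations of the Frobenius algebra established in \cite[Proposition 5.9]{AHI} — so it does not actually dispense with that citation, though the bookkeeping ($n=1$ in the gluing formula, the $\tfrac12$ normalization of $v_-$) is handled correctly.
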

\begin{proof}
	This follows from the same argument as Lemma \ref{lem_linear_independence_v_pm}. We use Corollary \ref{cor_zero_maps_detected_by_AHI} to reduce to the case of $\AHI$, and then cite \cite[Proposition 5.9]{AHI}. The  cobordism map defined by $D^-(\ga)$ is denoted by $\AHI(D^-)$ in \cite{AHI}.
\end{proof}

\subsubsection{Step 2.} 
\label{subsubsec_iso_V_to_SiHI_2}
Now we fix a basis for $\SiHI(\ga)$ when $\ga$ is non-separating.

Fix a pair of non-separating oriented simple closed curves $(\ga_0,\ga_0')$ on $R$ with algebraic intersection number $1$ that are disjoint from $p$. By Lemma \ref{lem_grading_SiHI_range},  the $\ga_0'$--degrees of $\SiHI(\ga_0)$ are contained in $\{-1,1\}$. By Lemma \ref{lem_SiHI(C)_iso_V(C)}, we have $\dim \SiHI(\ga_0)=2$. 
Hence by Lemma \ref{lem_same_dim_opposite_eigenvalues}, for each $d\in\{1,-1\}$, the component of $\SiHI_{\Sigma,p}(\ga_0)$ with $\ga_0'$--degree $d$ is one dimensional.

 Choose a non-zero homogeneous element $w_0\in \SiHI(\ga_0)$  with respect to the $\ga_0'$--degree. By Proposition \ref{prop_diff_induces_id_on_II}, this condition on $w_0$ does not depend on the choice of $\ga_0'$.
 
 \begin{lem}
 	\label{lem_ga0_to_ga_SiHI}
 	Suppose $\ga$ is an oriented non-separating simple closed curve on $R$ and let $\varphi:R\to R$ be a diffeomorphism fixing $p$ such that $\varphi(\ga_0) = \ga$. Let $\varphi_*:\SiHI(\ga_0)\to \SiHI(\ga)$ be the isomorphism induced by the diffeomorphism $\id_{S^1}\times \varphi$. Then the map $\varphi_*$ is independent of the choice of $\varphi$ up to sign. 
 \end{lem}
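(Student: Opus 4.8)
The plan is to show that any two diffeomorphisms $\varphi_1,\varphi_2\colon R\to R$ fixing $p$ with $\varphi_i(\ga_0)=\ga$ induce the same map $\SiHI(\ga_0)\to\SiHI(\ga)$ up to sign, which is equivalent to showing that $\psi\coloneqq \varphi_2^{-1}\circ\varphi_1$ induces $\pm\id$ on $\SiHI(\ga_0)=\SiHI_{R,p}(\{0\}\times\ga_0)$. Here $\psi$ is a diffeomorphism of $R$ fixing $p$ and fixing $\ga_0$ \emph{setwise}. By the functoriality in Example~\ref{exmp_diff_induce_iso}, it suffices to treat $\psi$, so the whole statement reduces to: if $\psi\colon R\to R$ fixes $p$ and maps $\ga_0$ to itself, then $\psi_*=\pm\id$ on $\SiHI(\ga_0)$.

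First I would reduce to the case where $\psi$ fixes a closed annular neighborhood $N$ of $\{0\}\times\ga_0$ (equivalently, an annular neighborhood of $\ga_0$ in $R$, thickened) pointwise, together with $\{p\}$. Indeed, a diffeomorphism of $R$ that preserves $\ga_0$ setwise can, up to isotopy (through diffeomorphisms fixing $p$ and preserving $\ga_0$ setwise), be arranged either to fix a neighborhood $N$ of $\ga_0$ pointwise, or to differ from such a map by the Dehn twist $D_{\ga_0}$ along $\ga_0$ itself and/or by the orientation-reversing involution of $\ga_0$. The Dehn twist $D_{\ga_0}$ along the core of $N$ is isotopic to the identity \emph{rel} $\partial N$ only after... — more carefully, $D_{\ga_0}$ acts on $\SiHI(\ga_0)$, and I would argue separately that it acts as $\pm\id$: since $D_{\ga_0}$ is supported in the annulus $N$, and $\{0\}\times\ga_0$ lies on the core, the pair $(S^1\times N,\{0\}\times\ga_0)$ is invariant and the induced map is computed inside annular instanton homology $\AHI$ via Proposition~\ref{prop_embedding_of_annulus_in_R}; the Dehn twist along the core curve of the annulus acts trivially on $\AHI$ of the core circle (this is the statement that the annular Dehn twist fixes the relevant generators, cf.\ \cite{AHI}). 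For the part of $\psi$ that fixes $N$ pointwise: now apply Proposition~\ref{prop_diff_induces_id_on_II} directly, with the link being $L=\{0\}\times\ga_0\subset(-1,1)\times N$ — here I must take $N$ to be an annulus, so $\partial N$ has two components (an even number, as required), and I need $R\setminus N$ connected of genus at least $3$, which holds since $R$ has genus at least $3$ and $\ga_0$ is non-separating (so $\ga_0$ can be taken to have a complement of full genus); Proposition~\ref{prop_diff_induces_id_on_II} then gives $\psi_*=\pm\id$ on $\SiHI_{R,p}(L)=\SiHI(\ga_0)$.

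The remaining subtlety is the possible orientation-reversal of $\ga_0$: a diffeomorphism $\psi$ fixing $\ga_0$ setwise may reverse its orientation. But $\SiHI(\ga_0)$ as defined does not depend on an orientation of $\ga_0$ (it depends only on the submanifold $\{0\}\times\ga_0$), so this causes no ambiguity at the level of the space; one only needs to check that the two generators $\bfw_\mfo,\bfw_{\mfo'}$ get swapped in a controlled way, but since we are only claiming the induced \emph{map} $\varphi_*$ is well-defined up to sign (not that it fixes each generator), a swap composed with the canonical identification is still the identity up to sign. Concretely: if $\psi$ reverses orientation on $\ga_0$, decompose $\psi = \psi' \circ r$ where $r$ is a fixed model orientation-reversing involution of $R$ preserving $\ga_0$ and $p$, and $\psi'$ preserves the orientation of $\ga_0$; then $\psi'_* = \pm\id$ by the argument above, so $\psi_* = \pm r_*$, and since $r_*$ is a \emph{fixed} isomorphism independent of $\psi$, the map $\varphi_*$ is well-defined up to sign and the composition convention (it is $r_*$ precomposed appropriately) is absorbed into the choice of $w_0$ being homogeneous.

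The main obstacle I anticipate is the bookkeeping in the reduction step: carefully justifying that every diffeomorphism of $R$ preserving $\ga_0$ setwise and fixing $p$ is, up to isotopy within this class, a composition of (i) a diffeomorphism fixing an annular neighborhood $N$ pointwise, (ii) a power of the annular Dehn twist $D_{\ga_0}$, and (iii) the fixed model involution $r$. This is essentially a statement about the mapping class group of the pair $(R,\ga_0)$ relative to $p$, and while standard (it follows from the fact that the stabilizer of an isotopy class of a non-separating simple closed curve is generated by the Dehn twist along it together with mapping classes supported in the complement, plus a reflection), writing it cleanly requires some care. Once that is in place, the three pieces are handled by Proposition~\ref{prop_diff_induces_id_on_II}, the annular computation of \cite{AHI}, and the definitional independence on orientation of $\ga_0$, respectively.
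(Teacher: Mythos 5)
Your overall strategy---reducing to showing $\psi_*=\pm\id$ for a single self-diffeomorphism $\psi$ of $R$ fixing $p$ and preserving $\ga_0$, and then decomposing $\psi$ into pieces handled by Proposition~\ref{prop_diff_induces_id_on_II} and isotopy-invariance---matches the paper's. The paper's decomposition is the cleaner one: it writes $\psi=\phi_1\circ\phi_2$ with $\phi_1$ isotopic to the identity through diffeomorphisms preserving $\ga_0$ and $p$ (so Example~\ref{exmp_diff_induce_iso} applies) and $\phi_2$ the identity near $\ga_0$ (so Proposition~\ref{prop_diff_induces_id_on_II} applies). What makes such a decomposition available is that the Dehn twist $D_{\ga_0}$ is isotopic, through diffeomorphisms preserving $\ga_0$ setwise and fixing $p$, to the Dehn twist $D_\beta$ along a parallel pushoff $\beta$ of $\ga_0$ supported away from the chosen neighborhood $N$ of $\ga_0$; so $D_{\ga_0}$ never needs to be treated as a separate case.

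Your separate treatment of $D_{\ga_0}$ via $\AHI$ has a genuine gap. The naturality in Proposition~\ref{prop_embedding_of_annulus_in_R} intertwines $\mathcal{I}$ with \emph{link cobordism maps} $\SiHI_{R,p}(S)$ for cobordisms $S\subset[-1,1]\times[-1,1]\times A$, whereas $(D_{\ga_0})_*$ is a diffeomorphism-induced map whose defining cobordism (Example~\ref{exmp_diff_induce_iso}) carries a twisted boundary parametrization and is not a priori of that form. The observation that the core Dehn twist of $A$ is isotopic to $\id_A$ without fixing $\partial A$ does not make $D_{\ga_0}$ isotopic to $\id_R$ (it is not), and so says nothing directly about $(D_{\ga_0})_*$ on $\SiHI_{R,p}(\ga_0)$. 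To repair the step you would need precisely the isotopy $D_{\ga_0}\simeq D_\beta$ noted above, at which point Proposition~\ref{prop_diff_induces_id_on_II} applies and the $\AHI$ detour becomes unnecessary.

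Finally, the orientation-reversal case you raise does not occur. In the lemma $\ga$ is oriented, so if $\varphi_1(\ga_0)=\varphi_2(\ga_0)=\ga$ as oriented curves, then $\psi$ preserves the orientation of $\ga_0$. Indeed, if an orientation-reversing $\psi$ were allowed the conclusion would \emph{fail} as stated, since $\psi_*$ would swap the two $\ga_0'$--homogeneous pieces of $\SiHI(\ga_0)$ (i.e.\ send $\bfw_{\mfo}$ to a multiple of $\bfw_{\mfo'}$) rather than equal $\pm\id$; this should be read as a signal that the case is excluded by the hypotheses, not as something to be absorbed into conventions on $w_0$.
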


\begin{proof}
	Suppose $\varphi_1$ and $\varphi_2$ are two diffeomorphisms on $R$ such that $\varphi_i(p)=p$ and $\varphi_i(\ga_0)=\ga$. We need to show that $(\varphi_1)_* = \pm (\varphi_2)_*$. 
	
	The map $\varphi_1^{-1}\circ \varphi_2$ can be written as a composition of two diffeomorphisms $\phi_1$ and $\phi_2$, where $\phi_1$ is isotopic to the identity, and $\phi_2$ is the identity map on a neighborhood of $\ga_0$. By Proposition \ref{prop_diff_induces_id_on_II}, the map defined by $\phi_2$ is $\pm \id$. Since $\phi_1$ is isotopic to the identity, it also induced $\pm \id$ on $\SiHI(\ga_0)$ (see Example \ref{exmp_diff_induce_iso}). Therefore $(\varphi_1)_* = \pm (\varphi_2)_*$. 
\end{proof}

\begin{defn}
	Suppose $\ga$ is a non-separating curve on $R$ disjoint from $p$, and let $\mfo$ be an orientation of $\ga$. 
    Let $\varphi$ be an orientation-preserving self-diffeomorphism of $R$ that takes $\ga_0$ to $\ga$ with orientation $\mfo$, let $\varphi_*$ be defined as in Lemma \ref{lem_ga0_to_ga_SiHI}.
	Define $w_{\mfo,w_0}(\ga)\in \SiHI(\ga)$ to be the image of $w_0$ under the map $\varphi_*$.
\end{defn}

By definition, $w_{\mfo,w_0}(\ga)$ is well-defined up to sign.

\begin{lem}
	Suppose $\ga$ is a non-separating curve on $R$ disjoint from $p$, and let $\mfo$, $\mfo'$ be the two orientations of $\ga$. Then $w_{\mfo,w_0}(\ga)$ and $w_{\mfo',w_0}(\ga)$ are linearly independent in $\SiHI(\ga)$.
\end{lem}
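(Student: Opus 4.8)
The plan is to exploit the fact that $\dim_\bC\SiHI(\ga)=2$ by Lemma~\ref{lem_SiHI(C)_iso_V(C)}, so that linear independence of $w_{\mfo,w_0}(\ga)$ and $w_{\mfo',w_0}(\ga)$ is equivalent to their not being scalar multiples of one another, and then to distinguish the two orientations of $\ga$ using the grading associated with a geometrically dual curve. First I would reduce to the case $\ga=\ga_0$. Fix an orientation-preserving diffeomorphism $\varphi\colon R\to R$ with $\varphi(p)=p$ carrying $\ga_0$, with its reference orientation $\mfo_0$, to $\ga$ with orientation $\mfo$; then $w_{\mfo,w_0}(\ga)=\pm\varphi_*(w_0)$. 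If $r\colon R\to R$ is any orientation-preserving diffeomorphism fixing $p$ that reverses the orientation of $\ga_0$, then $\varphi\circ r$ carries $(\ga_0,\mfo_0)$ to $\ga$ with orientation $\mfo'$, so by Lemma~\ref{lem_ga0_to_ga_SiHI} we have $w_{\mfo',w_0}(\ga)=\pm\varphi_*(r_*(w_0))$. Since $\varphi_*$ is an isomorphism and linear independence is insensitive to the sign ambiguities, it suffices to prove that $w_0$ and $r_*(w_0)$ are linearly independent in $\SiHI(\ga_0)$ for one convenient such $r$.

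Next I would take $r$ to reverse the orientations of both $\ga_0$ and $\ga_0'$ at once. Such an $r$ exists: a regular neighborhood $N_0\subset R\setminus\{p\}$ of $\ga_0\cup\ga_0'$ is a one-holed torus, since $\ga_0$ and $\ga_0'$ meet transversally in a single point; identifying $N_0$ with $(\bR^2/\bZ^2)\setminus B$ for a small disk $B$ about $(\tfrac12,\tfrac12)$ so that $\ga_0,\ga_0'$ become $\{y=0\},\{x=0\}$, the elliptic involution $(x,y)\mapsto(-x,-y)$ preserves $N_0$, reverses both curves, and rotates $\partial N_0$ by $\pi$; since rotation by $\pi$ is isotopic to the identity on a circle, this map extends over a collar of $\partial N_0$ and then by the identity over $R\setminus N_0$ to the desired orientation-preserving $r$ fixing $p$. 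Now $w_0$ spans the one-dimensional $\ga_0'$-degree component of $\SiHI(\ga_0)$ of some degree $d\in\{1,-1\}$, hence $\muu(S^1\times\ga_0')w_0=d\,w_0$. Because $r$ carries $S^1\times\ga_0'$ onto $S^1\times\ga_0'$ with the opposite orientation, Property~\ref{property_mu_map_intertwine_with_cobordism} (applied to the mapping cylinder of $\id_{S^1}\times r$) together with the linearity of $\muu$ in the homology class (Property~\ref{property_linearity_and_commutativity_of_mu}) gives $r_*\circ\muu(S^1\times\ga_0')=-\muu(S^1\times\ga_0')\circ r_*$, so $\muu(S^1\times\ga_0')\,r_*(w_0)=-d\,r_*(w_0)$. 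Thus $w_0$ and $r_*(w_0)$ are eigenvectors of $\muu(S^1\times\ga_0')$ with the distinct eigenvalues $d$ and $-d$, and are therefore linearly independent.

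I expect the only genuine point to be the construction of the diffeomorphism $r$ reversing $\ga_0$ and $\ga_0'$ simultaneously; the hypothesis that $\ga_0$ is non-separating is what supplies the dual curve $\ga_0'$ and hence this grading. Everything else is bookkeeping with $\mu$-gradings and with the ubiquitous overall sign ambiguities, none of which affects linear independence, so I would not dwell on it.
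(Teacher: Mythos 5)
Your proof is correct and in essence the same as the paper's: distinguish the two orientations of $\ga$ by the grading associated to a geometrically dual curve, which must flip sign when the orientation of $\ga$ is reversed. The paper's own proof is extremely terse, attributing the sign flip in the $\ga'$-degree to ``by definition''; your explicit construction of the orientation-preserving involution $r$ reversing both $\ga_0$ and $\ga_0'$, together with the anticommutation $r_*\circ\muu(S^1\times\ga_0')=-\muu(S^1\times\ga_0')\circ r_*$ obtained from Property~\ref{property_mu_map_intertwine_with_cobordism} and the linearity of $\muu$, supplies precisely the mechanism the paper leaves implicit (the only minor point is that your one-holed-torus picture tacitly assumes $\ga_0$ and $\ga_0'$ meet transversely at a single point, whereas the paper stipulates only algebraic intersection number one, but this is easily arranged by isotopy and is harmless by Proposition~\ref{prop_diff_induces_id_on_II}).
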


\begin{proof}
	Let $\ga'$ be a fixed oriented non-separating curve on $R$ that intersects $\ga$ transversely at one point. Then by definition, $w_{\mfo,w_0}(\ga)$ and $w_{\mfo',w_0}(\ga)$ are both homogeneous with respect to the $\ga'$--degree, and their $\ga'$--degrees have opposite signs. Therefore $w_{\mfo,w_0}(\ga)$ and $w_{\mfo',w_0}(\ga)$ are linearly independent.
\end{proof}

\subsubsection{Step 3}
Now we combine the results above to define an explicit isomorphism from $V_\bC(C)$ to $\SiHI(C)$.

\begin{defn}
	\begin{enumerate}
		\item 	If $\ga$ is a contractible simple closed curve on $R$, define
		$\theta_{w_0}(\ga): V_\bC(\ga)\to \SiHI(\ga)$
		to be the linear map that takes $\bfv_+(\ga)$ to $v_+(\ga)$, and takes $\bfv_-(\ga)$ to $v_-(\ga)$.
		\item 	If $\ga$ is a non-separating simple closed curve on $R$, define
		$\theta_{w_0}(\ga): V_\bC(\ga)\to \SiHI(\ga)$
		to be the linear map that takes $\bfw_\mfo(\ga)$ to $w_{\mfo,w_0}(\ga)$ for each orientation $\mfo$ of $\ga$.
	\end{enumerate}
\end{defn}

The maps $\theta_{w_0}(\ga)$ are isomorphisms. They are only well defined up to changes of signs on the images of the standard basis of $V_\bC(\ga)$. When $\ga$ is contractible,  $\theta_{w_0}(\ga)$ does not depend on the choice of $w_0$.

Suppose $C\subset R\backslash\{p\}$ is a closed $1$--manifold such that every non-trivial component is non-separating. Let $\sigma$ be an ordering of the components of $C$ as $(\ga_1,\ga_2,\cdots,\ga_k)$. 
Then the link $C$ is isotopic to 
$$(\{0\}\times\ga_1)\sqcup \dots\sqcup (\{0\}\times \ga_k)$$
 via an isotopy of $(-1,1)\times R$ that fixes the projection of every point on $R$. Any two such isotopies induce the same map from $\SiHI(C)$ to 
 $$\SiHI_{R,p}((\{0\}\times\ga_1)\sqcup \dots\sqcup (\{0\}\times \ga_k))$$ 
 because the diffeomorphisms defining the two maps are isotopic.
\begin{defn}
	\label{defn_Theta}
Define
$$
\Theta_{w_0,\sigma} : V_\bC(C) \to \SiHI(C)
$$
by $\Theta_{w_0,\sigma}  = \clP\circ (\theta_{w_0}(\ga_1)\otimes \dots \otimes \theta_{w_0}(\ga_k))$, where we identify $\SiHI(C)$ with $\SiHI_{R,p}((\{0\}\times\ga_1)\sqcup \dots\sqcup (\{0\}\times \ga_k))$ using the isomorphism described above, and $\clP$ is the map defined in Definition \ref{defn_clP}.
\end{defn}

\subsection{Properties of $\Theta_{w_0,\sigma}$}

The next two lemmas study the behavior of the map $\Theta_{w_0,\sigma}$ when changing the ordering of the components of $C$. 

\begin{lem}
	\label{lem_change_order_with_contractible}
	Suppose $\ga_1,\ga_2$ are two disjoint curves on $R\backslash\{p\}$ such that $\ga_1$ is contractible, and $\ga_2$ is either contractible or non-separating. Let $x\in V(\ga_2)$ be an element in the standard basis.  In other words, $x$ is of the form $\bfw_{\mfo}(\ga_2)$ if $\ga_2$ is non-separating, and is of the form $\bfv_\pm(\ga_2)$ if $\ga_2$ is contractible. Let $C=\ga_1\cup \ga_2$, let $\sigma$ be the ordering $(\ga_1,\ga_2)$, and $\sigma'$ be the ordering $(\ga_2,\ga_1)$. Then for each $\epsilon\in\{+,-\}$,
	$$
	 \Theta_{w_0,\sigma} \big( \bfv_\epsilon(\ga_1) \otimes x\big) = \pm   \Theta_{w_0,\sigma'} \big( x\otimes \bfv_\epsilon(\ga_1) \big).
	$$
\end{lem}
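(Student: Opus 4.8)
The plan is to reduce the claim to the associativity and naturality of the pair-of-pants map $\clP$ (Lemma \ref{lem_associativity_naturality_clP}) together with the fact that the two orderings $\sigma$ and $\sigma'$ of the components are related by an ambient isotopy of $(-1,1)\times R$ that swaps the two vertical slices in which $\{0\}\times\ga_1$ and $\{0\}\times\ga_2$ sit. First I would unwind Definition \ref{defn_Theta}: $\Theta_{w_0,\sigma}(\bfv_\epsilon(\ga_1)\otimes x)=\clP(v_\epsilon(\ga_1)\otimes\theta_{w_0}(\ga_2)(x))$ under the identification of $\SiHI(C)$ with $\SiHI_{R,p}((\{0\}\times\ga_1)\sqcup(\{0\}\times\ga_2))$, and similarly $\Theta_{w_0,\sigma'}(x\otimes\bfv_\epsilon(\ga_1))=\clP(\theta_{w_0}(\ga_2)(x)\otimes v_\epsilon(\ga_1))$ under the identification with $\SiHI_{R,p}((\{0\}\times\ga_2)\sqcup(\{0\}\times\ga_1))$. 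So the content of the lemma is that these two elements of $\SiHI(C)$ agree up to sign once the two identifications are taken into account.

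The key step is to compare the excision map $\clP$ with arguments swapped to $\clP$ without swapping. Since $\ga_1$ is contractible, it is supported in a small ball $B\subset R$ disjoint from $\ga_2$; choose $B$ so small that it lies in a coordinate chart, and let $\psi$ be an ambient isotopy of $(-1,1)\times R$, supported near $\{0\}\times(B\cup\ga_2)$, that interchanges the two vertical slices carrying $\{0\}\times\ga_1$ and $\{0\}\times\ga_2$ and is the identity outside a neighborhood of their union. Because $\ga_1$ lives in a ball, this ``slide $\ga_1$ past $\ga_2$'' isotopy exists and carries $(\{0\}\times\ga_1)\sqcup(\{0\}\times\ga_2)$ to $(\{0\}\times\ga_2)\sqcup(\{0\}\times\ga_1)$. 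The induced map on $\SiHI$ is, by construction, exactly the identification used implicitly in passing between the two sides. Then I would invoke Lemma \ref{lem_associativity_naturality_clP}(1) (naturality of $\clP$ under the product cobordism of the factor not being moved) to commute this slide isotopy with $\clP$; the upshot is that, up to the ever-present sign ambiguity in cobordism maps, $\clP(v_\epsilon(\ga_1)\otimes\theta_{w_0}(\ga_2)(x))$ and $\clP(\theta_{w_0}(\ga_2)(x)\otimes v_\epsilon(\ga_1))$ are carried to one another.

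The main obstacle I anticipate is bookkeeping rather than geometry: making precise that the isotopy $\psi$ realizing the swap of slices induces the \emph{same} isomorphism $\SiHI(C)\to\SiHI(C)$ (up to sign) as the composite of the two ordering-identifications appearing in $\Theta_{w_0,\sigma}$ and $\Theta_{w_0,\sigma'}$. This is where the hypothesis that $\ga_1$ is contractible is used in an essential way—it guarantees $\ga_1$ can be slid freely past $\ga_2$ inside $(-1,1)\times R$ without linking or winding, so that the slice-swap isotopy is genuinely ambient and fixes the projection to $R$ of every point outside a small ball. Once that identification is in hand, the conclusion follows from Lemma \ref{lem_associativity_naturality_clP} and Example \ref{exmp_diff_induce_iso} (isotopic diffeomorphisms induce the same map up to sign), with all equalities holding only up to sign exactly as in the statement.
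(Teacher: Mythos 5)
Your geometric intuition is sound, and you correctly identify that the contractibility of $\ga_1$ is what powers the argument, but the specific formal tool you reach for does not do what you want, and the resulting gap is more than bookkeeping.

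Your key step is to ``commute the slide isotopy with $\clP$'' via Lemma \ref{lem_associativity_naturality_clP}(1). That lemma applies to a cobordism $S_1:L_1\to L_1'$ run in the lower slot while $L_2$ sits untouched in the upper slot, producing a map $\SiHI_{R,p}(L_1\sqcup L_2)\to\SiHI_{R,p}(L_1'\sqcup L_2)$. The slide isotopy you construct is instead a map $\SiHI_{R,p}(L_1\sqcup L_2)\to\SiHI_{R,p}(L_2\sqcup L_1)$: it moves $\ga_1$ from the lower to the upper slice \emph{and} moves $\ga_2$ from the upper to the lower slice. It does not factor as ``product on one tensor factor'' and so is not covered by Lemma \ref{lem_associativity_naturality_clP}(1). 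Moreover, the general claim you are in effect asserting---that $\clP(a\otimes b)$ and $\clP(b\otimes a)$ agree up to sign after the slide identification---is \emph{false} in general: Lemma \ref{lem_change_order_non_parallel} shows that when both curves are non-contractible the discrepancy can be a genuine fourth root of unity. So the hypothesis that $\ga_1$ is contractible has to enter the argument before you make any claim about $\clP$ and swapping, not just in verifying that the ambient slide exists.

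The paper's proof makes exactly that move: it first uses Lemma \ref{lem_multiplicative_unit} and the fact that $v_\epsilon(\ga_1)$ is by construction $\SiHI_{R,p}(D^+)(u_0)$ or $\tfrac12\SiHI_{R,p}(E^+)(u_0)$ to rewrite \emph{both} sides of the lemma as images of the single element $u_0\otimes x\in\SiHI(\emptyset)\otimes\SiHI(\ga_2)$ under cobordism maps from $(S^1\times R,\emptyset,S^1\times\{p\})\sqcup(S^1\times R,\{0\}\times\ga_2,S^1\times\{p\})$ to $(S^1\times R,\{0\}\times C,S^1\times\{p\})$. After that reduction it remains only to observe that the two composite cobordisms (disk-cap then excision then slide, in the two orders) are diffeomorphic rel boundary, which is where the ``$\ga_1$ sits in a small ball, so the cap can slide freely in the $S^1$ direction'' picture genuinely applies. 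Without the reduction through $u_0\otimes x$, the argument you have proposed does not close; with it, your geometric picture becomes a proof, but the appeal to Lemma \ref{lem_associativity_naturality_clP}(1) should be replaced by an explicit comparison of cobordisms.
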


\begin{proof}
Each side of the equation is given by the image of a cobordism map from 
$$(S^1\times R, \emptyset, S^1\times \{p\} ) \sqcup (S^1\times R, \{0\}\times \ga_2, S^1\times \{p\} )$$
to 
$$(S^1\times R, \{0\}\times C, S^1\times \{p\} )$$
applied to the element $u_0\otimes x\in \SiHI(\emptyset)\otimes \SiHI(\ga_2)$. It is straightforward to verify that the two cobordisms are diffeomorphic relative to the boundary.
\end{proof}

\begin{lem}
	\label{lem_change_order_non_parallel}
	Suppose $\ga_1,\ga_2$ are two disjoint curves on $R$ such that $R\backslash{\ga_1\cup \ga_2}$ is connected. Let $\mfo_1,\mfo_2$ be fixed orientations of $\ga_1,\ga_2$. Let $C=\ga_1\cup \ga_2$, let $\sigma$ be the ordering $(\ga_1,\ga_2)$, and $\sigma'$ be the ordering $(\ga_2,\ga_1)$. Then there exists $\lambda=\pm 1$ or $\pm i$, such that 
	$$
	 \Theta_{w_0,\sigma} \big( \bfw_{\mfo_1}(\ga_1) \otimes \bfw_{\mfo_2}(\ga_2)\big) =  \lambda \cdot \Theta_{w_0,\sigma'} \big(  \bfw_{\mfo_2}(\ga_2) \otimes \bfw_{\mfo_1}(\ga_1)\big).
	$$
\end{lem}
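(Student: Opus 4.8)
The plan is to reduce the statement to a computation about cobordism maps by exploiting the fact that both sides are obtained from the same element $w_0 \otimes w_0 \in \SiHI(\ga_0)\otimes \SiHI(\ga_0)$ pushed forward by cobordisms that differ only by a permutation of the two copies of $(-1,1)\times R$. First I would unwind the definition of $\Theta_{w_0,\sigma}$: pick an orientation‑preserving diffeomorphism $\varphi_1$ of $R$ (fixing $p$) taking $\ga_0$ with its chosen orientation to $(\ga_1,\mfo_1)$, and $\varphi_2$ taking $\ga_0$ to $(\ga_2,\mfo_2)$. By Lemma \ref{lem_ga0_to_ga_SiHI} these are well‑defined up to sign on instanton homology. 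Then $\Theta_{w_0,\sigma}(\bfw_{\mfo_1}(\ga_1)\otimes \bfw_{\mfo_2}(\ga_2))$ is, up to sign, the image of $w_0\otimes w_0$ under $\clP\circ\big((\varphi_1)_* \otimes (\varphi_2)_*\big)$, where $\clP$ is the pair‑of‑pants excision along $\{t_*\}\times R$ landing in $\SiHI(\{0\}\times\ga_1 \sqcup \{0\}\times\ga_2)$, followed by the canonical identification with $\SiHI(C)$. The analogous description holds for $\sigma'$ with the roles of the two tensor factors swapped.

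Next I would compare the two cobordisms. The cobordism computing $\Theta_{w_0,\sigma}$ applied to $\bfw_{\mfo_1}(\ga_1)\otimes\bfw_{\mfo_2}(\ga_2)$ and the cobordism computing $\Theta_{w_0,\sigma'}$ applied to $\bfw_{\mfo_2}(\ga_2)\otimes\bfw_{\mfo_1}(\ga_1)$ are diffeomorphic relative to the outgoing boundary: both take the disjoint union of two copies of $(S^1\times R, \emptyset, S^1\times\{p\})$ — call them "slot A" and "slot B" — and produce $(S^1\times R, \{0\}\times C, S^1\times\{p\})$, with slot A carrying the $(\ga_1,\mfo_1)$ data in one ordering and the $(\ga_2,\mfo_2)$ data in the other. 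Since the disjoint union of instanton homology is symmetric (the Künneth isomorphism for $\bC$ coefficients is canonical, and the swap on $\SiHI(\ga_0)\otimes\SiHI(\ga_0)$ sends $w_0\otimes w_0$ to $w_0\otimes w_0$), the two pushed‑forward elements agree up to the sign ambiguity inherent in cobordism maps over $\bC$ — and that would already give $\lambda=\pm 1$. The point where $\lambda=\pm i$ can genuinely enter is the \emph{relative $\bZ/4$ grading}: the identification of $\SiHI(\{0\}\times\ga_1\sqcup\{0\}\times\ga_2)$ with $\SiHI(\{0\}\times\ga_2\sqcup\{0\}\times\ga_1)$ via the linear reordering isotopy of the $(-1,1)$‑factor may shift the grading, and the $\mu$‑map actions $\muu(S^1\times\ga_1')$, $\muu(S^1\times\ga_2')$ (which have degree $2$ and square to an honest operator) interact with the swap, so the well‑definedness of the homogeneous elements $w_{\mfo_i,w_0}$ only pins things down up to a fourth root of unity. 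So the structure is: produce the diffeomorphism of cobordisms, invoke naturality/functoriality of $\II$ and the symmetry of Künneth to get equality up to an \emph{a priori} scalar, then argue the scalar is a unit that must be a fourth root of unity because the two sides have the same norm (both are nonzero vectors in a $1$‑dimensional graded piece, and the ambiguity comes only from sign of cobordism maps and the $\bZ/4$ grading bookkeeping).

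The main obstacle I expect is bookkeeping the sign/phase carefully enough to conclude $\lambda\in\{\pm 1,\pm i\}$ rather than an arbitrary scalar — i.e. showing that beyond the overall $\pm$ of cobordism maps, the only additional freedom is a power of $i$ coming from how the reordering isotopy interacts with the $\bZ/4$ grading and the $\mu$‑map eigenvalue conventions used to single out $w_0$. One clean way to control this: observe that $\ga_1'$ (a curve dual to $\ga_1$) can be taken disjoint from $\ga_2$ and vice versa, so $\muu(S^1\times\ga_1')$ and $\muu(S^1\times\ga_2')$ are simultaneously diagonalizable on the two‑dimensional space $\Theta$ lands in, each eigenvalue in $\{\pm1\}$; the two sides of the claimed identity lie in the \emph{same} joint eigenspace (a $1$‑dimensional line), and since both are images of $w_0\otimes w_0$ under maps differing by the Künneth swap composed with a graded reordering isomorphism, they differ by the scalar by which that reordering isomorphism acts, which is forced to be a unit complex number whose fourth power is $1$ by the $\bZ/4$‑periodicity of the grading. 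I would then simply record $\lambda=\pm1$ or $\pm i$ without computing which, since that is all the later arguments require.
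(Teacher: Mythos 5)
Your proposal correctly identifies the first half of the paper's argument: using dual curves $\ga_1'$ (disjoint from $\ga_2$) and $\ga_2'$ (disjoint from $\ga_1$), the bi-grading by $\big(\muu(S^1\times\ga_1'),\muu(S^1\times\ga_2')\big)$ separates the four basis elements into four one-dimensional eigenspaces, and both sides of the claimed identity lie in the same one-dimensional piece, so they differ by a nonzero scalar $\lambda$, well-defined up to sign. That part is fine.

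The gap is in pinning down $\lambda$. Your primary route---claiming the two defining cobordisms (disjoint union of two copies of $(S^1\times R,\emptyset,S^1\times\{p\})$, then $\clP$, then the reordering isotopy of the link into $\{0\}\times C$) are diffeomorphic rel outgoing boundary and that the K\"unneth swap sends $w_0\otimes w_0$ to itself---would, if correct, give $\lambda=\pm1$. But this is precisely what the lemma does \emph{not} assert, and in Proposition~\ref{prop_description_of_T(b)} the authors explicitly allow powers of $i$ to appear when reordering non-parallel non-contractible components. So the claimed diffeomorphism cannot be taken for granted: the ``slot-swapping'' diffeomorphism of the pair-of-pants cobordism is required to fix the outgoing boundary and exchange the two annular traces of the isotopy, and it is not established that the composite cobordism (with the link-reordering isotopy attached) admits such a self-diffeomorphism respecting the chosen auxiliary/almost-complex data. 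Your fallback appeal to the relative $\bZ/4$ grading does not give $\lambda^4=1$ either: the grading is a relative grading on the homology group, and $\muu$-eigenvalues are integers, not roots of unity; there is no norm in this context that would force $\lambda$ to be unimodular, and $\bZ/4$-periodicity of a grading imposes no constraint on the scalar relating two vectors already sitting in the same graded line.

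The missing idea is short: take an orientation-preserving diffeomorphism $\psi$ of $R$, fixing $p$, with $\psi(\ga_1,\mfo_1)=(\ga_2,\mfo_2)$ and $\psi(\ga_2,\mfo_2)=(\ga_1,\mfo_1)$---this exists precisely because $R\setminus(\ga_1\cup\ga_2)$ is connected. By naturality of $\clP$ under $\id_{S^1}\times\psi$ and the fact that $\psi_*\,w_{\mfo_1,w_0}(\ga_1)=\pm\,w_{\mfo_2,w_0}(\ga_2)$ (and symmetrically), applying $\psi_*$ carries $\Theta_{w_0,\sigma}(\bfw_{\mfo_1}(\ga_1)\otimes\bfw_{\mfo_2}(\ga_2))$ to $\pm\,\Theta_{w_0,\sigma'}(\bfw_{\mfo_2}(\ga_2)\otimes\bfw_{\mfo_1}(\ga_1))$ and vice versa. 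Writing $a=\lambda b$ and using $\psi_* a=\pm b$, $\psi_* b=\pm a$, one gets $\lambda=\pm\lambda^{-1}$, i.e.\ $\lambda^2=\pm1$, so $\lambda\in\{\pm1,\pm i\}$. This symmetry argument is what the paper uses in place of your cobordism-diffeomorphism and $\bZ/4$ heuristics, and it is the step you would need to add.
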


\begin{proof}
	Let $\ga_1'$ be an oriented simple closed curve that is disjoint from $\ga_2$ and intersects $\ga_1$ transversely at one point, and let $\ga_2'$ be an oriented simple closed curve that is disjoint from $\ga_1$ and intersects $\ga_2$ transversely at one point. Let $\mfo_1'$, $\mfo_2'$ be the opposite orientations of $\mfo_1$ and $\mfo_2$ respectively. Then $\SiHI(C)$ is spanned by the images under $\Theta_{w_0,\sigma} $ of 
	$$ \bfw_{\mfo_1}(\ga_1) \otimes \bfw_{\mfo_2}(\ga_2),\quad
	 \bfw_{\mfo_1'}(\ga_1) \otimes \bfw_{\mfo_2}(\ga_2), \quad
	 \bfw_{\mfo_1}(\ga_1) \otimes \bfw_{\mfo_2'}(\ga_2),\quad
	 \bfw_{\mfo_1'}(\ga_1) \otimes \bfw_{\mfo_2'}(\ga_2),$$
	and the four images have different gradings with respect to the pair $(\ga_1',\ga_2')$. The same statement holds for $\Theta_{w_0,\sigma'}$. 
Comparing the gradings of the images shows that 
		$$		 \Theta_{w_0,\sigma} \big( \bfw_{\mfo_1}(\ga_1) \otimes \bfw_{\mfo_2}(\ga_2)\big) =  \lambda \cdot \Theta_{w_0,\sigma'} \big(  \bfw_{\mfo_2}(\ga_2) \otimes \bfw_{\mfo_1}(\ga_1)\big)
		$$
	for some non-zero constant $\lambda\in \bC$ that is well-defined up to sign.
		There is an orientation-preserving diffeomorphism of $R$ that switches $(\ga_1,\mfo_1)$ and $(\ga_2,\mfo_2)$, so we must have $\lambda=\pm \lambda^{-1}$.
 Therefore $\lambda$ is an integer power of $i$.
\end{proof}

We also need the following technical results.

\begin{lem}
	\label{lem_cobordism_map_symmetry_1}
	Suppose $\ga_1,\ga_2$ are parallel simple closed curves on $R$ that are non-separating. Let $C= \ga_1\cup \ga_2$, and $S\subset \Sigma$ be the annulus contained in $\Sigma$ such that $\partial S = C$. Then  $S$ defines a link cobordism from $\emptyset$ to $C$. 
	Since $\ga_1$ and $\ga_2$ are parallel, we identify the orientations of $\ga_1$ and $\ga_2$. Let $\mfo,\mfo'$ be the two orientations of $\ga_i$ for $i=1,2$. 
	Let $\sigma = (\ga_1,\ga_2)$.
	Then there exists $\lambda\in \bC$ such that 
	$$
\SiHI_{R,p}(S)(u_0) = \pm	\Theta_{w_0,\sigma} \big( \lambda\cdot  \bfw_{\mfo}(\ga_1) \otimes \bfw_{\mfo'}(\ga_2) \pm  \lambda\cdot  \bfw_{\mfo'}(\ga_1) \otimes \cdot  \bfw_{\mfo}(\ga_2) \big).
	$$
\end{lem}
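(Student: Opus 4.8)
The plan is to combine a grading argument, which shows that $\SiHI_{R,p}(S)(u_0)$ has no component along the ``diagonal'' basis vectors, with a reduction to the annular setting, which pins down the two remaining ``off--diagonal'' coefficients. Throughout, write $\SiHI_{R,p}(S)(u_0)=\Theta_{w_0,\sigma}(\xi)$ with $\xi\in V_\bC(\ga_1)\otimes V_\bC(\ga_2)$; since $\SiHI(\emptyset)\cong\bC$ by Lemma \ref{lem_SiHI(C)_iso_V(C)}, the element $\xi$ is well defined up to sign, and the four standard basis vectors of $V_\bC(\ga_1)\otimes V_\bC(\ga_2)$ are $\bfw_{\mfo}(\ga_1)\otimes\bfw_{\mfo}(\ga_2)$, $\bfw_{\mfo}(\ga_1)\otimes\bfw_{\mfo'}(\ga_2)$, $\bfw_{\mfo'}(\ga_1)\otimes\bfw_{\mfo}(\ga_2)$, and $\bfw_{\mfo'}(\ga_1)\otimes\bfw_{\mfo'}(\ga_2)$.

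First I would fix the shape of $\xi$ using a $c$--grading of the kind introduced in Section \ref{subsec_iso_V_to_SiHI}. Choose an oriented simple closed curve $\ga'$ in $R\setminus\{p\}$ meeting each of $\ga_1$ and $\ga_2$ transversally in one point, with matching signs; this is possible because $\ga_1$ and $\ga_2$ are parallel. By Lemma \ref{lem_grading_SiHI_range} the $\ga'$--degrees on $\SiHI(\ga_i)$ lie in $\{-1,1\}$; by the construction of the vectors $w_{\mfo,w_0}(\ga_i)$ together with Lemma \ref{lem_same_dim_opposite_eigenvalues}, each $w_{\mfo,w_0}(\ga_i)$ is $\ga'$--homogeneous and the two orientations give opposite $\ga'$--degrees; and by applying a change--of--coordinates diffeomorphism of $R$ fixing $p$ that carries $(\ga_1,\mfo)$ to $(\ga_2,\mfo)$ and $\ga'$ to a curve homologous to $\ga'$, together with Lemma \ref{lem_ga0_to_ga_SiHI} and Property \ref{property_mu_map_intertwine_with_cobordism}, these degrees may be taken to agree for $i=1$ and $i=2$. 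Hence, under $\Theta_{w_0,\sigma}$, the four basis vectors above carry $\ga'$--degrees $+2$, $0$, $0$, and $-2$. On the other side, $\muu(S^1\times\ga')$ is nilpotent on $\SiHI(\emptyset)\cong\bC$ by Lemma \ref{lem_grading_SiHI_range} (the case $n=0$), so $u_0$ lies in $\ga'$--degree $0$; and $\SiHI_{R,p}(S)$ preserves the $\ga'$--grading by Corollary \ref{cor_mu_map_eigenspace_preserved_by_cobordism}, since $[S^1\times\ga']$ agrees on the two ends of the cobordism realizing $S$. It follows that $\xi=a\,\bfw_{\mfo}(\ga_1)\otimes\bfw_{\mfo'}(\ga_2)+b\,\bfw_{\mfo'}(\ga_1)\otimes\bfw_{\mfo}(\ga_2)$ for some $a,b\in\bC$, so the assertion is reduced to $b=\pm a$.

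For $b=\pm a$ I would pass to the annular setting. The cobordism $S$ decomposes (up to isotopy) as a birth creating a contractible circle $c_0$ followed by the split of $c_0$ into $\ga_1\cup\ga_2$; by the definition of $v_+$ in Section \ref{subsec_iso_V_to_SiHI}, the birth part is the cobordism $D^+(c_0)$, so $\SiHI_{R,p}(S)(u_0)=\SiHI_{R,p}(\mathrm{split})\big(v_+(c_0)\big)$. Since $\ga_1$ and $\ga_2$ are parallel, a regular neighborhood $\hat S$ of $\ga_1\cup\ga_2\cup S$ in $R$ is an embedded annulus, and it is non--separating because $\ga_1$ is non--separating; thus $c_0$, $\ga_1$, $\ga_2$, the split cobordism, and $u_0$ all live in $\SiHI_{R,p}$ of configurations inside $\hat S$. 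By Proposition \ref{prop_embedding_of_annulus_in_R} (equivalently Corollary \ref{cor_zero_maps_detected_by_AHI}) the natural isomorphism $\AHI\cong\SiHI_{R,p}$ intertwines link cobordism maps and carries the cobordism--defined vectors $v_\pm$ and, up to scalars, the orientation vectors $\bfw_{\mfo},\bfw_{\mfo'}$ to the corresponding generators of $\AHI$ in \cite{AHI}. Under this identification, $\SiHI_{R,p}(\mathrm{split})(v_+(c_0))$ becomes the value on $\mathbf v_+(c_0)$ of the $\AHI$ split map of a contractible circle into two parallel non--contractible circles, which is computed in \cite{AHI} (see \cite[Section 5]{AHI}) and sends $\mathbf v_+(c_0)$ to $\lambda\big(\mathbf w_{\mfo}(\ga_1)\otimes\mathbf w_{\mfo'}(\ga_2)\pm\mathbf w_{\mfo'}(\ga_1)\otimes\mathbf w_{\mfo}(\ga_2)\big)$ for a suitable $\lambda$; transporting back (and using the first step to know the diagonal coefficients already vanish) gives the stated formula.

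The main obstacle is the bookkeeping in the last step: one must check that the isomorphism of Proposition \ref{prop_embedding_of_annulus_in_R} carries the $\Theta_{w_0,\sigma}$--basis used here --- which is built from diffeomorphisms of $R$ --- to the cobordism--defined basis of \cite{AHI} with only the expected scalar ambiguity, treating $\ga_1$ and $\ga_2$ symmetrically, so that the ``$\pm$'' relating the two off--diagonal terms (the only scalar--independent part of the statement) transfers correctly. A complementary, more intrinsic ingredient is the orientation--preserving self--diffeomorphism $\psi$ of $R$ interchanging $\ga_1$ and $\ga_2$ (necessarily reversing the induced orientations, so $(\ga_1,\mfo)\leftrightarrow(\ga_2,\mfo')$): since $\psi$ extends over the cobordism realizing $S$ and acts as $\pm\id$ on $\SiHI(\emptyset)$ by Lemma \ref{lem_diff_induce_pm_id_with_empty_link}, it constrains $\xi$; but converting this into $b=\pm a$ still requires analyzing how reordering the two $\clP$--factors (Lemma \ref{lem_associativity_naturality_clP}) behaves when $\ga_1$ and $\ga_2$ are parallel, a case not covered by Lemma \ref{lem_change_order_non_parallel}, which is the same sign issue as above. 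The existence of the auxiliary diffeomorphisms used throughout is routine via the change--of--coordinates principle, using that $R\setminus\mathrm{int}(S)$ is connected with two boundary circles.
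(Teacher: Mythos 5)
Your first step (the grading argument eliminating the ``diagonal'' terms $\bfw_{\mfo}\otimes\bfw_{\mfo}$ and $\bfw_{\mfo'}\otimes\bfw_{\mfo'}$) is essentially the paper's first step, and it is fine. The genuine gap is exactly where you yourself flag it: the symmetry $b = \pm a$ of the two off-diagonal coefficients. Neither of your two routes closes it. (i) The $\AHI$ reduction is circular in this paper's logic: the explicit symmetric form for the split of one contractible circle into two parallel non-contractible circles is Lemma \ref{lem_SiHI_cobordism_one_trivial_two_nontrivial}, whose proof \emph{cites} Lemma \ref{lem_cobordism_map_symmetry_1}; what \cite{AHI} supplies (Proposition 5.14) is only the non-vanishing and the vanishing of the $\bfv_-$ image, not the symmetric off-diagonal vector in a basis compatible with the diffeomorphism-defined basis $w_{\mfo,w_0}(\ga)$ used here. (ii) The diffeomorphism $\psi$ swapping $\ga_1\leftrightarrow\ga_2$ runs into the factor-reordering problem for parallel curves, which is precisely the case excluded from Lemma \ref{lem_change_order_non_parallel}, so invoking $\psi$ at the level of $V_\bC(\ga_1)\otimes V_\bC(\ga_2)$ does not by itself yield $b=\pm a$.

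The paper avoids the basis comparison entirely by arguing one level lower, with cobordism maps rather than chosen bases. It writes $\xi = \clP^{-1}\circ\SiHI_{R,p}(S)(u_0)$ using the reversed pair-of-pants $\hat\clP$ (and the fact that $\clP^{-1}$ is a scalar multiple of $\hat\clP$), and reduces $b=\pm a$ to the identity
$$
(\iota\otimes\iota^{-1})\circ\hat\clP\circ\SiHI_{R,p}(S) = \pm\,\phi\circ\hat\clP\circ\SiHI_{R,p}(S)
$$
as maps $\SiHI(\emptyset)\to\SiHI(\ga_2)\otimes\SiHI(\ga_1)$, where $\iota$ is the isotopy isomorphism $\SiHI(\ga_1)\cong\SiHI(\ga_2)$ and $\phi$ is the factor swap. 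This is a statement about two concrete cobordisms in $[-1,1]\times S^1\times R$ being diffeomorphic rel boundary, which one checks directly by a picture (the paper's Figures \ref{fig_pair_of_pants} and \ref{fig_pair_of_pants_switch}). No comparison of the $\Theta_{w_0,\sigma}$-basis with any $\AHI$ basis is needed, and no lemma about reordering $\clP$-factors for parallel curves is invoked. If you want to salvage your $\psi$-based plan, this is the missing ingredient: implement the symmetry as a self-diffeomorphism of the entire $4$-dimensional cobordism (not just of $R$), so that the comparison happens before any identification $\SiHI(C)\cong\SiHI(\ga_1)\otimes\SiHI(\ga_2)$ is made.
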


\begin{proof}
	For each simple closed curve $\ga'$ on $R$, the $\ga'$--grading of the left-hand-side of the equation is zero. Therefore, $\SiHI_{R,p}(S)(u_0)$ must have the form  
	$$\pm \Theta_{w_0,\sigma} \big( \lambda_1 \cdot  \bfw_{\mfo}(\ga_1) \otimes \bfw_{\mfo'}(\ga_2)+  \lambda_2 \cdot  \bfw_{\mfo'}(\ga_1) \otimes \cdot  \bfw_{\mfo}(\ga_2) \big).$$
	We need to show that $\lambda_1=\pm \lambda_2$. 
	
	Since $\ga_1$ and $\ga_2$ are parallel, there is an isomorphism $\iota: \SiHI(\ga_1)\to \SiHI(\ga_2)$ defined by the isotopy from $\ga_1$ to $\ga_2$. 
	Consider the element 
	$$
	\xi = \mathcal{P}^{-1}\circ  \SiHI_{R,p}(S)(u_0) \in \SiHI(\ga_1)\otimes \SiHI(\ga_2),
	$$
	where we identify $\SiHI(\ga_1\cup\ga_2)$ with $\SiHI_{R,p}(\{0\}\times \ga_1\sqcup \{0\}\times \ga_2)$ as in Definition \ref{defn_Theta}. 
	Let $\phi:\SiHI(\ga_1)\otimes \SiHI(\ga_2)\to \SiHI(\ga_2)\otimes \SiHI(\ga_1)$ be the isomorphism that switches the two components. It suffices to show that
	\begin{equation}
		\label{eqn_phi(xi)=xi}
	\phi(\xi) = \pm (\iota\otimes \iota^{-1})\xi.
	\end{equation}
	
	Let $\hat \clP: \SiHI_{R,p}(\ga_1\cup \ga_2)\to \SiHI(\ga_1)\otimes \SiHI(\ga_2)$ be the  map induced by the reversed pair-of-pants cobordism. Recall that the excision theorem was proved by showing that there exists a non-zero constant $c$ only depending on the genus of $R$ such that $\clP\circ  \hat \clP = c\cdot \id.$  So we have $\clP ^{-1} = c^{-1}\cdot \hat\clP$ for some constant $c$ that only depends on the genus of $R$. 
	
	We claim that
	\begin{equation}
		\label{eqn_empty_to_two_parallel_inv_by_switch}
			(\iota\otimes \iota^{-1})\circ \hat{\mathcal{P}}\circ  \SiHI_{R,p}(S) = \pm \phi \circ \hat{ \mathcal{P}}\circ  \SiHI_{R,p}(S)
	\end{equation}
	as maps from $\SiHI(\emptyset)$ to $\SiHI(\ga_2)\otimes \SiHI(\ga_1)$. Equation \eqref{eqn_empty_to_two_parallel_inv_by_switch} holds because the cobordisms defining both sides of the equation are diffeomorphic.  In fact, the cobordisms defining the two sides of \eqref{eqn_empty_to_two_parallel_inv_by_switch} are shown schematically in Figure \ref{fig_pair_of_pants} and Figure \ref{fig_pair_of_pants_switch} in the direction perpendicular to $R$, and the identity follows from the fact that there is a diffeomorphism from Figure \ref{fig_pair_of_pants} to Figure \ref{fig_pair_of_pants_switch} that restricts to the identity on the boundary. Equation \eqref{eqn_empty_to_two_parallel_inv_by_switch} immediately implies \eqref{eqn_phi(xi)=xi}, and the desired result is proved.
\end{proof}
	
	\begin{figure}[!htb]
		\begin{minipage}{0.48\textwidth}
			\centering
			\includegraphics[width=.3\linewidth]{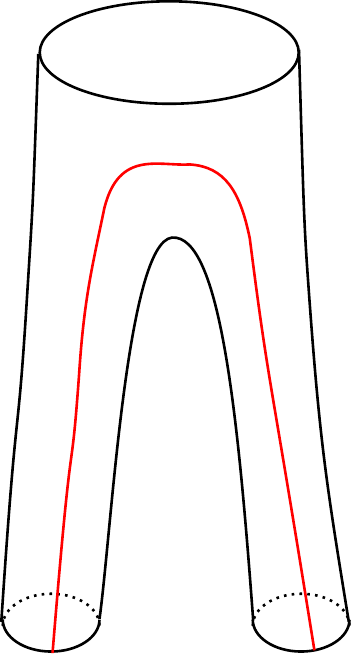}
			\caption{}
			\label{fig_pair_of_pants}
		\end{minipage}\hfill
		\begin{minipage}{0.48\textwidth}
			\centering
			\includegraphics[width=.3\linewidth]{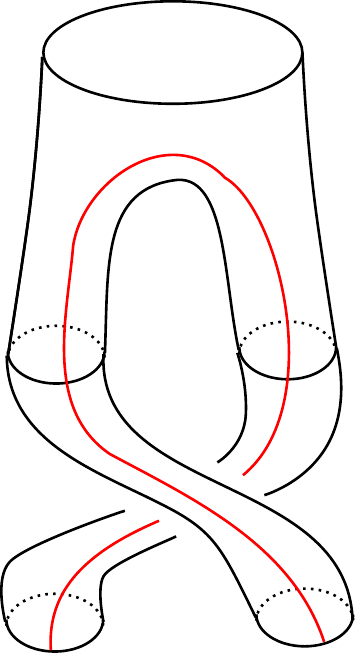}
			\caption{}
		   \label{fig_pair_of_pants_switch}
		\end{minipage}
	\end{figure}

\begin{lem}
	\label{lem_cobordism_map_symmetry_2}
	Suppose $\ga_1,\ga_2$ are parallel simple closed curves on $R$ that are non-separating. Let $C= \ga_1\cup \ga_2$, and $S\subset \Sigma$ be the annulus contained in $\Sigma$ such that $\partial S = C$. Then  $S$ defines a link cobordism from $C$ to $\emptyset$. 
Since $\ga_1$ and $\ga_2$ are parallel, we identify the orientations of $\ga_1$ and $\ga_2$. Let $\mfo,\mfo'$ be the two orientations of $\ga_i$ for $i=1,2$. 
Let $\sigma = (\ga_1,\ga_2)$.
	Then 
	$$
\SiHI_{R,p}(S)  \circ \Theta_{w_0,\sigma}( \bfw_{\mfo}(\ga_1) \otimes \bfw_{\mfo'}(\ga_2) )= \pm \SiHI_{R,p}(S) \circ  \Theta_{w_0,\sigma}( \bfw_{\mfo'}(\ga_1) \otimes \bfw_{\mfo}(\ga_2) )
	$$
\end{lem}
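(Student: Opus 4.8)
The plan is to prove the lemma in the same spirit as Lemma~\ref{lem_cobordism_map_symmetry_1}: I will produce a self-diffeomorphism $\beta$ of $(S^1\times R,\{0\}\times C)$ that interchanges the two elements $\eta_1:=\Theta_{w_0,\sigma}(\bfw_{\mfo}(\gamma_1)\otimes\bfw_{\mfo'}(\gamma_2))$ and $\eta_2:=\Theta_{w_0,\sigma}(\bfw_{\mfo'}(\gamma_1)\otimes\bfw_{\mfo}(\gamma_2))$ appearing in the statement, up to sign, and whose mapping cylinder, stacked onto the death cobordism $S$, recovers $S$ up to a diffeomorphism fixing both ends. Granting this, the lemma is immediate: by Example~\ref{exmp_diff_induce_iso} the map $\beta_*$ is the cobordism map of $\mathrm{mcyl}(\beta)$, so by functoriality (the glued $3$--manifold $S^1\times R$ is connected, so no extra factor of $2$ appears) and the second step we get $\SiHI_{R,p}(S)(\eta_2)=\pm\SiHI_{R,p}(S)(\beta_*\eta_1)=\pm\SiHI_{R,p}(S\circ \mathrm{mcyl}(\beta))(\eta_1)=\pm\SiHI_{R,p}(S)(\eta_1)$.

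\textbf{Constructing $\beta$.} Since $\gamma_1$ and $\gamma_2$ are parallel, fix an identification of a neighborhood of the closed annulus $S$ with $S^1\times(-\epsilon,1+\epsilon)$ carrying $\gamma_1$ to $S^1\times\{0\}$ and $\gamma_2$ to $S^1\times\{1\}$, compatibly with the chosen identification of orientations of $\gamma_1$ and $\gamma_2$. This identifies a neighborhood of $(-1,1)\times S$ in $(-1,1)\times R$ with $S^1\times Q$, where $Q$ is the rectangle $(-1,1)\times(-\epsilon,1+\epsilon)$, and $\{0\}\times C$ becomes $S^1\times\{\mathbf{p}_0,\mathbf{p}_1\}$ with $\mathbf{p}_0=(0,0)$ and $\mathbf{p}_1=(0,1)$. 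Let $\beta$ be $\id_{S^1}$ times a diffeomorphism of $Q$ that is the identity near $\partial Q$ and pushes $\mathbf{p}_0$ to $\mathbf{p}_1$ along an arc in $\{s<0\}$ and $\mathbf{p}_1$ to $\mathbf{p}_0$ along a disjoint arc in $\{s>0\}$. Extending by the identity gives a self-diffeomorphism of $(S^1\times R,\{0\}\times C,S^1\times\{p\})$ fixing $\{t_*\}\times R$; it interchanges the two components of $\{0\}\times C$ and carries $(\gamma_1,\mfo)$ to $(\gamma_2,\mfo)$ and $(\gamma_2,\mfo')$ to $(\gamma_1,\mfo')$, i.e.\ it slides the two curves onto one another through $S$ without reflecting them.

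\textbf{Step 1: $\beta_*(\eta_1)=\pm\eta_2$.} After conjugating by the isotopy used to present $\{0\}\times C$ as $(\{0\}\times\gamma_1)\sqcup(\{0\}\times\gamma_2)$ and unwinding Definition~\ref{defn_Theta} together with the naturality and associativity of $\clP$ (Lemma~\ref{lem_associativity_naturality_clP}), the map $\beta_*$ becomes, on $\SiHI(\gamma_1)\otimes\SiHI(\gamma_2)$, the composition of the two slide isomorphisms $\SiHI(\gamma_1)\to\SiHI(\gamma_2)$ and $\SiHI(\gamma_2)\to\SiHI(\gamma_1)$ with the transposition of the tensor factors. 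By Lemma~\ref{lem_ga0_to_ga_SiHI}, each slide isomorphism sends $w_{\mfo,w_0}$ to $\pm w_{\mfo,w_0}$ and $w_{\mfo',w_0}$ to $\pm w_{\mfo',w_0}$ — the essential point being that sliding through $S$ \emph{preserves} the orientation labels — so this composition carries $w_{\mfo,w_0}(\gamma_1)\otimes w_{\mfo',w_0}(\gamma_2)$ to $\pm\, w_{\mfo',w_0}(\gamma_1)\otimes w_{\mfo,w_0}(\gamma_2)$, which gives $\beta_*\eta_1=\pm\eta_2$. (This is exactly why a naive rigid swap of $\gamma_1$ and $\gamma_2$ by an orientation--preserving diffeomorphism of $R$ does not work here: such a diffeomorphism is forced to preserve the annulus $S$ and hence to reflect both curves, so it would merely fix $\eta_1$ up to sign.)

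\textbf{Step 2 (the main obstacle).} It remains to show that stacking $\mathrm{mcyl}(\beta)$ onto the death cobordism $S$ yields a cobordism diffeomorphic to $S$ by a diffeomorphism restricting to the identity on both ends, so that $\SiHI_{R,p}(S\circ\mathrm{mcyl}(\beta))=\pm\SiHI_{R,p}(S)$. The heuristic is that $\beta$ is supported in a neighborhood of $(-1,1)\times S$ and the cap $S$ lies in the same region: in the rectangle model, $S$ is the trace of shrinking the segment $\{0\}\times[0,1]\subset Q$ to a point, and pre-composing this trace with the half--twist $\beta$ produces a surface which can be isotoped, rel its boundary, back onto the standard trace by pulling the half--twist through the cap; the underlying $4$--manifold is unaffected because the half--twist of the rectangle $Q$ is isotopic to the identity rel $\partial Q$. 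Carrying out this isotopy carefully, while tracking the $S^1$--factor and the orientations, is the technical heart of the proof and is entirely analogous to the verification in Lemma~\ref{lem_cobordism_map_symmetry_1} that Figure~\ref{fig_pair_of_pants} is diffeomorphic to Figure~\ref{fig_pair_of_pants_switch} rel boundary. A secondary, routine issue is keeping track of the signs and of the orderings of the components of $C$ throughout Step~1.
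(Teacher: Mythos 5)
The paper proves this lemma by literally time--reversing the cobordism argument used for Lemma \ref{lem_cobordism_map_symmetry_1}: one shows that the two composed cobordisms $\SiHI_{R,p}(S)\circ\clP$ and $\SiHI_{R,p}(S)\circ\clP\circ(\iota^{-1}\otimes\iota)\circ\phi$ are defined by diffeomorphic cobordisms, which is exactly the mirror of Equation \eqref{eqn_empty_to_two_parallel_inv_by_switch}, and then evaluates on $\bfw_\mfo(\ga_1)\otimes\bfw_{\mfo'}(\ga_2)$. Your route is genuinely different in form: you isolate a self-diffeomorphism $\beta$ of $(S^1\times R,\{0\}\times C)$ and split the argument into ``$\beta_*\eta_1=\pm\eta_2$'' and ``$S\circ\mathrm{mcyl}(\beta)\cong S$ rel boundary.'' The overall structure of the chain $\SiHI(S)(\eta_2)=\pm\SiHI(S)(\beta_*\eta_1)=\pm\SiHI(S\circ\mathrm{mcyl}(\beta))(\eta_1)=\pm\SiHI(S)(\eta_1)$ is sound, and your Step 2, while only sketched, is the same kind of cobordism--diffeomorphism verification that the paper delegates to Figures \ref{fig_pair_of_pants}--\ref{fig_pair_of_pants_switch}.

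The genuine gap is in Step 1. You assert that, transported through $\clP^{-1}$, the map $\beta_*$ becomes the transposition composed with the two slide isomorphisms, and you cite Lemma \ref{lem_associativity_naturality_clP} for this. But that lemma only says $\clP$ is natural for link cobordisms of the form $S_1\sqcup(\text{product})$, i.e.\ cobordisms acting on one tensor factor at a time; $\beta$ swaps the two factors, so that naturality statement does not apply. If you instead try to conjugate $\beta$ by the isotopy pushing $\ga_1$ to $(-1,0)\times R$ and $\ga_2$ to $(0,1)\times R$ and then commute it past the excision, you find that the conjugated $\beta$ must carry points across $\{0\}\times R$, so it does not fix the excision surfaces and you cannot simply commute it past $\clP$. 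And since $\ga_1,\ga_2$ are parallel, every $c$--grading assigns the same value to $\Theta_{w_0,\sigma}(\bfw_\mfo(\ga_1)\otimes\bfw_{\mfo'}(\ga_2))$ and $\Theta_{w_0,\sigma}(\bfw_{\mfo'}(\ga_1)\otimes\bfw_{\mfo}(\ga_2))$, so gradings cannot rescue the claim either. Establishing Step 1 honestly requires a cobordism--diffeomorphism argument of exactly the same flavor and difficulty as what the paper does once, directly, for the composition $\SiHI(S)\circ\clP$; so the detour through $\beta$ does not actually save work, and as written the proposal has a hole at its decisive point.
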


\begin{proof}
This follows from the same argument as Lemma \ref{lem_cobordism_map_symmetry_1} by reversing the directions of all cobordisms.
\end{proof}

\section{Proof of the main theorem}
\label{sec_proof_main}

Let $\Sigma$ be a fixed compact surface with genus zero.
Recall that when $\Sigma\cong S^2$, Theorem \ref{thm_main} follows from Kronheimer--Mrowka's unknot detection result. Therefore we will always assume that $\partial \Sigma\neq \emptyset$ in this section.

Let $F$ be a connected surface such that there is an orientation-reversing diffeomorphism $\tau: \partial F \to \partial \Sigma$. Fix the map $\tau$ and let $R = F\cup_\tau N$. Choose $F$ so that the genus of $R$ is at least $3$. Let $p$ be a fixed point on $F$.

Let $C$ be an embedded closed $1$--manifold in $\Sigma$, and view it as a submanifold of $R$. Since $\Sigma$ has genus zero and $F$ is connected, every non-contractible simple closed curve on $C$ is non-separating in $R$. So the discussions from Section \ref{sec_instanton_for_links_at_0} can be applied to $C$. Let $w_0$ be given as in Section \ref{subsec_iso_V_to_SiHI}.

Suppose $b$ is an embedded disk on $\Sigma$ such that the interior of $b$ is disjoint from $C$ and the boundary of $b$ intersects $C$ at two arcs. 
Then surgery of $C$ along $b$ generates another embedded closed $1$--manifold on $\Sigma$, which we denote by $C_b$. See Figure \ref{fig:band_surgery} for an illustration of the surgery. We will call the disk $b$ the ``band'' that is \emph{attached} to $C$ and call the surgery from $C$ to $C_b$ the \emph{band surgery along $b$}.

\begin{figure}
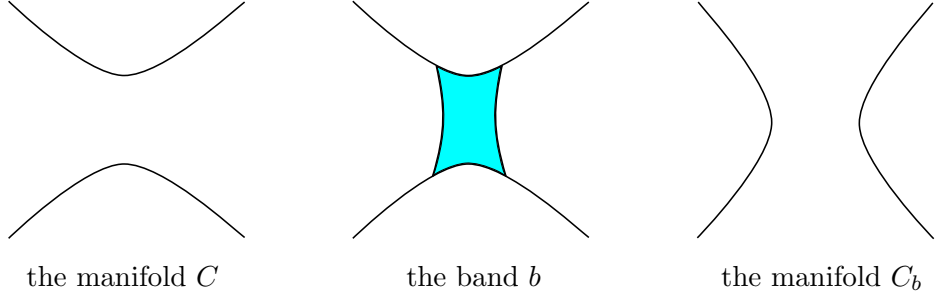

	\begin{overpic}[width=0.7\textwidth]{./figures/band_surgery}
		\put(2,-5){the manifold $C$}
		\put(43,-5){the band $b$}
		\put(77,-5){the manifold $C_b$}
	\end{overpic}
	\vspace{\baselineskip}
	\caption{Band surgery}\label{fig:band_surgery}
\end{figure}

The band surgery along $b$ defines a link cobordism from $C$ to $C_b$.
Let $\SiHI(b)$ denote the cobordism map from $\SiHI(C)$ to $\SiHI(C_b)$.
We will study the properties of the map $\SiHI(b)$. 

To start, consider the case where one of $C$ or $C_b$ has exactly one connected component. 

In the case that $C$ has one component $\ga$ and $C_b$ has two components $\ga_1$ and $\ga_2$, let $\sigma$ denote the only possible ordering of components of $C$, and let $\sigma_b$ be an arbitrary ordering of the components of $C_b$. 
We have the following results:
\begin{lem}
	\label{lem_cobordism_SiHI_one_trivial_to_two_trivial}
	If $\gamma_1$ and $\gamma_2$ are both trivial circles, then $\ga$ is also trivial. The map $\SiHI(b)$ has the form
	\begin{align*}
		\Theta_{w_0,\sigma}\,\mathbf{v}(\gamma)_+  &\mapsto  \pm \Theta_{w_0,\sigma_b}\big(\mathbf{v}(\gamma_1)_+\otimes \mathbf{v}(\gamma_2)_-  \big) \pm
		\Theta_{w_0,\sigma_b}\big( \mathbf{v}(\gamma_1)_-\otimes \mathbf{v}(\gamma_2)_+\big), \\
	\Theta_{w_0,\sigma}\,	\mathbf{v}(\gamma)_-  &\mapsto  \pm \Theta_{w_0,\sigma_b}\big(\mathbf{v}(\gamma_1)_-\otimes \mathbf{v}(\gamma_2)_-\big).
	\end{align*}
\end{lem}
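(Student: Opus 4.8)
The plan is to reduce the computation of $\SiHI(b)$, which is a cobordism map in the genus-$\geq 3$ surface $R$, to the already-understood annular theory. Since $\ga_1,\ga_2$ are both trivial (contractible) circles, a neighborhood of $\ga \cup b$ in $R$ is a disk, and the band surgery cobordism from $C$ to $C_b$ is supported inside $(-1,1)\times D$ for an embedded disk $D\subset R$. We can enlarge $D$ to a non-separating embedded annulus $\varphi(A)\subset R$ with $p\notin\varphi(A)$, so by Proposition \ref{prop_embedding_of_annulus_in_R} and Corollary \ref{cor_zero_maps_detected_by_AHI} the linear-algebraic content of $\SiHI(b)$ (which generators appear with which nonzero coefficients) is the same as that of the corresponding split cobordism map on $\AHI$. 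First I would spell out this reduction: choose $\varphi$, identify $v_\pm(\ga)=\theta_{w_0}(\ga)\mathbf v_\pm(\ga)$ with the corresponding annular generators under the isomorphism of Proposition \ref{prop_embedding_of_annulus_in_R}, and likewise for $v_\pm(\ga_i)$ and the map $\clP$ versus the annular merge/split.

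Next I would invoke the annular computation: the cobordism map for splitting one trivial circle into two trivial circles in the annular (equivalently, ordinary Khovanov-type) setting is computed in \cite{AHI} and matches the classical comultiplication $\Delta$ in the Frobenius algebra $\bC[X]/X^2$, namely $v_+\mapsto v_+\otimes v_- + v_-\otimes v_+$ and $v_-\mapsto v_-\otimes v_-$, up to the overall sign ambiguity inherent in cobordism maps and up to the sign ambiguities in the definitions of $v_\pm$ and $\Theta_{w_0,\sigma_b}$. Concretely, I would argue via grading: the $\ga'$--grading for an auxiliary curve $\ga'$ dual to $\ga$ forces $v_+(\ga)$ to map into the degree considerations that only allow $\mathbf v_+\otimes\mathbf v_-+\mathbf v_-\otimes\mathbf v_+$ (both summands in the same relevant grading) and $v_-(\ga)$ into $\mathbf v_-\otimes\mathbf v_-$; the nonvanishing of each coefficient then follows by composing with the reverse band cobordism $D^-$ and using Lemma \ref{lem_SiHI_D-} together with the fact, from Lemma \ref{lem_linear_independence_v_pm}, that these generators are genuinely there. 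The independence of the ordering $\sigma_b$ (that we may pick it arbitrarily) follows from Lemma \ref{lem_change_order_with_contractible}, since both $\ga_1,\ga_2$ are contractible.

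The trivial statement that $\ga$ is contractible is immediate: $\ga$ is obtained from $\ga_1\cup\ga_2$ by a band surgery along $b$, and surgering two contractible circles along a band in a disk neighborhood yields a contractible circle (the reverse band is also supported in a disk). I would state this first.

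The main obstacle I expect is bookkeeping of signs and the verification that \emph{no} coefficient accidentally vanishes — the ``$\pm$''s in the statement mean we need not pin signs down, but we do need each of the three displayed terms to be present. To handle the non-vanishing, the cleanest route is: post-compose $\SiHI(b)$ with the cap-off cobordisms $D^-(\ga_1)$ or $D^-(\ga_2)$ (Lemma \ref{lem_SiHI_D-}), which kill $v_+$ and send $v_-$ to $\pm u_0$; composing the band with a cap is isotopic to a product or to a simple elementary cobordism on $\ga$, and one reads off that the $\mathbf v_+\otimes\mathbf v_-$ and $\mathbf v_-\otimes\mathbf v_+$ coefficients survive when testing $v_+(\ga)$ and the $\mathbf v_-\otimes\mathbf v_-$ coefficient survives when testing $v_-(\ga)$. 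All of this is carried out in the annular model via Corollary \ref{cor_zero_maps_detected_by_AHI}, so the genus of $R$ never actually enters the computation.
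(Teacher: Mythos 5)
Your overall approach is the same as the paper's: reduce the computation to the annular theory via Proposition \ref{prop_embedding_of_annulus_in_R} and Corollary \ref{cor_zero_maps_detected_by_AHI}, and cite the relevant computation from \cite{AHI} (Proposition 5.9). The paper's execution is slightly cleaner in one place: rather than trying to directly identify the generators $v_\pm(\ga)$ with annular generators, the paper unwinds the definitions to express every element appearing in the lemma---both $\SiHI(b)\circ\Theta_{w_0,\sigma}\,\bfv_\pm(\ga)$ and each $\Theta_{w_0,\sigma_b}(\bfv_{\pm}(\ga_1)\otimes\bfv_{\pm}(\ga_2))$---as the image of $u_0\in\SiHI(\emptyset)\cong\bC$ under an explicit cobordism from $\emptyset$ to $\{0\}\times(\ga_1\cup\ga_2)$. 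The claimed identities then become linear relations among cobordism maps, which is exactly the form to which Corollary \ref{cor_zero_maps_detected_by_AHI} applies. Your proposal gestures at this identification but does not spell out that the ``pushing to the boundary'' step (converting a pair-of-pants composition into a single cobordism from $\emptyset$) is the mechanism that makes it work.

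One concrete misstep: you propose to use the $\ga'$--grading for ``an auxiliary curve $\ga'$ dual to $\ga$'', but $\ga$ is a contractible circle in this lemma, so no simple closed curve in $R$ can intersect it algebraically once; the curve-based gradings of Section 4.2 give no information that separates $\bfv_+\otimes\bfv_-$ from, say, $\bfv_+\otimes\bfv_+$ when all circles involved are contractible. This grading step cannot do the work you assign to it. Fortunately it is not load-bearing---once you have reduced to cobordism maps and then to $\AHI$, the full formula (not merely non-vanishing) is read off from \cite{AHI}*{Proposition 5.9}, which is what the paper does; the $D^-$ compositions you describe are fine for a non-vanishing sanity check but unnecessary once you invoke the $\AHI$ computation.
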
 

\begin{proof}
Recall from Section \ref{subsec_iso_V_to_SiHI} 
that $ \Theta_{w_0,\sigma_b}\big(\mathbf{v}(\gamma_1)_+\otimes \mathbf{v}(\gamma_2)_-  \big)$ is given by the image of 
$$\SiHI(E^+(\ga_1))(u_0)\otimes \SiHI(D^+(\ga_2))(u_0)$$
under the pair-of-pants map $\clP$. 
Therefore, $ \Theta_{w_0,\sigma_b}\big(\mathbf{v}(\gamma_1)_+\otimes \mathbf{v}(\gamma_2)_-  \big)$ is defined by a cobordism $(\hat W,\hat S,\hat \Omega)$ from $(S^1\times R, \emptyset, S^1\times \{p\})\sqcup (S^1\times R, \emptyset, S^1\times \{p\})$ to $(S^1\times R, \{0\}\times (\ga_1\sqcup \ga_2), S^1\times \{p\})$.
Let $S$ be the cobordism from $\emptyset$ to $\ga_1\cup \ga_2$ given by the disjoint union of $E^+(\ga_1)$ and $D^+(\ga_2)$. 
Then the cobordism $\hat S$ can be pushed to a tubular neighborhood of the boundary $(S^1\times R, \{0\}\times (\ga_1\cup \ga_2), S^1\times \{p\})$ to the position of $S$.
Since $\clP(u_0\otimes u_0)=\pm u_0$, this proves
$$ \Theta_{w_0,\sigma_b}\big(\mathbf{v}(\gamma_1)_+\otimes \mathbf{v}(\gamma_2)_-  \big) = \pm \SiHI(S)(u_0).$$
Similarly, the following elements in $\SiHI(\ga_1\cup\ga_2)$:
$$\Theta_{w_0,\sigma_b}\big( \mathbf{v}(\gamma_1)_-\otimes \mathbf{v}(\gamma_2)_+\big),
\;
\Theta_{w_0,\sigma_b}\big(\mathbf{v}(\gamma_1)_-\otimes \mathbf{v}(\gamma_2)_-\big),
\;
\SiHI(b)\circ \Theta_{w_0,\sigma}\,\mathbf{v}(\gamma)_+ ,
\;
\SiHI(b)\circ \Theta_{w_0,\sigma}\,\mathbf{v}(\gamma)_-,$$ are all given by the image of $u_0$ under maps induced from suitable cobordisms between $\emptyset$ and $\{0\}\times (\ga_1\cup \ga_2)$. 

Since $\SiHI(\emptyset)\cong \bC$, the desired lemma is equivalent to the corresponding linear relations on these cobordism maps.
By Corollary \ref{cor_zero_maps_detected_by_AHI}, we only need to prove the analogous result in $\AHI$. Hence the result follows from \cite[Proposition 5.9]{AHI}.
\end{proof}

\begin{lem}
	\label{lem_cobordism_SiHI_non_contractible_split_one_contractible}
	If one of $\{\ga_1,\ga_2\}$ is contractible and the other is non-contractible, assume without loss of generality that $\ga_1$ is contractible and $\ga_2$ is non-contractible. Then $\ga$ is isotopic to $\ga_2$, and the orientations of $\ga$ and $\ga_2$ are canonically identified.  Let $\mfo$ be an arbitrary orientation of $\ga$, and use the same notation to denote the corresponding orientation of $\ga_2$. Then $\SiHI(b)$ has the form
	$$
	\Theta_{w_0,\sigma}\,\mathbf{w}_{\mfo} (\gamma)\mapsto \pm \Theta_{w_0,\sigma_b}(\mathbf{v}_-(\gamma_1)\otimes \mathbf{w}_{\mfo}(\gamma_2)).
	$$
\end{lem}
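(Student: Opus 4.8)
The plan mirrors the strategy of Lemma \ref{lem_cobordism_SiHI_one_trivial_to_two_trivial}: localize everything inside a thickened non-separating annulus, constrain $\SiHI(b)$ by the grading coming from a dual curve, and then pin down the remaining coefficients by composing with a disk cap and by comparing with the computation of $\AHI$. It suffices to treat one orientation $\mfo$ of $\ga$ at a time, with $\mfo'$ denoting the opposite orientation. Since the reverse band surgery merges the contractible circle $\ga_1$ into $\ga_2$, the curve $\ga$ is isotopic to $\ga_2$, which gives the canonical identification of their orientations. First I would isotope $b$ so that $b$, the circle $\ga_1$, and a short arc of $\ga$ all lie in a small embedded disk $\Delta\subset\Sigma$ with $\ga_2=(\ga\setminus\mathrm{arc})\cup(\text{two arcs in }\Delta)$; then the cobordism defining $\SiHI(b)$ is supported in $(-1,1)\times A_0\times[0,1]$ for a non-separating annulus $A_0\subset\Sigma$ containing $\Delta$ and $\ga$. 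By Corollary \ref{cor_zero_maps_detected_by_AHI} (equivalently Proposition \ref{prop_embedding_of_annulus_in_R}) there is a natural isomorphism $\mathcal I\colon\AHI(\cdot)\to\SiHI_{R,p}(\cdot)$ intertwining all cobordism maps for links and cobordisms in $(-1,1)\times A_0$.

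Next, fix an oriented simple closed curve $\ga'$ meeting $\ga$ (hence $\ga_2$) transversely once and disjoint from $\Delta$. Then $\SiHI(b)$ preserves the $\ga'$--grading by Corollary \ref{cor_mu_map_eigenspace_preserved_by_cobordism}. The element $w_{\mfo,w_0}(\ga)=\Theta_{w_0,\sigma}(\mathbf w_\mfo(\ga))$ is $\ga'$--homogeneous, and $\Theta_{w_0,\sigma_b}(\mathbf v_\pm(\ga_1)\otimes\mathbf w_\mfo(\ga_2))$ have the same $\ga'$--degree (equal to that of $w_{\mfo,w_0}(\ga_2)$, since the contractible circle $\ga_1$ contributes $\ga'$--degree zero and the orientation $\mfo$ of $\ga$ is identified with that of $\ga_2$), while $\Theta_{w_0,\sigma_b}(\mathbf v_\pm(\ga_1)\otimes\mathbf w_{\mfo'}(\ga_2))$ have the opposite $\ga'$--degree. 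Hence
$$
\SiHI(b)\big(w_{\mfo,w_0}(\ga)\big)=a\,\Theta_{w_0,\sigma_b}\big(\mathbf v_+(\ga_1)\otimes\mathbf w_\mfo(\ga_2)\big)+b\,\Theta_{w_0,\sigma_b}\big(\mathbf v_-(\ga_1)\otimes\mathbf w_\mfo(\ga_2)\big)
$$
for some $a,b\in\bC$.

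To see $b=\pm1$, I would post-compose $\SiHI(b)$ with the cobordism $\ga_1\cup\ga_2\to\ga_2$ that caps $\ga_1$ with the small disk it bounds in $\Delta$ and is the product on $\ga_2$. Gluing this disk onto the $\ga_1$--leg of the band-surgery pair of pants produces an annulus from $\ga$ to $\ga_2$ that is isotopic rel boundary to a product cobordism; by functoriality and Lemma \ref{lem_ga0_to_ga_SiHI} (which shows the diffeomorphism realizing the isotopy $\ga\simeq\ga_2$ sends $w_{\mfo,w_0}(\ga)$ to $\pm w_{\mfo,w_0}(\ga_2)$) the composite sends $w_{\mfo,w_0}(\ga)$ to $\pm w_{\mfo,w_0}(\ga_2)$. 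On the other hand this cap is $\SiHI(D^-(\ga_1))$ on the $\ga_1$--factor, which by Lemma \ref{lem_SiHI_D-} sends $v_+(\ga_1)\mapsto 0$, $v_-(\ga_1)\mapsto\pm u_0$, and $u_0$ is a unit for $\clP$ by Lemma \ref{lem_multiplicative_unit}; so the composite sends the right-hand side above to $\pm b\,w_{\mfo,w_0}(\ga_2)$, forcing $b=\pm1$. To see $a=0$ I would transport the question to $\AHI$ via $\mathcal I$: the computation of elementary cobordism maps in \cite[Proposition 5.9]{AHI}, applied to the band surgery splitting a core circle into a contractible circle and a core circle, shows that its image lies in the summand $v^{\AHI}_-(\ga_1)\otimes\AHI(\ga_2)$. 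Since $v_\pm(\ga_1)$ and $v^{\AHI}_\pm(\ga_1)$ are defined by the same cobordisms $D^+(\ga_1)$, $E^+(\ga_1)$ applied to the respective multiplicative units, and $\mathcal I$ carries the unit of $\AHI(\emptyset)$ to $\pm u_0$ by the uniqueness in Lemma \ref{lem_multiplicative_unit}, the isomorphism $\mathcal I$ carries $v^{\AHI}_\pm(\ga_1)$ to $\pm v_\pm(\ga_1)$; hence the image of $\SiHI(b)$ lies in $\bC\,v_-(\ga_1)\otimes\SiHI(\ga_2)$, so $a=0$. Combining with $b=\pm1$ gives $\SiHI(b)\circ\Theta_{w_0,\sigma}(\mathbf w_\mfo(\ga))=\pm\Theta_{w_0,\sigma_b}(\mathbf v_-(\ga_1)\otimes\mathbf w_\mfo(\ga_2))$ for each orientation $\mfo$.

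The main obstacle I anticipate is the normalization rather than any individual step: as Lemmas \ref{lem_change_order_non_parallel} and \ref{lem_cobordism_map_symmetry_1} show, comparisons of this kind can a priori produce an arbitrary nonzero scalar instead of a sign, and the role of the disk-cap composition (together with the facts that $u_0$ is a genuine unit and that the isotopy $\ga\simeq\ga_2$ acts by $\pm\id$ on the $w_{\mfo,w_0}$--generators) is precisely to exclude this and force the coefficient of the $\mathbf v_-$ term to be $\pm1$. A secondary, routine point is arranging $\ga'$ and the capping cobordism so that the relevant homology classes agree over the cobordisms and Corollary \ref{cor_mu_map_eigenspace_preserved_by_cobordism} and Lemma \ref{lem_SiHI_D-} apply verbatim; this is immediate once $b$ has been isotoped into the small disk $\Delta$.
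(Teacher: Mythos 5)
Your proof is correct and genuinely takes a different route from the paper's. The paper's proof is very short: it rewrites $\Theta_{w_0,\sigma_b}(\mathbf{v}_-(\gamma_1)\otimes \mathbf{w}_{\mfo}(\gamma_2))$ as a cobordism map (the composition of the isotopy $\ga\to\ga_2$ with $E^+(\ga_1)$ disjoint union the product on $\ga_2$, all normalized) applied to $w_{\mfo,w_0}(\ga)$, then reduces the resulting identity of cobordism maps to an identity in $\AHI$ via Corollary \ref{cor_zero_maps_detected_by_AHI}, citing \cite[Proposition~5.14]{AHI}. You instead split the work: the $\ga'$--grading constrains the image to two terms $a\cdot\mathbf{v}_+\otimes\mathbf{w}_\mfo + b\cdot\mathbf{v}_-\otimes\mathbf{w}_\mfo$; the disk-cap composition (using Lemmas \ref{lem_SiHI_D-}, \ref{lem_multiplicative_unit}, \ref{lem_ga0_to_ga_SiHI}) forces $b=\pm1$ without any appeal to the precise normalization of the $\AHI$ computation; only the vanishing $a=0$ is transferred to $\AHI$. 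This factoring of ``normalization'' from ``direction'' is a nice feature of your approach and is not present in the paper.

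Two small points. First, the $\AHI$ citation for the $a=0$ step should almost certainly be \cite[Proposition~5.14]{AHI} rather than \cite[Proposition~5.9]{AHI}: throughout this paper Proposition~5.9 is invoked only for cobordisms among contractible circles (as in Lemmas \ref{lem_linear_independence_v_pm}, \ref{lem_SiHI_D-}, \ref{lem_cobordism_SiHI_one_trivial_to_two_trivial}), while Proposition~5.14 is the reference whenever non-contractible circles appear, which is the case here. Second, the sentence claiming that $\mathcal I$ carries the unit of $\AHI(\emptyset)$ to $\pm u_0$ ``by the uniqueness in Lemma \ref{lem_multiplicative_unit}'' is not actually justified by that lemma: Lemma \ref{lem_multiplicative_unit} is an internal statement about the pair-of-pants on $\SiHI$, and Proposition \ref{prop_embedding_of_annulus_in_R} only gives naturality of $\mathcal I$ for link cobordisms, not for the excision (pair-of-pants) cobordisms. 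Fortunately you do not need this stronger fact: since $\AHI(\emptyset)$ and $\SiHI_{R,p}(\emptyset)$ are one-dimensional and $\mathcal I$ is an isomorphism, $\mathcal I(u_0^{\AHI})$ is automatically a nonzero multiple $c\cdot u_0$; naturality with respect to the link cobordisms $D^+(\ga_1)$ and $E^+(\ga_1)$ then sends $v_\pm^{\AHI}(\ga_1)$ to $\pm c\cdot v_\pm(\ga_1)$, which is all you need to match the $\mathbf v_+$/$\mathbf v_-$ summands and conclude $a=0$. With that minor repair and the corrected citation, the argument is complete.
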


\begin{proof}
Similar to the argument of Lemma \ref{lem_cobordism_SiHI_one_trivial_to_two_trivial}, both sides of the equations can be interpreted as the images of suitable cobordism maps from $\SiHI(\ga)$ to $\SiHI(\ga_1\cup \ga_2)$ applied to the element  $w_{\mfo,w_0}(\ga)\in \SiHI(\ga)$ (see Section \ref{subsubsec_iso_V_to_SiHI_2} for the definition of $w_{\mfo,w_0}(\ga)$). Therefore we only need to prove the equation for the cobordism maps. The result then follows from Corollary \ref{cor_zero_maps_detected_by_AHI} and \cite[Proposition 5.14]{AHI}.
\end{proof}

\begin{lem}
	\label{lem_SiHI_cobordism_one_trivial_two_nontrivial}
	If both $\ga_1$ and $\ga_2$ are non-contractible and $\ga$ is contractible. Then $\ga_1$, $\ga_2$ are isotopic to each other, and the orientations of $\ga_1$ are $\ga_2$ are canonically identified. Let $\mfo,\mfo'$ be the orientations of $\ga_1,$ and use the same notation for the orientations of $\ga_2$. Then there exists $\lambda_1\neq 0$, such that the map $\SiHI(b)$ has the form
	$$
	\Theta_{w_0,\sigma}\,\mathbf{v}_+(\gamma) \mapsto \pm  \lambda_1\Theta_{w_0,\sigma_b}\big(\mathbf{w}_\mfo(\gamma_1)\otimes\mathbf{w}_{\mfo'}(\gamma_2)\big)\pm \Theta_{w_0,\sigma_b}\big( \lambda_1 \mathbf{w}_{\mfo'}(\gamma_1)\otimes \mathbf{w}_{\mfo} (\gamma_2)\big), \quad \Theta_{w_0,\sigma}\,\mathbf{v}_-(\gamma)\mapsto 0.
	$$
\end{lem}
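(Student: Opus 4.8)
The plan is to proceed exactly as in the proofs of Lemmas \ref{lem_cobordism_SiHI_one_trivial_to_two_trivial} and \ref{lem_cobordism_SiHI_non_contractible_split_one_contractible}: interpret everything in terms of cobordism maps and reduce to the annular computations of \cite{AHI} via Corollary \ref{cor_zero_maps_detected_by_AHI}, with the extra symmetry supplied by Lemma \ref{lem_cobordism_map_symmetry_1}. Recall from Section \ref{subsubsec_iso_V_to_SiHI_1} that $\Theta_{w_0,\sigma}\mathbf{v}_+(\ga)=v_+(\ga)=\SiHI(D^+(\ga))(u_0)$ and $\Theta_{w_0,\sigma}\mathbf{v}_-(\ga)=v_-(\ga)=\tfrac12\SiHI(E^+(\ga))(u_0)$, where $E^+(\ga)=D^+(\ga)\#T^2$. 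Since the gluing locus $S^1\times R$ is connected, composition of cobordism maps carries no power-of-two factor, so $\SiHI(b)\circ\Theta_{w_0,\sigma}\mathbf{v}_+(\ga)=\pm\SiHI(b\circ D^+(\ga))(u_0)$ and $\SiHI(b)\circ\Theta_{w_0,\sigma}\mathbf{v}_-(\ga)=\pm\tfrac12\SiHI(b\circ E^+(\ga))(u_0)$.

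Next I would identify the composite cobordisms. The disk $B\subset\Sigma$ bounded by the contractible curve $\ga$, together with the band $b$, forms an embedded annulus in $\Sigma$ with boundary $C_b=\ga_1\cup\ga_2$; as $\Sigma$ has genus zero this annulus is isotopic rel boundary to the annulus $S$ of Lemma \ref{lem_cobordism_map_symmetry_1}. Flattening the composite cobordism then gives $b\circ D^+(\ga)\cong S$ and hence $b\circ E^+(\ga)\cong S\#T^2$, both viewed as cobordisms from $\emptyset$ to $\{0\}\times(\ga_1\cup\ga_2)$. Applying Lemma \ref{lem_cobordism_map_symmetry_1} yields
$$\SiHI(b)\circ\Theta_{w_0,\sigma}\mathbf{v}_+(\ga)=\pm\SiHI(S)(u_0)=\pm\Theta_{w_0,\sigma_b}\bigl(\lambda\,\mathbf{w}_\mfo(\ga_1)\otimes\mathbf{w}_{\mfo'}(\ga_2)\pm\lambda\,\mathbf{w}_{\mfo'}(\ga_1)\otimes\mathbf{w}_\mfo(\ga_2)\bigr),$$
which is the asserted formula on $\mathbf{v}_+(\ga)$ with $\lambda_1=\lambda$. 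To see $\lambda_1\neq0$, compose $S$ with the reverse band‑surgery cobordism $\bar b\colon\ga_1\cup\ga_2\to\ga$; the resulting cobordism $\bar b\circ S\colon\emptyset\to\ga$ is a once‑punctured‑torus capping of $\ga$, hence isotopic to $E^+(\ga)$, so $\SiHI(\bar b)\bigl(\SiHI(S)(u_0)\bigr)=\pm\SiHI(E^+(\ga))(u_0)=\pm2v_-(\ga)\neq0$ by Lemma \ref{lem_linear_independence_v_pm}. Therefore $\SiHI(S)(u_0)\neq0$ and $\lambda_1\neq0$.

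It remains to show $\SiHI(S\#T^2)(u_0)=0$, which gives $\SiHI(b)\circ\Theta_{w_0,\sigma}\mathbf{v}_-(\ga)=0$. Here I would slide the small torus summand of $S\#T^2$ into a collar of a point of $\{0\}\times\ga_1$ near the outgoing end, so that $S\#T^2$ factors as $\bigl(h_{\ga_1}\otimes\id_{\SiHI(\ga_2)}\bigr)\circ\SiHI(S)$, where $h_{\ga_1}\colon\SiHI(\ga_1)\to\SiHI(\ga_1)$ is the genus‑adding operator on the single non‑contractible circle $\ga_1$. This operator vanishes: one may factor $h_{\ga_1}$ by first splitting off a small contractible circle $\delta$ from $\ga_1$ (Lemma \ref{lem_cobordism_SiHI_non_contractible_split_one_contractible}, which sends $\mathbf{w}_\mfo(\ga_1)$ to $\pm\Theta(\mathbf{v}_-(\delta)\otimes\mathbf{w}_\mfo(\ga_1'))$), then performing the genus‑adding operation on $\delta$, then merging $\delta$ back; the genus‑adding operation on the trivial circle $\delta$ kills $\mathbf{v}_-(\delta)$ (equivalently, a genus‑two disk cobordism induces the zero map, which follows from the annular computation in \cite{AHI} via Corollary \ref{cor_zero_maps_detected_by_AHI}), so the composite is already zero before the merge. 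Hence $\SiHI(S\#T^2)=0$, completing the proof.

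The main obstacle I anticipate is precisely this last step — establishing that the genus‑adding operator acts as zero on a non‑contractible circle. This is the feature that has no analogue in the all‑contractible case of Lemma \ref{lem_cobordism_SiHI_one_trivial_to_two_trivial}, and it is what forces the $\mathbf{v}_-(\ga)$ row to vanish; making it rigorous requires either the split/merge factorization above (using Lemma \ref{lem_cobordism_SiHI_non_contractible_split_one_contractible} together with the computation of the dot map on a trivial circle) or a direct appeal to the relevant cobordism computation in \cite{AHI} through Corollary \ref{cor_zero_maps_detected_by_AHI}, and one must make sure the annular inputs invoked are exactly those already proved in \cite{AHI}, as was done for the two preceding lemmas.
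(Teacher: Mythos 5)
Your overall strategy is a legitimate alternative to the paper's: where the paper disposes of both the vanishing of the $\mathbf{v}_-(\ga)$ row and the nonvanishing of the $\mathbf{v}_+(\ga)$ row in one stroke by citing Corollary~\ref{cor_zero_maps_detected_by_AHI} together with \cite[Proposition~5.14]{AHI}, you replace that citation with two self-contained cobordism arguments (composing with the reverse band $\bar b$ for nonvanishing, and factoring through a genus-adding operator for vanishing), and both you and the paper use Lemma~\ref{lem_cobordism_map_symmetry_1} to get the symmetric form of the $\mathbf{v}_+(\ga)$ image. Your first two steps are sound, modulo verifying that $\bar b\circ S$ is indeed isotopic (as an embedded cobordism in the $4$--manifold, rel boundary) to $E^+(\ga)$ — this is plausible since $\ga$ is contractible and one is in a $4$--dimensional thickening, but it is a nontrivial assertion that you state without justification.

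The gap is in the last step. You propose to show $h_{\ga_1}=0$ by factoring it as ``split off a small trivial circle $\delta$, perform the genus-adding operation on $\delta$, then merge $\delta$ back.'' But count Euler characteristics: the split and the merge are each a pair of pants with $\chi=-1$, and the genus-adding operator on $\delta$ is a twice-punctured torus with $\chi=-2$, so the composite has $\chi=-4$ and is a genus-$2$ cobordism from $\ga_1$ to $\ga_1$, not the genus-$1$ cobordism $h_{\ga_1}$ (which has $\chi=-2$). Your factorization therefore proves $h_{\ga_1}^2=0$, not $h_{\ga_1}=0$, which is what you need. The fix is simple: $h_{\ga_1}$ is already the composite of the split and the merge with no extra handle inserted. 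The split sends $\bfw_\mfo(\ga_1)\mapsto\pm\bfv_-(\delta)\otimes\bfw_\mfo(\ga_1')$ by Lemma~\ref{lem_cobordism_SiHI_non_contractible_split_one_contractible}, and the merge kills $\bfv_-(\delta)\otimes\bfw_\mfo(\ga_1')$ by Lemma~\ref{lem_SiHI_cobordism_two_nontrivial_merge_to_trivial}; the latter appears later in the paper but has an independent proof, so there is no circularity. With the redundant genus-add step removed (or, alternatively, with the direct appeal to \cite[Proposition~5.14]{AHI} via Corollary~\ref{cor_zero_maps_detected_by_AHI} that you yourself flag as an option), the argument goes through.
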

\begin{proof}
	The image of $\Theta_{w_0,\sigma}\,\bfv_-(\ga)$ is zero and the image of $\Theta_{w_0,\sigma}\, \bfv_+(\ga)$ is non-zero because of Corollary \ref{cor_zero_maps_detected_by_AHI} and  \cite[Proposition 5.14]{AHI}. The image of  $\Theta_{w_0,\sigma}\, \bfv_+(\ga)$ is given by the desired the form because of Lemma \ref{lem_cobordism_map_symmetry_1}. 
\end{proof}

\begin{lem}
	\label{lem_cobordism_map_1_nontrivial_to_2_nontrivial}
	If all of $\ga,\ga_1,\ga_2$ are non-contractible, let $N$ be the regular neighborhood of $b\cup \ga$.  Then $N$ is a sphere with three disks removed, and the three boundaries of $N$ are parallel to $\ga_1$, $\ga_2$, $\ga$.  The boundary orientation of $N$ defines an orientation on each of $\ga_1, \ga_2, \ga$, and we denote them by $\mfo_1,\mfo_2,\mfo$ respectively.  Denote their opposite orientations by $\mfo_1',\mfo_2',\mfo'$. Then there exists a choice of $w_0$ such that $\SiHI(b)$ has the form
	\[
	\Theta_{w_0,\sigma}\,	\mathbf{w}_{\mfo'}(\gamma)\mapsto \pm  \lambda_2\, \Theta_{w_0,\sigma_b}\big(\bfw(\ga_1)_{\mfo_1}\otimes \mathbf{w}(\gamma_2)_{\mfo_2}\big),\quad 	\Theta_{w_0,\sigma}\, \bfw(\ga)_{\mfo}\mapsto 0.
	\]
	for some constant $\lambda_2$.
\end{lem}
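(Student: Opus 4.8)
The proof should proceed exactly as in the companion lemmas: translate the statement about the map $\SiHI(b)$ into a statement about $\AHI$ cobordism maps using Corollary \ref{cor_zero_maps_detected_by_AHI}, and then quote the computation in \cite{AHI}. First I would set up the geometry: since all of $\ga,\ga_1,\ga_2$ are non-contractible, the regular neighborhood $N$ of $b\cup\ga$ is a thrice-punctured sphere whose three boundary components are isotopic to $\ga_1,\ga_2,\ga$; crucially $N$ is non-separating in $R$ (because $\Sigma$ has genus zero and $F$ is connected, so all three curves are non-separating, and one checks $N$ itself is non-separating). Enlarging $N$ slightly to a non-separating embedded annulus is not possible here since the pair-of-pants is not an annulus, so instead I would enlarge $b\cup\ga$ to a non-separating embedded surface containing $N$ on which the whole band cobordism takes place; more precisely, the cobordism defined by the band surgery, together with the cobordisms $E^{\pm}$, $D^{\pm}$ that define the generators, is supported in a thickened pair-of-pants region that embeds (non-separatingly) in $(-1,1)\times R$. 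By Corollary \ref{cor_zero_maps_detected_by_AHI}(2), the desired linear relations among the cobordism maps on $\SiHI_{R,p}$ hold if and only if the analogous relations hold for $\AHI$.

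**Main steps.** (1) Both $\Theta_{w_0,\sigma}\,\bfw_{\mfo}(\ga)$ and $\Theta_{w_0,\sigma}\,\bfw_{\mfo'}(\ga)$ are, by the definition in Section \ref{subsubsec_iso_V_to_SiHI_2}, images of $w_0$ under cobordism maps $\varphi_*$ from $\SiHI(\ga_0)$, hence the two sides of each asserted identity are images of cobordism maps applied to these elements; reduce to proving the identities at the level of cobordism maps. (2) The composite cobordism "(define the generator of $\SiHI(\ga)$) followed by (band surgery $b$) followed by (reversed pair-of-pants to $\SiHI(\ga_1)\otimes\SiHI(\ga_2)$)" is, up to diffeomorphism rel boundary, one of the merge/split cobordisms analyzed in \cite{AHI}; identify which one. (3) Invoke \cite[Proposition 5.14]{AHI} (the computation for the non-contractible merge/split maps, the same reference used in Lemmas \ref{lem_cobordism_SiHI_non_contractible_split_one_contractible} and \ref{lem_SiHI_cobordism_one_trivial_two_nontrivial}) to conclude that $\Theta_{w_0,\sigma}\,\bfw_{\mfo}(\ga)\mapsto 0$ and $\Theta_{w_0,\sigma}\,\bfw_{\mfo'}(\ga)\mapsto \pm\lambda_2\,\Theta_{w_0,\sigma_b}(\bfw_{\mfo_1}(\ga_1)\otimes\bfw_{\mfo_2}(\ga_2))$ for some constant $\lambda_2$, after a suitable choice of $w_0$. (4) The $\pm$ ambiguity, and the freedom to absorb a sign into $w_0$ (the choice of $w_0$ is only pinned down up to sign), account for the phrase "there exists a choice of $w_0$". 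A grading argument with respect to a curve $\ga'$ dual to $\ga$ shows that the image of $\bfw_{\mfo'}(\ga)$ lands in the single grading line spanned by $\Theta_{w_0,\sigma_b}(\bfw_{\mfo_1}(\ga_1)\otimes\bfw_{\mfo_2}(\ga_2))$, so no other basis vectors of $V_\bC(\ga_1\cup\ga_2)$ can appear; and since $\bfw_{\mfo}(\ga)$ has the opposite $\ga'$--grading, its image lies in a grading line not present in the target of the merge cobordism (the merge map of \cite{AHI} kills one orientation class), giving $0$.

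**The main obstacle.** The delicate point is matching the orientation conventions — i.e., verifying that it is $\bfw_{\mfo'}(\ga)$ (the orientation \emph{opposite} to the boundary orientation $\mfo$ induced by $N$) that survives and $\bfw_{\mfo}(\ga)$ that dies, rather than the reverse, and that the surviving image is the product $\bfw_{\mfo_1}(\ga_1)\otimes\bfw_{\mfo_2}(\ga_2)$ of the \emph{boundary} orientations. This requires carefully tracking the identification between the annular model of \cite{AHI} and the curves $\ga,\ga_1,\ga_2$ sitting in $N\subset R$, including the orientation-reversal that occurs when a pair-of-pants is read as a merge versus a split. The sign constant $\lambda_2$ itself is not determined (only its nonvanishing, if even that is needed downstream), so I would not attempt to compute it; the content of the lemma is the vanishing of the $\mfo$-component and the fact that the $\mfo'$-component is a scalar times a single product generator, both of which follow from the $\AHI$ computation once the orientation bookkeeping is done. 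I would relegate the diffeomorphism-of-cobordisms verifications to a sentence ("the cobordisms defining both sides are diffeomorphic rel boundary"), matching the level of detail in the preceding lemmas.
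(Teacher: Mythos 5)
There is a genuine gap, and it concerns exactly the step you try to slide past in the sentence ``Enlarging $N$ slightly to a non-separating embedded annulus is not possible here\ldots so instead I would enlarge $b\cup\ga$ to a non-separating embedded surface\ldots By Corollary~\ref{cor_zero_maps_detected_by_AHI}(2), the desired linear relations\ldots hold if and only if the analogous relations hold for $\AHI$.'' Corollary~\ref{cor_zero_maps_detected_by_AHI} only applies to links and cobordisms supported in a thickened \emph{annulus} embedded in $R$; embedding a thickened pair-of-pants buys you nothing. And the situation here genuinely cannot be pushed into an annulus: in an annulus every non-contractible simple closed curve is parallel to the core, so a band surgery in the annulus can never produce \emph{three} pairwise non-parallel non-contractible circles, which is exactly what the thrice-punctured-sphere configuration $\ga,\ga_1,\ga_2 = \partial N$ gives you. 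This is precisely why no such split map appears in the $\AHI$ TQFT, and why Proposition~5.14 of \cite{AHI}, which you plan to quote, does not cover this case. A related issue: $\bfw_{\mfo}(\ga)$ is defined via a diffeomorphism $\varphi$ of all of $R$ (Lemma~\ref{lem_ga0_to_ga_SiHI}), not via a local cobordism supported near $\ga$, so pre-composing with ``the cobordism defining the generator'' to reduce to a cobordism-map identity also does not set up an annular reduction.

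The paper's actual proof is forced to take a completely different route precisely because the annular reduction is unavailable. After the routine grading argument pins down the possible forms of the images (which you do have), the key step is a rank bound: the paper shows $\rank \SiHI(b) < 2$ by feeding Kronheimer--Mrowka's unoriented skein exact triangle with the knot $K$ of Figure~\ref{fig_a_knot_on_sigma} that resolves into $\ga$ and $\ga_1\cup\ga_2$, and then invoking Theorem~\ref{thm_main_instanton} to get $\dim\SiHI_{R,p}(K) > 2$, which would be violated if $\SiHI(b)$ had rank $2$. This is the one place in Section~\ref{sec_proof_main} where Theorem~\ref{thm_main_instanton} enters as an ingredient, and it is entirely absent from your proposal. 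Once $\rank \SiHI(b) \le 1$, diffeomorphism equivariance (Proposition~\ref{prop_diff_induces_id_on_II}) shows that whichever of $\bfw_\mfo(\ga)$ or $\bfw_{\mfo'}(\ga)$ is killed is killed uniformly over all choices of $\ga,\ga_1,\ga_2$, and the choice of $w_0$ (flipping its $\ga_0'$-grading) lets you arrange that it is $\bfw_\mfo(\ga)$. The constant $\lambda_2$ is not determined, but that is irrelevant downstream, as you correctly note. So: your grading observations are fine, but the core of the argument --- the vanishing statement --- cannot be obtained by your $\AHI$ reduction and requires the skein-triangle rank argument that you omit.
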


\begin{proof}
		By comparing gradings, the image of $\Theta_{w_0,\sigma}\,	\mathbf{w}_{\mfo'}(\gamma)$ must be a scalar multiple of $$\Theta_{w_0,\sigma_b}\big(\bfw(\ga_1)_{\mfo_1}\otimes \bfw(\gamma_2)_{\mfo_2}\big),$$ and  the image of $\Theta_{w_0,\sigma}\,	\mathbf{w}_{\mfo}(\gamma)$ must be a scalar multiple of $$\Theta_{w_0,\sigma_b}\big(\bfw(\ga_1)_{\mfo_1'}\otimes \bfw(\gamma_2)_{\mfo_2'}\big).$$
	
	We show that the rank of $\SiHI(b)$ must be strictly less than $2$.
	Since cobordism maps are equivariant with respect to orientation preserving self-diffeomorphisms of $R$, we may assume without loss of generality that $\ga$ and $b$ are given by Figure \ref{fig_cobordism_3_non_contractible}, where the boundaries of black dots are non-contractible curves in $\Sigma$. 

	\begin{figure}[!htb]
	\begin{minipage}{0.48\textwidth}
		\centering
		\begin{overpic}[width=.8\linewidth]{./figures/cobordism_3_non_contractible}
			\put(2,4){$\ga$}
 			\put(48,9){$b$}
		\end{overpic}
		\caption{}
		\label{fig_cobordism_3_non_contractible}
	\end{minipage}\hfill
	\begin{minipage}{0.48\textwidth}
		\centering
		\includegraphics[width=.7\linewidth]{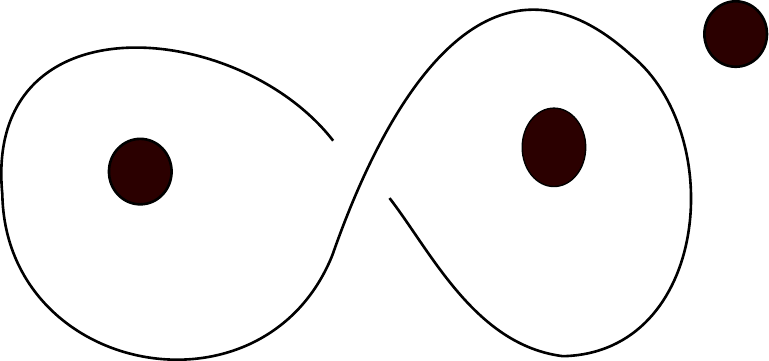}
		\caption{A knot in $(-1,1)\times \Sigma$}
		\label{fig_a_knot_on_sigma}
	\end{minipage}
\end{figure}

Consider the knot $K$ in the interior of $[-1,1]\times \Sigma$ as shown in Figure \ref{fig_a_knot_on_sigma}. The fundamental class of $K$ is not in the image of any embedded annulus in $\Sigma$, so by Theorem \ref{thm_main_instanton}, we have 
$$
\dim \SHI([-1,1]\times \Sigma, \{0\}\times \Sigma, K) > 2.
$$
By \eqref{eqn_SiHI_iso_SHI}, this implies $\dim \SiHI_{R,p}(K)>2$. By Kronheimer--Mrowka's unoriented skein exact triangle 
\cite[Theorem 6.8]{KM:Kh-unknot}, there is a long exact sequence
$$
\cdots \to \SiHI(\ga) \overset{\SiHI(b)}{\longrightarrow} \SiHI(\ga_1\cup \ga_2) \to \SiHI_{R,p}(K) \to \SiHI(\ga)  \to \SiHI(\ga) \overset{\SiHI(b)}{\longrightarrow} \SiHI(\ga_1\cup \ga_2)\to  \cdots.
$$
Therefore the rank of $\SiHI(b)$ cannot be $2$.
	
By diffeomorphism invariance, either the image of $\Theta_{w_0,\sigma}\bfw_\mfo(\ga)$ is zero for all choices of $\ga,\ga_1,\ga_2$,
	or the image of $\Theta_{w_0,\sigma}\bfw_{\mfo'}(\ga)$ is zero for all choices of $\ga,\ga_1,\ga_2$. Note that in the definition of $w_0$, we may choose it to have grading $1$ or $-1$ with respect to the $\ga_0'$--grading. Suppose $w_0'$ is a different choice of $w_0$ with a different sign in the $\ga_0'$--grading, then $w_{\mfo,w_0}(\ga)$ is a scalar multiple of $w_{\mfo',w_0'}(\ga)$. Therefore,  we can always choose $w_0$ so that the image of $\Theta_{w_0,\sigma}\bfw_{\mfo}(\ga)$ is zero. 
\end{proof}

In the case that $c$ has two components $\ga_1$ and $\ga_2$ and they are merged into one circle $c_b=\ga$ after the surgery, let $\sigma$ be an arbitrary ordering of the components of $C$, let $\sigma_b$ denote the only possible ordering of components of $C_b$. We have the following results.

\begin{lem} If both $\ga_1$ and $\ga_2$ are contractible circles, then $\ga$ is also contractible. In this case, the map $\SiHI(b)$ has the form
	\begin{align*}
		\Theta_{w_0,\sigma}\big(\mathbf{v}_+(c_1) \otimes \mathbf{v}_+(c_2)\big) &\mapsto 
		\pm \Theta_{w_0,\sigma_b}\,\mathbf{v}_+(c),  &\Theta_{w_0,\sigma}\big(\mathbf{v}_+(c_1)& \otimes \mathbf{v}_-(c_2)\big) \mapsto \pm \Theta_{w_0,\sigma_b}\,\mathbf{v}_-(c) ,\\
		\Theta_{w_0,\sigma}\big(\mathbf{v}_-(c_1) \otimes \mathbf{v}_+(c_2) \big)&\mapsto 
		\pm \Theta_{w_0,\sigma_b}\, \mathbf{v}_-(c),  &\Theta_{w_0,\sigma}\big(\mathbf{v}_-(c_1)& \otimes \mathbf{v}_-(c_2)\big) \mapsto 0.
	\end{align*}
\end{lem}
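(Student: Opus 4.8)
The plan is to run the proof of Lemma \ref{lem_cobordism_SiHI_one_trivial_to_two_trivial} almost verbatim, with the roles of splitting and merging cobordisms interchanged. First I would record the topological point that $c$ is contractible: $[c]\equiv[c_1]+[c_2]\pmod 2$ vanishes in $H_1(\Sigma;\bZ/2)$, and since $\Sigma$ has genus zero, an embedded simple closed curve that is null-homologous mod $2$ bounds a disk in $\Sigma$. By Lemma \ref{lem_change_order_with_contractible}, reversing the ordering of $\{c_1,c_2\}$ changes $\Theta_{w_0,\sigma}$ only by a sign, so I may assume $\sigma=(c_1,c_2)$, the resulting ambiguity being absorbed into the $\pm$ signs in the statement.

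Next I would rewrite every vector occurring in the lemma as a scalar multiple of $\SiHI(\,\cdot\,)(u_0)$ for a suitable link cobordism with source $\emptyset$, exactly as in Section \ref{subsubsec_iso_V_to_SiHI_1} and Definition \ref{defn_clP}. On the target side, $\Theta_{w_0,\sigma_b}\mathbf{v}_+(c)=\SiHI(D^+(c))(u_0)$ and $\Theta_{w_0,\sigma_b}\mathbf{v}_-(c)=\tfrac12\SiHI(E^+(c))(u_0)$. On the source side, $\Theta_{w_0,\sigma}(\mathbf{v}_{\epsilon_1}(c_1)\otimes\mathbf{v}_{\epsilon_2}(c_2))$ is $\clP$ applied to the tensor product of $\SiHI(D^+(c_i))(u_0)$ or $\tfrac12\SiHI(E^+(c_i))(u_0)$; using $\clP(u_0\otimes u_0)=\pm u_0$ (Lemma \ref{lem_multiplicative_unit}) and isotoping the singular set of the disjoint-union cobordism into a collar of $\{0\}\times(c_1\cup c_2)$, this vector equals $\pm 2^{-k}\,\SiHI(S_{\epsilon_1\epsilon_2})(u_0)$, where $k$ is the number of minus signs among $\epsilon_1,\epsilon_2$ and $S_{\epsilon_1\epsilon_2}\colon\emptyset\leadsto c_1\cup c_2$ is an explicit cap cobordism. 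Precomposing the saddle cobordism defined by $b$ then exhibits $\SiHI(b)\circ\Theta_{w_0,\sigma}(\mathbf{v}_{\epsilon_1}(c_1)\otimes\mathbf{v}_{\epsilon_2}(c_2))$ as $\pm 2^{-k}\,\SiHI(W_{\epsilon_1\epsilon_2})(u_0)$ for a cobordism $W_{\epsilon_1\epsilon_2}\colon\emptyset\leadsto c$.

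Since $\SiHI(\emptyset)\cong\bC$ (Proposition \ref{prop_product_space_homology_rank_1}), the four identities in the lemma are equivalent to the corresponding linear relations among the maps $\SiHI(W_{\epsilon_1\epsilon_2}),\SiHI(D^+(c)),\SiHI(E^+(c))\colon\SiHI(\emptyset)\to\SiHI(c)$. Since $c$, $c_1$, $c_2$ (together with the disks they bound in $\Sigma$) and the band $b$ lie in a common embedded disk of $\Sigma$, and $R$ has genus at least $3$, they lie in a non-separating embedded annulus $A'\subset R\setminus\{p\}$; all the cobordisms above may therefore be taken inside $[-1,1]\times[-1,1]\times A'$. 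By Corollary \ref{cor_zero_maps_detected_by_AHI}, these relations hold for $\SiHI$ if and only if they hold for the analogous maps in $\AHI$. For contractible circles the $\AHI$ cobordism maps coincide with the multiplication and counit of the Khovanov Frobenius algebra, and the four relations we need are precisely those recorded in \cite[Proposition 5.9]{AHI}. This completes the proof.

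The step I expect to be the main obstacle is the diffeomorphism bookkeeping in the second and third paragraphs: after pushing all singular sets into collars and gluing on the band, one must verify that the closed-up cobordisms $W_{\epsilon_1\epsilon_2}$ are diffeomorphic rel boundary to the standard merge cobordisms whose $\AHI$-maps are computed in \cite[Proposition 5.9]{AHI}, and that the normalization factors $\tfrac12$ built into $v_-$ combine so that the three nonzero structure constants are $\pm 1$ and the $(\mathbf{v}_-,\mathbf{v}_-)$ coefficient is exactly $0$, rather than merely nonzero scalars. This is entirely parallel to the corresponding point in the proof of Lemma \ref{lem_cobordism_SiHI_one_trivial_to_two_trivial}.
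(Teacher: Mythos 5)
Your proposal is correct and takes essentially the same approach as the paper. The paper's proof of this lemma is the one-sentence remark that it follows from the same argument as Lemma \ref{lem_cobordism_SiHI_one_trivial_to_two_trivial}, and you have correctly unpacked that argument for the merge direction: express all six vectors in $\SiHI(\ga)$ as scalar multiples of $\SiHI(\,\cdot\,)(u_0)$ for suitable cobordisms $\emptyset\leadsto\ga$, reduce to a common thickened annulus, invoke Corollary \ref{cor_zero_maps_detected_by_AHI} to pass to $\AHI$, and cite \cite[Proposition 5.9]{AHI}, just as the paper does in the split case.
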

\begin{proof}
	This follows from the same argument as Lemma \ref{lem_cobordism_SiHI_one_trivial_to_two_trivial}.
\end{proof}
\begin{lem} 	
	\label{lem_SiHI_cobordism_two_nontrivial_merge_to_trivial}
	If one of $\{\ga_1,\ga_2\}$ is contractible and the other is non-contractible, assume without loss of generality that $\ga_1$ is contractible and $\ga_2$ is non-contractible. Then $\ga$ is isotopic to $\ga_2$, and the orientations of $\ga$ and $\ga_2$ are canonically identified.  Let $\mfo$ be an arbitrary orientation of $\ga$, and use the same notation to denote the corresponding orientation of $\ga_2$. Then $\SiHI(b)$ has the form
	\[
	\Theta_{w_0,\sigma}\big(\mathbf{v}_+(\gamma_1)\otimes \bfw_{\mfo}(\ga_2) \big)\mapsto \pm\Theta_{w_0,\sigma_b}\, \bfw_{\mfo}(\ga),\quad  
	\Theta_{w_0,\sigma}\big(\bfv(\ga_1)_-\otimes \bfw_{\mfo}(\ga_2)\big) \mapsto 0.
	\]
\end{lem}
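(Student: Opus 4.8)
The plan is to follow the strategy of Lemmas \ref{lem_cobordism_SiHI_one_trivial_to_two_trivial} and \ref{lem_cobordism_SiHI_non_contractible_split_one_contractible}: realize every vector appearing in the statement as the image of a canonical generator under an explicit cobordism map, reduce the two claimed identities to identities between cobordism maps with fixed source and target, and then transport these identities to annular instanton homology, where they are already known. The topological inputs are the same as for the split version, Lemma \ref{lem_cobordism_SiHI_non_contractible_split_one_contractible}, read backwards.

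First I would unwind the definitions. By Definition \ref{defn_Theta}, the element $\Theta_{w_0,\sigma}(\bfv_\pm(\ga_1)\otimes \bfw_{\mfo}(\ga_2))$ is the image under the pair-of-pants map $\clP$ of $v_\pm(\ga_1)\otimes w_{\mfo,w_0}(\ga_2)$, and by Step 1 of Section \ref{subsec_iso_V_to_SiHI} we have $v_+(\ga_1)=\pm\SiHI_{R,p}(D^+(\ga_1))(u_0)$ and $v_-(\ga_1)=\pm\tfrac12\SiHI_{R,p}(E^+(\ga_1))(u_0)$. Pushing the birth cobordism $D^+(\ga_1)$ (resp.\ $E^+(\ga_1)$) into a collar of $\{0\}\times(\ga_1\cup\ga_2)$, composing with the band $b$, and using $\clP(u_0\otimes x)=\pm x$ (Lemma \ref{lem_multiplicative_unit}), one sees that $\SiHI(b)\circ\Theta_{w_0,\sigma}(\bfv_\pm(\ga_1)\otimes\bfw_{\mfo}(\ga_2))$ equals $\pm$ the image of $w_{\mfo,w_0}(\ga_2)\in\SiHI(\ga_2)$ under the cobordism $\ga_2\leadsto\ga$ obtained by gluing $D^+(\ga_1)$ (resp.\ $E^+(\ga_1)$), viewed as a cobordism that adds the component $\ga_1$, on top of $b$. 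Similarly $\Theta_{w_0,\sigma_b}\,\bfw_{\mfo}(\ga)$ is, by the construction of $w_{\mfo,w_0}(\ga)$, the image of $w_{\mfo,w_0}(\ga_2)$ under the product cobordism of $\ga_2$ followed by the standard isotopy identifying $\ga_2$ with $\ga$. Since $w_{\mfo,w_0}(\ga_2)$ spans a one-dimensional graded subspace of $\SiHI(\ga_2)$, the two claimed identities are therefore equivalent to the corresponding identities between cobordism maps $\SiHI(\ga_2)\to\SiHI(\ga)$.

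Next I would invoke Corollary \ref{cor_zero_maps_detected_by_AHI}(2). Since $\ga_2$ is non-contractible in $R$, hence non-separating (as $\Sigma$ has genus zero and $F$ is connected), and $\ga_1$ is a small contractible circle that may be isotoped into a regular neighborhood of $\ga_2$, all of $\ga_1$, $\ga_2$, the band $b$, and the merged circle $\ga$ can be placed inside a single orientation-preserving embedded annulus $\varphi(A)\subset R$ whose image is non-separating, with $p$ in the complement. Then every cobordism above is supported in $(-1,1)\times[-1,1]\times\varphi(A)$, and Corollary \ref{cor_zero_maps_detected_by_AHI}(2) reduces each of the two desired relations to the analogous relation in $\AHI$. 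The latter statements — that merging a contractible circle carrying $\bfv_+$ into a non-contractible circle is the identity up to sign, and that merging a contractible circle carrying $\bfv_-$ gives the zero map — are exactly \cite[Proposition 5.14]{AHI}, the same reference used in Lemmas \ref{lem_cobordism_SiHI_non_contractible_split_one_contractible} and \ref{lem_SiHI_cobordism_one_trivial_two_nontrivial}. This completes the proof.

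I expect the main obstacle to be the bookkeeping in the second step: carefully checking that the glued cobordism ``birth of $\ga_1$ followed by the band $b$'' is diffeomorphic relative to its boundary to the product cobordism of $\ga_2$ composed with the standard isotopy $\ga_2\leadsto\ga$ (for the $D^+$ case), and to the cobordism whose $\AHI$-map is shown to vanish in \cite{AHI} (for the $E^+$ case). This is a purely topological matching of cobordism pictures of the same flavor as the identifications in the proofs of Lemmas \ref{lem_cobordism_SiHI_one_trivial_to_two_trivial} and \ref{lem_cobordism_map_symmetry_1}, but it must be carried out carefully enough that the sign and the placement of all data inside one non-separating annulus are simultaneously under control.
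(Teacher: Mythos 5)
Your proposal is correct and takes essentially the same route as the paper: the paper's one-line proof defers to the argument of Lemma \ref{lem_cobordism_SiHI_non_contractible_split_one_contractible}, which is exactly your strategy of expressing both sides as cobordism maps applied to $w_{\mfo,w_0}(\ga_2)$, transporting to $\AHI$ via Corollary \ref{cor_zero_maps_detected_by_AHI}, and citing \cite[Proposition 5.14]{AHI}. You have merely spelled out the details (the collar-pushing of $D^+/E^+$, the use of Lemma \ref{lem_multiplicative_unit}, and the placement inside a non-separating annulus) that the paper leaves implicit.
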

\begin{proof}
	This follows from the same argument as Lemma \ref{lem_cobordism_SiHI_non_contractible_split_one_contractible}.
\end{proof}
\begin{lem} If $\gamma_1$ and $\gamma_2$ are both non-contractible and $\gamma_3$ is contractible, then $\ga_1$ and $\ga_2$ must be isotopic. 
	The orientations of $\ga_1$ and $\ga_2$ are canonically identified by the isotopy. Let $\mfo$, $\mfo'$ be the two orientations of $\ga_1$, and  use the same notation to denote the corresponding orientations of $\ga_2$. 
	Then $\SiHI(b)$ has the form
\begin{align}
	\Theta_{w_0,\sigma}\big(	\mathbf{w}_\mfo(\gamma_1)\otimes \mathbf{w_\mfo}(\gamma_2) \big)&\mapsto 0,   &
	\Theta_{w_0,\sigma}\big(\mathbf{w}_{\mfo'}(\gamma_1)& \otimes \mathbf{w}_{\mfo'}(\gamma_2)\big)\mapsto 0,
	\label{eqn_SiHI_merge_two_nontrivial_to_one_trivial_1}
	\\
	\Theta_{w_0,\sigma}\big(\mathbf{w}_\mfo(\gamma_1)\otimes \mathbf{w}_{\mfo'}(\gamma_2)\big) &\mapsto \pm \lambda_3\Theta_{w_0,\sigma_b}\, \mathbf{v}_-(\gamma), 
	&\Theta_{w_0,\sigma}\big( \mathbf{w}_{\mfo'}(\gamma_1)&\otimes \mathbf{w}_{\mfo} (\gamma_2)\big)\mapsto \pm \lambda_3 \Theta_{w_0,\sigma_b}\, \mathbf{v}_-(\gamma).	
	\label{eqn_SiHI_merge_two_nontrivial_to_one_trivial_2}
\end{align}
for some non-zero $\lambda_3$. 
\end{lem}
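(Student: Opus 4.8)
The plan is to follow the strategy of Lemma~\ref{lem_SiHI_cobordism_one_trivial_two_nontrivial}: push the entire band surgery into a non-separating annulus in $R$, transport every assertion to annular instanton homology via Corollary~\ref{cor_zero_maps_detected_by_AHI}, quote the corresponding formula from \cite{AHI}, and then fix the remaining ambiguity with a symmetry lemma. First I would record the topology. Since $\ga_1,\ga_2$ are non-contractible and the band surgery merges them into a single \emph{contractible} circle $\ga$, the curves $\ga_1,\ga_2$ must be parallel; let $A_0\subset\Sigma$ be the annulus they cobound. A band $b$ attached to $\partial A_0=\ga_1\cup\ga_2$ that yields a contractible circle must lie inside $A_0$, and then $\ga=\partial B$ where $B=A_0\setminus(\text{a tubular neighborhood of }b)$ is a disk. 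Hence $\ga_1$, $\ga_2$, $b$, $B$, and $\ga$ all lie in an annular neighborhood $N'$ of $A_0$ in $\Sigma$; the core of $N'$ is isotopic to $\ga_1$, which is non-separating in $R$, so $N'$ is a non-separating annulus in $R$.

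Having confined everything to $(-1,1)\times N'$, the identities \eqref{eqn_SiHI_merge_two_nontrivial_to_one_trivial_1}--\eqref{eqn_SiHI_merge_two_nontrivial_to_one_trivial_2} become, via Proposition~\ref{prop_embedding_of_annulus_in_R} and Corollary~\ref{cor_zero_maps_detected_by_AHI}, equivalent to the corresponding identities for the annular merge cobordism taking the two parallel non-contractible circles $\ga_1,\ga_2$ to the trivial circle $\ga$. That merge map is computed in \cite[Proposition 5.14]{AHI}: coherently oriented inputs are sent to zero, and oppositely oriented inputs are sent to a fixed nonzero vector that the isomorphism $\AHI\cong\SiHI$ identifies with a multiple of $\Theta_{w_0,\sigma_b}\bfv_-(\ga)$ (matching the APS merge formula of Section~\ref{subsec_APS}). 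Since the identifications $\AHI\cong\SiHI$ carry the annular bases to $\Theta_{w_0,\sigma_b}\bfv_\pm(\ga)$ and to $\Theta_{w_0,\sigma}\big(\bfw_{\mfo}(\ga_1)\otimes\bfw_{\mfo'}(\ga_2)\big)$ and its variants up to scalars (absorbed into the free sign and the undetermined constant), this yields \eqref{eqn_SiHI_merge_two_nontrivial_to_one_trivial_1} and the fact that the two images in \eqref{eqn_SiHI_merge_two_nontrivial_to_one_trivial_2} are nonzero scalar multiples of $\Theta_{w_0,\sigma_b}\bfv_-(\ga)$.

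It then remains to see that these two scalar multiples agree up to sign, so that a single $\lambda_3$ works. For this I would compose $\SiHI(b)$ with the disk-capping cobordism $D^-(\ga)$ of Section~\ref{subsubsec_iso_V_to_SiHI_1}: the composite, being the saddle followed by the disk $B$, is an annular cobordism with boundary $\ga_1\cup\ga_2$ isotopic in $\Sigma$ to $A_0$, so $\SiHI(D^-(\ga))\circ\SiHI(b)=\pm\SiHI(A_0)$. Applying Lemma~\ref{lem_cobordism_map_symmetry_2} to $A_0$ shows that $\SiHI(A_0)$ sends $\Theta_{w_0,\sigma}\big(\bfw_{\mfo}(\ga_1)\otimes\bfw_{\mfo'}(\ga_2)\big)$ and $\Theta_{w_0,\sigma}\big(\bfw_{\mfo'}(\ga_1)\otimes\bfw_{\mfo}(\ga_2)\big)$ to the same vector up to sign, and by Lemma~\ref{lem_SiHI_D-} the map $\SiHI(D^-(\ga))$ is injective on $\spann_{\bC}\!\big(\Theta_{w_0,\sigma_b}\bfv_-(\ga)\big)$; combining these gives the claim. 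The vanishing statement \eqref{eqn_SiHI_merge_two_nontrivial_to_one_trivial_1} can also be obtained directly, without invoking \cite{AHI}: choose an oriented simple closed curve $\ga'$ meeting $A_0$ in a single spanning arc, so $\ga'$ meets each of $\ga_1,\ga_2$ transversely once with the same sign; by Corollary~\ref{cor_mu_map_eigenspace_preserved_by_cobordism} the map $\SiHI(b)$ preserves the $\ga'$--grading, the elements $\bfw_{\mfo}(\ga_1)\otimes\bfw_{\mfo}(\ga_2)$ and $\bfw_{\mfo'}(\ga_1)\otimes\bfw_{\mfo'}(\ga_2)$ have $\ga'$--degree $\pm2$, while $\SiHI(\ga)$ is concentrated in $\ga'$--degree $0$ because the contractible curve $\ga$ is isotopic in $R$, through an isotopy fixing $\ga'$, to a curve disjoint from $\ga'$.

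The main obstacle is not any single step but the sign-and-scalar bookkeeping: one has to verify that the excision isomorphisms, the naturality identifications $\AHI\cong\SiHI$, and the choice of $w_0$ together introduce only an overall sign and one undetermined nonzero constant, and in particular that the two images in \eqref{eqn_SiHI_merge_two_nontrivial_to_one_trivial_2} carry the \emph{same} constant rather than merely proportional ones. Supplying that last point is precisely the role of the symmetry Lemma~\ref{lem_cobordism_map_symmetry_2}, and checking that its hypotheses really apply to the annulus $A_0$ produced by the surgery is the one place where some care is needed.
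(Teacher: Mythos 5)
Your proof is essentially the same as the paper's: you use gradings to kill the same-orientation inputs, the $D^{-}(\ga)$ composition together with Lemma~\ref{lem_cobordism_map_symmetry_2} and the injectivity of $\SiHI(D^{-}(\ga))$ on $\spann\big(\Theta_{w_0,\sigma_b}\bfv_-(\ga)\big)$ to match the two coefficients, and the $\AHI$ comparison (Corollary~\ref{cor_zero_maps_detected_by_AHI} plus \cite[Proposition~5.14]{AHI}) to see that the images are non-zero. The only place you diverge is in showing that the image lies in $\spann\big(\Theta_{w_0,\sigma_b}\bfv_-(\ga)\big)$ rather than having a $\bfv_+$ component: the paper post-composes $\SiHI(b)$ with a split of $\ga$ into two non-contractible circles, transports the vanishing of the composite through Corollary~\ref{cor_zero_maps_detected_by_AHI}, and then uses that the kernel of that split is $\spann(\bfv_-)$ (the paper's phrase ``two trivial circles'' appears to be a typo --- the split of one trivial circle into two trivial circles is injective by Lemma~\ref{lem_cobordism_SiHI_one_trivial_to_two_trivial}, so the argument only makes sense with a split into non-contractible circles as in Lemma~\ref{lem_SiHI_cobordism_one_trivial_two_nontrivial}).

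Your alternative here is to assert directly that the natural isomorphism $\AHI\cong\SiHI_{R,p}$ of Proposition~\ref{prop_embedding_of_annulus_in_R} carries the annular bases to the $\Theta_{w_0}$-bases up to scalars, and to read the formula straight off \cite[Proposition~5.14]{AHI}. For the contractible circle $\ga$ this is immediate from naturality (both $\bfv_\pm(\ga)$ are images of the unit $u_0$ under $D^+$ and $E^+$, and both unit elements are characterized by the pair-of-pants product). But for the non-contractible inputs this requires knowing that the isomorphism exchanges the $f$-grading on $\AHI(\ga_0)$ with the $\ga_0'$-grading on $\SiHI(\ga_0)$, which is a claim about two different $\muu$-classes being intertwined by the excision cobordism and is not verified in the paper or in your sketch. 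This is the one genuine gap in your write-up. Corollary~\ref{cor_zero_maps_detected_by_AHI} only lets you transport \emph{linear relations among cobordism maps}, not basis identifications, which is precisely why the paper reformulates the ``no $\bfv_+$-component'' statement as a vanishing statement for a composite cobordism before invoking the corollary. If you replace your basis-correspondence assertion by the split-composition argument (as in the paper, with the split into two non-contractible circles), your proof is complete; alternatively you would need to separately justify the grading compatibility. Your self-contained $\ga'$-grading argument for~\eqref{eqn_SiHI_merge_two_nontrivial_to_one_trivial_1}, and your observation that the injectivity of $D^-(\ga)$ on $\spann(\bfv_-)$ is needed to close the~\eqref{eqn_SiHI_merge_two_nontrivial_to_one_trivial_2} step, usefully fill in details that the paper states only tersely.
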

\begin{proof}
The right-hand side of the map is a linear combination of $ \Theta_{w_0,\sigma_b}\, \mathbf{v}_-(\gamma)$ and  $\Theta_{w_0,\sigma_b}\, \mathbf{v}_+(\gamma)$. Consider the composition of this map with the splitting of one trivial circle to two trivial circles. By Corollary \ref{cor_zero_maps_detected_by_AHI} and \cite[Proposition 5.9, Proposition 5.14]{AHI}, the composition map is zero. Therefore, by Lemma \ref{lem_cobordism_SiHI_one_trivial_to_two_trivial}, the image of $\SiHI(b)$ is spanned by $ \Theta_{w_0,\sigma_b}\, \mathbf{v}_-(\gamma)$. 

The two terms in \eqref{eqn_SiHI_merge_two_nontrivial_to_one_trivial_1} are zero because of gradings.
To show that the two terms in  \eqref{eqn_SiHI_merge_two_nontrivial_to_one_trivial_2} are equal up to sign, we compose the map with $\SiHI(D^-(\ga))$ from Lemma \ref{lem_SiHI_D-} and invoke Lemma  \ref{lem_cobordism_map_symmetry_2}. The right-hand-side of \eqref{eqn_SiHI_merge_two_nontrivial_to_one_trivial_2} are both non-zero because of Corollary \ref{cor_zero_maps_detected_by_AHI} and \cite[Proposition 5.14]{AHI}.
\end{proof}
\begin{lem} 
	\label{lem_SiHI_cobordism_two_nontrivial_one_nontrivial}
	Assume $w_0$ is chosen so that Lemma \ref{lem_cobordism_map_1_nontrivial_to_2_nontrivial} holds.	If all of $\ga,\ga_1,\ga_2$ are non-contractible, let $N$ be the regular neighborhood of $b\cup \ga$.  Then $N$ is a sphere with three disks removed, and the three boundaries of $N$ are parallel to $\ga_1$, $\ga_2$, $\ga$.  The boundary orientation of $N$ defines an orientation on each of $\ga_1, \ga_2, \ga$, and we denote them by $\mfo_1,\mfo_2,\mfo$ respectively.  Denote their opposite orientations by $\mfo_1',\mfo_2',\mfo'$.  Then $\SiHI(b)$ has the form
	\begin{align*}
			\Theta_{w_0,\sigma}\big( \mathbf{w}_{\mfo_1'}(\gamma_1)\otimes \mathbf{w}_{\mfo_2'}(\gamma_2) )&\mapsto  \pm \lambda_4 \Theta_{w_0,\sigma_b}\,\mathbf{w}_{\mfo}(\gamma),   &\Theta_{w_0,\sigma}\big(\mathbf{w}_{\mfo_1'}(\gamma_1)& \otimes \mathbf{w}_{\mfo_2}(\gamma_2)\big)\mapsto 0,\\
		\Theta_{w_0,\sigma}\big(\mathbf{w}_{\mfo_1}(\gamma_1)\otimes \mathbf{w}_{\mfo_2'}(\gamma_2)\big) &\mapsto 0, &\Theta_{w_0,\sigma}\big(\mathbf{w}_{\mfo_1}(\gamma_1)&\otimes \mathbf{w}_{\mfo_2}(\gamma_2) \big)\mapsto 0.
	\end{align*}
\end{lem}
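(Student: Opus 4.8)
The plan is to follow the template of the proof of Lemma~\ref{lem_cobordism_map_1_nontrivial_to_2_nontrivial}, adding one new ingredient to handle the fact that here $w_0$ is fixed rather than free. First I would run the grading argument: by Corollary~\ref{cor_mu_map_eigenspace_preserved_by_cobordism} the map $\SiHI(b)$ preserves the $c$--grading for every simple closed curve $c\subset R$, and the boundary of the pair of pants $N$ gives the homological relation $[\gamma_1]_{\mfo_1}+[\gamma_2]_{\mfo_2}+[\gamma]_{\mfo}=0$ in $H_1(R)$. Picking, for $i=1,2$, an oriented simple closed curve $c_i$ meeting $\gamma_i$ transversely once and disjoint from $\gamma_{3-i}$, and comparing the $c_1$-- and $c_2$--gradings of the four basis vectors of $V_\bC(\gamma_1)\otimes V_\bC(\gamma_2)$ with those of $\mathbf{w}_{\mfo}(\gamma)$ and $\mathbf{w}_{\mfo'}(\gamma)$ (using the conventions of Section~\ref{subsec_iso_V_to_SiHI} for the generators $w_{\mfo,w_0}$), one finds that the two ``mixed'' vectors map to $0$ and that $\mathbf{w}_{\mfo_1'}(\gamma_1)\otimes\mathbf{w}_{\mfo_2'}(\gamma_2)\mapsto c_1\,\Theta_{w_0,\sigma_b}\mathbf{w}_{\mfo}(\gamma)$ and $\mathbf{w}_{\mfo_1}(\gamma_1)\otimes\mathbf{w}_{\mfo_2}(\gamma_2)\mapsto c_2\,\Theta_{w_0,\sigma_b}\mathbf{w}_{\mfo'}(\gamma)$ for some scalars $c_1,c_2$. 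Putting $\lambda_4:=c_1$, the whole statement reduces to showing $c_2=0$.

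Next I would bound the rank exactly as in Lemma~\ref{lem_cobordism_map_1_nontrivial_to_2_nontrivial}: using the diffeomorphism--equivariance of cobordism maps I may assume $(\gamma,\gamma_1,\gamma_2,b)$ is in the standard position in which the crossing resolution of the band $b$ is the knot $K$ of Figure~\ref{fig_a_knot_on_sigma}, whose fundamental class is not carried by any embedded annulus in $\Sigma$; then $\dim\SiHI_{R,p}(K)>2$ by Theorem~\ref{thm_main_instanton} and \eqref{eqn_SiHI_iso_SHI}, and the unoriented skein exact triangle \cite[Theorem 6.8]{KM:Kh-unknot} relating $\SiHI(\gamma_1\cup\gamma_2)$, $\SiHI(\gamma)$ and $\SiHI_{R,p}(K)$ forces $\rank\SiHI(b)\le 1$; hence at most one of $c_1,c_2$ is nonzero.

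The genuinely new step is to show that the vanishing entry is $c_2$, since $w_0$ can no longer be re-chosen. I would do this by a cobordism--reversal duality. The reversed cobordism $\bar b$ is a split band from $\gamma$ to $\gamma_1\sqcup\gamma_2$ with the same regular neighbourhood $N$, so Lemma~\ref{lem_cobordism_map_1_nontrivial_to_2_nontrivial} applies with the \emph{same} $w_0$ and gives $\SiHI(\bar b)(\Theta_{w_0,\sigma_b}\mathbf{w}_{\mfo}(\gamma))=0$ and $\SiHI(\bar b)(\Theta_{w_0,\sigma_b}\mathbf{w}_{\mfo'}(\gamma))=\pm\lambda_2\,\Theta_{w_0,\sigma}(\mathbf{w}_{\mfo_1}(\gamma_1)\otimes\mathbf{w}_{\mfo_2}(\gamma_2))$. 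Each non--separating curve $\gamma,\gamma_1,\gamma_2$ has a parallel push--off $\widetilde\gamma$, and the annulus between a curve and its push--off gives a perfect pairing $\SiHI(\gamma)\otimes\SiHI(\widetilde\gamma)\to\SiHI(\emptyset)\cong\bC$, and likewise for $\gamma_1\sqcup\gamma_2$; by the symmetry computations of Lemmas~\ref{lem_cobordism_map_symmetry_1}--\ref{lem_cobordism_map_symmetry_2} (or by reducing to $\AHI$ via Corollary~\ref{cor_zero_maps_detected_by_AHI} and \cite{AHI}) these pairings are anti--diagonal in the $\Theta_{w_0}$--bases, i.e.\ $\langle\mathbf{w}_{\mfo},\mathbf{w}_{\mfo'}\rangle\ne0$ while $\langle\mathbf{w}_{\mfo},\mathbf{w}_{\mfo}\rangle=0$. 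Gluing $b$ to these cup and cap cobordisms and using that the resulting zigzag composite is isotopic, up to a nonzero constant from the functoriality property, to the identity cobordism, I would conclude that $\SiHI(b)$ is, up to a nonzero scalar, the adjoint of $\SiHI(\bar b)$ with respect to these pairings. Since $\SiHI(\bar b)$ kills $\mathbf{w}_{\mfo}(\gamma)$ and the pairing on $\SiHI(\gamma)$ is anti--diagonal, this forces the $\mathbf{w}_{\mfo'}(\gamma)$--coefficient of everything in the image of $\SiHI(b)$ to vanish, i.e.\ $c_2=0$, and the value of $\SiHI(\bar b)$ on $\mathbf{w}_{\mfo'}(\gamma)$ yields $\lambda_4=\pm\lambda_2$.

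The hard part is making this duality precise inside $\SiHI_{R,p}$: one must verify that the zigzag composite of the parallel--annulus cup and cap really is isotopic to the identity cobordism (so that by the functoriality properties it induces $\pm$ a nonzero constant times the identity), control the scalars and signs coming out of the gluing formula, and check that the pairing is anti--diagonal in the $\Theta_{w_0}$--basis, which is where the bookkeeping relating the boundary orientations $\mfo,\mfo_1,\mfo_2$ to the push--offs enters. If that proves awkward, an alternative is to keep the rank bound and instead compute one of the composites $\SiHI(\bar b)\circ\SiHI(b)$ or $\SiHI(b)\circ\SiHI(\bar b)$ (the map of a $4$--holed sphere, respectively a genus--one self--cobordism, whose handle core is a non--separating curve in $R$) and show that it induces the zero map, using that the $\muu$--map of the torus $S^1\times(\text{core})$ vanishes on $\SiHI_{R,p}$ by Proposition~\ref{prop_eigenvalues_bounded_by_Thurston_norm}; this gives $\lambda_2 c_2=0$, hence $c_2=0$ once the rank bound has ruled out $c_1$ and $c_2$ both being nonzero.
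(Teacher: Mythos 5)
Your proposal takes a genuinely different route from the paper's. After the same grading reduction, the paper does \emph{not} re-derive a rank bound for this band; it finishes with a ``commuting disjoint bands'' trick: in a model configuration with two \emph{disjoint} bands $b_1,b_2$ attached to $\delta_1\cup\delta_2$, functoriality gives $\SiHI(b_2)\circ\SiHI(b_1)=\pm\,\SiHI(b_1)\circ\SiHI(b_2)$, the direct $\SiHI(b_2)$ already kills $\bfw_{\mathfrak r_1}(\delta_1)\otimes\bfw_{\mathfrak r_2}(\delta_2)$ by Lemma~\ref{lem_cobordism_map_1_nontrivial_to_2_nontrivial} acting in a tensor factor, and the post-$b_1$ copy of $\SiHI(b_2)$ is injective on the relevant line by Lemma~\ref{lem_cobordism_SiHI_non_contractible_split_one_contractible}; diffeomorphism equivariance then transports this to the general merge band. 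Your duality route---parallel-annulus cup/cap pairings, anti-diagonality via Lemmas~\ref{lem_SiHI_cobordism_two_nontrivial_merge_to_trivial} and \ref{lem_SiHI_D-}, and adjointness of $\SiHI(b)$ and $\SiHI(\bar b)$ by sliding the pair of pants through the cap---is sound in outline and would work; it even yields the bonus relation that $\lambda_4$ is a fixed nonzero multiple of $\lambda_2$, which the paper neither uses nor proves. The cost is verifying the adjointness/zigzag identity with scalars and signs under control, which you correctly flag as the hard part. Two smaller remarks. First, the rank bound you re-derive in step two is not actually used by the duality argument, since adjointness kills the $\bfw_{\mfo'}(\gamma)$-coefficient directly without it. Second, your fallback via a composite of $\SiHI(b)$ and $\SiHI(\bar b)$ only yields $\lambda_2 c_2=0$, which does not give $c_2=0$ unless one knows $\lambda_2\neq 0$: the exact triangle in the proof of Lemma~\ref{lem_cobordism_map_1_nontrivial_to_2_nontrivial} only shows the rank there is $<2$, so $\lambda_2=0$ is not excluded, and ``at most one of $c_1,c_2$ is nonzero'' cannot break the tie since both could vanish. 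So the fallback would need an extra input; the primary duality route does not.
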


\begin{proof}
	Comparison of gradings shows that 
	\begin{enumerate}
		\item the image of $\Theta_{w_0,\sigma}\big( \mathbf{v}(\gamma_1)_{\mfo_1'}\otimes \mathbf{v}(\gamma_2)_{\mfo_2'} )$ is a scalar multiple of $ \Theta_{w_0,\sigma_b}\,\mathbf{v}(\gamma)_{\mfo}$,
		\item  the image of $\Theta_{w_0,\sigma}\big( \mathbf{v}(\gamma_1)_{\mfo_1}\otimes \mathbf{v}(\gamma_2)_{\mfo_2} )$
		is a scalar multiple of $ \Theta_{w_0,\sigma_b}\,\mathbf{v}(\gamma)_{\mfo'}$, 
		\item the images of $\Theta_{w_0,\sigma}\big(\mathbf{w}_{\mfo_1'}(\gamma_1) \otimes \mathbf{w}_{\mfo_2}(\gamma_2)\big)$ 
		and
		$\Theta_{w_0,\sigma}\big(\mathbf{w}_{\mfo_1}(\gamma_1)\otimes \mathbf{w}_{\mfo_2'}(\gamma_2)\big) $
		are zero.
	\end{enumerate}
	
	To show that the image of $\Theta_{w_0,\sigma}\big( \mathbf{v}(\gamma_1)_{\mfo_1}\otimes \mathbf{v}(\gamma_2)_{\mfo_2} )$ must also be zero when $w_0$ is chosen so that Lemma \ref{lem_cobordism_map_1_nontrivial_to_2_nontrivial} holds, consider the two bands in Figure \ref{fig_TQFT_two_holes}.  Here, we adopt the convention that if $B$ is a standard disk in the plane, then the induced orientation on $\partial B$ is counter-clockwise. The boundaries of the black dots are non-contractible curves in $\Sigma$. Let $\delta_1,\delta_2$ be simple closed curves on $\Sigma$, let $\mathfrak{r}_1,\mathfrak{r}_2$ be the orientations of $\delta_1,\delta_2$ and $b_1$, $b_2$ be two disjoint bands attached to $\delta_1\cup \delta_2$ as shown in Figure \ref{fig_TQFT_two_holes}. 
	Let $\tau$ denote an arbitrary ordering of the components of $\delta_1\cup \delta_2$. 
	Then by Lemma \ref{lem_cobordism_map_1_nontrivial_to_2_nontrivial}, the image of $\Theta_{w_0,\tau}(\bfw_{\mathfrak{r}_1}(\delta_1)\otimes \bfw_{\mathfrak{r}_2}(\delta_2))$ under $\SiHI(b_2)$ is zero. By the functoriality of $\SiHI$, we have $\SiHI(b_1)\circ \SiHI(b_2) = \SiHI(b_2)\circ \SiHI(b_1)$ on $\SiHI(\delta_1\cup \delta_2)$, therefore
	$$
	 \SiHI(b_2)\circ \SiHI(b_1)\circ \Theta_{w_0,\tau}(\bfw_{\mathfrak{r}_1}(\delta_1)\otimes \bfw_{\mathfrak{r}_2}(\delta_2))=0.
	$$
	It then follows from Lemma \ref{lem_cobordism_SiHI_non_contractible_split_one_contractible} that 
	$$
	\SiHI(b_1)\circ \Theta_{w_0,\tau}(\bfw_{\mathfrak{r}_1}(\delta_1)\otimes \bfw_{\mathfrak{r}_2}(\delta_2))=0.
	$$
	
Since there is an orientation-preserving diffeomorphism of $R$ that takes $(\delta_1,\mathfrak{r}_1,\delta_2,\mathfrak{r}_2,b_1)$ to $(\ga_1,\mfo_1,\ga_2,\mfo_2,b)$, we conclude that the image of $\Theta_{w_0,\sigma}\big(\mathbf{w}_{\mfo_1}(\gamma_1)\otimes \mathbf{w}_{\mfo_2}(\gamma_2)\big)$ under the map $\SiHI(b)$ is zero.
\begin{figure}
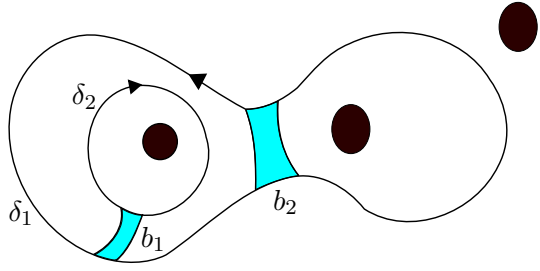

		\begin{overpic}[width=0.4\textwidth]{./figures/TQFT_two_holes}
			\put(0,8){$\delta_1$}
			\put(12,30){$\delta_2$}
			\put(25,3){$b_1$}
			\put(50,10){$b_2$}
		\end{overpic}
		\caption{Two bands}
		\label{fig_TQFT_two_holes}
\end{figure}
\end{proof}

\begin{lem}
	\label{lem_lambda1_lambda3_product}
	Fix a choice of $w_0$. 
	Let $\lambda_1$ be the constant given by Lemma \ref{lem_SiHI_cobordism_one_trivial_two_nontrivial}, and let $\lambda_3$ be the constant given by Lemma \ref{lem_SiHI_cobordism_two_nontrivial_merge_to_trivial}.
	Then $\lambda_1\lambda_3=\pm 1$.
\end{lem}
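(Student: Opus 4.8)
The plan is to compose the splitting cobordism of Lemma~\ref{lem_SiHI_cobordism_one_trivial_two_nontrivial} with the merging cobordism carrying two parallel non-contractible circles to a contractible one (the one producing $\lambda_3$, Lemma~\ref{lem_SiHI_cobordism_two_nontrivial_merge_to_trivial}), and to recognize the composite as a genus-adding cobordism, whose action on $\SiHI(\gamma)$ is forced by the definition of $v_-(\gamma)$ in Section~\ref{subsubsec_iso_V_to_SiHI_1}.

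First I would fix the geometry. Let $\gamma\subset\Sigma$ be a contractible simple closed curve and let $b$ be a band attached to $\gamma$ so that the band surgery along $b$ produces two parallel non-contractible circles $\gamma_1\cup\gamma_2$; this realizes the situation of Lemma~\ref{lem_SiHI_cobordism_one_trivial_two_nontrivial}. Let $b'$ be a reversed parallel push-off of $b$, so that the band surgery along $b'$ takes $\gamma_1\cup\gamma_2$ back to $\gamma$; this realizes the situation of Lemma~\ref{lem_SiHI_cobordism_two_nontrivial_merge_to_trivial}. Then $b$ followed by $b'$ is a link cobordism $W$ from $\{0\}\times\gamma$ to $\{0\}\times\gamma$. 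Because $b'$ retraces $b$, the surface $W$ is isotopic rel boundary to the product cylinder over $\{0\}\times\gamma$ with a single \emph{standard} (null-homotopically attached) handle; equivalently, capping $W$ below with the disk cobordism $D^+(\gamma)$ yields a cobordism from $\emptyset$ to $\gamma$ that is isotopic rel boundary to $E^+(\gamma)=D^+(\gamma)\#T^2$.

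Next I would compute $\SiHI(W)=\pm\SiHI(b')\circ\SiHI(b)$ in two ways. Evaluating $\SiHI(b)$ by Lemma~\ref{lem_SiHI_cobordism_one_trivial_two_nontrivial} and $\SiHI(b')$ by Lemma~\ref{lem_SiHI_cobordism_two_nontrivial_merge_to_trivial}, and observing that in both lemmas it is precisely the oppositely oriented generators $\mathbf{w}_{\mfo}\otimes\mathbf{w}_{\mfo'}$ and $\mathbf{w}_{\mfo'}\otimes\mathbf{w}_{\mfo}$ that occur, one gets, for suitable signs $\varepsilon_1,\varepsilon_2\in\{\pm1\}$,
\[
\SiHI(W)\colon\qquad v_+(\gamma)\ \mapsto\ (\varepsilon_1+\varepsilon_2)\,\lambda_1\lambda_3\,v_-(\gamma),\qquad v_-(\gamma)\ \mapsto\ 0 .
\]
On the other hand, by the previous paragraph and the definitions $v_+(\gamma)=\SiHI(D^+(\gamma))(u_0)$, $v_-(\gamma)=\frac12\SiHI(E^+(\gamma))(u_0)$ (together with the computation of $\AHI$ of a contractible circle in \cite[Section~5]{AHI}, applicable since $W$ lies in a thickened annulus, via Proposition~\ref{prop_embedding_of_annulus_in_R} and Corollary~\ref{cor_zero_maps_detected_by_AHI}), the map $\SiHI(W)$ sends $v_+(\gamma)$ to $\SiHI(E^+(\gamma))(u_0)=\pm2\,v_-(\gamma)$ and sends $v_-(\gamma)$ to $0$. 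Comparing the two descriptions gives $(\varepsilon_1+\varepsilon_2)\lambda_1\lambda_3=\pm2$; since $2\neq0$, this forces $\varepsilon_1=\varepsilon_2$ and $\lambda_1\lambda_3=\pm1$, which is the assertion.

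The main obstacle is the geometric claim in the second paragraph: that choosing $b'$ to be a reversed push-off of $b$ makes $W$ a cylinder with a \emph{standard} handle, so that $D^+(\gamma)\cup W$ is genuinely isotopic to $E^+(\gamma)$ rather than to a cap with a handle running around the annulus. This is a routine but slightly delicate isotopy argument — the core of the handle is a loop that travels out along $b$ and back along $b'$, hence is null-homotopic — and it is the only place where one needs the precise factor $2$ rather than mere non-vanishing of the composite map.
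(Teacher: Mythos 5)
Your geometric claim in the second paragraph is false, and it is the load-bearing step of your argument. The surface $D^+(\gamma)\cup W$ is \emph{not} isotopic rel boundary to $E^+(\gamma)$. You correctly observe that one of the two independent loops on the added handle --- the one that travels out along $b$ and back along $b'$ --- is null-homotopic in the ambient four-manifold. But you do not address the other loop: the intermediate level circle $\gamma_1$ lies on $D^+(\gamma)\cup W$ (it is the core of the cylindrical portion of the surface between the two saddles), and $\gamma_1$ is non-contractible in the annulus (equivalently non-separating in $R$). Hence the image of $\pi_1\big(D^+(\gamma)\cup W\big)$ in $\pi_1$ of the ambient four-manifold is non-trivial, whereas the image of $\pi_1\big(E^+(\gamma)\big)$ is trivial because the torus summand sits inside a small ball. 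Since this image (up to conjugacy) is an isotopy invariant of embedded surfaces, the two surfaces are not isotopic rel boundary for any choice of push-off $b'$. Your claim that the handle is ``null-homotopically attached'' only accounts for one of the two generators of $H_1$ of the handle.

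The correct way to get the factor $2$, and the route the paper actually takes, bypasses any isotopy of surfaces: one reduces to $\AHI$ via Corollary~\ref{cor_zero_maps_detected_by_AHI} and then reads off both individual band-surgery maps directly from \cite[Proposition~5.14]{AHI}. There the split sends $v_+$ to $\mathbf{w}_{\mfo}\otimes\mathbf{w}_{\mfo'}+\mathbf{w}_{\mfo'}\otimes\mathbf{w}_{\mfo}$ and the merge sends both $\mathbf{w}_{\mfo}\otimes\mathbf{w}_{\mfo'}$ and $\mathbf{w}_{\mfo'}\otimes\mathbf{w}_{\mfo}$ to $v_-$, so the composite sends $v_+$ to $2v_-$. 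This yields the identity $\SiHI(W)(v_+(\gamma))=\pm 2\,v_-(\gamma)$ that you wanted. (It does happen to agree, up to sign, with the action of $E^+(\gamma)$ on $u_0$, but this is an equality of cobordism maps, not an isotopy of cobordisms: the two surfaces induce the same map without being isotopic.) With that replacement, your third paragraph --- comparing $(\varepsilon_1+\varepsilon_2)\lambda_1\lambda_3=\pm 2$ to conclude $\lambda_1\lambda_3=\pm1$ --- is precisely the argument the paper sketches.
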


\begin{proof}
	Consider the composition of two band surgeries, where the first surgery takes one contractible circle $\ga$ to two non-contractible circles $\ga_1$ and $\ga_2$, and the second surgery is defined along the same band and it takes $\ga_1\cup \ga_2$ to $\ga$. By Corollary \ref{cor_zero_maps_detected_by_AHI} and \cite[Proposition 5.14]{AHI}, the composition map  takes $v_-(\ga)$ to $2v_-(\ga)$ (see Section \ref{subsubsec_iso_V_to_SiHI_1} for the definition of $v_-(\ga)$). Hence we must have $\lambda_1\lambda_3=\pm 1$. 
\end{proof}

As a result, we may rescale $w_0$ so that $\lambda_1=\pm 1$, $\lambda_3=\pm 1$. 

\vspace{.5\baselineskip}

Now we study the band surgery map on $C$ when $C$ has an arbitrary number of components. 

\begin{notn}
	\label{notn_ordering_of_components}
Fix an embedding of $\Sigma$ into $\bR^2$. 
For each $1$--manifold $C$ embedded in $\Sigma$, choose an ordering of the components of $C$ as follows.  
Suppose $\ga_1$ and $\ga_2$ are parallel non-contractible circles in $C$. Since we embed $\Sigma$ into $\bR^2$, one of $\{\ga_1,\ga_2\}$ is inside the other. Assume $\ga_1$ is on the inside and $\ga_2$ is on the outside. Then we require that $\ga_1$ appears before $\ga_2$ in the ordering. 
When there are more than one  orderings satisfying this condition, we choose one arbitrarily and fix the choice. Let $\sigma$ be the chosen ordering of the components of $C$ and  let $\sigma_b$ be the ordering for $C_b$.
\end{notn}

Recall that the isomorphism $\Theta_{w_0,\sigma_b}:V_\bC(C_b)\to \SiHI(C_b)$ is only defined up to changes of signs on the images of the standard basis. We make an arbitrary choice of signs and use it to define the inverse map $\Theta_{w_0,\sigma_b}^{-1}$. Define 
\begin{equation}
	\label{eqn_defn_T(b)}
T(b) =\Theta_{w_0,\sigma_b}^{-1} \circ \SiHI(b)\circ  \Theta_{w_0,\sigma}: V_\bC(C) \to V_\bC(C_b).
\end{equation}

\begin{prop}
	\label{prop_description_of_T(b)}
	Write $C = C'\cup C_0$, $C_b = C_b'\cup C_0$, where $C'$ and $C_b'$ consist of the components of $C$ and $C_b$ that intersect $b$, and $C_0$ consists of the components that are disjoint from $b$. Then
	$T(b)$ is given by the tensor product of a homomorphism from $V_\bC(C')$ to $V_\bC(C_b')$ and the identity map on $V_{\bC}(C_0)$, where the homomorphism from $V_\bC(C')$ to $V_\bC(C_b')$ is described by Lemmas \ref{lem_cobordism_SiHI_one_trivial_to_two_trivial} to \ref{lem_SiHI_cobordism_two_nontrivial_one_nontrivial} up to multiplying integer powers if $i$ to the coefficients of the equations. 
\end{prop}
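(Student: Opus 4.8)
The plan is to localize the band surgery cobordism map using the naturality and associativity of the pair-of-pants maps, reduce to the one-band computations of Lemmas \ref{lem_cobordism_SiHI_one_trivial_to_two_trivial}--\ref{lem_SiHI_cobordism_two_nontrivial_one_nontrivial}, and absorb the discrepancy between the chosen orderings and more convenient ones into powers of $i$ by means of the reordering Lemmas \ref{lem_change_order_with_contractible} and \ref{lem_change_order_non_parallel}. Throughout we fix $w_0$ once and for all so that Lemmas \ref{lem_cobordism_map_1_nontrivial_to_2_nontrivial} and \ref{lem_lambda1_lambda3_product} apply.

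The first step is to pick auxiliary orderings. Since $\Sigma$ has genus zero, a band with both feet on one component of $C$ necessarily splits it into two components, and a band with feet on two components merges them; hence $C'$ and $C_b'$ have one or two components and the pair $(C',C_b')$ falls into exactly one of the cases treated in Lemmas \ref{lem_cobordism_SiHI_one_trivial_to_two_trivial}--\ref{lem_SiHI_cobordism_two_nontrivial_one_nontrivial}. I would then produce orderings $\sigma^{\ast}$ of $C$ and $\sigma_b^{\ast}$ of $C_b$ which satisfy the nesting convention of Notation \ref{notn_ordering_of_components}, in which the components of $C'$ (resp.\ of $C_b'$) form a contiguous initial block, and whose restrictions to $C'$, $C_0$ (resp.\ $C_b'$, $C_0$) agree with those of $\sigma$ (resp.\ $\sigma_b$). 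The point that makes this possible is that $b$ is disjoint from $C_0$: if a non-contractible component $\eta$ of $C_0$ were isotopic to, and nested strictly between, two components of $C'$ joined by $b$, then the core of the part of $b$ crossing the annulus between those two components would meet $\eta$, which is impossible. Consequently no component of $C_0$ parallel to a component of $C'$ separates the block $C'$ in $\sigma$, and the transpositions needed to pass from $\sigma$ to $\sigma^{\ast}$ only ever exchange two curves at least one of which is contractible, or two non-contractible curves with connected complement in $R$. By Lemmas \ref{lem_change_order_with_contractible} and \ref{lem_change_order_non_parallel} we obtain $\Theta_{w_0,\sigma}=\Theta_{w_0,\sigma^{\ast}}\circ\rho$ and $\Theta_{w_0,\sigma_b}=\Theta_{w_0,\sigma_b^{\ast}}\circ\rho_b$ for automorphisms $\rho$ of $V_\bC(C)$ and $\rho_b$ of $V_\bC(C_b)$ that are \emph{monomial}, i.e.\ carry each standard basis vector to a power of $i$ times a standard basis vector.

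With the block orderings in place, I would invoke the locality of the construction. Arranging $\sigma^{\ast}$ so that the components of $C'$ come first, Definition \ref{defn_Theta} and the associativity of $\clP$ (Lemma \ref{lem_associativity_naturality_clP}) give $\Theta_{w_0,\sigma^{\ast}}=\clP\circ(\Theta_{w_0}^{C'}\otimes\Theta_{w_0}^{C_0})$ under the canonical identification $V_\bC(C)=V_\bC(C')\otimes V_\bC(C_0)$, where $\Theta_{w_0}^{C'}$, $\Theta_{w_0}^{C_0}$ are the analogues of $\Theta_{w_0,\cdot}$ for the sublinks $C'$, $C_0$ with the induced orderings; similarly for $\sigma_b^{\ast}$. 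Since $b\cap C_0=\emptyset$, after the linear isotopy pushing $C'$ into $(0,1)\times R$ and $C_0$ into $(-1,0)\times R$ the band surgery cobordism from $C$ to $C_b$ becomes the disjoint union of the band surgery cobordism $S_b^{\mathrm{loc}}$ from $C'$ to $C_b'$ with the product cobordism on $C_0$; hence Lemma \ref{lem_associativity_naturality_clP}(1) gives $\SiHI(b)=\pm\,\clP\circ(\SiHI(b^{\mathrm{loc}})\otimes\id)\circ\clP^{-1}$. Combining these identities yields $T_{\sigma^{\ast},\sigma_b^{\ast}}(b)=T(b^{\mathrm{loc}})\otimes\id_{V_\bC(C_0)}$ with $T(b^{\mathrm{loc}})=(\Theta_{w_0}^{C_b'})^{-1}\circ\SiHI(b^{\mathrm{loc}})\circ\Theta_{w_0}^{C'}$, and this $T(b^{\mathrm{loc}})$ is precisely the homomorphism described by the relevant one of Lemmas \ref{lem_cobordism_SiHI_one_trivial_to_two_trivial}--\ref{lem_SiHI_cobordism_two_nontrivial_one_nontrivial}. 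Finally, from $T(b)=\rho_b^{-1}\circ T_{\sigma^{\ast},\sigma_b^{\ast}}(b)\circ\rho$ and the fact that $\rho,\rho_b$ are monomial, each coefficient of the formula for $T(b)$ equals the corresponding coefficient from the cited Lemma multiplied by a power of $i$, which is the assertion.

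The main obstacle is the bookkeeping of the first step: verifying that passing from the chosen orderings to the block orderings never requires transposing two parallel non-contractible curves (for which no reordering lemma is available), and checking that the resulting automorphisms $\rho,\rho_b$ are genuinely monomial, so that conjugating $T(b^{\mathrm{loc}})\otimes\id$ by them merely rescales coefficients by powers of $i$ rather than producing new terms. The remaining steps are a direct assembly of results already established.
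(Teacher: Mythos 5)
Your proof follows the same strategy as the paper: reorder using Lemmas \ref{lem_change_order_with_contractible} and \ref{lem_change_order_non_parallel} so that $C'$ and $C_b'$ occupy consecutive positions, localize the cobordism map via Lemma \ref{lem_associativity_naturality_clP}, apply the two-circle formulas of Lemmas \ref{lem_cobordism_SiHI_one_trivial_to_two_trivial}--\ref{lem_SiHI_cobordism_two_nontrivial_one_nontrivial}, and absorb the reordering factors into the ``up to powers of $i$'' freedom. You go further than the paper in explaining why no non-contractible $C_0$ component can sit between two band-connected $C'$ components (the band would have to cross it); the paper leaves this implicit.

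However, there is an overreach in requiring $C'$ (and $C_b'$) to form an \emph{initial} block of $\sigma^{\ast}$. The available transpositions --- a contractible circle with anything, or two non-parallel non-contractible circles --- suffice to make $C'$ \emph{contiguous}, which is exactly what your nesting argument establishes, but they cannot in general move $C'$ to the front. A non-contractible $\eta\in C_0$ parallel to some $\ga\in C'$ and nested strictly \emph{inside} $\ga$ is not excluded by your argument (for instance the band may be attached on the outer side of $\ga$), and by Notation \ref{notn_ordering_of_components} such $\eta$ precedes $\ga$ in $\sigma$; since $\eta$ and $\ga$ are parallel, neither reordering lemma permits pushing $\eta$ past $\ga$, so $C'$ cannot be made initial. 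This is harmless: contiguity is all that is needed, because Lemma \ref{lem_associativity_naturality_clP}(2) lets you bracket $\clP$ with $C'$ as a middle block, and Lemma \ref{lem_associativity_naturality_clP}(1) localizes a cobordism supported on any one factor of the disjoint union, not just the first. With ``initial'' replaced by ``contiguous'' throughout, your argument is correct and coincides with the paper's.
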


\begin{proof}
	By Lemma \ref{lem_change_order_with_contractible}, we can change the position of any contractible circle in $\sigma$ or $\sigma_b$ without affecting the statement of the proposition.
	
	Note that if $\ga_1$ and $\ga_2$ are a pair of disjoint, non-parallel, non-contractible circles on $\Sigma$, then the condition of Lemma \ref{lem_change_order_non_parallel} is satisfied.
	Hence by Lemma \ref{lem_change_order_non_parallel} and the associativity of $\clP$, we may switch the order of a pair of consecutive non-parallel, non-contractible components in $\sigma$ or $\sigma_b$ without affecting the statement of the proposition. 
	
	Therefore, we may assume without loss of generality that the components of $C'$ and $C_b'$ are placed in consecutive orders in $\sigma$ and $\sigma_b$ respectively.  The desired result then follows from Lemmas \ref{lem_cobordism_SiHI_one_trivial_to_two_trivial} to \ref{lem_SiHI_cobordism_two_nontrivial_one_nontrivial} and  the associativity of $\clP$.
\end{proof}

Now we prove a rank estimate that is central to the proof of Theorem \ref{thm_main}.

\begin{prop}
	\label{prop_comparison_rank_APS_SiHI}
	Suppose $L$ is a link in the interior of $[-1,1]\times \Sigma$. Then
	$$\rank_{\bZ/2} {\rm APS(L)}\ge \dim_\bC\SiHI_{R,p}(L).$$
\end{prop}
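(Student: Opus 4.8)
The plan is to realize $\SiHI_{R,p}(L)$ via Kronheimer--Mrowka's unoriented skein spectral sequence and then compare its second page with the mod $2$ APS complex. Fix a diagram $D$ of $L$ on $\Sigma$ with $k$ crossings together with an ordering of the crossings, and let $\{D_v\}_{v\in\{0,1\}^k}$ be the cube of resolutions. Iterating the unoriented skein exact triangle \cite[Theorem 6.8]{KM:Kh-unknot} over the cube produces a spectral sequence converging to $\SiHI_{R,p}(L)$ with
\[
E_1=\bigoplus_{v\in\{0,1\}^k}\SiHI(D_v),\qquad d_1=\sum_{i}\sum_{u-v=e_i}\pm\,\SiHI(b_{vu}),
\]
where $b_{vu}$ is the band realizing the $0$-to-$1$ smoothing change at the $i$-th crossing; in particular $\dim_\bC\SiHI_{R,p}(L)=\dim_\bC E_\infty\le\dim_\bC E_2$. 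Choosing the orderings $\sigma_v$ of the components of $D_v$ as in Notation \ref{notn_ordering_of_components}, the isomorphisms $\Theta_{w_0,\sigma_v}\colon V_\bC(D_v)\to\SiHI(D_v)$ of Definition \ref{defn_Theta} assemble into an identification $E_1\cong \CKh_\Sigma(L)\otimes_\bZ\bC$, under which, by Proposition \ref{prop_description_of_T(b)}, the conjugated edge map $\Theta_{w_0,\sigma_u}^{-1}\circ\SiHI(b_{vu})\circ\Theta_{w_0,\sigma_v}=T(b_{vu})$ equals the elementary APS map $d_{vu}$ except that its nonzero coefficients are multiplied by powers of $i$ and, in the two all-non-contractible cases of Lemmas \ref{lem_cobordism_map_1_nontrivial_to_2_nontrivial} and \ref{lem_SiHI_cobordism_two_nontrivial_one_nontrivial}, one extra coefficient $\lambda$ may appear in a slot where $d_{vu}$ vanishes.

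Granting this identification, the proposition reduces to a statement about ranks of matrices. Put $N=\dim_\bC E_1=\rank_\bZ\CKh_\Sigma(L)$; then $\dim_\bC E_2=N-2\rank_\bC d_1$, and since every elementary APS map has entries in $\{0,\pm1\}$ we also have $\rank_{\bZ/2}\APS(L)=N-2\rank_{\bZ/2}(\mathcal{D}_\Sigma\bmod 2)$. Hence it suffices to show
\[
\rank_\bC d_1\ \ge\ \rank_{\bZ/2}(\mathcal{D}_\Sigma\bmod 2).
\]
The powers of $i$ are dealt with by passing to $\bZ[i]$: after rescaling $w_0$ so that $\lambda_1=\lambda_3=\pm1$ (Lemma \ref{lem_lambda1_lambda3_product}), every entry of $d_1$ supported where $\mathcal{D}_\Sigma$ is nonzero is a unit of $\bZ[i]$, and under the isomorphism $\bZ[i]/(1+i)\cong\bZ/2$ such a unit reduces to the corresponding entry of $\mathcal{D}_\Sigma\bmod 2$. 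Taking a maximal square submatrix of $\mathcal{D}_\Sigma\bmod 2$ with nonzero determinant and looking at the submatrix of $d_1$ on the same rows and columns, its determinant lies in $\bZ[i]$ (away from the extra $\lambda$'s), is $\not\equiv0\bmod(1+i)$, hence is nonzero in $\bC$; this would give the inequality as long as the extra $\lambda$-entries can be kept out of the chosen submatrix or otherwise accounted for.

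The main obstacle is precisely these extra coefficients. A band move that splits one non-contractible circle into two, or merges two non-contractible circles into one --- the only situations producing them --- changes the number of non-contractible components of a resolution by $\pm1$, whereas every APS-nonzero band map changes it by $0$ or $\pm2$. Thus the parity of the number of non-contractible components is a $\bZ/2$-grading on the cube under which $\mathcal{D}_\Sigma$ (and its $\bZ[i]$-unit perturbation) is grading-preserving while the extra part of $d_1$ is grading-reversing; this splits $\mathcal{D}_\Sigma\bmod 2$ as a direct sum and confines the unknown $\lambda$'s to the off-diagonal blocks of $d_1$. I would then finish by one of two routes: either proving that these $\lambda$'s actually vanish --- equivalently that $\SiHI(b)$ has rank $0$ rather than $1$ in the all-non-contractible merge/split cases --- by combining $d_1^2=0$ with functoriality relations among band maps in the spirit of Lemma \ref{lem_lambda1_lambda3_product}; or showing directly that off-diagonal blocks of the above type cannot drop $\rank_\bC d_1$ below $\rank_{\bZ/2}\mathcal{D}_\Sigma^{\mathrm{even}}+\rank_{\bZ/2}\mathcal{D}_\Sigma^{\mathrm{odd}}$. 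Either way, the genuine difficulty is reconciling the geometric differential $d_1$ with the combinatorial $\mathcal{D}_\Sigma$, not the spectral-sequence construction or the reduction modulo $(1+i)$.
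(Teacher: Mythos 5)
Your setup is right and you correctly isolate the real obstruction: the coefficients $\lambda_2,\lambda_4$ of Lemmas \ref{lem_cobordism_map_1_nontrivial_to_2_nontrivial} and \ref{lem_SiHI_cobordism_two_nontrivial_one_nontrivial} sit where the APS differential vanishes, and the $\bZ[i]/(1+i)\cong\bZ/2$ reduction is exactly how the paper handles the powers of $i$. But the grading you use to separate these terms is too coarse, and neither of your two proposed exits closes the gap.

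The $\bZ/2$-grading by parity of non-contractible components does not produce a filtration: an all-non-contractible split raises the count by $1$, while an all-non-contractible merge lowers it by $1$, so the ``extra'' part of $d_1$ populates both off-diagonal blocks $C$ and $C'$. Your Route~2 then needs $\rank\bigl(\begin{smallmatrix}A&C\\C'&B\end{smallmatrix}\bigr)\ge\rank A+\rank B$, which is simply false (e.g.\ $A=B=C=C'=[1]$ has rank $1$, not $2$), and $d_1^2=0$ does not rescue it. Your Route~1 asks to show $\lambda_2=\lambda_4=0$, but the paper never establishes this, and the argument should not depend on it.

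What the paper actually uses is the finer $\bZ$-valued grading (after Winkeler): assign $+1$ to $\bfw_{\mfo}(\ga)$ when $\mfo$ is counter-clockwise in the fixed planar embedding of $\Sigma$, $-1$ when clockwise, and $0$ to contractible circles. With the boundary-orientation convention for the pair-of-pants neighborhood $N$, one checks that every APS-type band map preserves this grading, while the $\lambda_2,\lambda_4$ terms \emph{strictly decrease} it --- crucially, the nc-to-nc split and merge both drop it, rather than going in opposite directions as they do for parity. This is a genuine decreasing filtration on $(E_1,d_1)$. The associated graded differential $d_1'$ is then exactly the APS differential up to powers of $i$, and the spectral sequence of the filtration gives $\dim_\bC H(E_1',d_1')\ge\dim_\bC H(E_1,d_1)$ without any block-rank inequality. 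Your parity grading is precisely the mod-$2$ reduction of this $\bZ$-grading; recovering the full integer grading is the missing step.
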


\begin{proof}
Suppose the link $L$ is given by a diagram $D$ on $\Sigma$ with $k$ crossings. Fix an ordering of the crossings.
For $v=(v_1,v_2,\dots, v_k)\in \{0,1\}^k$, let $D_v$ be the resolved diagram indexed by $v$.

Let $\mathcal{S}\in \{0,1\}^k \times \{0,1\}^k$ be the set of pairs $(v,u)$ such that $u$ is obtained from $v$ by changing one component from $0$ to $1$. For $(v,u)\in \mathcal{S}$, the change from $D_v$ to $D_u$ is given by a band surgery
 (see Figure \ref{fig_01smoothing} and Figure \ref{fig:band_surgery}). We use $b_{vu}$ to denote the band. 
	
By \cite[Proposition 6.9]{KM:Kh-unknot}, there is a spectral sequence abutting to 
$$
\II(S^1\times R, L,S^1\times \{p\})
$$ 
whose first page is 
$$
\bigoplus_{v\in\{0,1\}^k} \II(S^1\times R, \{0\}\times D_v,S^1\times \{p\}).
$$
The differential on the first page is given by the sum of maps
\begin{equation}
	\label{eqn_second_page_differential}
	\II(S^1\times R, \{0\}\times D_v,S^1\times \{p\}) \to \II(S^1\times R, \{0\}\times D_u,S^1\times \{p\})
\end{equation}
for all $(v,u)\in \mathcal{S}$, 
where each map is equal to the cobordism map defined by $b_{vu}$ up to sign.

According to the discussion in \cite{AHI}*{Section 5} (also cf. \cite{xie2020instantons}*{Lemma 5.4}), this yields a spectral sequence relating the eigenspaces of the $\muu$ maps.
Therefore, we obtain a spectral sequence abutting to $\SiHI_{R,p}(L)$ whose first page is given by 
$$
E_1 = \bigoplus_{v\in\{0,1\}^k} \SiHI(D_v),
$$
and the differentials on the first page is equal to
 $$d_1 = \sum_{(v,u)\in \mathcal{S}}\SiHI(b_{vu})$$ 
for some choice of signs of the cobordism maps. The spectral sequence then implies
\begin{equation}
	\label{eqn_rank_estimate_SiHI_APS_1}
\dim_\bC H(E_1,d_1)\ge \dim_\bC \SiHI_{R,p}(L),
\end{equation}
where $H(\cdot)$ denotes the homology of the complex.

Identify $E_1$ with 
$$\bigoplus_{v\in\{0,1\}^k} V_\bC(D_v)$$
 using the isomorphisms 
 $$\Theta_{w_0,\sigma_v}: V_\bC(D_v)\to \SiHI(D_v),$$ 
 where $\sigma_v$ are the orderings of components given by Notation \ref{notn_ordering_of_components}, and $w_0$ is chosen so that Lemma \ref{lem_cobordism_map_1_nontrivial_to_2_nontrivial} holds. By Lemma \ref{lem_lambda1_lambda3_product}, we may also rescale $w_0$ so that $\lambda_1=\pm 1$ and $\lambda_3=\pm 1$ in Lemmas \ref{lem_SiHI_cobordism_one_trivial_two_nontrivial} and \ref{lem_SiHI_cobordism_two_nontrivial_merge_to_trivial}.

We now define a filtration on $(E_1,d_1)$ following the strategy of  \cite[Section 6.2]{winkeler2021khovanov}.
For each circle $\ga$ on $\Sigma$, define a grading on $V_\bC(\ga)$ as follows. If $\ga$ is a contractible circle, define the grading to be zero. If $\ga$ is a non-contractible circle, recall that we view $\Sigma$ as a subset of $\bR^2$.
 Define the grading of $\bfw_\mfo(\ga)$ to be $1$ if $\mfo$ is the counter-clockwise orientation, and define the grading of $\bfw_\mfo(\ga)$ to be $-1$ if $\mfo$ is the clockwise orientation. 
 
 We adopt the convention that if $B$ is the standard disk in $\bR^2$ then the induced orientation on $\partial B$ is counter-clockwise.
 It is then straightforward to check using Proposition \ref{prop_description_of_T(b)} that the map $d_1$ does not increase the grading, and the only terms in $d_1$ that decrease the grading are the terms with coefficients $\lambda_2$ and $\lambda_4$ given by Lemma \ref{lem_cobordism_map_1_nontrivial_to_2_nontrivial} and Lemma \ref{lem_SiHI_cobordism_two_nontrivial_one_nontrivial}.
 
 Let $(E_1',d_1')$ be the associated graded complex of $(E_1,d_1)$ with respect to the above filtration.
By the spectral sequence associated with filtrations, we have
\begin{equation}
	\label{eqn_rank_estimate_SiHI_APS_2}
\dim_\bC H(E_1',d_1')\ge \dim_\bC H(E_1,d_1).
\end{equation}

The space $E_1'$ is isomorphic to $$\bigoplus_{v\in\{0,1\}^k} V_\bC(D_v),$$
which is the same as the group $\CKh_\Sigma(L)\otimes \bC$ from the definition of $\APS(L;\bC)$ in Section \ref{subsec_APS}. 
By Proposition \ref{prop_description_of_T(b)},   the differential $d_1'$ coincides with the differential on $\CKh_\Sigma(L)\otimes \bC$  up to multiplications by integer powers of $i$ on the images of the standard basis.

This implies that there is a complex 
$$
(\hat{E}, \hat{d})
$$ 
with coefficients in $\bZ[i]$, such that when reducing to $\bC$ coefficients, the complex $(\hat{E}\otimes\bC, \hat{d}\otimes \id_{\bC})$ is isomorphic to $(E_1',d_1')$; when reducing to $\bZ[i]/(1-i) \cong \bZ/2$ coefficients, the complex $$(\hat{E}\otimes \big(\bZ[i]/(1-i)\big), \hat{d}\otimes \id_{\bZ[i]/(1-i)})$$ is isomorphic to the complex that defines $\APS(L;\bZ/2)$.

 By the universal coefficient theorem, we have
 $$
 \rank_{\bZ/2} \APS(L;\bZ/2) \ge \rank_{\bZ[i]} H(\hat{E}, \hat{d}) = \dim_\bC H(E_1',d_1').
 $$
Hence the desired result is proved by combining the above inequality with \eqref{eqn_rank_estimate_SiHI_APS_1}, \eqref{eqn_rank_estimate_SiHI_APS_2}.
\end{proof}

\begin{proof}[Proof of Theorem \ref{thm_main}]
	Suppose $L$ is a link in the interior of $[-1,1]\times \Sigma$ such that $\rank_{\bZ/2}\APS(L;\bZ/2)\le 2$. 
	Let $F,R,p$ be as before.
	Then by Proposition \ref{prop_comparison_rank_APS_SiHI} and \eqref{eqn_SiHI_iso_SHI}, we have
	$$
	\dim_{\bC}\SHI([-1,1]\times \Sigma, \{0\}\times \Sigma, L)\le 2.
	$$
	Now the theorem follows from Theorem \ref{thm_main_instanton}.
	
\end{proof}

\bibliographystyle{amsalpha}
\bibliography{references}

\end{document}